\documentclass[10pt,11pt]{amsart}%
\usepackage[dvips]{graphicx}
\usepackage{amsmath}
\usepackage{color}
\usepackage{amsfonts}
\usepackage{stmaryrd}
\usepackage{amssymb}%
\usepackage{tikz}
\setcounter{MaxMatrixCols}{30}
%TCIDATA{OutputFilter=latex2.dll}
%TCIDATA{Version=5.50.0.2953}
%TCIDATA{LastRevised=Sunday, December 12, 2021 13:31:19}
%TCIDATA{<META NAME="GraphicsSave" CONTENT="32">}
%TCIDATA{<META NAME="SaveForMode" CONTENT="1">}
%TCIDATA{BibliographyScheme=Manual}
%TCIDATA{Language=American English}
%BeginMSIPreambleData
\providecommand{\U}[1]{\protect\rule{.1in}{.1in}}
%EndMSIPreambleData
\providecommand{\U}[1]{\protect\rule{.1in}{.1in}}
\providecommand{\U}[1]{\protect\rule{.1in}{.1in}}
\providecommand{\U}[1]{\protect\rule{.1in}{.1in}}
\providecommand{\U}[1]{\protect\rule{.1in}{.1in}}
\providecommand{\U}[1]{\protect\rule{.1in}{.1in}}
\providecommand{\U}[1]{\protect\rule{.1in}{.1in}}
\providecommand{\U}[1]{\protect\rule{.1in}{.1in}}
\textheight 220mm
\textwidth 165mm
\hoffset -20mm
\voffset -15mm

\newtheorem{theorem}{Theorem}[section]

\newtheorem{corollary}[theorem]{Corollary}

\newtheorem{definition}[theorem]{Definition}
\newtheorem{example}[theorem]{Example}

\newtheorem{lemma}[theorem]{Lemma}

\newtheorem{proposition}[theorem]{Proposition}
\newtheorem{remark}[theorem]{Remark}

\def \b{\beta}
\def \l{\lambda}

\setcounter{tocdepth}{2}
\usepackage{tikz}
\usepackage[notocbasic,refpage]{nomencl}
\makenomenclature
\title{Quantum cohomology of the Grassmannian and unitary Dyson Brownian motion}

\begin{document}
\thanks{This project is supported by the Agence Nationale de la
Recherche funding CORTIPOM ANR-21-CE40-0019.}

\author[J. Guilhot]{Jérémie Guilhot}
\address{Institut Denis Poisson, Université de Tours, France}
\email{jeremie.guilhot@lmpt.univ-tours.fr}

\author[C. Lecouvey]{Cédric Lecouvey} 
\address{Institut Denis Poisson, Université de Tours, France}
\email{cedric.lecouvey@lmpt.univ-tours.fr}
       
\author[P. Tarrago]{Pierre Tarrago} 
\address{Laboratoire de Probabilit\'es, Statistique et Mod\'elisation, Sorbonne Universit\'e, France}
\email{pierre.tarrago@sorbonne-universite.fr}

\maketitle
\begin{abstract}
We study a class of commuting Markov kernels whose simplest element describes the movement of $k$ particles on a discrete circle of size $n$ conditioned to not intersect each other. Such Markov kernels are related to the quantum cohomology ring of the Grassmannian, which is an algebraic object counting analytic maps from $\mathbb{P}^1(\mathbb{C})$ to the Grassmannian space of $k$-dimensional vector subspaces of $\mathbb{C}^n$ with prescribed constraints at some points of $\mathbb{P}^1(\mathbb{C})$. We obtain a Berry-Esseen theorem and a local limit theorem for an arbitrary product of approximately $n^2$ Markov kernels belonging to the above class, when $k$ is fixed. As a byproduct of those results, we derive asymptotic formulas for the quantum cohomology ring of the Grassmannian in terms of the heat kernel on $SU(k)$.  
\end{abstract}

\vspace{0.5 cm}

\begin{flushright}
\textit{To Philippe Biane, for his 60th birthday.}
\end{flushright}

\vspace{0.5cm}

\section{Introduction}
The present paper gives a probabilistic study of an integrable class of rooted graphs which describe movements of particles on a discrete circle conditioned to not intersect. The simplest example of this class is the graph $B_{k,n}$ encoding the transitions of $k$ particles moving clockwise with small steps on a discrete circle of $n$ sites without intersecting each other. This graph is rooted at a particular configuration where the $k$ particles are stacked next to each other after some chosen origin. We present such a graph for $k=2$ particles on a discrete circle of size $n=4$ in the figure below (where the dashed line gives the origin of the circle).

\begin{figure}[h!]
\centering
\includegraphics[scale=0.5]{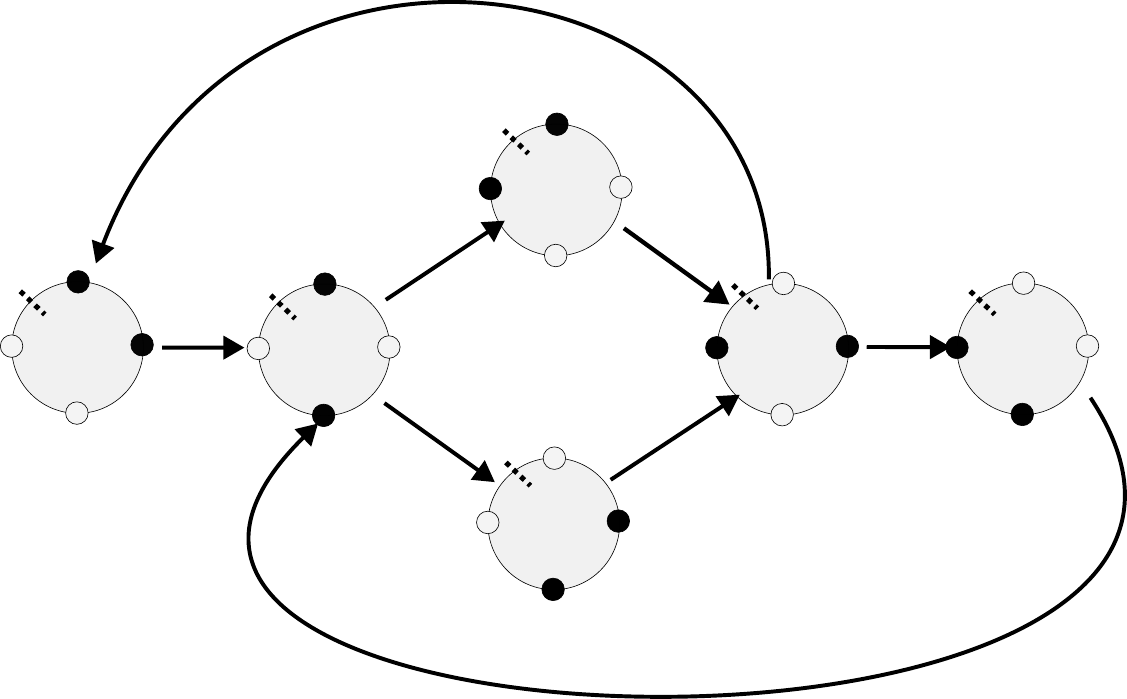}
\caption{\label{figPart}Graph $B_{2,4}$ encoding the movement of $2$ nonintersecting particles on a discrete circle of size $4$. The dashed line marks the origin of the circle.}
\end{figure}

 As we will see, the adjacency matrix of the graph $B_{k,n}$ generates an algebra which can be seen as a finite version of the algebra of symmetric functions in $k$ variables, and this algebra has a distinguished basis for which structure coefficients are nonnegative. Graphs whose adjacency matrices generate algebras having such properties have been studied in \cite{GLT} under the name of positively multiplicative graphs. In the case of $B_{k,n}$, the corresponding algebra is the small quantum cohomology of the Grassmannian, see \cite{B,BCF}. This algebra describes the geometry of the moduly space of rational maps from $\mathbb{P}^1(\mathbb{C})$ to the Grassmannian variety $G_{k,n}$, the latter describing the space of $k$-dimensional vector subspaces of $\mathbb{C}^n$. 
 
 The nonnegative coefficients that appear in the multiplication of the distinguished basis are generalizations of the Littlewood-Richardson coefficients called quantum Littlewood-Richardson coefficients \cite{BCF}. Exactly as the latter are central combinatorial objects in the representation theory of the general linear group $U(k)$ and the geometry of the Grassmannian variety $G_{k,n}$, the former play a prominent role in the combinatorics of the representation of the loop group on $U(k)$ and the moduli space of rational maps from $\mathbb{P}^1(\mathbb{C})$ to $G_{k,n}$. In the simplest nontrivial situation $n=2, k=1$, the quantum Littlewood-Richardson coefficients boil down to the fact that specifying the values of a degree $d$ rational function from $\mathbb{P}^1(\mathbb{C})$ to $\mathbb{P}^1(\mathbb{C})$ at 2d+1 generic points uniquely determines this function (see Example \ref{Example:QuantumCohomP1}).
 
The adjacency matrix of the graph $B_{k,n}$ can be turned into a Markov kernel up to conjugation by an appropriate diagonal matrix. Such a transformation, using the Perron-Frobenius eigenvector and eigenvalue of the adjacency matrix, is just a generalization of the Doob $h$-transform for conditioned random walks \cite{Do-87}. Then, from a probabilistic perspective, the notion of positively multiplicative graph translates into the existence of a large family of commuting Markov kernels which are obtained in this way from the algebra generated by the adjacency matrix of the graph. This family is the largest possible in the following sense: for any probability distribution on the set of vertices, there exists a Markov kernel of the aforementioned family mapping the Dirac mass on the root to this probability distribution. The basic example of positively multiplicative graphs is provided by the directed cycle graph of size $n$ rooted at one of its vertices. The underlying algebra of Markov kernels commuting with the adjacency matrix $S$ of this graph is the set of circulant matrices with nonnegative entries. It has a basis $\{S^k, 0\leq k\leq n-1\}$ with the multiplication rule $S^kS^l=S^{(k+l)[n]}$, where $(k+l)[n]$ means the value of $k+l$ modulo $n$. In this particular simple case, the \textit{h}-transform of the original adjacency matrix is trivial since it is already a Markov kernel.

The case of the directed cycle graph of size $n$ actually corresponds to the simplest case $B^{1,n}$ of our graphs. In a probabilistic context, it is known that products of linear combinations of $\{S^k, 0\leq k\leq n-1\}$ generate all spatially homogeneous random walks on a discrete circle. Moreover, we can diagonalize the action of any of these operators on the so-called Fourier basis, and the change of basis from the canonical basis to the Fourier basis is then called the discrete Fourier transform. As $n$ goes to infinity and after normalization, the discrete circle becomes a continuous circle of radius $2\pi$ and the discrete Fourier transform becomes the continuous Fourier transform for periodic functions on $\mathbb{R}$ with period $2\pi$. The interplay between discrete and continuous Fourier transforms allows to prove several precise probabilistic results such as the wrapped central limit theorem and their refined versions, local limit theorem and Berry-Esseen theorem, \cite{BobLed}. To put those three results in a nutshell, the wrapped central limit theorem states that a random walk on a discrete circle of size $n$ can be embedded in a continuous circle such that after a number $m$ of steps with order $n^2$, the distribution is close to a wrapped normal distribution on the circle. Then, the Berry-Esseen theorem gives quantitative convergence towards the limit distribution while the local limit theorem gives asymptotic formula for the probability to be at some position at time $m$ in terms of the density of the wrapped normal distribution.

The goal of the present paper is then to get similar probabilistic results when there are $k\geq 2$ particles on the discrete unit circle, with the additional requirement that particles do not intersect. An important property of the adjacency matrix of $B_{k,n}$ is to be explicitly diagonalizable. The eigenbasis is given in terms of specializations of Schur functions at $k$-tuples of distinct $n$-roots of unity and can thus be seen as a symmetrized version of the Fourier basis on $(\mathbb{Z}/n\mathbb{Z})^k$. We can then define a discrete Fourier transform on $B_{k,n}$, which turns out to be a discrete version of the Fourier transform on the conjugation invariant measures on $U(k)$, the unitary group in dimension $k$. By using these Fourier transforms, we get a Berry-Esseen theorem and a discrete local limit theorem, similar to the ones existing for $k=1$, for the complete family of Markov kernels generated by the positively multiplicative graph $B_{k,n}$. Instead of a convergence towards a wrapped Gaussian distribution, we get a convergence towards the eigenvalue process of a Brownian motion on $U(k)$, which is called a unitary Dyson Brownian motion. As a corollary of our local limit theorem, we get some asymptotic formulas for the multiplication in the quantum cohomology ring of the Grassmannian at fixed $k$ and growing $n$. From the geometric interpretation of the quantum cohomology, an informal description of this asymptotic formula says the following (the reader should refer to Corollary \ref{cor:cohomology} for an exact result):

\vspace{0.3cm}

 \textit{ The number of rational functions from $\mathbb{P}^1(\mathbb{C})$ to $G_{k,n}$ with prescribed constraints of small codimension at around $n^2$ generic points of $\mathbb{P}^1(\mathbb{C})$ is either zero, infinite or approximately described by the density of a unitary Dyson Brownian motion at a time depending on the number of constraints and their homology.}

\vspace{0.3cm}
There had been several combinatorial formulas for counting such numbers in specific cases, see \cite{RRW}, \cite{Sch} and also \cite{Sot} for applications to the pole placement problem. Up to our knowledge the result of Corollary \ref{cor:cohomology} is the first asymptotic result counting such rational functions.

The paper is organized as follows: Section \ref{sec:intro_model} formally introduces the graph $B_{k,n}$, the relation with the unitary group and the main results of the paper. Section \ref{sec:pm_graph} defines the general notion of integrable positively multiplicative graph and the corresponding convolution of probability measures which are well suited for the study of $B_{k,n}$, and the related Fourier transform. Section \ref{sec:formal_model_pm} applies these notions to our graph $B_{k,n}$ and relates the discrete Fourier transform we get to the continuous Fourier transform on the unitary group $U(k)$. The technical core of the this paper is done in Section \ref{sec:Fourier_large_product}, which gives an asymptotic formula for the Fourier transform of the convolution of a large sequence of probability measures on $B_{k,n}$. We kept trace of the dependence in $k$ of the formulas that we obtained for later purposes. Finally, Section \ref{sec:Berry-Esseen} and Section \ref{sec:LLT} respectively give proofs of the Berry-Esseen theorem and the local limit theorem with its geometric interpretation. We added in Appendix \ref{sec:quantum_cohomology} a short introduction to the quantum cohomology of the Grassmannian for probabilists.

\bigskip
\noindent\textbf{Acknowledgments:}  The authors are partially supported by the Agence Nationale de la Recherche funding ANR CORTIPOM 21-CE40-001.

\section{Presentation of the model and main results}
\label{sec:intro_model}
\subsection{$k$-configuration on a circle and continuous counterpart}\label{subsec:intro_model}

For $k,n\in \mathbb{N}^*$ with $k\leq n$, let $B_{k,n}$ denote the set of positions of $k$ nonintersecting and indistinguishable particles on a discrete circle of size $n$. Formally, $B_{k,n}$ is the set of increasing sequences of  $\llbracket 0,n-1\rrbracket$ of length $k$: \nomenclature{$B_{k,n}$}{Set of decreasing integer sequences in $\llbracket 0,n-1\rrbracket$} such a sequence gives the position of the $k$ particles in the clockwise direction starting from the origin $0$ of the discrete circle, see Figure \ref{figPart} and the corresponding $k$-subsets on Figure \ref{figBkn} in the case $k=2,n=4$.

The size $\langle I\rangle$ of an element $I=\{I_1>\ldots>I_k\}\in B_{k,n}$ is the sum of its parts (remark that the size is always positive when $I\in B_{k,n}$). The discrete set $B_{k,n}$ is given a graph structure by saying there is an oriented edge from $I$ to $J$ when $\langle J\rangle=\langle I\rangle +1\mod n$ and $\#\{1\leq l,l'\leq k, J_{l'}=I_l\}=k-1$. We then write $I\nearrow J$ when there is such an oriented edge from $I$ to $J$. The edges of $B_{k,n}$ encode the clockwise movement of the $k$ particles around the circle conditioned to never intersect. A distinguished vertex of $B_{k,n}$ is given by the sequence $I_0=\left\lbrace 0,\ldots,k-1\right\rbrace$, \nomenclature{$I_0$}{Root $\left\lbrace 0,\ldots,k-1\right\rbrace$ of $B_{k,n}$} which represents $k$ particles stacked after the origin on the discrete circle. See Figure \ref{figBkn} for the graph structure of $B_{2,4}$.

\begin{figure}[h!]
\centering
\includegraphics[scale=0.5]{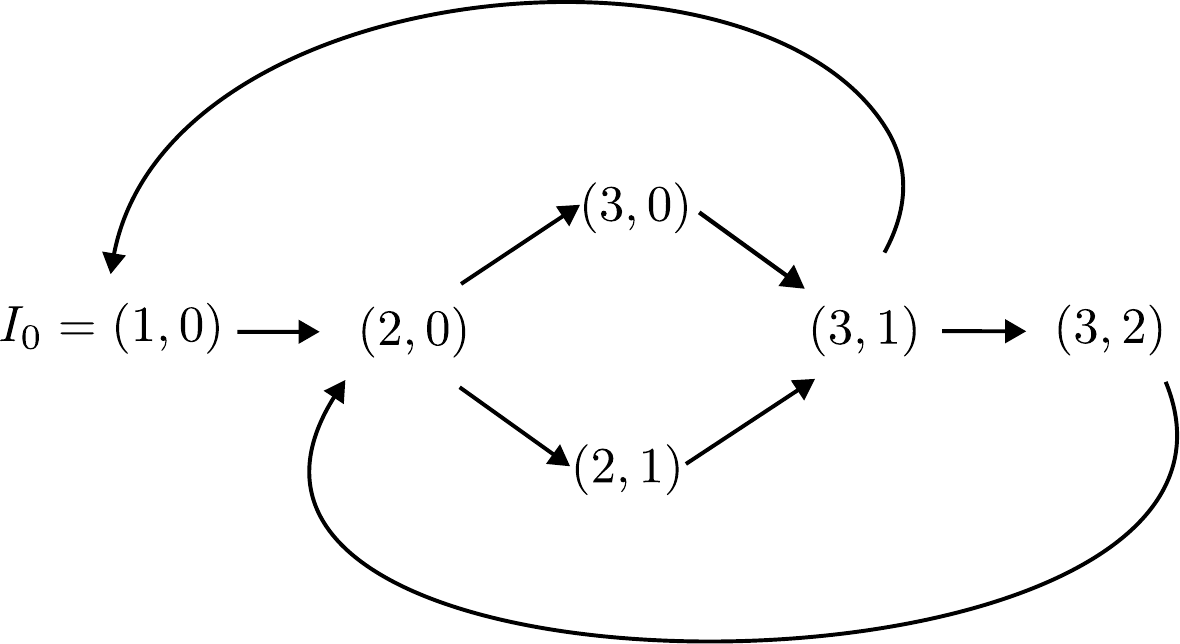}
\caption{Graph $B_{2,4}$ from a combinatorial perspective. \label{figBkn}}
\end{figure}

For fixed $k\geq 1$, the family of graphs $\{B_{k,n},\,n\geq 1\}$ has a continuous counterpart embodied by the set $ T_{k}$ of positions of $k$ indistinguishable particles on a continuous circle of radius $1$. Identifying the positions of the particles by their distance from a distinguished point of the circle in the clockwise direction, we formally identify $T_k$ with the subset of vectors $\vec{u}=(u_1,\ldots,u_k)\in [0,2\pi[^k$ such that 
$$2\pi>u_1\geq u_2\geq\dots\geq u_k\geq 0.$$
\nomenclature{$T_k$}{Set of non-increasing vectors in $[0,2\pi[^k$}Remark that we authorize particles to be at the same position in the continuous case, since this situation may theoretically appear as a limit case of discrete configurations as $n$ goes to infinity (although it will almost surely not be the case).

For $x\in \mathbb{R}$, denote by $\overline{x}$ the unique element of $[0,2\pi[$ such that $x-\overline{x}\in 2\pi\mathbb{Z}$. Then, reflecting the circular nature of the model, there is a shift action of $\mathbb{R}$ on $T_k$ given by 
$$R_{t}(\vec{u})=Sort(\overline{u_1-2t\pi},\ldots,\overline{u_k-2t\pi}),$$\nomenclature{$R_{t}$}{$R_{t}(\vec{u})=Sort(\overline{u_1-2t\pi},\ldots,\overline{u_k-2t\pi})$} where $Sort(\vec{u})$ is the sequence obtained from $\vec{u}\in [0,2\pi[^k$ by sorting it in decreasing order. For each $n\geq 1$, the graph $B_{k,n}$ can actually be embedded in $ T_{k}$ through the map
\begin{equation}\label{eq:definition_xi_n}
\xi_n:\left\lbrace \begin{aligned}
&B_{k,n}&\rightarrow &\quad T_{k}\\
&I&\mapsto& R_{\frac{k-1}{2n}} \left(\frac{2\pi }{n}I\right)
\end{aligned}\right..
\end{equation}
\nomenclature{$\xi_n$}{$\xi_n(I)=R_{\frac{k-1}{2n}} \left(\frac{2\pi }{n}I\right)$} Note that there is a shift of $\frac{\pi(k-1)}{n}$ which seems at first sight arbitrary: its role is mainly to simplify later expressions.

\subsection{A complete family of commuting Markov kernels associated to $B_{k,n}$}
In order to state our main results, let us briefly explain how to associate to $B_{k,n}$ a family of Markov chains. By analogy with the space $L^2(T_k)$ of square integrable functions on $T_k$, denote by $L^2(B_{k,n})$\nomenclature{$L^2(B_{k,n})$}{Hilbert space of complex functions on $B_{k,n}$} the Hermitian space of functions on $B_{k,n}$ with the distinguished orthonormal basis $\mathcal{B}_0=\left\{ e_{I}, I\in B_{k,n}\right\}$\nomenclature{$\mathcal{B}_0$}{Basis $\left\{ e_{I}, I\in B_{k,n}\right\}$ of $L^2(B_{k,n})$} given by $e_I(J)=\delta_{I,J}$. The edge structure of $B_{k,n}$ can then be encoded by the operator $A:L^2(B_{k,n})\rightarrow L^2(B_{k,n})$ given by 
$$A(e_I)=\sum_{I\nearrow J} e_{J}.$$
\nomenclature{$A$}{Adjacency matrix of $B_{k,n}$}
Since $B_{k,n}$ is strongly connected, the adjacency matrix $A$ is irreducible and thus there exist unique (up to a scaling) left and right Perron-Frobenius eigenvectors $h^l$ and $h^r$ of $A$ \nomenclature{$h^l,\,h^r$}{Left and right Perron-Frobenius eigenvectors of $A$}corresponding to the same Perron-Frobenius eigenvalue $\lambda$. The coordinates $h^r(I),h^l(I), \, I\in B_{k,n}$ are positive and $\lambda>0$ so that the transformed matrix $A^h$ \nomenclature{$A^h$}{$h$-transform of $A$} given by
$$A^h(e_J)=\frac{1}{\lambda}\sum_{I\in B_{k,n}}\frac{h^r(J)}{h^r(I)}A_{IJ}e_{I}$$ 
is a stochastic matrix with respect to the basis $\mathcal{B}_0$. The corresponding Markov chain started at any vertex converges then in probability to the invariant measure $\mu_h$\nomenclature{$\mu_h$}{Invariant measure of $A^h$} such that $\mu^h(I)=h^l(I)h^r(I)$ for $I\in B_{k,n}$, where $h^l$ and $h^r$ are normalized so that $h^l(I_0)=1$ and  $\langle h^l,h^r\rangle=1$. We call $A^h$ the \textit{$h$-transform} of $A$, by analogy with the specific situation where $A$ comes from the Markov kernel of a random walk on $\mathbb{Z}^k$ conditioned to stay in a finite connected domain $D\subset \mathbb{Z}^k$ (see \cite{Do-87}).

As we will see later, in the case of $B^{k,n}$ we have the explicit expression
$$h^l(I)=\frac{\vert V(\xi_n(I))\vert}{V(I_0)}\,,\quad \mu^h(I)=\frac{\vert V(\xi_n(I))\vert^2}{n^k},$$
where $V(\vec{u})$ denoting the Vandermonde determinant of $(\exp(iu_j))_{1\leq j\leq k}$ for $\vec{u}=(u_1,\ldots,u_k)\in\mathbb{R}^k$\nomenclature{$V(\vec{u})$}{Vandermonde determinant of $(\exp(iu_j))_{1\leq j\leq k}$ for $\vec{u}=(u_1,\ldots,u_k)\in\mathbb{R}^k$}, so that 
\begin{equation}\label{eq:definition_Vandermonde}
V(\xi_n(I))=\prod_{j<j'}\left(e^{2i\pi I_{j'}/n}-e^{2i\pi I_{j}/n}\right)=(2i)^{k(k-1)/2}e^{(k-1)i\pi\frac{\langle I\rangle}{n}}\prod_{j<j'}\sin\left(\frac{(I_{j'}-I_{j})\pi}{n}\right).
\end{equation}

As this can be readily seen from the graph structure of $B_{k,n}$,
$$A^h(e_{I_0})=\frac{1}{\lambda}\frac{\vert V(I_0)\vert}{\vert V(I^1)\vert}e_{I^1}$$
where $I^1=\left\{k,k-2,k-3,\ldots,0\right\}$ and $\lambda$ is the Perron-Frobenius eigenvalue of $A$, which can be proven to be equal to $\sum_{a=1}^ke^{\frac{2i\pi(k+1-a)}{n}}$.

Anticipating with the continuous case, let us denote by $\mathcal{M}_{\geq 0}(B_{k,n})$ the set of positive measures on $B_{k,n}$, which can be viewed as vectors of $L^2(B_{k,n})$ with nonnegative entries, and $\mathcal{M}_{1}(B_{k,n})\subset \mathcal{M}_{\geq 0}(B_{k,n})$ the subset of vectors of $\mathcal{M}_{\geq 0}(B_{k,n})$ integrating to $1$ with respect to $\mu^h$, namely
$$\mathcal{M}_{1}(B_{k,n})=\left\{\mu\in L^2(B_{k,n})\,\left\vert \sum_{I\in B_{k,n}} \mu(I)\mu^h(I)=1, \, \mu(I)\geq 0, I\in B_{k,n}\right.\right\}.$$
Elements of $\mathcal{M}_1(B_{k,n})$ are called \textit{$h$-probability measures} in the sequel.\nomenclature{$\mathcal{M}_{1}(B_{k,n})$}{Set of $h$-probability measures on $B_{k,n}$}

Our first results give a product structure on $L^2(B_{k,n})$ which restricts to a convolution product on the set $\mathcal{M}_{1}(B_{k,n})$ of $h$-probability measures.
\begin{proposition}\label{prop:result_1}
There exists a unique product $*:L^2(B_{k,n})\times L^2(B_{k,n})\rightarrow L^2(B_{k,n})$ such that 
\begin{enumerate}
\item $(L^2(B_{k,n}),+,*)$ is a commutative algebra,
\item $\mathcal{M}_{1}(B_{k,n})*\mathcal{M}_{1}(B_{k,n})\subset \mathcal{M}_{1}(B_{k,n})$,
\item $\frac{1}{\lambda}A^h(y)=\frac{e_{I^1}}{\mu^h(I^1)}*y$ for all $y\in L^2(B_{k,n})$,
\item $\left(\frac{e_{I}}{\mu^h(I)}\right)* \left(\frac{e_{I_0}}{\mu^h(I_0)}\right)=\frac{e_{I}}{\mu^h(I)}$ for all $I\in B_{k,n}$.
\end{enumerate}
\end{proposition}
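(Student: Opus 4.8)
The plan is to obtain $*$ by transporting to $L^2(B_{k,n})$ the product of a matrix algebra and then to verify the four properties one by one. Let $\mathcal A\subseteq\operatorname{End}\bigl(L^2(B_{k,n})\bigr)$ be the unital commutative subalgebra generated by the $h$-transforms of the ``Schubert operators'' of $B_{k,n}$: by the explicit diagonalisation of $A$ recalled above (common eigenvectors indexed by the $k$-subsets $\Lambda$ of $n$-th roots of unity, with eigenvalues given by specialised Schur functions of $\Lambda$), $\mathcal A$ is, after conjugation by $D=\operatorname{diag}(h^r(I))_I$, the image of the small quantum cohomology ring $QH^*(G_{k,n})$ specialised at $q=1$; see \cite{B,BCF,GLT}. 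I would use two structural inputs: \textbf{(a)} $\mathcal A$ is commutative, of dimension exactly $|B_{k,n}|=\binom nk$, contains $\tfrac1\lambda A^h$, and has a basis $\{M_I\}_{I\in B_{k,n}}$ of operators (``$h$-transformed multiplication by the Schubert class $\sigma_I$'', so $M_{I_0}=\operatorname{Id}$); and \textbf{(b)} after rescaling this basis appropriately, its structure constants are the quantum Littlewood--Richardson coefficients, hence nonnegative. Granting \textbf{(a)}, a suitably normalised evaluation-at-the-root map $\Phi\colon\mathcal A\to L^2(B_{k,n})$, $M\mapsto M(\tilde e_{I_0})$ with $\tilde e_I:=e_I/\mu^h(I)$, is a linear isomorphism — injective because $\tilde e_{I_0}$ generates $L^2(B_{k,n})$ over $\mathcal A$, bijective by dimension — and an isomorphism of $\mathcal A$-modules. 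Setting $v*w:=\Phi\bigl(\Phi^{-1}(v)\Phi^{-1}(w)\bigr)$ then makes $L^2(B_{k,n})$ an algebra isomorphic to $\mathcal A$, giving (1); its unit is $\Phi(\operatorname{Id})=\tilde e_{I_0}$, giving (4); and since $\Phi$ is a module map, $\tilde e_{I_1}*y=\Phi^{-1}(\tilde e_{I_1})(y)$, so (3) reduces to $\tfrac1\lambda A^h(\tilde e_{I_0})=\tilde e_{I_1}$, which follows from the formula $A^h(e_{I_0})=\tfrac1\lambda\frac{|V(I_0)|}{|V(I_1)|}e_{I_1}$ together with the expression for $\mu^h$ (this is also the computation that pins down the normalisation of $\Phi$).

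Property (2) follows from \textbf{(b)} and the identification of the total-mass functional $\epsilon(v):=\sum_I v(I)\mu^h(I)$ with a character. By \textbf{(b)}, $\tilde e_I*\tilde e_J$ is a nonnegative combination of the $\tilde e_K$, so $*$ maps $\mathcal M_{\ge0}(B_{k,n})\times\mathcal M_{\ge0}(B_{k,n})$ into $\mathcal M_{\ge0}(B_{k,n})$. Moreover $\epsilon(\tilde e_I)=1$ for all $I$ (directly from $\tilde e_I=e_I/\mu^h(I)$), and, since $\mu^h$ is up to scaling the coordinatewise product $h^l h^r$ of the Perron--Frobenius eigenvectors of $A$, the functional $\epsilon$ corresponds under $\Phi^{-1}$ to evaluation of $\mathcal A$ at its Perron--Frobenius joint eigenvalue, hence is an algebra character: $\epsilon(v*w)=\epsilon(v)\epsilon(w)$. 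Thus for $v,w\in\mathcal M_1(B_{k,n})$ one gets $v*w\in\mathcal M_{\ge0}(B_{k,n})$ with $\epsilon(v*w)=1$, i.e.\ $v*w\in\mathcal M_1(B_{k,n})$. (Equivalently, (2) asserts that in $\tilde e_I*\tilde e_J=\sum_K c_{IJ}^K\tilde e_K$ one has $c_{IJ}^K\ge0$ and $\sum_K c_{IJ}^K=1$.)

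For uniqueness, let $\star$ satisfy (1)--(4). Then $v\mapsto(v\star-)$ is an injective algebra morphism from $(L^2(B_{k,n}),+,\star)$ onto a commutative subalgebra $\mathcal A^\star\subseteq\operatorname{End}(L^2(B_{k,n}))$ of dimension $|B_{k,n}|$ containing $\tfrac1\lambda A^h$ (by (3)) and with $\tilde e_{I_0}$ as unit and cyclic vector (by (4)). Since $\mathcal A^\star$ commutes with the diagonalisable operator $\tfrac1\lambda A^h$, it is block diagonal for the eigenspace decomposition $L^2(B_{k,n})=\bigoplus_i V_i$ of $A^h$; $\tilde e_{I_0}$ projects to a cyclic vector in each $V_i$, which forces $\dim(\mathcal A^\star|_{V_i})=\dim V_i$ and hence $\mathcal A^\star=\bigoplus_i\mathcal A^\star|_{V_i}$; and on the $V_i$ of multiplicity greater than one, property (2) excludes nilpotents, so each block is the semisimple (diagonal) one. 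This pins down $\mathcal A^\star=\mathcal A$, and since $\mathcal A$ has $\tilde e_{I_0}$ cyclic each $v$ determines a unique $M_v\in\mathcal A$ with $M_v(\tilde e_{I_0})=v$, whence $v\star w=M_v(w)=v*w$.

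The real obstacle is input \textbf{(a)}, and specifically the equality $\dim\mathcal A=|B_{k,n}|$. The spectrum of $A$ (equivalently of $A^h$) is \emph{not} simple — already for $B_{2,4}$ one computes the characteristic polynomial $x^2(x^4-4)$, so $\mathbb C[A^h]$ has dimension $5<6=|B_{2,4}|$, and $A^h$ by itself does not separate the spectral points. One is therefore genuinely forced to bring in the higher Schubert operators $\sigma_2,\dots,\sigma_k$, i.e.\ the full quantum cohomology ring, and establishing both its dimension and the nonnegativity of its structure constants in the Schubert basis (input \textbf{(b)}) is exactly the combinatorial/geometric content of \cite{B,BCF}, packaged here in the general theory of integrable positively multiplicative graphs of Section \ref{sec:pm_graph}; the same non-simplicity is what makes the eigenspace-by-eigenspace argument necessary in the uniqueness part rather than a one-line appeal to ``$A^h$ generates $\mathcal A$''.
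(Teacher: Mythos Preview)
Your existence argument is essentially the paper's: you build $*$ by transporting the product of the $h$-transformed quantum cohomology algebra $\mathcal A$ to $L^2(B_{k,n})$ via the evaluation-at-root isomorphism $M\mapsto M(\tilde e_{I_0})$. This is precisely the map $M^h$ of \eqref{eq:isomor_L2h_CA}, and your verification of (1)--(4) mirrors the paper's use of Proposition~\ref{prop:structure_conv} followed by the two short computations for (3) and (4). You are also right that the hard input is Proposition~\ref{prop:equiv_graph_qh} (your \textbf{(a)}--\textbf{(b)}): $\dim\mathcal A=\binom nk$ and nonnegativity of the quantum Littlewood--Richardson coefficients, both imported from \cite{B,BCF}.

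Where you diverge from the paper is in attempting to prove \emph{uniqueness}: the paper's proof of Proposition~\ref{prop:result_1} addresses only existence and is silent on uniqueness. Your attempt, however, has a genuine gap. You correctly argue that any competing $\mathcal A^\star$ is block-diagonal for the eigenspace decomposition of $A^h$ and has full dimension on each block. But the step ``property (2) excludes nilpotents, so each block is the semisimple (diagonal) one --- this pins down $\mathcal A^\star=\mathcal A$'' fails on two counts. First, positivity of structure constants does not by itself rule out nilpotents: already $\mathbb C[x]/(x^2)$ with basis $\{1,x\}$ has nonnegative structure constants. Second, and more seriously, even granting semisimplicity, a $d$-dimensional eigenspace of $A^h$ supports a \emph{continuum} of maximal commutative semisimple subalgebras of $\operatorname{End}(V_i)$ (one for each unordered basis of $V_i$ up to scaling), so ``semisimple'' does not single out $\mathcal A$. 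In your own $B_{2,4}$ example the kernel of $A^h$ is two-dimensional, and you have not shown that the stochasticity constraint from (2) forces the joint eigenlines there to be exactly $w^{(2,0)}$ and $w^{(3,1)}$ rather than some other pair. Pinning this down would require a Perron--Frobenius-type rigidity argument for the full commuting family $\{M_I\}$, not just for $A^h$, and that argument is missing.
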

The existence of this convolution product on $L^2(B_{k,n})$ reflects the fact that $B_{k,n}$ is a positively multiplicative graph, as introduced in \cite{GLT}. Such graphs have several properties which we recall in Appendix \ref{sec:pm_graph}. 
\begin{remark}\label{rem:qcoho_convolution}
The existence of this convolution product extending the action of $A$ and under which $\mathcal{M}_1(B_{k,n})$ is stable reflects the fact that $B_{k,n}$ encodes the Pieri rules in the small quantum cohomology of the Grassmannian (see Appendix \ref{sec:quantum_cohomology} for an introduction to this notion), which is an algebra having nonnegative structure coefficients in a particular basis. As we will see in Section \ref{sec:formal_model_pm}, the ring $(L^2(B_{k,n}),+,*)$ is actually isomorphic to the ring of symmetric functions specialized at $k$-tuples of distinct roots of $(-1)^{k+1}$. In particular, one can see through this isomorphism that the convolution by $\frac{e_{I^S}}{\mu^h(I^S)}$ for $I^S=(k,k-1,\ldots,1)$ corresponds to the operator $S^h$ on $L^2(B_{k,n})$, where $S$ is the shift operator 
$Se_{I}=e_{\tilde{I}}$ where $\{\tilde{I}_i,1\leq i\leq k\}=\{I_i+1 \,[n], 1\leq i\leq k\}$.
\end{remark}
\begin{remark}
The convolution algebra $\mathcal{M}_1(B_{k,n})$ is also deeply related to the fusion algebra for the semi-simple Lie algebra of type $A_k$, see \cite{De}. The latter is also a quotient of the small quantum cohomology ring of the Grassmannian, and the structure coefficients of each of those algebras can be easily deduced from the other.
\end{remark}
\subsection{Relation between $B_{k,n}$ and the unitary group}
Recall that we embedded $B_{k,n}$ into $T_k$ through the map $\xi_n$. This map can be transformed into a map, still denoted by $\xi_n$, from $\mathcal{M}_{\geq 0}(B_{k,n})$ to $\mathcal{M}_f( T_{k})$, the space of finite Radon measures on $ T_{k}$ with the formula
$$\xi_n(\mu)=\left(\frac{2\pi}{n}\right)^k\sum_{I\in B_{k,n}}\mu(I)\delta_{\xi_n(I)}.$$
As explained in the previous section, for $\mu\in\mathcal{M}_{1}(B_{k,n})$ we have $\sum_{I\in B_{k,n}}\mu^h(I)\mu(I)=1$ with $\mu^h(I)=\frac{\vert V(\xi_n(I)\vert^2}{n^k}$, so that 
\begin{equation}\label{eq:spherical_integral_xi(mu)}
\int_{ T_{k}}\frac{\left\vert V\left(\vec{u}\right)\right\vert^2}{(2\pi)^k}d\left[\xi_n(\mu)\right](\vec{u})=1.
\end{equation}
The latter equality is reminiscent of the spectral projection of conjugation invariant probability measures on the unitary group $U(k)$. Indeed, according to the Weyl integration formula \cite[Section 4]{AGZ}, the projection $p:U(k)\rightarrow  T_{k}$ mapping $U$ to the arguments of its eigenvalues sorted in decreasing order\nomenclature{$p$}{Map sending $U\in U(k)$ to the decreasing sequence of the arguments of its eigenvalues taken in $[0,2\pi]$} (and taken in $[0,2\pi)$) transforms the Haar measure $\mu_H$ on $U(k)$ into the measure 
$$dp_{\star}\mu_H(\vec{u})=\frac{\left\vert V\left(\vec{u}\right)\right\vert^2}{(2\pi)^k}d\vec{u}$$
on $ T_{k}$, where $p_{\star}$ denotes the push-forward of $\mu_H$ through $p$ \footnote{We recall that $p_{\star}\mu(A)=\mu(p^{-1}(A))$ for $A$ measurable set of $T_k$.}. For $\vec{u}\in  T_{k}$, $\frac{\left\vert V\left(\vec{u}\right)\right\vert^2}{(2\pi)^k}$ is exactly the area of the adjoint orbit $\mathcal{O}(\vec{u})$ of unitary matrices having eigenvalues $\left(e^{i u_1},\ldots,e^{iu_k}\right)$, see \cite[Section 4.1, Eq. (4.1.17)]{AGZ}. Hence, for any conjugation invariant probability measure $\tilde{\mu}$ on $U(k)$ with density $f$ with respect to $\mu_H$ (which means that $f=\tilde{f}\circ p$ for some measurable function $\tilde{f}:T_k\rightarrow \mathbb{R}$), we have 
$$dp_{\star}\tilde{\mu}(\vec{u})=\frac{\left\vert V\left(\vec{u}\right)\right\vert^2}{(2\pi)^k} \tilde{f}(\vec{u})d\vec{u},$$
on $T_k$. In particular, $\tilde{f}(\vec{u})d\vec{u}$ satisfies \eqref{eq:spherical_integral_xi(mu)}. Let us denote by $\mathcal{M}_1( T_{k})$ the closure in the weak topology of the set of positive measurable functions $\tilde{f}$ on $T_k$ such that \eqref{eq:spherical_integral_xi(mu)} is satisfied with $d\left[\xi_n(\mu)\right](\vec{u})$ replaced by $f(\vec{u})d\vec{u}$. Note that the set of extreme points of the convex set $\mathcal{M}_1( T_{k})$ includes the weighted Dirac masses $\frac{(2\pi)^k}{\vert V(\vec{u})\vert^2}\delta_{\vec{u}}$ for elements $\vec{u}$ of $T_k$ with $\vert V(\vec{u})\vert>0$. As a consequence, $\xi_n\left[\mathcal{M}_{1}(B_{k,n})\right]\subset\mathcal{M}_1(T_k)$.

For any element $\mu$ of $\mathcal{M}_1(T_k)$, there exists a unique conjugation invariant probability measure $p^{\star}\mu$ on $U(k)$ (meaning that $p^{\star}\mu(UEU^*)=p^{\star}\mu(E)$ for all $U\in U(k)$ and $E$ measurable set of $U(k)$) such that 
$$p_{\star}\left[p^{\star}\mu\right]=\mu,$$
so that $p^{\star}$ yields an embedding of $\mathcal{M}_1(T_k)$ into the set of conjugation invariant probability measures on $U(k)$, see \cite[Chapter 4]{AGZ}. Dirac masses on $\mathcal{M}_1(T_k)$ correspond through $p^{\star}$ to Dirac masses on orbits $\mathcal{O}(\vec{u})=\{U\in U(k),\, p(U)=\vec{u})$ for $\vec{u}\in T_k$. In particular, the set $\mathcal{M}_{1}(B_{k,n})$, which had been seen previously as a discrete subset of $\mathcal{M}_1(T_k)$, can now be considered through $p^{\star}$ as a set of conjugation invariant probability measures on $U(k)$. 

The relation between $B_{k,n}$ and $U(k)$ is actually much stronger. There exists a natural convolution product $*$ on the set of probability measures on $U(k)$ which is characterized for $\mu,\nu$ probability measures on $U(k)$ by the formula
$$\int_{U(k)}f(U)d\left[\mu*\nu\right](U)=\int_{U(k)\times U(k)}f(UV)d\mu(U)\otimes d\mu(V)$$
for any continuous map $f:U(k)\rightarrow \mathbb{R}$. Conjugating by $p^\star$ turns this product into a convolution product, also denoted by $*$, on the set $\mathcal{M}_1( T_{k})$.

The set $(\mathcal{M}_1(B_{k,n}),*)$ can be seen as a $n$-step discrete version of the space $(\mathcal{M}_1( T_{k}),*))$, although the former is not a subgroup of the latter.  However, it has been proven in \cite[Proposition 7.1]{De} that the convolution algebra $(\mathcal{M}_1(B_{k,n}),*)$ converges towards $(\mathcal{M}_1( T_{k},*))$ as $n$ goes to infinity in some sense. Indeed, for $\lambda,\mu\in  T_{k}$ which are decreasing, set $\lambda^{(n)}=\left\{\left\lfloor \frac{n\lambda_i}{2\pi}\right\rfloor\right\}_{1\leq i\leq k}$ for $n\geq 1$ and similarly for $\mu^{(n)}$, so that $\xi_{n}(\lambda^{(n)})$ (resp. $\xi_n(\mu^{(n)})$) converges to $\lambda$ (resp. $\mu$) as $n\rightarrow \infty$. Then, $\lambda^{(n)}, \mu^{(n)}\in B_{k,n}$ for $n$ large enough and, as $n\rightarrow \infty$,
$$\lim_{n\rightarrow \infty}\xi_n\left(\frac{e_{\lambda^{(n)}}}{\mu^h(\lambda^{(n)})}*_{B_{k,n}}\frac{e_{\mu^{(n)}}}{\mu^h(\mu^{(n)})}\right)=\delta(\lambda)*_{ T_{k}}\delta(\mu)
=p_{\star}(\delta_{\mathcal{O}(\lambda)}*_{U(k)}\delta_{\mathcal{O}(\mu)}),$$
where we wrote $*_F$ to specify that the convolution occurs in the group $\mathcal{M}_1(F)$.

\subsection{Main probabilistic theorems}\label{subsec:main_probabilistic_results}
Using the convolution product introduced in Proposition \ref{prop:result_1}, we can consider the convolution $\mu=\mu_1*\mu_2*\dots\mu_m$ of an arbitrary number of $h$-probability measures and its image by the map $\xi_n$. As already shown in \cite{De,We} in the independent and identically distributed case, we expect that there exists $\gamma>0$ depending on some second moments of $(\mu_i)_{1\leq i\leq m}$ such that $\mu$ is close to the eigenvalue distribution of a unitary Brownian motion at time $\gamma\frac{ m}{n^2}$ as $n$ goes to infinity and under the appropriate scaling. Note that we need some care to properly define the unitary Brownian motion, since there is a two-parameters family of bi-invariant Brownian motions on $U(k)$ because of the semisimple decomposition $U(k)\simeq SU(k)\times U(1)$. 

\subsubsection{Unitary Dyson Brownian motion}\label{subsubsec:Dyson_motion}

Recall that the $n^2$-dimensional Lie algebra $\mathfrak{g}_{U(k)}$ of $U(k)$, which is the tangent space of the manifold $U(k)$ at the identity, admits the decomposition 
$$\mathfrak{g}_{U(k)}=\mathfrak{g}_{SU(k)}\oplus \mathfrak{g}_{U(1)},$$
where $\mathfrak{g}_{SU(k)}$ is the Lie algebra of $SU(k)$ and $\mathfrak{g}_{U(1)}=i\mathbb{R}$ is the Lie algebra of $U(1)$. 
Each previous simple Lie algebra respectively admits a natural scalar product called the (negative) Killing form (see Section \ref{Sec:Brownian_motion} for an explicit expression). From these scalar products, one can define $\mathfrak{B}_{SU(k)}$ (resp. $\mathfrak{B}_{U(1)}$) as the unique Brownian motion on $\mathfrak{g}_{SU(k)}$ (resp. $\mathfrak{g}_{U(1)}$) whose covariance at time $t=1$ is the identity in an orthonormal basis of $SU(k)$ (resp. $U(1)$) with respect to its aforementioned scalar product.

For any $\alpha,\gamma\geq 0$ (with $(\alpha,\gamma)\not=0$), there exists then a unique bi-invariant Markov process $(\mathbf{B}^{\alpha,\gamma}(t))_{t\geq 0}$\nomenclature{$(\mathbf{B}^{\alpha,\gamma}(t))_{t\geq 0}$}{Brownian motion with parameters $(\alpha,\gamma)$ on $U(k)$} on $U(k)$ starting at $Id$ which is obtained from the Brownian motion $\gamma \mathfrak{B}_{SU(k)}\oplus \alpha \mathfrak{B}_{U(1)}$ on $\mathfrak{g}_{U(k)}$ by the so-called wrapping procedure (see \cite[Section 1.4]{L}, a more analytic approach is given in Section \ref{Sec:Brownian_motion}). By taking the eigenvalues of $\mathbf{B}^{\alpha,\gamma}(t)$, one gets a stochastic process $B^{\alpha,\gamma}(t)=p\left(\mathbf{B}^{\alpha,\gamma}(t)\right)$\nomenclature{$B^{\alpha,\gamma}(t)$}{$p\left(\mathbf{B}^{\alpha,\gamma}(t)\right)$} on $ T_{k}$ for $t>0$. This stochastic process is actually a continuous Markov process, whose generator is explicitly given in \eqref{eq:generator_Brownian_Motion}. In the particular case $\alpha=\gamma=1$, the process $B^{1,1}$ is just the stochastic process obtained by conditioning $k$ Brownian motions on $\mathbb{R}/(2\pi\mathbb{Z})$ to never intersect by a Doob h-transform (with the harmonic function being $\vec{u}\mapsto \left\vert V\left(\vec{u}\right)\right\vert$), see \cite{CL,HW}. This process is also called the unitary Dyson Brownian motion. The kernel $K^{(1,1)}_t(\vec{u},\vec{u}')$ of the Markov process $B^{1,1}(t)$ has a simple determinantal formula (see \cite[Proposition 1.1]{LW}) given by
$$K^{(1,1)}_t(\vec{u},\vec{u}')=\frac{\vert V(\vec{u}')\vert}{\vert V(\vec{u})\vert}\det(P_t(u_i,u'_j))_{1\leq i,j\leq k},$$
where $P_t(u,v)=\sqrt{\frac{k}{2\pi t}}\sum_{\ell\in \mathbb{Z}}(-1)^{\ell(k+1)}e^{-\frac{k(v-u+\ell 2\pi)^2}{2t}}$.  There is a natural repulsive behavior of the Brownian motions conditioned to not intersect, seen for example in the determinantal formula displayed above in the case $(\gamma,\alpha)=(1,1)$. Hence, as long as $\gamma>0$, $B^{\alpha,\gamma}(t)$ stays almost surely in $ T_{k}^{\circ}$ for $t>0$, where $T_k^{\circ}=\{\vec{u}\in T_k, u_1>u_2>\dots>u_k\}$.

When $\alpha=0$, the process $B^{0,\gamma}$ stays on the subdomain $\{\vec{u}\in  T_{k}, \sum_{i=1}^ku_i=0\,[2\pi]\}$, which can be identified with an alcove of type $A_{k-1}$. The distribution of this stochastic process can be translated on $T_k$ by any scalar $c\in\textbf{R}$ through the action $c\mapsto R_c$, yielding a Markov process on all of $T_k$ which we also call $B^{0,\gamma}$: it just corresponds to the projection $p\circ\mathbf{B}^{0,\gamma}$  when $\mathbf{B}^{0,\gamma}$ is started at some scalar multiple of the identity. Then, the Markov kernel $P^{\gamma}(x,\cdot)$ of the extended process does admit a density with respect to the Lebesgue measure on the subset $\{\vec{y}\in  T_{k}, \sum_{i=1}^ky_i=\sum_{i=1}^kx_i\,[2\pi]\}$. We denote by $K^{SU(k)}_{t}(x,y)$ this density. \nomenclature{$K^{SU(k)}_{t}$}{Kernel of $B^{0,1}$ on the subdomain $\{\vec{x},\vec{y}\in  T_{k}, \sum_{i=1}^ky_i=\sum_{i=1}^kx_i\,[2\pi]\}$}

\subsubsection{Moments of a h-probability measure} Let us introduce here several quantities which will be helpful in stating our main results. In the classical Berry-Esseen theorem, the speed of convergence towards the Gaussian distribution of a normalized sum of independent random variables is bounded by their third moment. We thus need to introduce similar quantities in our setting which is a bit different from the one on the real line. 

First, since the limiting distribution will be the marginal of the aforementioned radial unitary Brownian motion, which depends on two parameters, it is natural to expect two kinds of second and third moments. The second particularity of this model is the cyclic nature of the position. On $\mathbb{R}$, the barycenter of $k$ particles is straightforwardly given by the mean of their distance to zero. On a circle however, there are several possible barycenters depending on the choice of the origin of the circle. We chose one which is well suited for the enumerative application of the present paper and other choices can be easily adapted for other applications with a more probabilistic flavor.

Given a $h$-probability measure $\mu\in \mathcal{M}_{1}(B_{k,n})$, we denote by $I_{\mu}$\nomenclature{$I_{\mu}$}{Random variable on $B_{k,n}$ defined for $\mu\in\mathcal{M}_{1}(B_{k,n})$ following the distribution $\mathbb{P}(I_{\mu}=I)=\mu(I)\mu^h(I)$} a random element of $B_{k,n}$ whose distribution is $\mathbb{P}(I_{\mu}=I)=\mu(I)\mu^h(I)$, and by $\mathbb{E}_{\mu}$ the expectation with respect to $\mu$, namely for $f:B_{k,n}\rightarrow \mathbb{R}$,
$$\mathbb{E}_{\mu}(f(I_{\mu}))=\sum_{I\in B_{k,n}}\mu^h(I)\mu(I)f(I).$$ 
For $I\in B_{k,n}$, define
$$\tilde{I}=I-\frac{\langle I\rangle}{k}\mathbf{1}_k.$$
Remark that $\tilde{I}$\nomenclature{$\tilde{I}$}{$I-\frac{\langle I\rangle}{k}\mathbf{1}_k$} belongs to the $(k-1)$-dimensional subspace $\left\{\vec{u}\in \mathbb{R}^k, \sum_{i=1}^k u_i=0\right\}$. For example, for $I=I_0$ we have $\widetilde{I_0}=\left(\frac{k-1}{2},\frac{k-3}{2},\ldots,-\frac{k-1}{2}\right)$. Then, for $\mu \in\mathcal{M}_1(B_{k,n})$ and $p\geq 1$, we write 
$$\langle\mu\rangle=\mathbb{E}_{\mu}(\langle I_\mu\rangle),\quad Var_p(\mu)=\mathbb{E}_{\mu}\left[\left(\langle I_{\mu}\rangle-\langle\mu\rangle\right)^p\right],\quad \Vert \tilde{\mu}\Vert_p=\mathbb{E}\left[\left\Vert \widetilde{I_{\mu}}\right\Vert^p\right].$$ 
\nomenclature{$\langle\mu\rangle$}{$\mathbb{E}_{\mu}(\langle I_\mu\rangle)$}\nomenclature{$Var_p(\mu)$}{$\mathbb{E}_{\mu}\left[\left(\langle I_{\mu}\rangle-\langle\mu\rangle\right)^p\right]$}\nomenclature{$\Vert \tilde{\mu}\Vert_p$}{$\mathbb{E}\left[\left\Vert \widetilde{I_{\mu}}\right\Vert^p\right]$}Given a sequence $\mathbf{m}=(\mu_i)_{1\leq i\leq m}$ of $h$-probability measures on $B_{k,n}$ and $X$ one of the previously defined parameters on $h$-probability measures, we write 
$$X(\mathbf{m})=\left(\frac{1}{m}\sum_{i=1}^mX(\mu_i)\right)-X(I_0).$$ 
\nomenclature{$X(\mathbf{m})$}{Average of the value of $X(\mu)-X(I_0)$ for a sequence $\mathbf{m}=(\mu_i)_{1\leq i\leq m}$ of $h$-probability measures}
Hence, for example, $\langle\mathbf{m}\rangle=\frac{1}{m}\sum_{i=1}^m\langle\mu_i\rangle-\langle I_0\rangle$.
 Finally, we set 
\begin{equation}\label{eq:product_m_definition}
*\mathbf{m}=\mu_1*\mu_2*\dots*\mu_m.
\end{equation}
\nomenclature{$*\mathbf{m}$}{Convolution of a sequence $\mathbf{m}=(\mu_i)_{1\leq i\leq m}$ of $h$-probability measures}
\subsubsection{Berry-Esseen theorem} We can now state our first main result, which is a precise estimate of the convergence of a large convolution of h-probability measures $(\mu_i)_{1\leq i\leq m}$ towards the marginal of $B^{\alpha,\gamma}$ at time $t$ for some parameters $\alpha,\gamma,t$ depending on the sequence. This Berry-Esseen like result is given in terms of the $1$-Wasserstein distance, which is both tractable and natural to compare discrete measures to continuous ones. Recall that the $1$-Wasserstein distance $W_1^E$ on the set of probability measures on a metric space $(E,d)$ is defined as 
$$W_1^{E}(\mu,\nu)=\sup\left\{\left\vert \int_E fd\mu-\int_E fd\nu\right\vert,\quad f:E\rightarrow \mathbb{R}\text{ $1$-Lipschitz }\right\}.$$
\nomenclature{$W_1^{E}$}{$1$-Wassertein distance on the metric space $E$}In our case, we consider $T_k$ viewed as a symmetrized torus with the natural induced metric, namely with the distance 
$$d_{ T_{k}}(z,z')=\inf_{\sigma\in S_k}d_{(\mathbb{Z}/2\pi\mathbb{Z})^k}(z,\sigma\cdot z'),\quad z,z'\in T_{k},$$
where $d_{(\mathbb{Z}/2\pi\mathbb{Z})^k}$ is the usual metric on the torus $(\mathbb{Z}/2\pi\mathbb{Z})^k$, $S_k$ denotes the symmetric group on ${1,\ldots,k}$ and $(\sigma.z')_i=z'_{\sigma(i)}$ for $1\leq i\leq k$. Remark also that the action of $\mathbb{R}$ on $T_k$ extends to an action of $\mathbb{R}$ on $\mathcal{M}_1( T_{k})$ given by 
$$R_t[\mu](A)=\mu(R_t( A))$$
for $t\in\mathbb{R}$, $\mu\in \mathcal{M}_1(T_k)$ and $A$ measurable set of $T_k$, with $R_t(A)=\{R_t(x), x\in A\}$. 

The quantitative convergence theorem can then be written in the following simple form (the reader should refer to Theorem \ref{thm:Berry-Esseen} for a more precise and more general result), where we recall that $\xi_n$ is the embedding of $B_{k,n}$ in $T_{k}$ defined in Section \ref{subsec:intro_model} and we identify a random variable with its distribution.

\begin{theorem}[Berry-Esseen Theorem, simplified version]\label{thm:Berry-Esseen_simplified}
Suppose that $\mathbf{m}=(\mu_r)_{1\leq r\leq m}$ is a sequence of h-probability measures on $ B_{k,n}$ and set $M=\frac{\Vert \tilde{\mathbf{m}}\Vert_3}{k^3\Vert \tilde{\mathbf{m}}\Vert_2^{3/2}}$. Then, with $t_0=\frac{(2\pi)^2m}{n^2}$,
$$W_1^{ T_{k}}\left(R_{\frac{m\left\langle \mathbf{m}\right\rangle}{kn}}\left[\xi_{n}\left(*\mathbf{m}\right)\right],  B^{\alpha,\gamma}\left(t_0\right)\right)\leq C\frac{M\log(n)}{n},$$
where
$$\alpha=\frac{Var_2(\mathbf{m})}{2k^2},\,\gamma=\frac{k\Vert \tilde{\mathbf{m}}\Vert_2}{2(k^2-1)}$$ 
and $C$ depends on $k$, $t_0$ and $\alpha,\gamma$, see \eqref{eq:exact_bound}. 
\end{theorem}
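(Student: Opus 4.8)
The plan is to compare the two probability measures through the discrete Fourier transform on $B_{k,n}$ from Section~\ref{sec:formal_model_pm} and its continuous analogue on $U(k)$. Recall that the convolution algebra $(L^2(B_{k,n}),*)$ has a common eigenbasis $(v_\lambda)$ indexed by a set $\widehat B_{k,n}$ of dominant weights $\lambda$ (equivalently $k$-subsets of $n$-th roots of $(-1)^{k+1}$), with $v_\lambda(I)=s_\lambda(\xi_n(I))/s_\lambda(\xi_n(I_0))$; writing $\widehat\mu(\lambda)=\mathbb E_\mu[v_\lambda(I_\mu)]$ one has $|\widehat\mu(\lambda)|\le 1=\widehat\mu(0)$ and $\widehat{*\mathbf m}=\prod_{r=1}^m\widehat{\mu_r}$ pointwise. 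On the continuous side, the radial Brownian motion $B^{\alpha,\gamma}(t)$ has a character transform of the form $\exp\bigl(-t\,\Lambda_{\alpha,\gamma}(\lambda)\bigr)$, with $\Lambda_{\alpha,\gamma}(\lambda)$ the eigenvalue on the irreducible $U(k)$-module $V_\lambda$ of the Laplacian attached to the metric $\gamma\oplus\alpha$ on $\mathfrak g_{SU(k)}\oplus\mathfrak g_{U(1)}$ — a nonnegative quadratic function of $\lambda$, equal to $\gamma$ times the $SU(k)$-Casimir plus $\alpha$ times the square $\langle\lambda\rangle^2$ of the total weight, up to the Killing-form normalisation; translating a law by $R_c$ multiplies its transform by the unimodular phase $e^{-2\pi i c\langle\lambda\rangle}$. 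It therefore suffices to (i) estimate $\widehat{*\mathbf m}(\lambda)$ over the relevant range of frequencies and match its leading behaviour with $e^{-t_0\Lambda_{\alpha,\gamma}(\lambda)}$, once the phase produced by $R_{m\langle\mathbf m\rangle/(kn)}$ is accounted for, and (ii) turn this Fourier comparison into a bound on $W_1^{T_k}$.

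\textbf{The Fourier estimate.} For (i) I would invoke the asymptotic formula for $\widehat{*\mathbf m}$ proved in Section~\ref{sec:Fourier_large_product}. Its input is a second-order expansion of each factor around the trivial weight,
\[\widehat{\mu_r}(\lambda)=1+\tfrac{2\pi i}{n}L_{\mu_r}(\lambda)-\tfrac{(2\pi)^2}{2n^2}Q_{\mu_r}(\lambda)+O\!\Bigl(\tfrac{|\lambda|^3}{n^3}\bigl(\|\tilde\mu_r\|_3+1\bigr)\Bigr),\]
where the linear form $L_{\mu_r}$ is, along the $U(1)$-direction, proportional to $(\langle\mu_r\rangle-\langle I_0\rangle)\langle\lambda\rangle$, and $Q_{\mu_r}$ is the quadratic form built from $Var_2(\mu_r)$ in the $U(1)$-direction and from the second moments of $\widetilde{I_{\mu_r}}$ in the $SU(k)$-directions. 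Multiplying over $r$ and summing $\log(1+z)=z+O(|z|^2)$, the linear part produces a total phase $\exp\!\bigl(\tfrac{2\pi i m}{n}L_{\mathbf m}(\lambda)\bigr)$, exactly cancelled by $e^{-2\pi i c\langle\lambda\rangle}$ with $c=m\langle\mathbf m\rangle/(kn)$, while the quadratic part gives $\exp\!\bigl(-\tfrac{(2\pi)^2m}{2n^2}Q_{\mathbf m}(\lambda)\bigr)=\exp\!\bigl(-\tfrac{t_0}{2}Q_{\mathbf m}(\lambda)\bigr)$; comparing $\tfrac12 Q_{\mathbf m}$ with $\Lambda_{\alpha,\gamma}$ then forces the stated values $\alpha=Var_2(\mathbf m)/(2k^2)$ and $\gamma=k\|\tilde{\mathbf m}\|_2/\bigl(2(k^2-1)\bigr)$, the $k^2-1=\dim\mathfrak g_{SU(k)}$ and the powers of $k$ entering through the Killing-form normalisation and the $U(1)$-charge being the mean of the parts. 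The errors accumulate to $O\!\bigl(\tfrac{m|\lambda|^3}{n^3}\|\tilde{\mathbf m}\|_3\bigr)=O\!\bigl(\tfrac{|\lambda|^3}{n}\|\tilde{\mathbf m}\|_3\bigr)$ since $m\asymp n^2$, which is the source of the factor $M$; for frequencies outside the diffusive window one uses instead a uniform bound $|\widehat{*\mathbf m}(\lambda)|\le \exp\!\bigl(-c\,t_0\,\gamma\,\mathrm{dist}(\lambda,\mathcal A)^2\bigr)$, $\mathcal A$ being the set of ``alias'' weights at which the Schur specialisations degenerate.

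\textbf{From Fourier to Wasserstein, and the main obstacle.} For (ii) I would run a smoothing argument. Put $\nu_1=R_{m\langle\mathbf m\rangle/(kn)}[\xi_n(*\mathbf m)]$ and $\nu_2=B^{\alpha,\gamma}(t_0)$, fix a Schwartz mollifier $\varphi_\varepsilon$ on $T_k$ of width $\varepsilon\asymp M\log(n)/n$, and note that for $1$-Lipschitz $f$ one has $\|f-f*\varphi_\varepsilon\|_\infty\lesssim\varepsilon$, so only $\bigl|\int f*\varphi_\varepsilon\,d\nu_1-\int f*\varphi_\varepsilon\,d\nu_2\bigr|$ remains. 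Expanding on $T_k$, this equals $\bigl|\sum_\lambda \widehat{\varphi_\varepsilon}(\lambda)\,\widehat f(\lambda)\,(\widehat{\nu_1}(\lambda)-\widehat{\nu_2}(\lambda))\bigr|$ with $|\widehat f(\lambda)|\lesssim|\lambda|^{-1}$; for $|\lambda|\lesssim n$ the discrete--continuous dictionary of Section~\ref{sec:formal_model_pm} identifies $\widehat{\nu_1}(\lambda)$ with the phase-corrected $\widehat{*\mathbf m}(\lambda)$, so the difference is governed by the estimates above — the diffusive range contributing $\sum_\lambda|\lambda|^{-1}\tfrac{|\lambda|^3}{n}\|\tilde{\mathbf m}\|_3\,e^{-c t_0\gamma|\lambda|^2}\lesssim M\log(n)/n$, the extra $\log n$ coming from the band at the edge of the diffusive regime together with a union bound over the $O(n^k)$ alias weights — while for $|\lambda|\gtrsim n$ the factor $\widehat{\varphi_\varepsilon}(\lambda)$, of size $\lesssim(\varepsilon|\lambda|)^{-N}$, suppresses the tail; assembling the pieces yields $C\,M\log(n)/n$ with $C=C(k,t_0,\alpha,\gamma)$, the $U(1)$-third-moment being absorbed at lower order (the sharper accounting being Theorem~\ref{thm:Berry-Esseen}). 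I expect the real difficulty to be step (i), and within it the uniform sub-Gaussian control of $\widehat{*\mathbf m}$ over the intermediate frequencies where the Taylor expansion is no longer valid yet the Gaussian is not yet negligible: this amounts to showing quantitatively that $|v_\lambda(I)|$ stays bounded away from its maximum uniformly in $I$ once $\lambda$ is away from $\mathcal A$, an equidistribution statement for Schur functions at tuples of roots of unity that does not follow from the local expansion; the reduction (ii) is then routine, if bookkeeping-heavy.
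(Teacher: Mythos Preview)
Your overall architecture matches the paper's: estimate the Fourier transform of $*\mathbf m$ (Section~\ref{sec:Fourier_large_product}, in particular Proposition~\ref{prop:convergence_fourier} and Lemma~\ref{lem:bound_Fourier_large}) and then convert the Fourier bound into a $W_1$ bound. Step (i) of your outline is essentially the paper's Lemma~\ref{lem:asymptoti_eigenvalue}/Proposition~\ref{prop:convergence_fourier}, and you correctly identify the nontrivial point as the uniform control at intermediate frequencies (the paper's regions $R_B$ and $R_C$, handled respectively by the first inequality of Proposition~\ref{prop:convergence_fourier} and by Lemma~\ref{lem:bound_Fourier_large}).

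Where you diverge is step (ii). Instead of a generic Schwartz mollifier together with the pointwise bound $|\hat f(\lambda)|\lesssim|\lambda|^{-1}$ and a term-by-term estimate, the paper lifts to $U(k)$ and uses a Bobkov--Ledoux type inequality (Proposition~\ref{prop:Ledoux_approach}): regularize by the heat semigroup on $U(k)$, use $\sum_J\kappa(J)|\Phi[f](J)|^2\le 1$ for $1$-Lipschitz central $f$, and apply Cauchy--Schwarz to obtain $W_1\le 2k\sqrt{2t}+\bigl(\sum_{J\neq I_0}\kappa(J)^{-1}e^{-2\kappa(J)t}|\Phi[\mu](J)-\Phi[\nu](J)|^2\bigr)^{1/2}$. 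This has two advantages over your scheme. First, the regularizer's Fourier transform is exactly $e^{-\kappa(J)t}$, so the three-region splitting interacts cleanly with the Gaussian behavior of $\Phi[\nu_2]$. Second, and more importantly, the Cauchy--Schwarz step avoids having to track the dimension weights $d_J$ that enter the Plancherel formula on $U(k)$; in your term-by-term sum over $|\lambda|\gtrsim n$ you implicitly need $\sum_{|\lambda|\gtrsim n}(\varepsilon|\lambda|)^{-N}|\lambda|^{-1}|\Phi[\nu_1](\lambda)|$ to be small, but $|\Phi[\nu_1](\lambda)|$ is bounded by $d_\lambda$ (not by $1$) for the discrete measure $\xi_n(*\mathbf m)$, and $d_\lambda$ grows polynomially in $|\lambda|$---this is where a loose treatment of the aliasing can fail. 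Your explanation of the $\log n$ is also off: in the paper it does not come from an edge band or a union bound over aliases, but from the choice of the heat-kernel time $t$ in \eqref{eq:choice_t}, tuned so that the $S_C$ contribution (controlled via Lemma~\ref{lem:rough_bound_heat}) becomes $O(n^{-2})$ while the smoothing cost $2k\sqrt{2t}$ is of order $M\sqrt{\log n}/n$; the final $\log n$ in the simplified statement then appears after absorbing constants in \eqref{eq:exact_bound}. Finally, the deduction of the simplified statement from Theorem~\ref{thm:Berry-Esseen} uses one additional input you did not mention: Lemma~\ref{lem:bound_K_norm_inf}, which converts the hypothesis $\langle\hat{\mathbf m}\rangle_3\le M K(\mathbf m)^{3/2}$ of the full theorem into the moment ratio $M=\Vert\tilde{\mathbf m}\Vert_3/(k^3\Vert\tilde{\mathbf m}\Vert_2^{3/2})$ of the simplified one.
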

Remark that $C$ depends decreasingly on $t_0$ and thus this theorem is useful only when $m$ is of order $n^2$ or larger. By taking appropriate Lipschitz functions, one can deduce from the previous results convergence estimates for probability of open sets of $ T_{k}$: namely for an open set $\mathcal{O}$ of $T_k$
$$\left\vert\mathbb{P}\left(I_{*\mathbf{m}}\in O\right)-\mathbb{P}\left(B^{\alpha,\gamma}\left(\frac{m}{n^2}\right)\in O\right)\right\vert\leq C_{O}\sqrt{\frac{\log n}{n}},$$
with $C_O$ depending on $O$. The theorem of the next section, which provides a quantitative local limit theorem, would improve this speed to $O\left(\frac{1}{n}\right)$ in the special case where $\alpha=0$.   

In comparison, the usual central limit theorem would only state that for each $O$ open set of $T_k$, we have $\left\vert\mathbb{P}\left(I_{*(\mathbf{m}_n)}\in O\right)-\mathbb{P}\left(B^{\alpha,\gamma}\left(t\right)\in O\right)\right\vert=o(1)$ as $n$ goes to infinity for some sequence of constant $tn^2$-uples of h-probability measures $\mathbf{m}_n=(\mu^n,\ldots,\mu^n)$ (where $\mu^n$ is a fixed $h$-probability measure in $B_{k,n}$ for each $n\geq 0$) with $\Vert \tilde{\mathbf{m}}_n\Vert_{2}, \,Var_2(\mathbf{m}_n)$ converging to some constant and $\Vert \tilde{\mathbf{m}}_n\Vert_{3},\, Var_3(\mathbf{m}_n)$ uniformly bounded. The fact that we only require a bound on moments of order $3$ to get a central limit theorem is not at all obvious for constrained random walks. In general, the minimal order of moment which is required to be finite to get a central limit theorem should depend on the geometry of the domain, and may be much higher than $3$, see \cite{DeWa15}. The fact that our central limit theorem holds for such minimal moment assumptions (which could actually be lower to require only bound on moments of order $2+\epsilon$ for some $\epsilon>0$) is due to the high symmetry of the considered transition kernels.
\begin{example}
For a concrete application of Theorem \ref{thm:Berry-Esseen_simplified}, let us consider the case where for all $1\leq i\leq m$, $\mu_i(I)=\frac{s}{\mu^h(I^{+})}e_{I^{+}}+\frac{t}{\mu^h(I^{-})}e_{I^{-}}$ with $s+t=1$ and $I^+=(k+1,k-1,\ldots,2)$, $I^-=(k,k-1,\ldots,2,0)$. Then, by Proposition \ref{prop:result_1} and the fact that the convolution algebra $L^2(B_{k,n})$ is stable by conjugation (see also Remark \ref{rem:qcoho_convolution}), 
$$\mu_i\ast y=\frac{1}{\lambda}\left[S(sA+tA^t)\right]^h(y)$$
for all $y\in L^2(B_{k,n})$, where $S$ is the shift operator introduced in Remark \ref{rem:qcoho_convolution}. Thus, up to the shift operator, convolution by $\mu_i$ corresponds to the graph of transitions of $k$ particles on $n$ sites, where at each time a uniformly randomly chosen particle jumps to the right with probability $t$ and to the left with probability $s$ (with the usual conditioning on non-intersection of the particles), see Figure \ref{Fig:transion_gd}. Beware that the transition probabilities of our model are different to the ones of ASEP, since we are conditioning the particles to not intersect instead of directly defining the transition probabilities which prevent the intersection. There is a repulsive behavior in the conditioned model which is not present in the ASEP situation, and the dynamics of both processes are very different.
\begin{figure}[h!]
\includegraphics[scale=1]{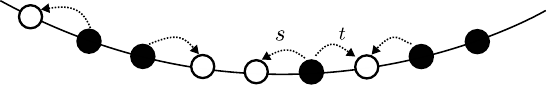}
\caption{\label{Fig:transion_gd} Particles hopping to the left with probability $t$ and to the right with probability $s$, conditioned to not intersect.}
\end{figure}

Then, Theorem \ref{thm:Berry-Esseen_simplified} shows a bound of $O(\frac{\log n}{n})$ on the $1$-Wasserstein distance between the law of the positions of the particles after rescaling and the distribution of $k$ non-intersecting Brownian motions on a circle conditioned to not intersect each other.
\end{example}
\subsubsection{Local limit theorem}
The second main result is a local limit theorem for the decomposition of the h-probability measure $*\mathbf{m}$ defined in \eqref{eq:product_m_definition} on the basis $\mathcal{B}_0=\{e_I\}_{I\in B_{k,n}}$. As usual for local limit theorems with discrete distributions, there may be some issues regarding the lattice generated by the distribution of each $\mu_i$. If the situation may be handled easily when all the distributions are the same (meaning that $\mu_i=\mu$ for all $1\leq i\leq k$), the problem is trickier when the random variables are not identically distributed, see \cite{Mu}. To simplify, we will assume that $Var_2(\mu_i)=0$ for all $1\leq i\leq k$. Beware that it does not implies that $\mu_i$ is a Dirac mass, but rather that $\langle I_{\mu_i}\rangle$ is constant. 

\begin{theorem}[Local limit Theorem]\label{thm:local-limit}
Suppose that $\mathbf{m}=(\mu_r)_{1\leq r\leq m}$ is a sequence of h-probability measure on $ B_{k,n}$ such that $Var_{2}(\mathbf{m})=0$. Then, for $I,I'\in B_{k,n}$,
$$\frac{1}{n}\left(\left(\ast \mathbf{m}\right)\ast \delta_I\right)[I']= \delta_{\langle I'\rangle=\langle I\rangle+\frac{m\langle\mathbf{m}\rangle}{k}[n]}K^{SU(k)}_{\gamma t_0}\left(\xi_{n}(I'),R_{\frac{- m\langle\mathbf{m}\rangle}{kn}}\left[\xi_n\left(I\right)\right]\right)+O\left(\frac{1}{n}\right),$$
where $\gamma=\frac{k\Vert \mathbf{m}\Vert_2}{k^2-1}$, $t_0=\frac{(2\pi)^2m}{n^2}$ and $K^{SU(k)}$ is the Markov kernel defined in Section \ref{subsubsec:Dyson_motion}, and $O(\cdot)$ depends on $k, t_0$ and $\Vert \mathbf{m}\Vert_i, i=2,3$.
\end{theorem}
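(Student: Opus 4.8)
The local limit theorem should be obtained by Fourier inversion on $B_{k,n}$ together with the asymptotic formula for the Fourier transform of a large convolution product established in Section \ref{sec:Fourier_large_product}. The first step is to express $\frac{1}{n}\big((\ast\mathbf{m})\ast\delta_I\big)[I']$ as a sum over the eigenbasis of $A$. Recall that the eigenbasis of the adjacency matrix of $B_{k,n}$ is indexed by $k$-subsets $w$ of $n$-th roots of $(-1)^{k+1}$, with eigenvectors built from Schur function specializations; writing $\widehat{\mu}(w)$ for the corresponding Fourier coefficient, the $\ast$-product becomes multiplicative, so $\widehat{\ast\mathbf{m}}(w)=\prod_{r=1}^m\widehat{\mu_r}(w)$, and the inversion formula rewrites the left-hand side as $\frac{1}{n^k}\sum_w \overline{s_w(I')}\,s_w(I)\,\widehat{\ast\mathbf{m}}(w)$ up to the Vandermonde normalizations coming from the $h$-transform. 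The Kronecker delta $\delta_{\langle I'\rangle=\langle I\rangle+\frac{m\langle\mathbf m\rangle}{k}[n]}$ will appear because the $U(1)$-component of the Fourier variable only contributes a phase, and summing that phase over the relevant index set forces the congruence; this is exactly the point where the hypothesis $Var_2(\mathbf m)=0$ is used, since it guarantees $\langle I_{\mu_r}\rangle$ is deterministic so that the phases align coherently across the product rather than spreading out.

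The second step is the asymptotic analysis of $\prod_{r=1}^m\widehat{\mu_r}(w)$. Parametrize $w$ by a point $\vec\theta\in T_k$ near the trivial character (the Perron–Frobenius direction); the key estimate from Section \ref{sec:Fourier_large_product} gives, uniformly on a window of size $O\big(\frac{\log n}{\sqrt m}\big)$ around the origin, an expansion of the form $\prod_r\widehat{\mu_r}(w)\approx \exp\!\big(-\tfrac{t_0\gamma}{2}Q_{SU(k)}(\vec\theta)+i\,(\text{phase})\big)$ plus a controlled error, where $Q_{SU(k)}$ is the quadratic form attached to the Killing form on $\mathfrak g_{SU(k)}$ and the phase encodes the drift $\frac{m\langle\mathbf m\rangle}{kn}$. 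Because $Var_2(\mathbf m)=0$, there is no $\mathfrak g_{U(1)}$-diffusion: the quadratic form degenerates in the radial direction, which is precisely why the limit kernel is $K^{SU(k)}$ rather than $K^{\alpha,\gamma}$, and why the conclusion lives on the fixed slice $\{\sum y_i=\sum x_i\,[2\pi]\}$. Outside that window one needs a spectral gap estimate showing $|\widehat{\mu_r}(w)|$ is bounded away from $1$ uniformly, so that the tail of the sum over $w$ is $O(n^{-\infty})$, or at worst absorbed into the $O(1/n)$; this uses the high symmetry of the kernels, as in the Berry–Esseen proof.

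The third step is to recognize the resulting Riemann sum. After the change of variables $w\mapsto\vec\theta$, the sum $\frac{1}{n^k}\sum_w(\cdots)$ becomes a Riemann sum approximating the integral $\int_{T_k}\overline{V(\xi_n(I'))}^{-1}\chi_{\vec\theta}(\xi_n(I'))\,V(\xi_n(I))^{-1}\overline{\chi_{\vec\theta}(\xi_n(I))}\,e^{-\frac{t_0\gamma}{2}Q_{SU(k)}(\vec\theta)}\,d\vec\theta$, which is exactly the spectral expansion of the heat kernel on $SU(k)$ evaluated between the two conjugacy classes $\xi_n(I')$ and $R_{-m\langle\mathbf m\rangle/(kn)}[\xi_n(I)]$ — i.e. $K^{SU(k)}_{\gamma t_0}$ of those two arguments, with the Weyl/Vandermonde factors producing the determinantal-type normalization intrinsic to the radial heat kernel. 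One then has to bound the Riemann-sum error: the integrand is smooth with derivatives controlled by powers of $t_0$ and the moment bounds $\|\mathbf m\|_i$, the spacing is $1/n$, and the effective support (after the Gaussian decay) has size $O(\sqrt{t_0})$, so the error is $O(1/n)$ with the stated dependence.

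The main obstacle will be the uniformity in the second step: controlling $\prod_r\widehat{\mu_r}(w)$ simultaneously in the transitional regime between the Gaussian window and the region where the spectral gap kicks in, while keeping the error genuinely $O(1/n)$ rather than $O(\log n/n)$ as in Berry–Esseen. The assumption $Var_2(\mathbf m)=0$ is what kills the logarithmic loss — it removes the one-dimensional $U(1)$ direction where the lattice/aliasing issues (in the spirit of \cite{Mu}) would otherwise force a weaker bound — so the delicate part is to make precise that, on the $SU(k)$-directions, the combination of the spectral gap and the non-degeneracy of $Q_{SU(k)}$ yields summable tails with no extra logarithm, and to verify that the congruence-extracting phase sum is exact rather than approximate.
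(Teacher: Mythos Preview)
Your first two steps are on track and match the paper: the Verlinde inversion (Proposition~\ref{prop:Verlinde_formula_general} together with Lemma~\ref{lem:formula_inversion_Schur}) yields
\[
\big((\ast\mathbf{m})\ast\delta_I\big)[I']=\sum_{J\in B_{k,n}}\Big(\prod_{r}\Phi_n[\mu_r](J)\Big)\,S_J(\xi_n(I))\,\overline{S_J(\xi_n(I'))},
\]
the asymptotic of the product comes from Proposition~\ref{prop:convergence_fourier} with $Var_2(\mathbf m)=0$, and summing the $U(1)$-phase over $\ell$ in the decomposition $J=\hat J+\ell\mathbf{1}_k$ produces exactly $n$ times the Kronecker delta, as you say.

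The genuine gap is your third step. After extracting the delta, the remaining sum over $\{J\in B_{k,n}:J_k=0\}$ is \emph{not} a Riemann sum for an integral over $T_k$, and $K^{SU(k)}_{\gamma t_0}$ is not given by such an integral. The heat kernel on $SU(k)$ has the character expansion \eqref{eq:series_representation_Brownian_suk},
\[
K^{SU(k)}_{\gamma t_0}(\vec u,\vec v)=\sum_{\substack{J\in B_k\\J_k=0}}e^{-\kappa_{0,\gamma}(J)t_0}\,\overline{S_J(\vec u)}\,S_J(\vec v),
\]
which is a series over the \emph{same} discrete index set, merely extended from $B_{k,n}$ to all of $B_k$. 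The paper's argument is a direct term-by-term comparison of the two series: for $J$ with $\Vert\hat J\Vert_\infty\lesssim n^{1/3}$, Proposition~\ref{prop:convergence_fourier} gives $\prod_r\Phi_n[\mu_r](J)=e^{-\kappa_{0,\gamma}(J)t_0}\big(1+O(K(\tilde J)^{3/2}/n)\big)$ up to the drift phase; for large $J$ in the finite sum one invokes the spectral-gap estimate of Lemma~\ref{lem:bound_Fourier_large}; and the tail of the infinite series is controlled by the Gaussian decay of $e^{-\kappa_{0,\gamma}(J)t_0}$. No Riemann-sum error analysis enters, and this is precisely why the final error is $O(1)$ (hence $O(1/n)$ after dividing by $n$) without any logarithmic loss: the only error is the per-term Fourier approximation, which is summable against $e^{-\kappa_{0,\gamma}(J)t_0/2}\vert S_J\vert^2$.

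Your normalization $\tfrac{1}{n^k}\sum_w$ is also off: the inversion formula above carries no such prefactor, and the single factor of $n$ in the statement comes entirely from the $\ell$-summation. An integral over $T_k$ with a Gaussian integrand concentrated on a window of width $O(1/n)$ would run into exactly the mesh-versus-concentration issue you flag as the ``main obstacle''; the series-to-series comparison sidesteps it entirely.
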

In the case of a sequence of $h$-probability measures with $Var_2(\mathbf{m})=0$, the latter results implies the Berry-Essen type result of the previous theorem. However, in the general case the contribution of the fluctuations of each $\langle I_{\mu_i}\rangle$ prevents from easily giving a local limit theorem, as periodicity patterns may appear. Since this is mainly a one-dimensional issue, one may expect that a more general local limit theorem should hold and a Berry-Esseen theorem without the $\log n$ factor in the error term.

\subsection{A geometric corollary on quantum cohomology of the Grassmannian}
The graph $B_{k,n}$ and its convolution algebra are related to various other fields, out of which we singled out two main ones: the Wess-Zumino-Witten models of type $A_{k-1}$ at level $n$, which is a particular model of conformal field theory, and the quantum cohomology of the Grassmannian $G_{k,n}$, which counts rational curves with target space the space of $k$-dimensional vector subspaces of $\mathbb{C}^n$. Those two concepts are actually related \cite{G,GK,KS}. Let us first briefly sketch the second one (see Appendix \ref{sec:quantum_cohomology} for a more detailed introduction). 

Denote by $G_{k,n}$ the space of $k$-dimensional vector subspaces of $\mathbb{C}^n$. This space is a compact manifold of dimension $k(n-k)$ and even a projective variety, and thus we can study it from a cohomological perspective (which roughly amounts to counts the number of $\ell$-dimensional holes in $G_{k,n}$, $0\leq \ell \leq k(n-k)$). For any flag $W=0\subset W_1\subset\dots\subset W_n=\mathbb{C}^n$ there exists a natural decomposition of $G_{k,n}$ into cells $\Omega_{I}^W, I \in B_{k,n}$, where 
$$\Omega_I^W=\left\{ V\in G_{k,n}, \dim(V\cap W_{n-I_i})\geq i, 1\leq i\leq k\right\},$$ 
and $\Omega_I^W$ is a closed manifold of $G_{k,n}$ of dimension $\langle I\rangle-\langle I_0\rangle$ called Schubert variety. An important property of the family $\{\Omega^W_I, I\in B_{k,n}\}$ is that a generic submanifold $M$ of $G_{k,n}$ is completely characterized up to homotopy by
$$\{\#(M\cap  \Omega_I^W), \dim \Omega^W_I+\dim M=\dim G_{k,n}\},$$
see \cite{Fulton1997}.
We are interested in the set of maps from $\mathbb{P}^1(\mathbb{C})$ to $G_{k,n}$ whose values at generic points are restricted to lie on some submanifolds of $G_{k,n}$. Let $Hol(G_{k,n})$ be the set of rational maps from $\mathbb{P}^1(\mathbb{C})$ to $G_{k,n}$. We define the degree $\deg f$ of $f\in Hol(G_{k,n})$ as
$$\deg f=\#\{z\in \mathbb{P}^1(\mathbb{C}), f(z)\cap H\not=\emptyset, \quad H\in G_{n-k-1,n} \text{ generic}\}.$$
We should understand $\deg f$ as the number of singular points of $f$ (as the number of poles of a rational function, which actually corresponds to the special case $k=1$, $n=2$, see Appendix \ref{sec:quantum_cohomology}). For $d\geq 0$, let us write $Hol_d(G_{k,n})$ as the space of elements of $Hol(G_{k,n})$ of degree $d$. Up to some compactification, we can again give to $Hol_d(G_{k,n})$ the structure of a projective variety of dimension $k(n-k)+dn$. The enumerative problem related to the quantum cohomology is to count elements of $Hol_d(G_{k,n})$ having prescribed singular points. Namely, fix $a_0,\ldots,a_{m+1}$ generic points of $\mathbb{P}^1(\mathbb{C})$ and $M_0,\ldots,M_{m+1}$ $m+1$ generic sub-manifolds of $\mathbb{C}^n$ of respective dimension $1\leq d_0,\ldots,d_{m+1}\leq k(n-k)$. Set 
$$\mathcal{M}^{d,a_0,\ldots,a_{m+1}}_{M_0,\ldots,M_{m+1}}:=\{f\in Hol_d(G_{k,n}), f(a_i)\in M_{i}, 0\leq i\leq m+1\}.$$
\nomenclature{$\mathcal{M}^{d,a_0,\ldots,a_{m+1}}_{M_0,\ldots,M_{m+1}}$}{$\{f\in Hol_d(G_{k,n}), f(a_i)\in M_{i}, 0\leq i\leq m+1\}$}In general, $\mathcal{M}^{d,a_0,\ldots,a_{m+1}}_{M_0,\ldots,M_{m+1}}$ is non empty only if $\sum_{i=0}^{m+1} d_i\leq \dim Hol_d(G_{k,n})=k(n-k)+dn$ and then the resulting manifold $\mathcal{M}^{d,a_0,\ldots,a_{m+1}}_{M_0,\ldots,M_{m+1}}$ has dimension $k(n-k)+dn-\sum_{i=0}^{m+1}d_i$. In the particular case where $k(n-k)+dn=\sum_{i=0}^{m+1}d_i$, $\mathcal{M}^{d,a_0,\ldots,a_{m+1}}_{M_0,\ldots,M_{m+1}}$ is generically a discrete manifold, which can be shown to be finite (see Section \ref{sec:quantum_cohomology}). We address in this section the problem of counting $\#\mathcal{M}^{d,a_0,\ldots,a_{m+1}}_{M_0,\ldots,M_{m+1}}$ in the case $k(n-k)+dn=\sum_{i=0}^{m+1}d_i$ when there is a large number of constraints $M_i$. To this end, for $M$ a submanifold of dimension $d$ of $G_{k,n}$, let us decompose the homology class $[M]\in H_d(G_{k,n})$ of $M$ on the basis $\left\{\left[\Omega_I^W\right], I\in B_{k,n}\right\}$ as 
$$[M]=\sum_{\substack{I\in B_{k,n}, \langle I\rangle=d+\langle I_0\rangle}}c_I^M\left[\Omega_{I}^W\right].$$ 
We then define the \textit{quantum dimension} of $M$ as 
\begin{equation}\label{eq:definition_qDim}
qDim(M)=\sum_{I\in B_{k,n}, \langle I\rangle=d+\langle I_0\rangle}c_I^MS_I(\xi_{n}(I_0)),
\end{equation}
\nomenclature{$qDim(M)$}{$\sum_{I\in B_{k,n}, \langle I\rangle=d+\langle I_0\rangle}c_I^MS_I(\xi_{n}(I_0))$}where $S_I(\vec{u})$ is the evaluation at $(e^{iu_1},\ldots,e^{iu_k})$ of the Schur function associated to the partition $\lambda_I=(I_1-(k-1),\ldots ,I_k)$ (see Appendix \ref{sec:quantum_cohomology} for more details on Schur functions), and write $p_{I}^M=\frac{c^{M}_IS_{I}(\xi_n(I_0))}{qDim M}$ for $I\in B_{k,n}$, so that $\sum_{I\in B_{k,n}}p_{I}^M=1$ by \eqref{eq:definition_qDim}. Then, introduce for $a=2,3$ the statistic
$$\Vert M\Vert_a= \frac{1}{qDim(M)}\sum_{I\in B_{k,n}, \langle I\rangle=d+\langle I_0\rangle}p_I^M\sum_{j=1}^k(I_j-(d+\langle I_0\rangle)/k)^a.$$
In the following statement, we write $I^c$ for the dual configuration $I^c=(n-1-I_{k+1-j})_{1\leq j\leq k}$ of $I\in B_{k,n}$.
\begin{corollary}\label{cor:cohomology}
Let $M_0,M_1,\ldots, M_{m+1}$ be generic sub-manifolds of $G_{k,n}$ having respective dimension $d_0,\ldots,d_{m+1}$ and $d\geq 0$ such that $\sum_{i=0}^{m+1}d_i=k(n-k)+dn$. Then, for generic $a_0,\ldots,a_{m+1}\in \mathbb{P}^1(\mathbb{C})$ and setting $ x^n_I=R_{-\frac{\sum_{i=1}^md_i}{kn}}\left[\xi_n(I)\right]$ and $y^n_{I'}=\xi_n((I')^c)$
$$\#\mathcal{M}^{d,a_0,\ldots,a_{m+1}}_{M_0,\ldots,M_{m+1}}=\frac{V(\xi_{n}(I_0))^2\prod_{i=0}^{m+1}qDim(M_i)}{n^{k-1}}\left(\sum_{I,I'\in B_{k,n}}p_I^{M_0}p_{I'}^{M_{m+1}}K_{\gamma t_0}^{SU(k)}\left(x^n_I,y^n_{I'}\right)+O\left(\frac{1}{n}\right)\right),$$
where $\gamma=\frac{k\left[\left(\frac{1}{m}\sum_{i=1}^m\Vert M_i\Vert_2 \right)-\langle I_0\rangle_2\right]}{k^2-1}$, $t_0=\frac{(2\pi)^2m}{n^2}$ and $O(\cdot)$ depends on $k, \frac{m}{n^2}$ and $\left(\frac{1}{m}\sum_{i=1}^m\Vert M_i\Vert_a \right)$, $a=2,3$.
\end{corollary}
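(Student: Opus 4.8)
The plan is to translate the enumerative problem into the language of the convolution algebra $(L^2(B_{k,n}),*)$ and then apply the local limit theorem (Theorem \ref{thm:local-limit}). The starting point is the classical fact (reviewed in Appendix \ref{sec:quantum_cohomology}) that the number $\#\mathcal{M}^{d,a_0,\ldots,a_{m+1}}_{M_0,\ldots,M_{m+1}}$ is a quantum intersection number, hence an application of the $m$-fold quantum product of the Schubert classes representing $[M_1],\ldots,[M_m]$ to the classes $[M_0]$ and $[M_{m+1}]$, read off in the Schubert basis dual (via the quantum Poincaré pairing) to the one indexing $[M_{m+1}]$. Expanding $[M_i]=\sum_I c_I^{M_i}[\Omega_I^W]$ and using that, after conjugation by the diagonal matrix built from $h^r$, quantum multiplication by the Schubert class $[\Omega_I^W]$ corresponds to $*$-convolution by $e_I/\mu^h(I)$ (this is exactly the content of Proposition \ref{prop:result_1} together with the identification of $(L^2(B_{k,n}),*)$ with the quantum cohomology ring in Section \ref{sec:formal_model_pm}), one rewrites $\#\mathcal{M}^{d,a_0,\ldots,a_{m+1}}_{M_0,\ldots,M_{m+1}}$ as a weighted sum $\sum_{I,I'} c_I^{M_0}c_{I'}^{M_{m+1}}\big((\ast\mathbf{m})\ast\delta_I\big)[I'']$ where $\mathbf{m}=(\mu_i)$ has $\mu_i$ the $h$-probability measure with density proportional to $\sum_J c_J^{M_i}e_J$, and $I''$ is the configuration dual to $I'$ coming from the Poincaré pairing. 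The normalization constants $S_I(\xi_n(I_0))$, $\mu^h$, $V(\xi_n(I_0))$ and the factors $n^{k}$ appearing in the definitions of $\mu^h$ and $qDim$ must be tracked carefully: this bookkeeping is what produces the prefactor $\tfrac{V(\xi_n(I_0))^2\prod qDim(M_i)}{n^{k-1}}$ and converts the coefficients $c_I^{M_i}$ into the probability weights $p_I^{M_i}$.

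Once the problem is in the form $\tfrac1n\big((\ast\mathbf{m})\ast\delta_I\big)[I'']$, one checks that the hypotheses of Theorem \ref{thm:local-limit} are met. The key point is that $Var_2(\mu_i)=0$ for each $i$: indeed $\langle J\rangle$ is constant on $\{J\in B_{k,n}:\langle J\rangle=d_i+\langle I_0\rangle\}$, which is exactly the support of $\mu_i$ since $[M_i]\in H_{d_i}(G_{k,n})$ is supported on Schubert cells of a single dimension; hence $Var_2(\mathbf{m})=0$. One then identifies the remaining parameters: $\langle\mathbf{m}\rangle$ becomes $\tfrac1m\sum_i d_i$ (up to the shift by $\langle I_0\rangle$), which explains the translation $R_{-\sum d_i/(kn)}$ in the definition of $x_I^n$; the dimension constraint $\sum_{i=0}^{m+1}d_i=k(n-k)+dn$ is what forces the degree $d$ to match and makes the discrete $\delta_{\langle I''\rangle=\cdots[n]}$ in Theorem \ref{thm:local-limit} automatically satisfied, so that term contributes $1$; and $\|\mathbf{m}\|_2=\tfrac1m\sum_i\|M_i\|_2-\langle I_0\rangle_2$, matching the stated $\gamma$. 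Finally one must match $\xi_n(I'')$ with $y_{I'}^n=\xi_n((I')^c)$, i.e. verify that the Poincaré-dual configuration of $I'$ in $B_{k,n}$ is precisely $(I')^c=(n-1-I'_{k+1-j})_j$ and that $\xi_n$ intertwines the two; this is a direct combinatorial computation with the Schubert pairing.

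The main obstacle is the first paragraph: cleanly establishing the dictionary between the geometric count $\#\mathcal{M}^{d,a_0,\ldots,a_{m+1}}_{M_0,\ldots,M_{m+1}}$ and the algebraic expression $\sum c_I^{M_0}c_{I'}^{M_{m+1}}\big((\ast\mathbf{m})\ast\delta_I\big)[I'']$, keeping every normalization constant exact. This requires the precise statement that such quantum intersection numbers equal structure constants of the small quantum cohomology ring, the identification of that ring with $(L^2(B_{k,n}),*)$ together with the explicit Schubert basis versus the basis $\{e_I/\mu^h(I)\}$, and the formula for the quantum Poincaré pairing in terms of $I\mapsto I^c$. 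All of this is standard (and is recalled in Appendix \ref{sec:quantum_cohomology}), but assembling it with the correct powers of $n$, of $2\pi$, and of $V(\xi_n(I_0))$ so that the final constant is literally $\tfrac{V(\xi_n(I_0))^2\prod_{i=0}^{m+1}qDim(M_i)}{n^{k-1}}$ is delicate. The remaining analytic input — the uniform $O(1/n)$ error — is entirely supplied by Theorem \ref{thm:local-limit}, since the error there depends only on $k$, $m/n^2$ and the averaged moments $\|\mathbf{m}\|_a$, which by the above translate into $k$, $m/n^2$ and $\tfrac1m\sum_i\|M_i\|_a$.
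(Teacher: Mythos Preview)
Your proposal is correct and follows essentially the same approach as the paper's proof: express the enumerative count as a structure coefficient in $QH(G_{k,n})/\langle q=1\rangle$ via Poincar\'e duality, transport this to the convolution algebra $(L^2(B_{k,n}),*)$ through the isomorphism of Proposition~\ref{prop:equiv_graph_qh} and the map $M^h$, normalize each $[M_i]$ to an $h$-probability measure by dividing by $qDim(M_i)$, and apply Theorem~\ref{thm:local-limit} after checking $Var_2(\mu_i)=0$ from the single-dimension support of $[M_i]$. The paper carries out exactly this bookkeeping, including the identification $\Phi_n[(M^h)^{-1}([M_i])](I_0)=qDim(M_i)$ and the passage from $a_I^{M_i}$ to $p_I^{M_i}$, and arrives at the stated prefactor and error term in the same way you outline.
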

Similarly to the previous local limit theorem, the latter result is non trivial only when $m$ is of order $n^2$ or larger. In the particular case where $M_0=\Omega^{W_0}_I$ and $M_{m+1}=\Omega^{W_{m+1}}_{I'}$ for some generic flags $W_0,W_{m+1}$, the latter corollary yields for $d$ such that $\sum_{i=0}^{m+1}d_i-dn-k(n-k)=0$
$$\#\mathcal{M}^{d,a_0,\ldots,a_{m+1}}_{M_1,\ldots,M_m}=\frac{\vert V(\xi_{n}(I))V(\xi_{n}(I'))\vert\prod_{i=0}^{m+1}qDim(M_i)}{n^{k-1}}\left(K_{\frac{m}{n^2}}^{SU(k),\gamma}\left(x_I,y_{I'}\right)+O\left(\frac{1}{n}\right)\right).$$
\subsection{Related results and perspectives}
Let us give several comments on our results and their relation to previous works on the subject :
\begin{itemize}
\item An analogous problem has already been considered on the real line instead of the circle, see \cite{Bi,BG}. In this case, one still has a positively multiplicative graph whose corresponding algebra is the fusion ring for the representation theory of $U(k)$. The continuous limit of this discrete model corresponds to convolution of co-adjoint orbits of $U(k)$ instead of adjoint orbits \cite{Gra,BJ}, and there is a central limit theorem towards a Dyson Brownian motion on the real line. Up to our knowledge, no quantitative central limit theorem has been obtained in this framework; it is however expected that the method used in the present paper could be adapted to the real line.
\item There exists a natural quotient of $B_{k,n}$ which describes the movement of a particle on a discrete lattice of type $A_k$ conditioned to stay in a macroscopic alcove of size $n$. The graph is still positively multiplicative and the corresponding algebra is the fusion algebra of type $A_k$ at level $n$. Such processes have been intensively studied in the wider setting of affine semisimple Lie algebras by Defosseux \cite{De}. In particular, our result in the particular case $\mu_r=\mu$ for $1\leq r\leq m$ and $n,m$ going to infinity is similar to \cite[Theorem 8.1]{De}. The latter theorem also mention a Donsker's type theorem for the sequence $(\xi_n(\mu_1*\dots*\mu_i))_{1\leq i\leq m}$. Such results could also be obtained for general sequences $\mathbf{m}$ from our results by imposing that $(\Vert \mu_i\Vert_2)_{1\leq i\leq m}$ is constant and $(\Vert \mu_i\Vert_3)_{1\leq i\leq m}$ is bounded as $m,n$ goes to infinity. We mention also that asymptotic results in the simplest case of a random walk in an alcove have been obtained before by Grabiner \cite{Gra2}.
\item Considering nonintersecting particles on the discrete circle conditioned to not intersect is not new. A first work in this direction has been done by Fulmek \cite{Fulm} in the case where only one particle at a time jumps to a free next location: this correspond to considering the adjacency matrix of $B_{k,n}$ only, and not the other commutating adjacency matrices $A_I$ for $I\not=(k,k-2,\ldots,0)$. The author then proved exact formulas for the transition probabilities. This work has been later generalized by Krattenthaler to other small steps cases and similar graphs (corresponding to other affine Lie algebras) \cite{Kra}. In this paper, the author also obtained in the small step case local limit theorems similar to the ones of the present paper.

Beware that the stochastic processes studied above and in the present paper are different from the  the periodic TASEP with $k$ particles on a circle of size $n$: indeed, the TASEP is a continuous time Markov process whose transition matrix is given by the adjacency matrix of $B_{k,n}$, whereas our model is a discrete time Markov process whose Markov kernel is a Doob h-transform of this adjacency matrix. The two Markov processes have very different behaviors. For example, the stationary distribution of TASEP is the uniform distribution on $B_{k,n}$ whereas the stationary distribution of our model is given by $\mu^h(I)=\frac{\vert V(\xi_n(I))\vert^2}{n^k}$.
\item The cylindric hexagonal dimer model is a particular instance of the present framework by considering the transition matrix $A=\sum_{i=1}^{n-k}A_{I^{(i)}}$ with $I^{(i)}=(k-1+i,k-2,\ldots,0)$ (this is a consequence of the quantum Pieri rules for $QH(G_{k,n})$ from \cite[p.5]{B} and the bijection between hexagonal dimer models and nonintersecting paths, see \cite[Figure 1]{KW}). This relation has already been deeply studied when considering the configuration on the real line \cite{BG}, and it is expected that such correspondence also holds on the circle. See also \cite{BdT} for results on the winding number for the cylindric hexagonal dimer model when $k,n$ are large. 
\item The relation between nonintersecting particles on the circle and the unitary group has already been considered from a probabilistic perspective in the continuous case by Metcalfe, O'Connell and Warren \cite{MOW}. They build Markov processes on $T_k$ by using the convolution of a family of commuting Markov kernels which are continuous version of our Markov kernel $A_I^h$. They also relate this construction to the convolution of orbits on $U_k$. Our work can be seen as a discrete version of theirs.
\item Regarding the geometric application of our results, we mention that there have already been some combinatorial results on counting rational functions with prescribed critical points (see \cite{Sch}). From a quantum cohomological perspective, the relation between the quantum cohomology of the Grassmannian and the orbits of the unitary group goes back to \cite{Bel1}. It would be very interesting to pursue this relation from a probabilistic point of view. Note also that the quantum cohomology of the Grassmannian also appear in the study of the so-called rigid system, see \cite{Bel2}.
\end{itemize}
Our work is a first step towards two main interesting and challenging directions :
\begin{itemize}
\item The first natural question is the behavior of the system when $k$ grows with $n$. The probability distribution at time $\frac{t}{k}$ of a Dyson unitary Brownian motion with parameter $(1,1)$ converges in some sense toward the marginal $\mu^{\boxtimes t}$ of the free unitary Brownian, see \cite{Bi2} for a formal definition of the latter and a proof. It can be easily deduced from our work that under good normalization hypotheses, $\xi_n(\mathbf{m})$ is close to $\mu^{\boxtimes t}$ for some $t>0$ when $k=O(\log n)$. However, it is expected that such approximation holds for $k=O(n)$. It would be very interesting to study this problem and related questions (see also \cite{LW} for the distribution of the winding number in the continuous case when $k$ goes to infinity).
\item The graph $B_{k,n}$ is a particular instance of a Kirillov-Reshetikhin crystal, and there is a full family of similar graphs which extend the present framework to other Kirillov-Reshetikhin crystals, see \cite{GLT2}. They corresponds each time to movements of colored particles on a circle or a segment conditioned to not intersect, sometimes with some additional symmetries. We expect that our results could be extended to this larger setting in some cases: the main difficulty is to get a explicit diagonalization of the adjacency matrix $A$, which is not obvious in the more general setting (see \cite{Kos} for an approach to this problem). Another interesting direction would be to consider graphs related to the equivariant quantum cohomology of the Grassmannian instead of the usual one, see \cite{Bu}: the corresponding graph describes particles moving on a discrete circle and conditioned to not intersect, but with jumping rates depending on the location. Once again, the main challenge is to explicitly diagonalize the adjacency matrix; we expect that such a task would be manageable when all jumping rates but one are equal.
\end{itemize}
\section{Positively multiplicative graphs, random walks and Fourier transform}\label{sec:pm_graph}
In this section, we introduce the notion of integrable positively multiplicative graph which is the central property of the family of graph $\{B_{k,n}; 1\leq k\leq n\}$. We also introduce a general Fourier transform formalism to study a class of Markov chains naturally constructed from those graphs. Although some results of the present section are needed in the sequel of the paper, its content is quite general to enlighten a universal behavior.

Let us recall the definition of a positively multiplicative graph from \cite{GLT}. In this paper, a (directed) graph is the data of a finite set $V$ and a adjacency matrix $A$ on the vector space $L^2(V)=\bigoplus_{v\in V} e_v$, by which we mean an operator $A\in\mathcal{B}(L^2(V))$ such that $Ae_v=\sum_{v'\in V}A_{v'v}e_{v'}$ with $a_{vv'}\geq 0$ for all $v,v'\in V$. We add a Hilbert space structure on $L^2(V)$ with the scalar product $\langle e_v,e_{v'}\rangle=\delta_{vv'}$. The Hilbert space $L^2(V)$ can be seen as the algebra of $L^2$-functions on $V$ with respect to the counting measure, and for $x\in L^2(V)$, we write $x(v)$ for the coordinate of $x$ along $e_v$.
\begin{definition} [See \cite{GLT}]\label{def:PM-graph}
A finite graph $\Gamma=(V,A)$ is said positively multiplicative (abbreviated PM) at $v_0\in V$ if there is a commutative algebra $\mathcal{A}$ such that $\mathbb{C}[A]\subset \mathcal{A}\subset End(L^2(V))$ and such that $\mathcal{A}$ admits a basis $\mathcal{B}=\{s_v,v\in V\}$ with $s_{v}e_{v_0}=e_v$,
$$As_v=\sum_{v'\in V}A_{v'v}s_{v'},\, s_vs_{v'}\in\sum_{v''\in V}\mathbb{R}^+s_{v''}.$$ 
A PM graph is said integrable if $A$ is diagonalizable on $L^2(V)$.
\end{definition}
The algebra $\mathcal{A}$ is called the enveloping algebra of $\Gamma$ in the sequel. Remark that in general $\mathcal{A}$ is bigger that $\mathbb{C}[A]$, since $A$ may for example have eigenvalues with multiplicities larger than $1$. For a PM-graph there is a isomorphism 
\begin{equation}\label{eq:isomor_L2_CA}
S:\left\lbrace \begin{aligned} L^2(V)&\rightarrow &\mathcal{A}\\ e_v&\mapsto &s_v\end{aligned}\right.
\end{equation}
(which reflects the fact that $\dim \mathcal{A}=|V|$. It can then be deduced that $s_{v_0}=Id_{L^2(V)}$ (see \cite[Theorem 3.12]{GLT}). 
\subsection{Family of random walks and $h$-transform}
Given a PM-graph with distinguished vertex $v_0$, one can construct a family of commuting Markov kernels as follows. For sake of simplicity we assume here that $A$ is irreducible, although the same reasoning apply beyond irreducibility. Let $h^r$ (resp. $h^l$) be the right (resp. left) Perron-Frobenius vector of $A$, normalized so that $h^l(v_0)=1$ and $\langle h^l,h^r\rangle_{L^2(V)}=1$. By irreducibility of $A$, $h^l(v)>0$ and $h^r(v)>0$ for all $v\in V$. In the case where we are considering an adjacency matrix $M$ which is not necessarily irreducible, we say that $v$ is a positive eigenvector of $M$ if $v$ is an eigenvector of $M$ which has positive coefficients in its expansion in the canonical basis.

\begin{lemma}\label{lem:Perron_sv}
For all $v\in V$, $h^l$ and $h^r$ are respectively left and right eigenvectors of $s_v$ with eigenvalue $h^l(v)$.
\end{lemma}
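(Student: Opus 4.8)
The plan is to leverage the fact that $s_v \in \mathcal{A}$ and that $\mathbb{C}[A] \subset \mathcal{A}$ is a commutative algebra, so every element of $\mathcal{A}$ commutes with $A$. First I would record that, since $A$ is irreducible, its Perron--Frobenius eigenvalue $\lambda$ is simple, so the left eigenspace $\{x : xA = \lambda x\}$ and the right eigenspace $\{y : Ay = \lambda y\}$ are both one-dimensional, spanned by $h^l$ and $h^r$ respectively. Now fix $v \in V$. Because $s_v$ commutes with $A$, the vector $s_v h^r$ satisfies $A(s_v h^r) = s_v(A h^r) = \lambda (s_v h^r)$, hence $s_v h^r$ lies in the one-dimensional right $\lambda$-eigenspace and must equal $c_v h^r$ for some scalar $c_v \in \mathbb{R}$ (real since $A$, and therefore $\mathcal{A}$ as constructed, is defined over $\mathbb{R}$; positivity of $h^r$ forces $c_v$ real). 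The same argument applied to the transpose action gives $h^l s_v = c_v' h^l$ for some scalar $c_v'$, using that $s_v^T$ commutes with $A^T$ and that the left $\lambda$-eigenspace is one-dimensional.

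Next I would identify both scalars with $h^l(v)$. For the left eigenvalue: evaluate the identity $h^l s_v = c_v' h^l$ against the standard basis vector $e_{v_0}$. Using the defining property $s_v e_{v_0} = e_v$ from Definition~\ref{def:PM-graph}, the left-hand side gives $\langle h^l, s_v e_{v_0}\rangle = \langle h^l, e_v\rangle = h^l(v)$, while the right-hand side gives $c_v' \langle h^l, e_{v_0}\rangle = c_v' h^l(v_0) = c_v'$ by the normalization $h^l(v_0) = 1$. Hence $c_v' = h^l(v)$, which proves $h^l$ is a left eigenvector of $s_v$ with eigenvalue $h^l(v)$.

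For the right eigenvalue $c_v$, I would pair the identity $s_v h^r = c_v h^r$ with $h^l$ on the left and use the previous step: $c_v \langle h^l, h^r\rangle = \langle h^l, s_v h^r\rangle = \langle h^l s_v, h^r\rangle = h^l(v)\langle h^l, h^r\rangle$, and since $\langle h^l, h^r\rangle = 1$ by normalization this yields $c_v = h^l(v)$ as well. (Alternatively, one can expand $s_v = S(e_v)$ in the basis and note that the multiplicativity $s_v s_{v'} = \sum_{v''} N_{vv'}^{v''} s_{v''}$ transfers to the eigenvalues via the character $s_v \mapsto c_v$, but the eigenspace-dimension argument is cleaner and avoids invoking the structure constants.) The only point requiring a little care — and the mild obstacle — is making sure the eigenvalues stay real and that the scalar is genuinely independent of which eigenvector we test against; both follow from simplicity of the Perron--Frobenius eigenvalue and the real structure of $\mathcal{A}$, so no real difficulty arises. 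This completes the proof.
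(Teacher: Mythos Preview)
Your proof is correct and follows essentially the same approach as the paper: use commutativity of $s_v$ with $A$ together with simplicity of the Perron--Frobenius eigenvalue to conclude that $h^l$ and $h^r$ are eigenvectors of $s_v$, then identify the eigenvalue by computing $\langle h^l, s_v e_{v_0}\rangle = \langle h^l, e_v\rangle = h^l(v)$. The only minor difference is that the paper matches the left and right eigenvalues by observing that $h^l, h^r$ are positive and hence must be \emph{the} Perron--Frobenius eigenvectors of the nonnegative matrix $s_v$ (which automatically share the same eigenvalue), whereas you do it via the direct pairing $\langle h^l, s_v h^r\rangle$; both arguments are equivalent here.
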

\begin{proof}
The fact that $h^l$ and $h^r$ are eigenvectors of $s_v$ is deduced from the fact that both vectors are eigenvectors of $A$ and $s_v$ commutes with $A$. Hence, they are left and right positive eigenvectors of $s_v$, and thus correspond to the same eigenvalue we denote temporarily by $\lambda$. On the one hand, we have by definition of $s_v$
$$\langle h^l,s_v e_{v_0}\rangle=\langle h^l,e_v\rangle=h^l(v),$$
and on the other hand,
$$\langle  h^l,s_v e_{v_0}\rangle=\langle s_v^*h^l,e_{v_0}\rangle=\lambda\langle h^l,e_{v_0}\rangle=\lambda.$$
We conclude that 
$$\lambda=h^l_v.$$
\end{proof}
It will be convenient to introduce the Hadamard product on vectors $x,y\in L^2(V)\mapsto x\cdot y\in L^2(V)$ defined by 
$$(x\cdot y)(v)=x(v)y(v).$$
If $x$ has nonzero entries, we write $\frac{1}{x}$ for the vector having entry $\frac{1}{x(v)}$ at $v$.
\begin{definition}\label{def:h_transform}
The $h$-space $L^2_h(V)$ is the vector space $L^2(V)$ endowed with the Hilbert product
$$\langle x,y\rangle_h=\langle h^l\cdot x,h^r\cdot y\rangle=\sum_{v\in V}\mu^h(v)\overline{x(v)}y(v),$$
with $\mu^h=h^l\cdot h^r$ called the invariant probability measure of $A$.

The $h$-transform of an operator $B\in End(L^2(V))$ is the operator $B^h$ given by 
$$B^h(x)=\frac{1}{h^r}\cdot B(h^r\cdot x).$$
\end{definition}
The space $L^2_h(V)$ can be also seen as the Hilbert space of $L^2$-functions on $V$ with respect to the probability measure $\mu^h$.

Since $h^r$ has positive entries, the map $B\mapsto B^h$ is well-defined. Its is actually an algebra isomorphism of $End(L^2(V))$ which amounts to a conjugation by a diagonal matrix; we denote by $\mathcal{A}^h$ the algebra $\{B^h, B\in \mathcal{A}\}$, which is isomorphic to $\mathcal{A}$. It is readily seen that 
$$(B^h)^*(x)=h^r\cdot B^*\left(\frac{1}{h^r}\cdot x\right),$$
while taking the adjoint $(\cdot)^{h,*}$ of $B$ with respect to $\langle \cdot,\cdot\rangle_h$ yields 
$$B^{h,*}(x)=\frac{1}{\mu^h}\cdot B^*\left(\mu^h\cdot x\right).$$
For $v\in V$, let us write 
$$P^v=\frac{s_v^h}{h^l(v)}\in \mathcal{A}^h.$$
\begin{proposition}
The set $\left\{P^v, v\in V\right\}$ is a family of commuting Markov kernels on $L^2(V)$ with common invariant measure $\mu_h$.
\end{proposition}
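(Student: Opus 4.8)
The plan is to verify the three properties in the statement directly from the definitions: that each $P^v$ is a Markov kernel (nonnegative entries, rows summing to $1$), that the family $\{P^v, v\in V\}$ is commuting, and that $\mu_h$ is a common invariant measure. First I would address commutativity, which is the cheapest: each $s_v$ lies in the commutative algebra $\mathcal{A}$, so the $s_v^h$ all lie in the commutative algebra $\mathcal{A}^h$; scaling by the positive scalars $h^l(v)$ does not affect this, so the $P^v$ pairwise commute.

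Next I would check that $P^v$ is stochastic. Nonnegativity: since $s_v e_{v_0}=e_v$ and $s_vs_{v'}\in\sum_{v''}\mathbb{R}^+ s_{v''}$, acting on $e_{v_0}$ gives $s_v e_{v'}\in\sum_{v''}\mathbb{R}^+ e_{v''}$, so $s_v$ has nonnegative matrix entries in the basis $\mathcal{B}_0$; conjugating by the positive diagonal matrix $\mathrm{diag}(h^r)$ preserves nonnegativity, and dividing by $h^l(v)>0$ does too, so $P^v$ has nonnegative entries. For the row sums, I would use Lemma \ref{lem:Perron_sv}: $h^l$ is a left eigenvector of $s_v$ with eigenvalue $h^l(v)$. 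Writing out $(P^v)^*$ applied to a suitable vector, or equivalently checking that the all-ones vector $\mathbf{1}$ satisfies $P^v\mathbf{1}=\mathbf{1}$: indeed $P^v\mathbf{1}=\frac{1}{h^l(v)}\cdot\frac{1}{h^r}\cdot s_v(h^r\cdot\mathbf{1})=\frac{1}{h^l(v)h^r}\cdot s_v(h^r)$, and since $h^r$ is a right eigenvector of $s_v$ with eigenvalue $h^l(v)$, this equals $\frac{1}{h^l(v)h^r}\cdot h^l(v)h^r=\mathbf{1}$. So each $P^v$ is a Markov kernel.

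Finally, invariance of $\mu_h$: I need $\mu^h P^v=\mu^h$, i.e. $\sum_v \mu^h(v)P^v(v,v')=\mu^h(v')$. This is most cleanly seen by computing the adjoint of $P^v$ with respect to $\langle\cdot,\cdot\rangle_h$ using the formula $B^{h,*}(x)=\frac{1}{\mu^h}\cdot B^*(\mu^h\cdot x)$ already recorded in the excerpt, or even more simply: $\mu_h$ is invariant for $P^v$ iff $\mathbf{1}$ is a right eigenvector of $P^v$ with eigenvalue $1$ (for a nonnegative matrix with a stated invariant measure this is the standard duality), which we just verified. Alternatively, one computes $\mu^h(v)P^v_{v'v}=h^l(v)h^r(v)\cdot\frac{1}{h^l(v)}\cdot\frac{h^r(v)}{h^r(v')}(s_v)_{v'v}$ — wait, more carefully, $P^v = \frac{1}{h^l(v)}\,\mathrm{diag}(h^r)^{-1} s_v\,\mathrm{diag}(h^r)$ so $(P^v)_{v'v''}=\frac{h^r(v'')}{h^l(v)h^r(v')}(s_v)_{v'v''}$, and then $\sum_{v'}\mu^h(v')(P^v)_{v'v''}=\sum_{v'}\frac{h^l(v')h^r(v'')}{h^l(v)}(s_v)_{v'v''}=\frac{h^r(v'')}{h^l(v)}(h^l s_v)_{v''}=\frac{h^r(v'')}{h^l(v)}h^l(v)h^l(v'')=\mu^h(v'')$, again using Lemma \ref{lem:Perron_sv}. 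I do not anticipate a genuine obstacle here; the only mild subtlety is bookkeeping the left-versus-right conventions between $s_v^h$, its adjoint $(s_v^h)^*$, and the $h$-adjoint, so I would fix notation once and lean on Lemma \ref{lem:Perron_sv} throughout.
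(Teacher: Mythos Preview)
Your proof is correct and follows essentially the same approach as the paper: nonnegativity from the positive structure constants, stochasticity from $P^v\mathbf{1}=\mathbf{1}$ via Lemma~\ref{lem:Perron_sv}, and invariance of $\mu^h$ from $h^l$ being a left eigenvector of $s_v$. One slip to flag: your claimed shortcut ``$\mu_h$ is invariant for $P^v$ iff $\mathbf{1}$ is a right eigenvector of $P^v$ with eigenvalue $1$'' is false as stated (every Markov kernel has $\mathbf{1}$ as a right eigenvector, regardless of which measures are invariant); what you presumably mean is that $(P^v)^{h,*}\mathbf{1}=\mathbf{1}$, which is indeed equivalent to $(P^v)^*\mu^h=\mu^h$ via the formula $B^{h,*}(x)=\frac{1}{\mu^h}\cdot B^*(\mu^h\cdot x)$, but that is a different operator. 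Your alternative direct matrix computation is correct and is exactly what the paper does in operator form.
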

\begin{proof}
Let $v\in V$ and denote by $\mathbf{1}\in V$ the vector with $\mathbf{1}(v)=1$ for all $v\in V$. Remark first that by Lemma \ref{lem:Perron_sv},
$$P^v(\mathbf{1}_V)=\frac{1}{h^l(v)}\frac{1}{h^r}\cdot s_v(h^r)=\frac{1}{h^r}\cdot h^r=\mathbf{1}.$$
Then, writing $s_{v}s_{v'}=\sum_{v''\in V}c_{vv'}^{v''}s_{v''}$ with $c_{vv'}^{v''}\geq 0$ for all $v,v',v''\in V$, we have
$$P^v(e_{v'})=h^r(v')\frac{1}{h^r}\cdot s_v(s_{v'}e_{v_0})=\sum_{v''\in V}h^r(v')c_{vv'}^{v''}\left(\frac{1}{h^r}\cdot s_{v''}(e_{v_0})\right)=\sum_{v''\in V}\frac{h^r(v')}{h^r(v'')}c_{vv'}^{v''}e_{v''}.$$
Since $\frac{h^r_{v'}}{h^r_{v''}}c_{vv'}^{v''}\geq 0$ for all $v,v',v''\in V$, $P^v$ is an adjacency matrix and thus a Markov kernel on $L^2_h(V)$. Since by Lemma \ref{lem:Perron_sv}
$$(s_v^h)^*[\mu^h]=h^r\cdot s_v^*\left(\frac{1}{h^r}\cdot \mu^h\right)=h^r\cdot s_v^*(h^l)=h^l(v)h^r\cdot h^l=h^l(v)\mu^h,$$
the vector $\mu_h$ is a positive eigenvector of $P^v$ with eigenvalue $1$. Hence, $\mu_h$ is an invariant measure of $P^v$.
\end{proof}
Remark that we can similarly view the set $\left\{P^v, v\in V\right\}$ as a family of commuting Markov kernels on $L^2_h(V)$ with common invariant measure having density $\mathbf{1}$ with respect to $\mu^h$. We have indeed similarly
$$(P^v)^{h,*}[\mathbf{1}]=\frac{1}{\mu^h}h^r\cdot \frac{s_v^*}{h^l(v)}\left(\frac{1}{h^r}\cdot \mu^h\cdot \mathbf{1}\right)=\frac{1}{h^l}\cdot \frac{s_v^*}{h^l(v)}(h^l)=\frac{1}{h^l}\cdot h^l=\mathbf{1}.$$
\subsection{Convolution product on $L^2_h(V)$}
Similarly to \eqref{eq:isomor_L2_CA} there is an isomorphism from $L^2_h(V)$ to $\mathcal{A}^h$ given by
\begin{equation}\label{eq:isomor_L2h_CA}
M^h:\left\lbrace \begin{aligned} L^2(V)&\rightarrow &\mathcal{A}^h\\ e_v&\mapsto &\mu^h(v)P^v\end{aligned}\right..
\end{equation}
We can then transpose the multiplicative structure on $\mathcal{A}^h$ to a multiplication on $L^2_h(V)$. This motivates the following definition.
\begin{definition}\label{def:convolution}
The convolution of $x,y\in L^2_h(V)$ is the vector $x*y$ defined by
$$x*y=\sum_{v\in V}\mu^h(v)x(v)P^v(y)=\sum_{v\in V}h^r(v)x(v)s_v^h(y).$$
A vector $u\in L^2_h(V)$ with nonnegative real entries is called a $h$-probability measure if 
$$\langle \mathbf{1},u\rangle_h=1.$$
\end{definition} 
We denote by $\mathcal{M}_1(V)$ the set of $h$-probability measures. We have then the following results analogous to the usual convolution on classical groups.
\begin{proposition}\label{prop:structure_conv}
The convolution product turns $L^2_h(V)$ into an algebra isomorphic to $\mathcal{A}$ through the map $M^h$. Moreover, the set $\mathcal{M}_1(V)$ is stable by convolution.
\end{proposition}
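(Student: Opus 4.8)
The plan is to exploit the isomorphism $M^h: L^2_h(V) \to \mathcal{A}^h$ defined in \eqref{eq:isomor_L2h_CA} and simply transport the (commutative) algebra structure of $\mathcal{A}^h$ back to $L^2_h(V)$, then check that the resulting product agrees with the formula in Definition \ref{def:convolution} and that $\mathcal{M}_1(V)$ is closed under it. First I would verify that $M^h$ is a linear isomorphism: it sends the basis $\{e_v\}_{v\in V}$ to $\{\mu^h(v)P^v\}_{v\in V}$, and since $\mu^h(v) > 0$ for all $v$ it suffices to know that $\{P^v\}_{v \in V}$ is a basis of $\mathcal{A}^h$; but $\mathcal{A}^h = \{B^h : B \in \mathcal{A}\}$ is the image of $\mathcal{A}$ under the algebra isomorphism $B \mapsto B^h$, and $\{s_v\}_{v\in V}$ is a basis of $\mathcal{A}$ by Definition \ref{def:PM-graph}, so $\{s_v^h\}_{v\in V}$ is a basis of $\mathcal{A}^h$, hence so is $\{P^v = s_v^h/h^l(v)\}_{v\in V}$. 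In particular $\dim_{\mathbb{C}} \mathcal{A}^h = |V|$.

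Next I would transport the product: define $x * y := (M^h)^{-1}\big(M^h(x)\, M^h(y)\big)$. This makes $(L^2_h(V), +, *)$ an algebra isomorphic to $\mathcal{A}^h$, hence commutative and isomorphic to $\mathcal{A}$ (via $\mathcal{A}^h \cong \mathcal{A}$). Then I would check this matches Definition \ref{def:convolution}: by bilinearity it is enough to compute $e_v * e_{v'}$. We have $M^h(e_v)M^h(e_{v'}) = \mu^h(v)\mu^h(v') P^v P^{v'}$; applying this operator to $e_{v_0}$ and using that $P^w(e_{v_0}) = \frac{s_w^h(e_{v_0})}{h^l(w)} = \frac{1}{h^l(w)}\cdot\frac{1}{h^r}\cdot s_w(h^r \cdot e_{v_0}) = \frac{h^r(v_0)}{h^l(w)}\cdot\frac{1}{h^r}\cdot s_w(e_{v_0}) = \frac{h^r(v_0)}{h^l(w)h^r(w)} e_w$ (using $s_w e_{v_0} = e_w$), one reads off the expansion of $P^vP^{v'}$ in the basis $\{P^w\}$ and finds that $(M^h)^{-1}$ of it is exactly $\sum_{w} \mu^h(w) x(w) P^w(y)$ evaluated at $x = e_v$, $y = e_{v'}$; alternatively, and more cleanly, I would just \emph{take} the formula $x*y = \sum_v \mu^h(v) x(v) P^v(y)$ as the definition, observe $M^h(x*y) = \sum_v \mu^h(v)x(v) M^h(P^v(y))$, and verify $M^h(P^v(y)) = P^v \, M^h(y)$ by linearity in $y$ (for $y = e_{v'}$ this is $M^h(P^v(e_{v'})) = \mu^h(v') P^v P^{v'}$ once one checks $P^v P^{v'} = \mu^h(v')^{-1} M^h(P^v e_{v'})$, which again reduces to evaluating operators at $e_{v_0}$). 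Either way the bookkeeping is routine once one knows how $P^w$ acts on $e_{v_0}$.

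For stability of $\mathcal{M}_1(V)$: let $x, y \in \mathcal{M}_1(V)$. Nonnegativity of $x*y$ follows from the formula $x*y = \sum_v h^r(v) x(v) s_v^h(y)$ together with the fact, established in the proof of the preceding Proposition, that each $P^v$ (equivalently each $s_v^h$ up to the positive scalar $h^l(v)$) is a Markov kernel, hence maps nonnegative vectors to nonnegative vectors; since $x(v) \ge 0$ and $h^r(v) > 0$, the sum is nonnegative. For the normalization, compute $\langle \mathbf{1}, x*y\rangle_h = \sum_v \mu^h(v) x(v) \langle \mathbf{1}, P^v(y)\rangle_h$. Here $\langle \mathbf{1}, P^v(y)\rangle_h = \langle (P^v)^{h,*}\mathbf{1}, y\rangle_h = \langle \mathbf{1}, y\rangle_h = 1$, using the identity $(P^v)^{h,*}[\mathbf{1}] = \mathbf{1}$ proved just after the preceding Proposition. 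Hence $\langle \mathbf{1}, x*y\rangle_h = \sum_v \mu^h(v) x(v) = \langle \mathbf{1}, x\rangle_h = 1$, so $x*y \in \mathcal{M}_1(V)$.

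\textbf{Main obstacle.} The only genuinely delicate point is the compatibility check $M^h(x*y) = M^h(x)\,M^h(y)$ — i.e., confirming that the explicit convolution formula in Definition \ref{def:convolution} is really the pullback of the algebra multiplication on $\mathcal{A}^h$ rather than just some bilinear map. This hinges on the identity $M^h(P^v(y)) = P^v\,M^h(y)$, which one proves by linearity in $y$ after the single computation of how $P^v P^{v'}$ relates to $M^h(P^v e_{v'})$; the key lemma is that $\{s_v\}$ being a basis with $s_v e_{v_0} = e_v$ forces $s_{v_0} = \mathrm{Id}$ (quoted as \cite[Theorem 3.12]{GLT}), which in turn gives $P^{v_0} = \mathrm{Id}$ and pins down the normalization. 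Everything else is formal manipulation of the Hadamard product and adjoints with respect to $\langle\cdot,\cdot\rangle_h$.
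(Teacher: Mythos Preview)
Your proposal is correct and follows essentially the same approach as the paper's proof. The only cosmetic differences are that the paper works with the normalized basis $\{e_v/h^r(v)\}$ (which $M^h$ sends to $\{s_v^h\}$) and computes $\frac{e_v}{h^r(v)}*\frac{e_{v'}}{h^r(v')}=\sum_{v''}c_{vv'}^{v''}\frac{e_{v''}}{h^r(v'')}$ directly, and for the normalization step it unwinds $\langle\mathbf{1},P^v y\rangle_h$ back to Lemma~\ref{lem:Perron_sv} rather than quoting the already-displayed identity $(P^v)^{h,*}\mathbf{1}=\mathbf{1}$; your route through that identity is slightly cleaner, and you also make nonnegativity explicit while the paper leaves it implicit.
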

\begin{proof}
To prove the first statement, it is enough to prove that $M^h$ intertwines $*$ and the operator product on $\mathcal{A}^h$. By linearity, it is enough to prove it on $\frac{e_v}{h^r(v)},\frac{e_{v'}}{h^r(v')}$ for $v,v'\in V$. Let $v,v'\in V$. Then, using that $\mu^h=h^l\cdot h^r$ and $P^v=\frac{s^h_v}{h^l(v)}$,
\begin{align*}
\frac{e_v}{h^r(v)}*\frac{e_{v'}}{h^r(v')}=\sum_{v''\in V}\left(h^r\cdot \frac{e_v}{h^r(v)}\right)(v'')s_{v''}^h\left(\frac{e_{v'}}{h^r(v')}\right)=s_v^h\left(\frac{e_{v'}}{h^r(v')}\right)=&\frac{1}{h_r}\cdot s_v(e_{v'}).
\end{align*}
Since $e_{v'}=s_{v'}e_{v_0}$, writing $s_{v}s_{v'}=\sum_{v''\in V}c_{vv'}^{v''}s_{v''}$ we get 
$$s_{v}(e_v)=\sum_{v''\in V}c_{vv'}^{v''}e_{v''}.$$
Hence,
$$\frac{e_v}{h^r(v)}*\frac{e_{v'}}{h^r(v')}=\sum_{v''\in V}c_{vv'}^{v''}\frac{e_{v''}}{h^r(v'')}.$$
Since we have $M^h\left(\frac{e_v}{h^r(v)}\right)=s_v^h$, we deduce that 
$$M^h\left(\frac{e_v}{h^r(v)}*\frac{e_{v'}}{h^r(v')}\right)=M^h\left(\frac{e_v}{h^r(v)}\right)M^h\left(\frac{e_{v'}}{h^r(v')}\right).$$
Hence, $(L^2_h(V),+,*)$ is an algebra which is isomorphic to $\mathcal{A}^h$ (and thus to $\mathcal{A}$).

Suppose that $x,y\in \mathcal{M}_1(V)$. Then,
\begin{align*}
\langle \mathbf{1},x\ast y\rangle_h=&\sum_{v\in V}\mu^h(v)x(v)\langle \mathbf{1},P^v(y)\rangle_h\\
=&\sum_{v\in V}\mu^h(v)x(v)\left\langle h^l,h^r\cdot \frac{1}{h^r}\cdot \frac{s_v(h^r\cdot y)}{h^l(v)}\right\rangle=\sum_{v\in V}h^r(v)x(v)\left\langle h^l,s_v(h^r\cdot y)\right\rangle.
\end{align*}
By Lemma \ref{lem:Perron_sv}, $h^l$ is an eigenvector of $s_v^*$ with eigenvalue $h^l(v)$ for all $v\in V$, thus, using the fact that $x,y\in\mathcal{M}_{1}(V)$,
\begin{align*}
\langle \mathbf{1},x\ast y\rangle_h=&\sum_{v\in V}h^r(v)x(v)\left\langle s_v^*h^l,h^r\cdot y\right\rangle\\
=&\sum_{v\in V}\mu^h(v)x(v)\left\langle h^l,h^r\cdot y\right\rangle=\sum_{v\in V}\mu^h(v)x(v)\left\langle \mathbf{1},y\right\rangle_{h}=\langle \mathbf{1},x\rangle_{h}=1.
\end{align*}

\end{proof}

\subsection{Integrable PM-graph, Fourier transform and Verlinde formula}\label{sec:integrable_graph}
In this section, we assume that $A$ is diagonalizable over $\mathbb{C}$. As a direct consequence of commutativity, all $s_v, \, v\in V$ are diagonalizable. We denote by $\{u^{(i)},i=0,\ldots,|V|-1\}$\nomenclature{$u^{(i)}$}{Right eigenvector for an integrable PM graph with vertex set $V$} a basis of right eigenvectors and $\{w^{(i)},i=0,\ldots,|V|-1\}$ \nomenclature{$w^{(i)}$}{Left eigenvector with $w^{(i)}(v_0)=1$ for an integrable PM graph with vertex set $V$} the dual basis of left eigenvectors normalized so that $w^{(i)}(v_0)=1$ (as we will see in the next lemma, any eigenvector has a nonzero coordinate along $e_{v_0}$). We suppose that $u^{(0)}=h^r$ (and thus $w^{(0)}=h^l$). Finally, we denote by $\theta_v(i)$ the eigenvalue of $s_v$ on $u^{(i)}$, so that $\overline{\theta_v(i)}$ is the eigenvalue of $s_v$ on $w^{(i)}$). As in Lemma \ref{lem:Perron_sv}, the coordinates of $w^{(i)}$ along $\mathcal{B}_0$ are actually given by the eigenvalues of the family $s_v$.
\begin{lemma}\label{lem:eigenvalue_eigenvector_relation}
For all $i\in \{0,\ldots,|V|-1\}$,
$$w^{(i)}(v)=\overline{\theta_v(i)}.$$
\end{lemma}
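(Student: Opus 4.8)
The statement to prove is the identity $w^{(i)}(v) = \overline{\theta_v(i)}$ for every vertex $v$ and every eigenvalue index $i$. The key observation is that this is the exact analogue of the computation in Lemma~\ref{lem:Perron_sv}, where we showed that the Perron--Frobenius eigenvalue of $s_v$ equals $h^l(v) = w^{(0)}(v)$; the only thing that changes is that we run the argument with an arbitrary eigenvector $w^{(i)}$ in place of the Perron--Frobenius one $w^{(0)} = h^l$.

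First I would recall the two facts that make this work: the isomorphism $S$ of \eqref{eq:isomor_L2_CA} gives $s_{v_0} = Id$, and the defining property $s_v e_{v_0} = e_v$. Now fix $i$ and evaluate $\langle w^{(i)}, s_v e_{v_0}\rangle$ in two ways. On one hand, $s_v e_{v_0} = e_v$, so $\langle w^{(i)}, s_v e_{v_0}\rangle = \langle w^{(i)}, e_v\rangle = w^{(i)}(v)$. On the other hand, moving $s_v$ to the left via its adjoint, $\langle w^{(i)}, s_v e_{v_0}\rangle = \langle s_v^* w^{(i)}, e_{v_0}\rangle$. Since $w^{(i)}$ is a left eigenvector of $s_v$, it is an eigenvector of $s_v^*$; the eigenvalue of $s_v$ on the right eigenvector $u^{(i)}$ is $\theta_v(i)$ by definition, and the corresponding eigenvalue of $s_v^*$ on the dual left eigenvector $w^{(i)}$ is $\overline{\theta_v(i)}$ (this is the standard duality between the action on a basis and on its dual basis, and it is already noted just before the lemma statement). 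Hence $\langle s_v^* w^{(i)}, e_{v_0}\rangle = \overline{\theta_v(i)}\,\langle w^{(i)}, e_{v_0}\rangle = \overline{\theta_v(i)}\, w^{(i)}(v_0) = \overline{\theta_v(i)}$, using the normalization $w^{(i)}(v_0) = 1$. Comparing the two evaluations gives $w^{(i)}(v) = \overline{\theta_v(i)}$.

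One small point worth addressing is the parenthetical remark in the text that "any eigenvector has a nonzero coordinate along $e_{v_0}$", which is what makes the normalization $w^{(i)}(v_0) = 1$ legitimate. I would dispatch this first: if $w^{(i)}(v_0) = 0$, then the computation above (run before normalizing) would force $\overline{\theta_v(i)}\cdot 0 = w^{(i)}(v)$, i.e. $w^{(i)}(v) = 0$ for all $v$, contradicting $w^{(i)} \neq 0$. So in fact the nonvanishing at $v_0$ is itself a consequence of the same two-sided evaluation, and should be proved before invoking the normalization.

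I do not expect a serious obstacle here; the entire proof is two lines of pairing manipulation plus the normalization, and it mirrors Lemma~\ref{lem:Perron_sv} almost verbatim. The only thing to be careful about is the conjugation bookkeeping: one must track that the Hermitian inner product on $L^2(V)$ is conjugate-linear in the first slot, so that passing the eigenvalue of $s_v^*$ out of $\langle s_v^* w^{(i)}, e_{v_0}\rangle$ produces $\overline{\theta_v(i)}$ and not $\theta_v(i)$ — consistent with the convention fixed in the text that $\overline{\theta_v(i)}$ is the eigenvalue of $s_v$ on $w^{(i)}$ and hence of $s_v^*$ on... (one double-checks the convention against the $i=0$ case, where the formula reads $w^{(0)}(v) = h^l(v) = \overline{\theta_v(0)}$ and $\theta_v(0) = h^l(v)$ is real, so both conventions agree there and the general case is pinned down by the definitions preceding the lemma).
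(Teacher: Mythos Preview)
Your proof is correct and follows exactly the paper's argument: evaluate $\langle w^{(i)}, s_v e_{v_0}\rangle$ two ways using $s_v e_{v_0}=e_v$ and the eigenvector property, then invoke the normalization $w^{(i)}(v_0)=1$, having first established nonvanishing at $v_0$ by the same pairing trick. Your conjugation bookkeeping is slightly off in two places that happen to cancel --- under the paper's convention (conjugate-linear in the first slot, cf.\ Definition~\ref{def:h_transform}) one has $\langle w^{(i)}, e_v\rangle = \overline{w^{(i)}(v)}$, and pulling the eigenvalue $\overline{\theta_v(i)}$ of $s_v^*$ out of the first slot conjugates it back to $\theta_v(i)$ --- but the conclusion is right.
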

\begin{proof}
Let $y$ be a left eigenvector with eigenvalue $\theta_v(i)$. Then, since $s_v(e_{v_0})=e_v$, 
$$\langle y,e_v\rangle=\langle s_v^*y,e_{v_0}\rangle=\theta_v(i)\langle y,e_{v_0}\rangle.$$
In particular, $\langle y,e_{v_0}\rangle\not=0$ for otherwise we would have $y=0$. Specifying to $y=w^{(i)}$ yields then
$$w^{(i)}(v)=\overline{\theta_v(i)}.$$
\end{proof}
Remark that $s_{v}^h$ is also diagonalizable on $L^2_h(V)$ with same eigenvalues $\theta_v(i),\overline{\theta_v(i)}$ and left and right eigenvectors
$$\tilde{u}^{(i)}=\frac{1}{h^r}\cdot u^{(i)},\, \tilde{w}^{(i)}=\frac{1}{h^l}\cdot w^{(i)}.$$
\nomenclature{$\tilde{u}^{(i)}$}{$\frac{1}{h^r}\cdot u^{(i)}$}\nomenclature{$\tilde{w}^{(i)}$}{$\frac{1}{h^l}\cdot w^{(i)}$} (beware that left eigenvectors are taken with respect to the adjoint $(\cdot)^{h,*})$).
 \begin{definition}\label{def:Fourier_transform}
The Fourier transform $\Phi_V$ on $L^2_h(V)$ is the morphism from $L^2_h(V)$ to $\mathbb{C}^{|V|}$ defined by 
$$\Phi_V[x](i)=\left\langle \tilde{w}^{(i)},x\right\rangle_h.$$
\end{definition}\nomenclature{$\Phi_V$}{Fourier transform on $L^2_h(V)$.}.
With this definition, a vector $u\in L^2_h(V)$ is a $h$-probability measure if and only if $\Phi_V[u](0)=1$.
\begin{proposition}\label{prop:diagonalization_Fourier}
For any $x,y\in L^2_h(V)$,
$$\Phi_V[x\ast y](i)=\Phi_V[x](i)\Phi_V[y](i).$$
Moreover, $\Phi_V$ is an isomorphism with inverse
$$\Phi_V^{-1}(e_{i})=\tilde{u}^{(i)},$$
where $\{e_i\}_{1\leq i\leq \vert V\vert}$ is the canonical basis of $\mathbb{C}^{\vert V\vert}$.
\end{proposition}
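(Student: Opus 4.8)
The statement has two parts: the multiplicativity $\Phi_V[x*y](i) = \Phi_V[x](i)\Phi_V[y](i)$, and the fact that $\Phi_V$ is an isomorphism inverting $e_j \mapsto \tilde{u}^{(j)}$. The plan is to prove both by direct computation, the only inputs being the explicit form of the convolution from Definition \ref{def:convolution}, namely $x*y = \sum_{v\in V} h^r(v)x(v)\,s_v^h(y)$; the spectral description of the operators $s_v^h$ on $L^2_h(V)$ recalled just before Definition \ref{def:Fourier_transform} ($\tilde{u}^{(i)}$ is a right eigenvector for the eigenvalue $\theta_v(i)$, and $\tilde{w}^{(i)}$ a left eigenvector for $(\cdot)^{h,*}$ with eigenvalue $\overline{\theta_v(i)}$); and the identity $w^{(i)}(v) = \overline{\theta_v(i)}$ from Lemma \ref{lem:eigenvalue_eigenvector_relation}.

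For multiplicativity I would first record the key reformulation $\Phi_V[x](i) = \sum_{v\in V} h^r(v)\,\theta_v(i)\,x(v)$. This follows by unwinding $\Phi_V[x](i) = \langle \tilde{w}^{(i)}, x\rangle_h = \sum_v \mu^h(v)\,\overline{\tilde{w}^{(i)}(v)}\,x(v)$, substituting $\tilde{w}^{(i)}(v) = w^{(i)}(v)/h^l(v) = \overline{\theta_v(i)}/h^l(v)$ via Lemma \ref{lem:eigenvalue_eigenvector_relation}, and using $\mu^h = h^l\cdot h^r$ together with the reality and positivity of $h^l$ to cancel the weights $h^l(v)$. Next, starting from $\Phi_V[x*y](i) = \langle \tilde{w}^{(i)}, \sum_v h^r(v)x(v) s_v^h(y)\rangle_h$ and using linearity of $\langle\cdot,\cdot\rangle_h$ in the second slot, I would push $s_v^h$ onto the first slot via $(\cdot)^{h,*}$; since $\tilde{w}^{(i)}$ is a left eigenvector with eigenvalue $\overline{\theta_v(i)}$ and the inner product is conjugate-linear in the first argument, $\langle \tilde{w}^{(i)}, s_v^h(y)\rangle_h = \theta_v(i)\langle \tilde{w}^{(i)}, y\rangle_h = \theta_v(i)\Phi_V[y](i)$. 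Hence $\Phi_V[x*y](i) = \big(\sum_v h^r(v)\theta_v(i)x(v)\big)\Phi_V[y](i)$, and the parenthesis equals $\Phi_V[x](i)$ by the reformulation above, giving the product formula.

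For the isomorphism claim I would argue that $\{\tilde{w}^{(i)}\}_i$, being a rescaling of the basis $\{w^{(i)}\}_i$ of $L^2(V)$ by the nowhere-vanishing vector $1/h^l$, is still a basis of $L^2_h(V)$; therefore $\Phi_V[x] = 0$ forces $\langle z, x\rangle_h = 0$ for all $z \in L^2_h(V)$, i.e. $x = 0$, so $\Phi_V$ is injective and hence bijective by equality of dimensions. To identify the inverse, I would compute $\Phi_V[\tilde{u}^{(j)}](i) = \langle \tilde{w}^{(i)}, \tilde{u}^{(j)}\rangle_h = \sum_v \mu^h(v)\,\overline{w^{(i)}(v)/h^l(v)}\cdot u^{(j)}(v)/h^r(v) = \sum_v \overline{w^{(i)}(v)}\,u^{(j)}(v) = \delta_{ij}$, where the weights again cancel using $\mu^h = h^l\cdot h^r$ and the last equality is the duality between $\{w^{(i)}\}$ and $\{u^{(j)}\}$. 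Thus $\Phi_V(\tilde{u}^{(j)}) = e_j$, which is exactly $\Phi_V^{-1}(e_j) = \tilde{u}^{(j)}$.

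There is no real obstacle here: the argument is entirely formal manipulation. The only point needing care is keeping the Hermitian conjugations consistent, so that the crucial reformulation $\Phi_V[x](i) = \sum_v h^r(v)\theta_v(i)x(v)$ comes out with $\theta_v(i)$ \emph{unconjugated} --- this is precisely where Lemma \ref{lem:eigenvalue_eigenvector_relation} and the reality of $h^l$ enter, and it is what makes $\Phi_V$ an algebra morphism (as a sanity check, $s_{v_0} = \mathrm{Id}$ and the normalization $w^{(i)}(v_0) = 1$ force $\Phi_V$ to send the unit $e_{v_0}/h^r(v_0)$ of $(L^2_h(V),*)$ to the all-ones vector, the unit of pointwise multiplication on $\mathbb{C}^{|V|}$). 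A purely notational subtlety is the bilinear-versus-sesquilinear convention for the dual-basis pairing of the $u^{(i)}, w^{(j)}$; since the weights $\mu^h = h^l\cdot h^r$ cancel in every computation, either convention gives the same conclusion.
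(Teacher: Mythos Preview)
Your proof is correct and follows essentially the same approach as the paper: both use the eigenvector property of $\tilde{w}^{(i)}$ for $(s_v^h)^{h,*}$ together with Lemma~\ref{lem:eigenvalue_eigenvector_relation} to obtain the multiplicativity, and both verify $\Phi_V[\tilde{u}^{(j)}](i)=\delta_{ij}$ via the duality $\langle \tilde{w}^{(i)},\tilde{u}^{(j)}\rangle_h=\langle w^{(i)},u^{(j)}\rangle$. The only difference is organizational---you isolate the identity $\Phi_V[x](i)=\sum_v h^r(v)\theta_v(i)x(v)$ up front, whereas the paper derives it inline.
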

\begin{proof}
Let $x,y\in L^2_h(V)$ and $0\leq i\leq \vert V\vert-1$. Then, using the definition of the convolution product and the fact that $\tilde{w}^{(i)}$ is an eigenvector of $(s_v^h)^{h,*}$ with eigenvalue $\overline{\theta_v^i}$,
\begin{align*}
\Phi_V[x\ast y](i)=&\langle \tilde{w}^{(i)},x\ast y\rangle_h\\
=&\sum_{v\in V}(h^r\cdot x)(v)\left\langle \tilde{w}^{(i)},s_v^h(y)\right\rangle_h\\
=&\sum_{v\in V}(h^r\cdot x)(v)\left\langle (s_v^h)^{h,*}(\tilde{w}^{(i)}),y\right\rangle_h\\
=&\sum_{v\in V}(h^r\cdot x)(v)\theta_v(i)\Phi_V[y](i).
\end{align*}
Then, by Lemma \ref{lem:eigenvalue_eigenvector_relation},
\begin{align*}
\Phi_V[x\ast y](i)=\Phi_V[y](i)\sum_{v\in V}(h^r\cdot x)(v)\overline{w^{(i)}(v)}=&\Phi_V[y](i)\sum_{v\in V}(h^r\cdot x)(v) \overline{(h^l\cdot \tilde{w}^{(i)})(v)}\\
=&\Phi_V[y](i)\left\langle \tilde{w}^{(i)},x\right\rangle_h=\Phi_V[y](i)\Phi_V[x](i),
\end{align*}
which proves the first assertion. Then, for $0\leq i,j\leq \vert V\vert-1$,
$$\Phi_V\left[\tilde{u}^{(i)}\right](j)=\left\langle \tilde{w}^{(j)},\tilde{u}^{(i)}\right\rangle_h=\delta_{ij},$$
so $\Phi_V$ is an isomorphism with inverse 
$$\Phi_V^{-1}(e_i)=\tilde{u}^{(i)}.$$
\end{proof}
The latter proposition amounts to say that $\Phi_V$ diagonalizes the convolution product on $L^2_h(V)$. We can deduce from the above construction a general formula to compute arbitrary convolutions in $L^2_h(V)$. 
\begin{proposition}[Verlinde formula]\label{prop:Verlinde_formula_general}
Let $n\geq 2$. Then, for $v\in V$ and $x_1,\ldots,x_n\in L^2_h(V)$
$$\left[x_1*\dots *x_n\right](v)=\sum_{i=0}^{|V|-1}\prod_{j=1}^n\Phi_V[x_j](i)\tilde{u}^i(v),$$
and for $v_1,\ldots,v_n\in V$,
$$s_{v_1}\dots s_{v_n}=\sum_{v\in V}c_{v_1,\ldots,v_n}^vs_v$$ 
with 
\begin{equation}\label{eq:Verlinde_formula_general}
c_{v_1,\ldots,v_n}^v=\sum_{i=0}^{|V|-1}\prod_{j=1}^n\theta_{v_j}(i)u^{(i)}(v).
\end{equation}
\end{proposition}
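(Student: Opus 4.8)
The plan is to deduce both identities directly from the diagonalization of the convolution product established in Proposition \ref{prop:diagonalization_Fourier}, so that no genuinely hard step is involved; the only real ``obstacle'' is bookkeeping, in particular the expansion of $e_{v_0}$ in the eigenbasis and keeping track of the Hermitian conjugations in the various pairings.

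For the first identity, I would first observe that $(L^2_h(V),+,*)$ is an associative algebra (Proposition \ref{prop:structure_conv}), so that the two-factor multiplicativity $\Phi_V[x*y]=\Phi_V[x]\Phi_V[y]$ of Proposition \ref{prop:diagonalization_Fourier} propagates by an immediate induction on $n$ to $\Phi_V[x_1*\dots*x_n](i)=\prod_{j=1}^n\Phi_V[x_j](i)$ for all $i$. Equivalently, $\Phi_V[x_1*\dots*x_n]=\sum_{i=0}^{|V|-1}\bigl(\prod_{j=1}^n\Phi_V[x_j](i)\bigr)e_i$ in $\mathbb{C}^{|V|}$. Applying the inverse isomorphism, which by the same proposition satisfies $\Phi_V^{-1}(e_i)=\tilde u^{(i)}$, turns this into $x_1*\dots*x_n=\sum_{i=0}^{|V|-1}\bigl(\prod_{j=1}^n\Phi_V[x_j](i)\bigr)\tilde u^{(i)}$, and I would conclude by reading off the $v$-th coordinate.

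For the second identity, I would first note that by Definition \ref{def:PM-graph} the family $\{s_v,\,v\in V\}$ is a basis of the commutative algebra $\mathcal{A}$, so $s_{v_1}\cdots s_{v_n}\in\mathcal{A}$ does decompose as $\sum_{v\in V}c_{v_1,\dots,v_n}^v s_v$; the coefficients are then pinned down by applying this operator identity to $e_{v_0}$ and using $s_v e_{v_0}=e_v$, which gives $c_{v_1,\dots,v_n}^v=(s_{v_1}\cdots s_{v_n}\,e_{v_0})(v)$. Next I would expand $e_{v_0}$ in the right eigenbasis: since $\{u^{(i)}\}$ and $\{w^{(i)}\}$ are dual bases normalized by $w^{(i)}(v_0)=1$, one gets $e_{v_0}=\sum_i\langle w^{(i)},e_{v_0}\rangle u^{(i)}=\sum_{i=0}^{|V|-1}u^{(i)}$, the conjugation $\langle w^{(i)},e_{v_0}\rangle=\overline{w^{(i)}(v_0)}$ being harmless. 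Since each $s_{v_j}$ acts diagonally on $u^{(i)}$ by $\theta_{v_j}(i)$, this yields $s_{v_1}\cdots s_{v_n}\,e_{v_0}=\sum_{i=0}^{|V|-1}\bigl(\prod_{j=1}^n\theta_{v_j}(i)\bigr)u^{(i)}$, and taking the $v$-th coordinate gives \eqref{eq:Verlinde_formula_general}.

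As a consistency check I would observe that the two formulas are exchanged under the algebra isomorphism $M^h$ of \eqref{eq:isomor_L2h_CA} combined with Lemma \ref{lem:eigenvalue_eigenvector_relation} (which identifies $w^{(i)}(v)=\overline{\theta_v(i)}$), so that no separate argument is needed to reconcile them. The whole proof is therefore essentially formal; the only place demanding a little attention is to keep the complex conjugations straight in the Hermitian pairings $\langle\cdot,\cdot\rangle$ and $\langle\cdot,\cdot\rangle_h$, all of which turn out to be innocuous thanks to the real normalizations chosen for $h^l$ and for the $w^{(i)}$.
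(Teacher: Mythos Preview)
Your proof of the first identity is essentially identical to the paper's: both expand $x_1*\dots*x_n$ in the eigenbasis $\{\tilde u^{(i)}\}$ using the dual basis $\{\tilde w^{(i)}\}$, and then invoke the multiplicativity of $\Phi_V$ from Proposition \ref{prop:diagonalization_Fourier}.

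For the second identity your argument is correct but takes a genuinely different and more direct route. The paper derives \eqref{eq:Verlinde_formula_general} \emph{from} the first identity via the isomorphism $M^h$: it writes $s_{v_1}^h\cdots s_{v_n}^h=M^h\bigl(\frac{1}{h^r(v_1)}e_{v_1}*\dots*\frac{1}{h^r(v_n)}e_{v_n}\bigr)$, applies the first formula to the convolution, and then unwinds $\Phi_V[e_{v_j}](i)=h^r(v_j)\theta_{v_j}(i)$ and $M^h(\tilde u^{(i)})=\sum_v u^{(i)}(v)s_v^h$. You instead bypass the $h$-transform entirely by expanding $e_{v_0}=\sum_i u^{(i)}$ (using $\langle w^{(i)},e_{v_0}\rangle=\overline{w^{(i)}(v_0)}=1$) and applying the eigenvalue relations $s_{v_j}u^{(i)}=\theta_{v_j}(i)u^{(i)}$ directly. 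Your argument is shorter and more elementary; the paper's has the virtue of exhibiting the second formula as the image of the first under $M^h$, which you mention only as a consistency check.
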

\begin{proof}
Since $(\tilde{w}^{(i)})_{0\leq i\leq |V|-1}$ is the dual basis of $(\tilde{u}^{(i)})_{0\leq i\leq |V|-1}$ with respect to $\langle \cdot,\cdot\rangle_h$, we have
$$x_1*\dots *x_n=\sum_{i=1}^{|V|-1}\langle \tilde{w}^{(i)},x_1*\dots *x_n\rangle_h \tilde{u}^{(i)}=\sum_{i=1^{|V|-1}}\Phi_V[x_1*\dots *x_n](i) \tilde{u}^{(i)}.$$
By Proposition \ref{prop:diagonalization_Fourier}, we then have 
$$x_1*\dots *x_n=\sum_{i=1}^{|V|-1}\prod_{j=1}^n\Phi_V[x_j](i)\tilde{u}^{(i)},$$
and the first statement is deduced. For the second statement, remark that we also have
$$s_{v_1}^h\dots s_{v_n}^h=\sum_{v\in V}c_{v_1,\ldots,v_n}^vs_v^h$$
with the same coefficients (see Definition \ref{def:h_transform} and the remark afterwards). By Proposition \ref{prop:structure_conv} and the first statement,
\begin{align*}
s_{v_1}^h\dots s_{v_n}^h=&M^h\left(\frac{1}{h^r(v_1)}e_{v_1}*\dots *\frac{1}{h^r(v_n)}e_{v_n}\right)\\
=&\frac{1}{\prod_{j=1}^nh^r(v_j)}\sum_{i=1}^{|V|-1}\prod_{j=1}^n\Phi_V[e_{v_j}](i)M^h(\tilde{u}^i).
\end{align*}
On the one hand, for $1\leq j\leq n$, we have
$$\Phi_V[e_{v_j}](i)=\langle \tilde{w}^{(i)},e_{v_j}\rangle_h=\mu^h(v_j)\overline{\tilde{w}^{(i)}(v_j)}=h^r(v_j)\overline{w^{(i)}(v_j)}=h^r(v_j)\theta_{v_j}(i)$$
by Lemma \ref{lem:eigenvalue_eigenvector_relation}. On the other hand,
$$M^h(\tilde{u}^{(i)})=\sum_{v\in V}h^r(v)\tilde{u}^{(i)}(v)s^h_{v}=\sum_{v\in V}u^{(i)}(v)s^h_{v}.$$
Hence,
$$s_{v_1}^h\dots s_{v_n}^h=\sum_{v\in V}\left(\sum_{i=1}^{|V|-1}\prod_{j=1}^n\theta_{v_j}(i)u^{(i)}(v)\right) s_{v}^h,$$
and we thus get 
$$c_{v_1,\ldots,v_n}^v=\sum_{i=0}^{|V|-1}\left(\prod_{j=1}^n\theta_{v_j}(i)\right)u^{(i)}(v).$$
\end{proof}
\begin{remark}
The formula \eqref{eq:Verlinde_formula_general} is called the Verlinde formula, since it is exactly the same formula that appears in the special case where the graph comes from the fusion ring in conformal field theory, see \cite{Beauv}. Let $\{v_j\}_{0\leq j\leq |V|-1}$ be an enumeration of the vertices of $V$, with $v_0$ the root of the PM graph as before, and write $S=\left(\theta_{v_i}(j)\right)_{0\leq i,j\leq |V|-1}$. Then, by Lemma \ref{lem:eigenvalue_eigenvector_relation} and the definition of the dual basis $u^{(i)}$
$$\left(S^{-1}\right)_{ij}=u^{(i)}(v_j).$$
Hence, \eqref{eq:Verlinde_formula_general} specializes in the more common formula $s_{v_i}s_{v_j}=\sum_{k=0}^{|V|-1}N_{ij}^ks_{v_k}$ with
$$N_{ij}^k=\sum_{l=0}^{|V|-1}S_{il}S_{jl}\left(S^{-1}\right)_{lk}.$$
Note that a term $\frac{1}{S_{0l}}$ often appearing in the literature disappears in our case due to our normalization of the vectors $w^{(i)}$, $0\leq i\leq |V|-1$. In the special case of the quantum cohomology of the Grassmannian which is the subject of the present paper, this formula is known as Vafa and Intriligator’s formula, see \cite{B}.
\end{remark}

When $A$ is normal with respect to $\langle \cdot,\cdot\rangle$, then $A$ is diagonalizable in an orthogonal basis and, with our convention, we can choose 
\begin{equation}\label{eq:normal_adjacency}
u^{(i)}=\frac{1}{\Vert w^{(i)}\Vert^2}w^{(i)}
\end{equation}
for $0\leq i\leq |V|-1$. In particular, $h^r=\frac{1}{\Vert h^l\Vert^2}h^l$ and thus 
$$\mu^h=\frac{1}{\Vert h^l\Vert^2}h^l\cdot h^l.$$
In this case, $\lbrace \tilde{u}_i, 1\leq i\leq \vert V\vert\rbrace$ is an orthogonal basis of $L^2_h(V)$ with $\Vert \tilde{u}_i\Vert_{h}^2=\frac{\Vert h^l\Vert^2}{\Vert w^{(i)}\Vert^2}$ and then $\Phi_V$ is an isometry from $L^2_h(V)$ to $\mathbb{C}^{\vert V\vert}$ endowed with the Hilbert space structure $\langle e_i,e_j\rangle=\delta_{ij}\frac{\Vert h^l\Vert^2}{\Vert w^{(i)}\Vert^2}$.

\section{Noncolliding particles on a circle and eigenvalues of unitary matrices}\label{sec:formal_model_pm}

% Ici \Lambda_{k} sont des poids de U_n et B_k les poids shiftés. On autorise donc des entiers négatifs.
Recall from Section \ref{sec:intro_model} the definition of $B_{k,n}$. Due to the relation between $B_{k,n}$ and the quantum cohomology of the Grassmannian whose ring structure is described by Schur functions, it is sometimes better to see $B_{k,n}$ as a graph of partitions (see Appendix \ref{sec:quantum_cohomology} for some reminders on partitions). We denote by $\Lambda_{k}$ the set of generalized partitions of length $k\geq 1$, which are nondecreasing sequences of integers of length $k$ and $\Lambda_{k,n}$ the set of partitions of length $k$ such that the first part is smaller that $n-k$. There is a bijection $\phi$ from $\Lambda_{k,n}$ to $B_{k,n}$ defined by the shift
$$\phi(\lambda)=\lambda+I_0,$$
where we recall that $I_0=(k-1,\ldots,0)$. By the latter bijection, $\phi(\emptyset)=I_0$ and we then have $\phi^{-1}(I)=I-I_0$. Since we will make an extensive use of the maps $\phi$ and $\phi^{-1}$, we will simply write $I_{\lambda}=\phi(\lambda)$ and $\lambda_{I}=\phi^{-1}(I)$ when there is no ambiguity. Remark that the map $\phi$ is independent of $n$ and extends to a map from $\Lambda_{k}$ to $B_{k}$, where $B_{k}$ denotes the set of increasing sequences of integers of length $k$. The graph structure of $B_{k,n}$ yields a graph structure on $\Lambda_{k,n}$ given by the following rule, which is a specialization of the so-called quantum Pieri rule, see \cite{B}.
\begin{lemma}\label{lem:equivalence_graph_pieri}
There is an oriented edge between $I$ and $J$ in $B_{k,n}$ if and only if either $\lambda_{I}\nearrow \lambda_{J}$ in the usual sense of partition or if $(\lambda_{I})_1=n-k$, $(\lambda_{I})_{k}>0$ and $\lambda_{J}=\widetilde{\lambda_{I}}$, where for $\lambda\in \Lambda_{k,n}$ such that $\lambda_1=n-k$ and $\lambda_k>0$, we write 
$$\tilde{\lambda}=(\lambda_2-1,\ldots,\lambda_k-1,0).$$
\end{lemma}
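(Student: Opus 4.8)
The plan is to push the edge condition of $B_{k,n}$ through the shift bijection $\phi$ and read off what it becomes on partitions. Recall that $I_\lambda=\lambda+I_0$ means $(I_\lambda)_i=\lambda_i+(k-i)$ for $1\le i\le k$, and that, by the definition of the graph structure in Section~\ref{sec:intro_model}, an oriented edge $I\nearrow J$ exists exactly when $I$ and $J$ agree in $k-1$ of their parts and $\langle J\rangle\equiv\langle I\rangle+1\pmod n$. So I would write $I=S\sqcup\{b\}$ and $J=S\sqcup\{c\}$ with $S=I\cap J$ of size $k-1$ and $b\ne c$; then $\langle J\rangle-\langle I\rangle=c-b$, and the congruence $c-b\equiv 1\pmod n$ together with $b,c\in\llbracket 0,n-1\rrbracket$ leaves exactly two possibilities: either $c=b+1$ (so $b\le n-2$), or $(b,c)=(n-1,0)$. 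These two alternatives will produce the two clauses of the lemma.

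For the alternative $c=b+1$ I would show it is equivalent to $\lambda_I\nearrow\lambda_J$ in Young's lattice. Let $b=I_r$ be the unique part of $I$ equal to $b$; then $J$ is $I$ with $I_r$ replaced by $I_r+1$, and the requirement $c=I_r+1\notin I$ is equivalent, when $r\ge 2$, to $I_{r-1}\ge I_r+2$, i.e. $(\lambda_I)_{r-1}\ge(\lambda_I)_r+1$ (and is automatic when $r=1$). This is precisely the condition for the sequence obtained from $\lambda_I$ by incrementing its $r$-th part to be a partition; moreover $b\le n-2$ forces $(\lambda_J)_1\le n-k$, so $\lambda_J\in R_{k,n}$. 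Hence $\lambda_J$ is obtained from $\lambda_I$ by adding one box to row $r$, i.e. $\lambda_I\nearrow\lambda_J$. The converse runs the same computation backwards: if $\lambda_J$ is obtained from $\lambda_I\in R_{k,n}$ by adding a box, then $J=I_{\lambda_J}$ differs from $I=I_{\lambda_I}$ by replacing some part $b=I_r$ by $b+1$, the fact that $\lambda_J$ is a partition forces $b+1\notin I$, and $\langle J\rangle=\langle I\rangle+1$; thus $I\nearrow J$.

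For the alternative $(b,c)=(n-1,0)$: since $n-1$ is the maximal admissible value, $n-1\in I$ gives $I_1=n-1$, hence $(\lambda_I)_1=n-k$, and $0=c\notin I$ gives $\min I=I_k\ge 1$, hence $(\lambda_I)_k>0$. Then $S=\{I_2,\dots,I_k\}$ and, sorted in decreasing order, $J=(I_2,\dots,I_k,0)$, so $(\lambda_J)_i=I_{i+1}-(k-i)=(\lambda_I)_{i+1}-1$ for $1\le i\le k-1$ and $(\lambda_J)_k=0$, which is exactly $\lambda_J=\widetilde{\lambda_I}$. Conversely, if $(\lambda_I)_1=n-k$, $(\lambda_I)_k>0$ and $\lambda_J=\widetilde{\lambda_I}$, then $I_1=n-1$, the sets $I$ and $J=(I_2,\dots,I_k,0)$ share exactly the $k-1$ parts $I_2,\dots,I_k$, and $\langle J\rangle=\langle I\rangle-(n-1)\equiv\langle I\rangle+1\pmod n$, so $I\nearrow J$. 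This exhausts all cases and proves the equivalence; note that the two clauses are mutually exclusive, as the first forces $|\lambda_J|=|\lambda_I|+1$ while the second forces $|\lambda_J|=|\lambda_I|-(n-1)$.

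The whole argument is bookkeeping, and the only delicate points I anticipate are: keeping the index shift $(k-i)$ between $\lambda$ and $I_\lambda$ straight; handling the boundary rows $r=1$ and $r=k$ when verifying that ``increment the $r$-th part'' really corresponds to adding a box; and checking the membership constraint $(\lambda_J)_1\le n-k$ in the case $c=b+1$, which is exactly where the restriction $b\le n-2$ (i.e. that $c=b+1$ is incompatible with $b=n-1$) is used. There is no conceptual obstacle beyond this.
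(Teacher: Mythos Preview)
Your proof is correct and is precisely the elementary case analysis the paper alludes to when it says the lemma ``is a simple transposition to partitions of the edge rule for $B_{k,n}$, which we do not detail here.'' You have simply supplied those details, and there is nothing to add.
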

The proof of the latter lemma is a simple transposition to partitions of the edge rule for $B_{k,n}$, which we do not detail here. Finally, recall from Section \ref{subsec:intro_model} that we also embed $B_{k,n}$ into $ T_{k}$ with the map
$$\xi_n:\left\lbrace \begin{aligned}
&\quad B_{k,n}&\rightarrow &\quad T_{k}\\
& \quad \quad I&\mapsto& R_{\frac{k-1}{2n}} \left(\frac{2\pi}{n}I\right)
\end{aligned}\right.,$$
and we have set $T_{k,n}=\xi_{n}(B_{k,n})$. To summarize, we have then three sets of cardinal $\binom nk$ related together as in the following picture.
\begin{center}
\begin{tikzpicture}
\draw (0,0) node (A) {$T_{k,n}\subset T_{k}$};
\draw (6,0) node (B) {$B_{k,n}\subset B_{k}$};
\draw (3,3) node (C) {$\Lambda_{k,n}\subset \Lambda_{k}$};
\draw [<-](A)--(B);
\draw [<-](B)--(C);
\draw [<->] (C)--(A);
\draw (2.5,-0.3) node {$\xi_{n}$};
\draw (4.7,1.7) node {$\phi$};
\end{tikzpicture}
\end{center}
\subsection{The graph $B_{k,n}$ as an integrable PM-graph}
Recall from Section \ref{sec:intro_model} that $L^2(B_{k,n})=\bigoplus_{I\in B_{k,n}}\mathbb{C} e_{I}$ denotes the Hilbert space of functions on $B_{k,n}$ with the scalar product $\langle e_{I},e_{J}\rangle=\delta_{I,J}$, where $\mathcal{B}_0=(e_{I})_{I\in B_{k,n}}$ is the canonical basis on $L^2(B_{k,n})$. Then, the graph structure on $B_{k,n}$ is encoded in the adjacency matrix $A\in \mathcal{B}(L^2(B_{k,n}))$ given by
$$Ae_I=\sum_{I\nearrow J}e_J.$$

In the sequel, we will use a specialization of the quantum cohomology of the Grassmannian $QH(G_{k,n})$ \nomenclature{$QH(G_{k,n})$}{Quantum cohomology of the Grassmannian $G_{k,n}$} to prove that $B_{k,n}$ is an integrable PM-graph as defined in Definition \ref{def:PM-graph}. The reader should refer to Appendix \ref{sec:quantum_cohomology} for any background on $QH(G_{k,n})$. We mainly need the fact that $QH(G_{k,n})=Sym[x_1,\ldots,x_k]\otimes \mathbb{C}[q]/Sym[x_1,\ldots,x_k]/\langle (x_1,\ldots,x_k)=q^{1/k}\left(e^{it_1},\ldots e^{it_k}\right), \vec{t}\in T_{k,n}\rangle$, see Appendix \ref{sec:quantum_cohomology}, and we will consider the specialization 
$$QH_{1}(G_{k,n}):=QH(G_{k,n})/\langle q=1\rangle=Sym[x_1,\ldots,x_k]/\langle(x_1,\ldots,x_k)=\left(e^{it_1},\ldots e^{it_k}\right), \vec{t}\in T_{k,n}\rangle.$$
The algebra $QH_{1}(G_{k,n})$ is spanned by the basis $\{s_{\lambda},\lambda\in \Lambda_{k,n}\}$, and we denote by $\ell:QH_1(G_{k,n})\rightarrow L^2(B_{k,n})$ the isomorphism defined by 
$$\ell(s_{\lambda})=e_{I_{\lambda}}.$$ 
Then, $B_{k,n}$ is a PM-graph in the sense of Definition \ref{def:PM-graph} with an enveloping algebra $\mathcal{A}\simeq QH_{1}(G_{k,n})$, as the next proposition shows.
\begin{proposition}\label{prop:equiv_graph_qh}
There is a commutative algebra $\mathcal{A}\subset \mathcal{B}(L^2(B_{k,n}))$ containing $\mathbb{C}[A]$ which is isomorphic to $QH_{1}(G_{k,n})$ through an isomorphism $T:QH_{1}(G_{k,n})\rightarrow\mathcal{A}$ such that $T(s_{\lambda})e_{I_0}=e_{I_{\lambda}}$ and $T(s_{(1)})=A$. 

As a consequence $B_{k,n}$ is a PM-graph rooted at $I_0$.
\end{proposition}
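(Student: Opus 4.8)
The plan is to realise $QH_{1}(G_{k,n})$ concretely inside $\mathcal{B}(L^2(B_{k,n}))$ through its left regular representation, transported along the bijection $\phi$. Recall that $QH_{1}(G_{k,n})$ is a finite-dimensional commutative unital $\mathbb{C}$-algebra with basis $\{s_{\lambda}\,:\,\lambda\in R_{k,n}\}$ (the Schubert classes, see Appendix \ref{sec:quantum_cohomology}), and $\phi:R_{k,n}\to B_{k,n}$ is a bijection, so the linear map $\iota:QH_{1}(G_{k,n})\to L^2(B_{k,n})$, $s_{\lambda}\mapsto e_{I_{\lambda}}$, is an isomorphism of vector spaces (both sides have dimension $\binom nk$). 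I would then define $T(a)=\iota\circ m_{a}\circ\iota^{-1}\in\mathrm{End}(L^2(B_{k,n}))$, where $m_{a}$ denotes multiplication by $a$ in $QH_{1}(G_{k,n})$, and set $\mathcal{A}=T\big(QH_{1}(G_{k,n})\big)$. Since $a\mapsto m_{a}$ is the regular representation, $T$ is an algebra homomorphism; it is injective because $QH_{1}(G_{k,n})$ has a unit ($a=m_{a}(1)$, so $m_{a}=0$ forces $a=0$). Hence $T:QH_{1}(G_{k,n})\to\mathcal{A}$ is an algebra isomorphism and $\mathcal{A}$ is a commutative subalgebra of $\mathrm{End}(L^2(B_{k,n}))$ containing $\mathrm{Id}=T(1)$.

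Next I would check the two normalisation properties. Since $\phi(\emptyset)=I_{0}$ and $s_{\emptyset}=1$ is the unit of $QH_{1}(G_{k,n})$, for every $\lambda\in R_{k,n}$ we get $T(s_{\lambda})e_{I_{0}}=\iota\big(m_{s_{\lambda}}(\iota^{-1}e_{I_{0}})\big)=\iota(s_{\lambda}\cdot 1)=\iota(s_{\lambda})=e_{I_{\lambda}}$. For the generator, by Lemma \ref{lem:equivalence_graph_pieri} the edge relation $I\nearrow J$ on $B_{k,n}$ is exactly the quantum Pieri rule describing the expansion of $s_{(1)}\,s_{\lambda_{I}}$ on the basis $\{s_{\mu}\}$, so $T(s_{(1)})e_{I}=\iota\big(s_{(1)}s_{\lambda_{I}}\big)=\sum_{I\nearrow J}\iota(s_{\lambda_{J}})=\sum_{I\nearrow J}e_{J}=Ae_{I}$; hence $T(s_{(1)})=A$. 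In particular $\mathbb{C}[A]=\mathbb{C}[T(s_{(1)})]=T(\mathbb{C}[s_{(1)}])\subset\mathcal{A}$, so $\mathbb{C}[A]\subset\mathcal{A}\subset\mathrm{End}(L^2(B_{k,n}))$ as required.

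To conclude that $B_{k,n}$ is a PM-graph at $I_{0}$ in the sense of Definition \ref{def:PM-graph}, take the basis $\mathcal{B}=\{s_{I}:=T(s_{\lambda_{I}})\,:\,I\in B_{k,n}\}$ of $\mathcal{A}$. Then $s_{I}e_{I_{0}}=e_{I}$ by the computation above; using that $T$ is an algebra map and $A=T(s_{(1)})$, one gets $A\,s_{I}=T(s_{(1)}s_{\lambda_{I}})=\sum_{I\nearrow J}s_{J}=\sum_{J}A_{JI}\,s_{J}$; and finally $s_{I}\,s_{J}=T(s_{\lambda_{I}}s_{\lambda_{J}})=\sum_{K}c_{IJ}^{K}\,s_{K}$, where the $c_{IJ}^{K}$ are the quantum Littlewood--Richardson coefficients of $QH_{1}(G_{k,n})$, which are nonnegative (this is the crucial external input, see \cite{BCF}). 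These are exactly the three conditions of Definition \ref{def:PM-graph}, so $(B_{k,n},A)$ is positively multiplicative at $I_{0}$ with enveloping algebra $\mathcal{A}\cong QH_{1}(G_{k,n})$.

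The only non-formal ingredients are (i) Lemma \ref{lem:equivalence_graph_pieri}, identifying the combinatorial edge structure of $B_{k,n}$ with the quantum Pieri rule for multiplication by $s_{(1)}$, and (ii) the nonnegativity of the quantum structure constants; both are available in the text and the literature. Everything else is the standard formalism of the regular representation of a finite-dimensional commutative unital algebra, and I expect no real obstacle there; the one point requiring a little care is the bookkeeping $\emptyset\leftrightarrow I_{0}$, $s_{\emptyset}=1\leftrightarrow\mathrm{Id}$, together with the dimension count $|R_{k,n}|=|B_{k,n}|=\binom nk$ which guarantees that $\iota$ is a genuine isomorphism. Thus the proposition is essentially a packaging of the quantum cohomology facts recalled in Appendix \ref{sec:quantum_cohomology} with Lemma \ref{lem:equivalence_graph_pieri}.
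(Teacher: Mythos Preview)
Your proof is correct and follows essentially the same route as the paper: both transport the left regular representation of $QH_1(G_{k,n})$ to $\mathcal{B}(L^2(B_{k,n}))$ via the linear isomorphism $s_\lambda\mapsto e_{I_\lambda}$ (your $\iota$ is the paper's $\ell$), identify $T(s_{(1)})=A$ through the quantum Pieri rule of Lemma~\ref{lem:equivalence_graph_pieri}, and invoke nonnegativity of the quantum Littlewood--Richardson coefficients. The only cosmetic difference is that the paper pauses to justify that $\{s_\lambda\}_{\lambda\in R_{k,n}}$ is a basis of $QH_1(G_{k,n})$ via the presentation as functions on $k$-tuples of roots of $(-1)^{k+1}$, whereas you defer this to the appendix; and the paper verifies $A\,s_I=\sum_J A_{JI}s_J$ by first noting that evaluation at $e_{I_0}$ is injective, while you obtain it directly from $T$ being multiplicative together with $T(s_{(1)})=A$.
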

\begin{proof}
Remark first that since $\dim L^2(B_{k,n})=\vert B_{k,n}\vert=\binom nk$, for any commutative subalgebra $\mathcal{A}$ of $\mathcal{B}(L^2(B_{k,n}))$ we have $\dim \mathcal{A}\leq \binom nk$. Recall that we defined 
$$QH_1(G_{k,n})=Sym[x_1,\ldots,x_k]/\langle (x_1,\ldots,x_k)=\left(e^{it_1},\ldots e^{it_k}\right), \vec{t}\in T_{k,n}\rangle.$$
Since $T_{k,n}$ has cardinal $\binom nk$ and the map $(t_1,\ldots,t_k)\mapsto\left(e^{it_1},\ldots e^{it_k}\right)$ is injective on $T_{k,n}$, $QH_1(G_{k,n})$ is isomorphic to the space of functions on $\binom nk$ elements and thus $\dim QH_1(G_{k,n})=\binom nk$. Since $QH_1(G_{k,n})$ is spanned by $\{s_{\lambda}\}_{\lambda\in \Lambda_{k,n}}$, by a dimension argument the latter set is actually a basis of $QH_1(G_{k,n})$. Recall that $\ell:QH_1(G_{k,n})\rightarrow L^2(B_{k,n})$ is the isomorphism sending $s_\lambda$ to $e_{I_{\lambda}}$, and define $T:QH_1(G_{k,n})\rightarrow \mathcal{B}(L^2(B_{k,n}))$ by
$$T(x)e_{I}=\ell(x\ell^{-1}(e_I)),\,x\in QH_1(G_{k,n}),\,  I\in B_{k,n}.$$
Then, $T$ is an injective morphism of algebra, since it can be written as the composition of the multiplication map $m:QH_1(G_{k,n})\rightarrow \mathcal{B}(QH_1(G_{k,n}))$ with the conjugation map $C_{\ell}:\mathcal{B}(QH_1(G_{k,n}))\rightarrow \mathcal{B}(L^2(B_{k,n}))$ given by $M\mapsto \ell\circ M\circ \ell^{-1}$, both maps being injective morphisms of algebras. Since $QH_{1}(G_{k,n})$ is commutative, $T(QH_{1}(G_{k,n}))$ is a commutative subalgebra of $\mathcal{B}(L^2(B_{k,n}))$.

Since $\ell^{-1}(e_{I})=s_{\lambda_I}$, we have
$$T(s_{(1)})e_{I}=\ell(s_{(1)}s_{\lambda_{I}}).$$
The multiplication by $s_{(1)}$ in $QH(G_{k,n})$ is given by the quantum Pieri rule \cite[Eq. (22)]{BCF}, which gives for $\lambda\in \Lambda_{k,n}$ after specializing $q=1$
$$s_{(1)}s_{\lambda}=\sum_{\lambda\nearrow \mu}s_{\mu}+\delta_{\lambda_1=n-k,\lambda_k>0}qs_{\tilde{\lambda}}=\sum_{\lambda\nearrow \mu}s_{\mu}+\delta_{\lambda_1=n-k,\lambda_k>0}s_{\tilde{\lambda}},$$
where $\tilde{\lambda}=(\lambda_2-1,\ldots,\lambda_k-1,0)$. Hence, by Lemma \ref{lem:equivalence_graph_pieri}, $s_{(1)}s_{\lambda}=\sum_{I_{\lambda}\nearrow I}s_{\lambda_{I}}$, which yields for $I\in B_{k,n}$
$$T(s_{(1)})e_{I}=\ell(s_{(1)}s_{\lambda_{I}})=\sum_{I\nearrow J}\ell(s_{\lambda_{J}})=\sum_{I\nearrow J}e_{J},$$
and finally $T(s_{(1)})=A$, so that $\mathbb{C}[A]\subset T(QH_{1}(G_{k,n}))$.

For all $I\in B_{k,n}$,
$$T(s_{\lambda_I})e_{I_0}=\ell(s_{\lambda_I}s_{\emptyset})=\ell(s_{\lambda_I})=e_{I},$$
which implies that the map $T(QH_{1}(G_{k,n}))\rightarrow L^2(B_{k,n})$ given by $x\mapsto x(e_{I_0})$ is injective. Then, since $$\left[AT(s_{\lambda_I})\right]e_{I_0}=Ae_{I}=\sum_{I\nearrow J}e_{J}=\left(\sum_{I\nearrow J} T(s_{\lambda_J})\right)e_{I_0},$$
we deduce that 
$$AT(s_{\lambda_I})=\sum_{I\nearrow J} T(s_{\lambda_J})=\sum_{J\in B_{k,n}}A_{JI} T(s_{\lambda_J}).$$
Finally, by \eqref{eq:multiplication_quantum}, for all $\lambda,\mu \in \Lambda_{k,n}$ we have 
$$T(s_{\lambda})T(s_{\mu})=T(s_{\lambda}s_{\mu})=\sum_{d\geq 0,\nu\in \Lambda_{k,n}}\langle\lambda,\mu,\nu^c\rangle_dT(s_{\nu})$$
with each term $\langle\lambda,\mu,\nu^c\rangle_d\geq 0$.

By the previous results, $B_{k,n}$ is a PM-graph whose enveloping algebra is $\mathcal{A}=T(QH_1(G_{k,n}))$ and the corresponding basis is $\{T(s_{\lambda_I}),I\in B_{k,n}\}$.
\end{proof}
As a consequence of the latter result and the presentation of $QH_1(G_{k,n})$, we will prove that $B_{k,n}$ is an integrable PM-graph. Let us simply write $A_{I}=T(s_{\lambda_I})$. In the case of $B_{k,n}$, there is a natural indexation of left and right common eigenvectors of $\{A_I\}_{I\in B_{k,n}}$ by the set $B_{k,n}$ itself. We do not known if such natural indexation holds for any integrable PM-graph. As we will see in the next proposition, we are in the particular case where the adjacency matrix $A$ is normal, so that left and right eigenvectors $u^{(I)}$ and $w^{(I)}$, $I\in B_{k,n}$ of all operators $A_{I}$ coincide up to a constant, see \eqref{eq:normal_adjacency}. For $I\in B_{k,n}$, set
\begin{equation}\label{eq:def_vI}
w^{(I)}=\sum_{J\in B_{k,n}}\overline{S_{J}(\xi_{n}(I))}e_{J},
\end{equation}
where $S_J(\vec{u})=s_{\lambda_J}\left(e^{iu_1},\ldots,e^{iu_k}\right)$ \nomenclature{$S_J(\vec{u})$}{Schur function $s_{\lambda_J}$ evaluated at $(e^{iu_1},\ldots,e^{iu_k})$} for $\vec{u}\in \mathbb{R}^k$, and let $V(u_1,\ldots,u_k)=\prod_{j<j'}\left(e^{iu_{j'}}-e^{iu_{j}}\right)$ be the Vandermonde determinant of $\left(e^{iu_j}\right)_{1\leq j\leq k}$.  
\begin{lemma}\label{lem:action_s_mu}
The set $\{w^{(I)}\}_{I\in B_{k,n}}$ is an orthogonal basis of $L^2(B_{k,n})$, and for all $J\in B_{k,n}$,
$$A_{J}(w^{(I)})=S_{J}(\xi_{n}(I))w^{(I)}\quad
\text{   and    } \quad A_{J}^*(w^{(I)})=\overline{S_{J}(\xi_{n}(I))}w^{(I)}.$$
\end{lemma}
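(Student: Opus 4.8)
The plan is to produce the joint eigenvectors of the commuting family $\{A_J\}_{J\in B_{k,n}}$ by transporting the evaluation characters of $QH_1(G_{k,n})$ to $L^2(B_{k,n})$ through the isomorphisms $\ell$ and $T$ of Proposition~\ref{prop:equiv_graph_qh}, to identify these transported eigenvectors with the vectors $w^{(I)}$ of the statement, and to establish the orthogonality by a Cauchy--Binet computation. First I would record that, writing $z^{(I)}=\left(e^{i(\xi_n(I))_1},\ldots,e^{i(\xi_n(I))_k}\right)$, the $\frac{\pi(k-1)}{n}$ shift built into $\xi_n$ gives $(\xi_n(I))_j=\overline{\frac{\pi(2I_j-(k-1))}{n}}$ (up to reordering), hence $\left(z^{(I)}_j\right)^n=e^{i\pi(2I_j-(k-1))}=e^{-i\pi(k-1)}=(-1)^{k+1}$, while the $z^{(I)}_j$ are pairwise distinct because the $I_j$ are distinct modulo $n$. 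Consequently $s_\lambda\mapsto s_\lambda(z^{(I)})$ is a well-defined algebra homomorphism $\mathrm{ev}_I\colon QH_1(G_{k,n})\to\mathbb{C}$, with $\mathrm{ev}_I(s_{\lambda_J})=S_J(\xi_n(I))$ by definition of $S_J$; moreover $I\mapsto z^{(I)}$ is a bijection of $B_{k,n}$ onto the set of unordered $k$-subsets of the $n$-th roots of $(-1)^{k+1}$, a set of cardinality $\binom nk$.

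Next I would transport things to $L^2(B_{k,n})$. The linear functional $\phi_I=\mathrm{ev}_I\circ\ell^{-1}$ satisfies $\phi_I(e_J)=\mathrm{ev}_I(s_{\lambda_J})=S_J(\xi_n(I))=\langle w^{(I)},e_J\rangle$, so $w^{(I)}$ is exactly the vector representing $\phi_I$ for the Hermitian product of $L^2(B_{k,n})$. Since $A_J e_{J'}=T(s_{\lambda_J})e_{J'}=\ell(s_{\lambda_J}s_{\lambda_{J'}})$ by Proposition~\ref{prop:equiv_graph_qh}, multiplicativity of $\mathrm{ev}_I$ gives $\phi_I(A_J e_{J'})=S_J(\xi_n(I))\,\phi_I(e_{J'})$, hence by linearity $\langle w^{(I)},A_J x\rangle=S_J(\xi_n(I))\,\langle w^{(I)},x\rangle=\langle\overline{S_J(\xi_n(I))}\,w^{(I)},x\rangle$ for every $x$, which is precisely $A_J^*w^{(I)}=\overline{S_J(\xi_n(I))}\,w^{(I)}$. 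Granting the orthogonality below, the remaining identity follows by expanding $A_Jw^{(I)}$ in the basis $(w^{(I')})$ and using $\langle w^{(I')},A_Jw^{(I)}\rangle=\langle A_J^*w^{(I')},w^{(I)}\rangle=S_J(\xi_n(I'))\langle w^{(I')},w^{(I)}\rangle=\delta_{I,I'}\,S_J(\xi_n(I))\,\|w^{(I)}\|^2$, which forces $A_Jw^{(I)}=S_J(\xi_n(I))\,w^{(I)}$.

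The computational heart is the orthogonality. By the Weyl character formula $S_J(\xi_n(I))=a_J(z^{(I)})/a_{\delta}(z^{(I)})$ with $a_J(z)=\det\left(z_i^{J_j}\right)_{1\le i,j\le k}$ and $a_{\delta}(z^{(I)})=\pm V(\xi_n(I))\neq 0$, so $\langle w^{(I)},w^{(I')}\rangle=\sum_{J\in B_{k,n}}S_J(\xi_n(I))\overline{S_J(\xi_n(I'))}$ reduces, after clearing the nonzero denominators $a_\delta$, to evaluating $\sum_{J\in B_{k,n}}a_J(z^{(I)})\overline{a_J(z^{(I')})}$. Applying the Cauchy--Binet formula to the two $k\times n$ matrices $\left((z^{(I)}_i)^m\right)$ and $\left(\overline{(z^{(I')}_j)^m}\right)$ for $0\le m\le n-1$, this sum equals $\det\left(\sum_{m=0}^{n-1}(z^{(I)}_i)^m\,\overline{(z^{(I')}_j)^m}\right)_{i,j}=\det\left(\sum_{m=0}^{n-1}e^{2i\pi m(I_i-I'_j)/n}\right)_{i,j}=\det\left(n\,\mathbf{1}_{I_i=I'_j}\right)_{1\le i,j\le k}$, which is $n^k$ if $I=I'$ and $0$ otherwise, since $I\neq I'$ as $k$-subsets forces an entire row to vanish. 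Thus the $w^{(I)}$ are pairwise orthogonal with $\|w^{(I)}\|^2=n^k/|V(\xi_n(I))|^2>0$ (equivalently $1/\mu^h(I)$); being $\binom nk=\dim L^2(B_{k,n})$ nonzero mutually orthogonal vectors, they form an orthogonal basis.

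I expect the only real obstacle to be bookkeeping in this last step rather than anything conceptual: one must keep track of the shift $\frac{\pi(k-1)}{n}$ in $\xi_n$ (it is exactly what makes $(z^{(I)}_j)^n=(-1)^{k+1}$ and makes the exponents $e^{2i\pi m(I_i-I'_j)/n}$ come out as genuine $n$-th roots of unity summing to $n\,\mathbf{1}_{I_i=I'_j}$), of the sign relating the alternant $a_\delta$ to $V(\xi_n(I))$, and of the partition/configuration indexing $\lambda_I=\phi^{-1}(I)$. The only input beyond Proposition~\ref{prop:equiv_graph_qh} is the well-definedness of the evaluation homomorphisms $\mathrm{ev}_I$, which is immediate since the $z^{(I)}_j$ are pairwise distinct and satisfy the defining relations of $QH_1(G_{k,n})$.
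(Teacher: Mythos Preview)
Your argument is correct. It rests on the same underlying structure as the paper's proof (the evaluation morphisms on $QH_1(G_{k,n})$ at the points $z^{(I)}$), but the organisation differs in two respects that are worth noting. First, the paper does not prove the orthogonality relation: it quotes the Cauchy-type identity from \cite[Prop.~4.3]{Ri}. Your Cauchy--Binet computation supplies an elementary, self-contained proof of exactly that identity, and the $\frac{\pi(k-1)}{n}$ shift in $\xi_n$ is used precisely as you say, to turn $z^{(I)}_i\overline{z^{(I')}_j}$ into a genuine $n$-th root of unity so that the geometric sum collapses to $n\mathbf{1}_{I_i=I'_j}$. Second, the logical order is reversed: the paper first establishes orthogonality (by citation), then proves the $A_J$-eigenvector property by viewing $QH_1(G_{k,n})$ as the algebra of functions on $\mathcal{Z}_{k,n}$ and checking that $s_{\lambda_J}\cdot\ell^{-1}(w^{(I)})$ agrees with $S_J(\xi_n(I))\ell^{-1}(w^{(I)})$ pointwise, a step that uses the orthogonality to see that $\ell^{-1}(w^{(I)})$ vanishes at all points $\exp(i\xi_n(I'))$ with $I'\neq I$; normality then gives $A_J^*$. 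You instead obtain $A_J^*w^{(I)}=\overline{S_J(\xi_n(I))}w^{(I)}$ directly from multiplicativity of $\mathrm{ev}_I$, without needing orthogonality, and only afterwards deduce $A_J w^{(I)}=S_J(\xi_n(I))w^{(I)}$ from orthogonality. Your route is slightly more economical in this respect and avoids any external citation.
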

\begin{proof}
The orthogonality relation is given by the following Cauchy type formula from \cite[Proposition 4.3]{Ri}: for $I,I'\in B_{k,n}$,
\begin{equation}\label{eq:first_ortho_rietsch}
\sum_{J \in B_{k,n}}S_{J}(\xi_{n}(I))\overline{S_{J}(\xi_{n}(I'))}=\delta_{I,I'}\frac{n^k}{\vert V(\xi_{n}(J))\vert^2},
\end{equation}
which directly yields
\begin{align}
\langle w^{(I)},w^{(I')}\rangle=&\delta_{I,I'}\frac{n^k}{\vert V(\xi_{n}(J))\vert^2}\label{eq:orthogonality_relation}
\end{align}
 for $I,I'\in B_{k,n}$. 
 
 To prove the eigenvector property, it is easier to work on $QH_1(B_{k,n})$. Indeed, by Proposition \ref{prop:equiv_graph_qh}, it suffices to prove that $s_{\lambda_{J}}\cdot\ell^{-1}(w^{(I)})=S_{J}(\xi_{n}(I))\ell^{-1}(w^{(I)})$, where $\cdot$ denotes the product in $QH_{1}(G_{k,n})$. From the proof of Proposition \ref{prop:equiv_graph_qh}, $QH_{1}(B_{k,n})$ is the space of functions on $\mathcal{Z}_{k,n}=\{\exp(i\vec{u}), \vec{u}\in T_{k,n}\}$, where $\exp(i\vec{u})=\left(e^{iu_1},\ldots,e^{iu_k}\right)$ for $\vec{u}\in \mathbb{R}^k$. Hence, we need to show that the values of $s_{\lambda_{J}}\cdot\ell^{-1}(w^{(I)})$ and $S_{J}(\xi_{n}(I))\ell^{-1}(w^{(I)})$ agree on $\mathcal{Z}_{k,n}$. First, we have by the usual evaluation of multiplication of functions
\begin{align*}
\left[s_{\lambda_{J}}\cdot\ell^{-1}\left(w^{(I)}\right)\right](\exp(i\xi_{n}(I)))=&s_{\lambda_{J}}(\exp(i\xi_{n}(I)))\left[\ell^{-1}\left(w^{(I)}\right)\right](\exp(i\xi_{n}(I)))\\
=&\left[S_{J}(\xi_{n}(I))\ell^{-1}(w^{(I)})\right](\exp(i\xi_{n}(I))).
\end{align*}
Then, for $I'\not=I$,
\begin{align*}
&\left[s_{\lambda_{J}}\cdot\ell^{-1}(w^{(I)})\right](\exp(i\xi_{n}(I')))-\left[S_{J}(\xi_{n}(I))\ell^{-1}(w^{(I)})\right](\exp(i\xi_{n}(I')))\\
&\hspace{3cm}=\left(S_{J}(\xi_{n}(I'))-S_{J}(\xi_{n}(I))\right)\left[\sum_{J'\in B_{k,n}}\overline{S_{J'}(\xi_{n}(I))} s_{\lambda_{J'}}\right](\exp(i\xi_{n}(I')))\\
&\hspace{3cm}=\left(S_{J}(\xi_{n}(I'))-S_{J}(\xi_{n}(I))\right)\sum_{J'\in B_{k,n}}\overline{S_{J'}(\xi_{n}(I))} S_{J'}(\xi_{n}(I'))=0
\end{align*}
by \eqref{eq:orthogonality_relation}. Hence, we deduce that $s_{\lambda_{J}}\cdot\ell^{-1}(w^{(I)})=S_{J}(\xi_{n}(I))\ell^{-1}(w^{(I)})$, and the result is deduced by applying $\ell$. Since $A_{J}$ is diagonalized on an orthogonal basis, it is actually normal and we thus have 
$$A_{J}^*\left(w^{(I)}\right)= \overline{S_{J}(\xi_{n}(I))}w^{(I)}.$$
\end{proof}
Note that as a corollary of \eqref{eq:orthogonality_relation}, we have 
$$\Vert w^{(I)}\Vert_2=\frac{n^{k/2}}{\vert V(\xi_{n}(I))\vert}.$$
Hence, following the notation of Section \ref{sec:pm_graph}, we set 
\begin{equation}\label{eq:def_uI_Bkn}
u^{(I)}=\frac{\vert V(\xi_{n}(I))\vert^2}{n^k}w^{(I)},
\end{equation}
so that $\langle w^{(I)},u^{(J)}\rangle=\delta_{IJ}$.
\subsection{$h$-transform, convolution and Fourier transform on $B_{k,n}$}
As a consequence of the fact that $B_{k,n}$ is an integrable PM-graph and the expression of its eigenbasis, we can deduce several results from Section \ref{sec:pm_graph}. In order to do so, we will need several times the following inversion formula for evaluating $S_{I}(\xi_{n}(J))$ in terms of $S_{J}(\xi_{n}(I))$.
\begin{lemma}\label{lem:formula_inversion_Schur}
For $I,J\in  B_{k,n}$,
$$S_J(\xi_{n}(I))=\frac{S_J(\xi_{n}(I_0))}{S_I(\xi_{n}(I_0))}S_I(\xi_{n}(J)).$$
Moreover,
$$V(\xi_{n}(I))=\exp\left(\frac{(k-1)i\pi}{n}\left(\langle I\rangle-\langle I_0\rangle\right)\right)V(\xi_{n}(I_0))S_I(\xi_{n}(I_0)),$$
where $\langle I\rangle=\sum_{i=1}^kI_i$.
\end{lemma}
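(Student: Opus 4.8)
The plan is to reduce both identities to the Weyl bialternant formula for Schur functions together with a single symmetry observation. For a strictly decreasing sequence $P=(P_1>\dots>P_k)$ of nonnegative integers and $x=(x_1,\dots,x_k)$, set $a_P(x)=\det\big(x_i^{P_l}\big)_{1\le i,l\le k}$, so that the bialternant formula reads $s_{\lambda_P}(x)=a_P(x)/a_{I_0}(x)$ with $a_{I_0}(x)=\det\big(x_i^{k-l}\big)=\prod_{i<i'}(x_i-x_{i'})$. Write $\omega=e^{2i\pi/n}$ and $\zeta_n(P)=(\omega^{P_1},\dots,\omega^{P_k})$. The one nontrivial ingredient is that $a_P(\zeta_n(Q))=\det\big(\omega^{Q_iP_l}\big)_{i,l}$ is symmetric in the pair $(P,Q)$: transposing the matrix $(\omega^{Q_iP_l})_{i,l}$ gives $(\omega^{Q_lP_i})_{i,l}=(\omega^{P_iQ_l})_{i,l}$, hence
$$a_P(\zeta_n(Q))=a_Q(\zeta_n(P)).$$
I will also record the effect of the shift built into $\xi_n$: since $R_{\frac{k-1}{2n}}$ subtracts $\frac{\pi(k-1)}{n}$ from every coordinate modulo $2\pi$ (and reorders), the multiset $\exp(i\xi_n(Q))$ equals $e^{-i\pi(k-1)/n}\,\zeta_n(Q)$, and because $s_{\lambda_P}$ is symmetric and homogeneous of degree $|\lambda_P|=\langle P\rangle-\langle I_0\rangle$, this yields
$$S_P(\xi_n(Q))=e^{-i\pi(k-1)(\langle P\rangle-\langle I_0\rangle)/n}\,s_{\lambda_P}(\zeta_n(Q)).$$

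For the first identity I would first establish its unshifted form
$$s_{\lambda_J}(\zeta_n(I))=\frac{s_{\lambda_J}(\zeta_n(I_0))}{s_{\lambda_I}(\zeta_n(I_0))}\,s_{\lambda_I}(\zeta_n(J)).$$
Starting from $s_{\lambda_J}(\zeta_n(I))=a_J(\zeta_n(I))/a_{I_0}(\zeta_n(I))$, I use the symmetry to rewrite numerator and denominator as $a_I(\zeta_n(J))$ and $a_I(\zeta_n(I_0))$, then apply the bialternant three times — to $a_I(\zeta_n(J))=s_{\lambda_I}(\zeta_n(J))a_{I_0}(\zeta_n(J))$, to $a_I(\zeta_n(I_0))=s_{\lambda_I}(\zeta_n(I_0))a_{I_0}(\zeta_n(I_0))$, and to $a_{I_0}(\zeta_n(J))=a_J(\zeta_n(I_0))=s_{\lambda_J}(\zeta_n(I_0))a_{I_0}(\zeta_n(I_0))$ — after which $a_{I_0}(\zeta_n(I_0))$ cancels and the displayed identity drops out. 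All denominators are legitimate: $a_{I_0}(\zeta_n(I))=\prod_{i<i'}(\omega^{I_i}-\omega^{I_{i'}})\neq 0$ since the parts of $I$ are pairwise distinct, and dividing by $a_{I_0}(\zeta_n(I_0))\neq 0$ then shows $s_{\lambda_I}(\zeta_n(I_0))\neq 0$, i.e. $S_I(\xi_n(I_0))\neq 0$. Finally I multiply through by the prefactors in the relation between $S_P(\xi_n(Q))$ and $s_{\lambda_P}(\zeta_n(Q))$ above: the prefactor attached to $I$ cancels between numerator and denominator, and the one attached to $J$ converts the unshifted identity into the claimed one for $S_J(\xi_n(I))$.

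For the second identity I would use \eqref{eq:definition_Vandermonde} to write $V(\xi_n(I))=\prod_{j<j'}(\omega^{I_{j'}}-\omega^{I_j})=(-1)^{k(k-1)/2}a_{I_0}(\zeta_n(I))$, and likewise $V(\xi_n(I_0))=(-1)^{k(k-1)/2}a_{I_0}(\zeta_n(I_0))$. The symmetry gives $a_{I_0}(\zeta_n(I))=a_I(\zeta_n(I_0))$ and the bialternant gives $a_I(\zeta_n(I_0))=s_{\lambda_I}(\zeta_n(I_0))\,a_{I_0}(\zeta_n(I_0))$, whence $V(\xi_n(I))=s_{\lambda_I}(\zeta_n(I_0))\,V(\xi_n(I_0))$; substituting $s_{\lambda_I}(\zeta_n(I_0))=e^{i\pi(k-1)(\langle I\rangle-\langle I_0\rangle)/n}S_I(\xi_n(I_0))$ completes the proof.

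The conceptual content is entirely in the transposition symmetry $a_P(\zeta_n(Q))=a_Q(\zeta_n(P))$, which is immediate; there is no genuine obstacle. The only thing requiring care is the bookkeeping of the two auxiliary factors — the scaling $e^{-i\pi(k-1)/n}$ coming from the shift $R_{\frac{k-1}{2n}}$ in $\xi_n$ (the source of both the exponential prefactor and the exponent $\langle I\rangle-\langle I_0\rangle$), and the sign $(-1)^{k(k-1)/2}$ relating $V$ to the ordinary Vandermonde $a_{I_0}$ — together with checking that the homogeneity degree of $s_{\lambda_P}$ is indeed $\langle P\rangle-\langle I_0\rangle$ and that $S_P$ and the right-hand side of \eqref{eq:definition_Vandermonde} are insensitive to the reordering performed by $R_{\frac{k-1}{2n}}$.
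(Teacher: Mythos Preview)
Your proof is correct and follows essentially the same approach as the paper: both arguments rest entirely on the transposition symmetry of the matrix $\big(\omega^{I_lJ_m}\big)_{l,m}$, combined with the bialternant formula. The only cosmetic difference is that the paper packages the shift and the symmetry into a single relation $a_J(\xi_n(I))=\exp\!\big(\tfrac{(k-1)i\pi}{n}(\langle I\rangle-\langle J\rangle)\big)\,a_I(\xi_n(J))$ and works directly with $\xi_n$, whereas you separate the two steps by first proving the clean identity $a_P(\zeta_n(Q))=a_Q(\zeta_n(P))$ at the unshifted roots and then inserting the homogeneity factor $e^{-i\pi(k-1)(\langle P\rangle-\langle I_0\rangle)/n}$ at the end.
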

\begin{proof}
By definition of Schur functions as ratios of generalized Vandermonde determinants, for $I\in B_{k}$ and $\vec{u}\in \mathbb{R}^k$ we have
$$S_{I}(\vec{u})=\frac{a_{I}(\vec{u})}{a_{I_0}(\vec{u})},$$
where $a_{J}(\vec{u})=\det\left(e^{iu_lJ_m}\right)_{1\leq l,m\leq k}$ for $J\in B_{k}$. For $I,J\in B_{k,n}$,
\begin{align}
a_{J}(\xi_{n}(I))=&\det\left(\exp\left(\frac{2i\pi}{n}(I_l-(k-1)/2)J_m\right)\right)_{1\leq l,m,\leq k}\nonumber\\
=&\exp\left(\frac{(k-1)i\pi}{n}\left(\sum_{l=1}^kI_l-\sum_{m=1}^kJ_m\right)\right)\det\left(\exp\left(\frac{2i\pi}{n}I_l(J_m-(k-1)/2)\right)\right)_{1\leq l,m,\leq k}\nonumber\\
=&\exp\left(\frac{(k-1)i\pi}{n}(\langle I\rangle-\langle J\rangle)\right)\det\left(\exp\left(\frac{2i\pi}{n}I_l(J_m-(k-1)/2)\right)\right)_{1\leq l,m,\leq k}\nonumber\\
=&\exp\left(\frac{(k-1)i\pi}{n}(\langle I\rangle-\langle J\rangle)\right)a_{I}(\xi_{n}(J)).\label{eq:relation_vandermonde}
\end{align}
Applying the latter formula to the ratio $\frac{S_J(\xi_{n}(I))}{S_J(\xi_{n}(I_0))}$ yields
\begin{align*}
\frac{S_J(\xi_{n}(I))}{S_J(\xi_{n}(I_0))}=&\frac{a_{J}(\xi_{n}(I))}{a_{I_0}(\xi_{n}(I))}\frac{a_{I_0}(\xi_{n}(I_0))}{a_{J}(\xi_{n}(I_0))}\\
=&\frac{a_{I}(\xi_{n}(J))}{a_{I}(\xi_{n}(I_0))}\frac{a_{I_0}(\xi_{n}(I_0))}{a_{I_0}(\xi_{n}(J))}\exp\left(\frac{(k-1)i\pi}{n}\left(\langle I\rangle-\langle J\rangle+\langle I_0\rangle-\langle I\rangle+\langle J\rangle-\langle I_0\rangle\right)\right)\\
=&\frac{S_I(\xi_{n}(J))}{S_I(\xi_{n}(I_0))},
\end{align*}
which proves the first statement. Then, using \eqref{eq:relation_vandermonde} with $J=I_0$ yields
$$V(\xi_{n}(I))=a_{I_0}(\xi_{n}(I))=\exp\left(\frac{(k-1)i\pi}{n}(\langle I\rangle-\langle I_0\rangle\right)a_{I}(\xi_{n}(I_0))),$$
and, since $S_I(\xi_{n}(I_0))=\frac{a_{I}(\xi_{n}(I_0))}{V(\xi_{n}(I_0))}$,
$$V(\xi_{n}(I))=\exp\left(\frac{(k-1)i\pi}{n}(\langle I\rangle-\langle I_0\rangle)\right)V(\xi_{n}(I_0))S_I(\xi_{n}(I_0)).$$
\end{proof} 
Specializing \eqref{eq:def_vI} and \eqref{eq:def_uI_Bkn} to $I=I_0$ yields the following Lemma.
\begin{lemma}\label{lem:Perron-Frobenius}
The common left and right positive eigenvectors $h^l, h^r$ of all $A_{I},\, I\in B_{k,n}$ with the normalization $h^l(I_0)=1$, $\left\langle h^l,h^r\right\rangle=1$ are 
$$h^l(I)=S_I(\xi_{n}(I_0)),\quad h^r(I)=\frac{\vert V(\xi_{n}(I_0))\vert}{n^k}\cdot\vert V(\xi_{n}(I))\vert,$$
and the invariant probability measure of $B_{k,n}$ is 
$$\mu^h(I)=\frac{1}{n^k}\left\vert V(\xi_{n}(I))\right\vert^2.$$
\end{lemma}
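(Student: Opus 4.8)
This statement is, up to bookkeeping, just the specialization to $I=I_0$ of the formulas \eqref{eq:def_vI}, \eqref{eq:def_uI_Bkn} together with the normal-adjacency relation \eqref{eq:normal_adjacency}, so the plan is: first, show that $w^{(I_0)}$ is the common Perron--Frobenius eigenvector of the family $\{A_I\}_{I\in B_{k,n}}$; then, fix the two normalizations. For the first part, Lemma \ref{lem:action_s_mu} tells us that the $w^{(I)}$ form a complete orthogonal family of common eigenvectors of all the $A_J$, with $A_J^{*}(w^{(I)})=\overline{S_J(\xi_n(I))}\,w^{(I)}$. Since $A=A_{I_1}$ is irreducible, its left Perron--Frobenius eigenvector is, up to a positive scalar, the unique one among the $w^{(I)}$ all of whose coordinates are positive, and by Lemma \ref{lem:Perron_sv} that vector is automatically a common eigenvector of every $A_I$. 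So everything reduces to showing that the coordinates $\overline{S_J(\xi_n(I_0))}$ of $w^{(I_0)}$ are positive real numbers.

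The crux --- and the only step where something genuinely has to be proved --- is this positivity of $S_J(\xi_n(I_0))$ for every $J\in B_{k,n}$. Setting $q:=e^{2i\pi/n}$, one reads off from the definition of $\xi_n$ that $\exp(i\xi_n(I_0))=q^{-(k-1)/2}\bigl(q^{k-1},q^{k-2},\dots,q,1\bigr)$, so by homogeneity of the Schur polynomial $s_{\lambda_J}$ (of degree $\vert\lambda_J\vert$),
$$S_J(\xi_n(I_0)) = q^{-\vert\lambda_J\vert(k-1)/2}\, s_{\lambda_J}\bigl(1,q,\dots,q^{k-1}\bigr),$$
a principal specialization, which I would evaluate by the hook--content formula. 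Writing $c(u)$ and $h(u)$ for the content and hook length of a cell $u$ of $\lambda_J$, the point is that for $\lambda_J\in R_{k,n}$ (at most $k$ rows, first part $\le n-k$) every $k+c(u)$ and every $h(u)$ lies in $\{1,\dots,n-1\}$; hence each factor $\tfrac{1-q^{a}}{1-q^{b}}$ occurring is well defined, of positive modulus $\sin(\pi a/n)/\sin(\pi b/n)$ and of phase $e^{i\pi(a-b)/n}$, and collecting all these phases together with the power of $q$ produced by the hook--content formula, and simplifying with the standard identities expressing $\sum_u c(u)$ and $\sum_u h(u)$ through the statistic $\sum_i(i-1)\lambda_i$ of $\lambda_J$ and of its conjugate, makes the total phase disappear. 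Thus $S_J(\xi_n(I_0))=\prod_{u\in\lambda_J}\sin(\pi(k+c(u))/n)/\sin(\pi h(u)/n)>0$. (This positive quantity is exactly the quantum dimension of the associated Schubert class, so one could alternatively quote the well-known positivity of quantum dimensions in the fusion ring.) Consequently $w^{(I_0)}$ has positive coordinates and is the common Perron--Frobenius eigenvector.

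It then remains to chase the normalizations. Since $S_{I_0}(\xi_n(I_0))=s_\emptyset=1$, the $I_0$-coordinate of $w^{(I_0)}$ equals $1$, so $h^l=w^{(I_0)}$ and $h^l(I)=\overline{S_I(\xi_n(I_0))}=S_I(\xi_n(I_0))$. As $A$ is normal, \eqref{eq:normal_adjacency} together with $\Vert w^{(I_0)}\Vert_2=n^{k/2}/\vert V(\xi_n(I_0))\vert$ (a consequence of \eqref{eq:orthogonality_relation}) forces $h^r=u^{(I_0)}=\tfrac{\vert V(\xi_n(I_0))\vert^2}{n^k}\,w^{(I_0)}$, i.e. $h^r(I)=\tfrac{\vert V(\xi_n(I_0))\vert^2}{n^k}\,S_I(\xi_n(I_0))$. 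Finally Lemma \ref{lem:formula_inversion_Schur} gives $\vert V(\xi_n(I))\vert=\vert V(\xi_n(I_0))\vert\,S_I(\xi_n(I_0))$ (using $S_I(\xi_n(I_0))>0$); substituting this turns $h^r(I)$ into $\tfrac{\vert V(\xi_n(I_0))\vert}{n^k}\vert V(\xi_n(I))\vert$, and then $\mu^h(I)=h^l(I)h^r(I)=\tfrac{1}{n^k}\vert V(\xi_n(I))\vert^2$, as claimed. The only subtlety to watch is the one inside the middle paragraph: one must check that the hook--content specialization stays valid at the root of unity $q$, i.e. that no denominator $1-q^{h(u)}$ vanishes, which is precisely guaranteed by the bound $h(u)\le n-1$ for $\lambda_J\in R_{k,n}$; everything else is routine manipulation of the objects introduced in Section \ref{sec:pm_graph} and Lemma \ref{lem:formula_inversion_Schur}.
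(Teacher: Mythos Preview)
Your proof is correct, and the overall architecture matches the paper's: identify $w^{(I_0)}$ as the Perron--Frobenius vector by showing all its coordinates $S_J(\xi_n(I_0))$ are positive, then read off the normalizations from \eqref{eq:normal_adjacency} and \eqref{eq:orthogonality_relation}, and finally use Lemma~\ref{lem:formula_inversion_Schur} to obtain $\vert V(\xi_n(I_0))\vert\,S_I(\xi_n(I_0))=\vert V(\xi_n(I))\vert$.

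The one genuine difference lies in the positivity step. The paper does \emph{not} appeal to the hook--content formula: instead it starts directly from Lemma~\ref{lem:formula_inversion_Schur}, writes
\[
V(\xi_n(I_0))\,S_I(\xi_n(I_0))=e^{-\frac{(k-1)i\pi}{n}(\langle I\rangle-\langle I_0\rangle)}\,V(\xi_n(I)),
\]
expands the right-hand side as $(2i)^{k(k-1)/2}\prod_{l<m}\sin(\pi(I_m-I_l)/n)$ (this is just \eqref{eq:definition_Vandermonde}), and observes that the product of sines is positive because $I$ is an increasing sequence in $\{0,\dots,n-1\}$. Specializing $I=I_0$ fixes the phase of $V(\xi_n(I_0))$ and yields simultaneously positivity of $S_I(\xi_n(I_0))$ \emph{and} the identity \eqref{eq:rel_S_V}. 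Your route instead establishes positivity first, via the quantum-dimension formula $S_J(\xi_n(I_0))=\prod_{u\in\lambda_J}\sin(\pi(k+c(u))/n)/\sin(\pi h(u)/n)$, and only afterwards invokes Lemma~\ref{lem:formula_inversion_Schur} (together with the already-known positivity) to get \eqref{eq:rel_S_V}. Your approach buys the explicit hook--content product for the quantum dimension, which is pleasant and connects to the fusion-ring interpretation you allude to; the paper's approach is more self-contained (no external specialization identity needed) and produces the relation between $S_I(\xi_n(I_0))$ and $\vert V(\xi_n(I))\vert$ in a single stroke rather than in two passes. Your phase cancellation (the check that $-\vert\lambda\vert(k-1)+2n(\lambda)+\sum_u(k+c(u)-h(u))=0$ using $\sum_u c(u)=n(\lambda')-n(\lambda)$ and $\sum_u h(u)=n(\lambda)+n(\lambda')+\vert\lambda\vert$) is correct, as is your remark that $h(u)\le n-1$ keeps the denominators nonzero.
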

\begin{proof}
Remark that with the notation of the statement, we have $w^{(I_0)}=h^l$. By \eqref{eq:def_vI}, \eqref{eq:def_uI_Bkn} and Lemma \ref{lem:action_s_mu}, $w^{(I_0)}$ and $u^{(I_0)}$ are both left and right eigenvectors of $A_{I}$ for $I\in B_{k,n}$ with eigenvalue $S_{I}(\xi_{n}(I_0))$. To conclude, we need to prove that $w^{(I_0)}(I)>0$ for all $I\in B_{k,n}$ and $h^r=w^{(I_0)}$  and $u^{(I_0)}=h^r$. Both assumptions are deduced if we prove that $\vert V(\xi_n(I_0))\vert S_I(\xi_{n}(I_0))=\vert V(\xi_{n}(I))\vert$, since then $h^l(I)=\frac{\vert V(\xi_{n}(I))\vert}{\vert V(\xi_n(I_0))\vert}>0$ and $u^{(I_0)}(I)=\frac{\vert V(\xi_n(I_0))\vert^2}{n^k}S_{I}(\xi_{n}(I_0))=\frac{\vert V(\xi_n(I_0))\vert}{n^k}\vert V(\xi_{n}(I))\vert$. 

By Lemma \ref{lem:formula_inversion_Schur}
$$V(\xi_{n}(I_0))S_{I}(\xi_{n}(I_0))=\exp\left(-\frac{(k-1)i\pi}{n}(\langle I\rangle-\langle I_0\rangle)\right)V(\xi_{n}(I)).$$
Then, using that $\langle I_0\rangle=\frac{k(k-1)}{2}$
\begin{align*}
&\exp\left(-\frac{(k-1)i\pi}{n}(\langle I\rangle-\langle I_0\rangle)\right)V(\xi_{n}(I))\\
=&\exp\left(-\frac{(k-1)i\pi}{n}\sum_{j=1}^k(I_j-(k-1)/2)\right)\prod_{1\leq l<m\leq k}\left( e^{2i\pi (I_m-(k-1)/2)/n}-e^{2i\pi (I_l-(k-1)/2)/n}\right)\\
=&\prod_{1\leq l<m\leq k}e^{-\frac{(I_m+I_l-(k-1))i\pi}{n}}\left(e^{2i\pi (I_m-(k-1)/2)/n}-e^{2i\pi (I_l-(k-1)/2)/n}\right)\\
=&(2i)^{k(k-1)/2}\prod_{l<m}\sin(\pi(I_m-I_l)/n).
\end{align*}
Since $I$ is an increasing sequence, $\prod_{l<m}\sin(\pi(I_m-I_l)/n)>0$. In particular, for $I=I_0$ we get
$V(\xi_{n}(I_0))=i^{k(k-1)/2}\vert V(\xi_{n}(I_0))\vert$. We deduce that 
\begin{equation}\label{eq:rel_S_V}
\vert V(\xi_n(I_0))\vert S_I(\xi_{n}(I_0))=\vert V(\xi_{n}(I))\vert,
\end{equation}
and the lemma is deduced.
\end{proof}
As a consequence of the latter lemma, the positive eigenvalue of $A_I, I\in B_{k,n}$ corresponding to the eigenvector $h^r$ is $S_{I}(\xi_n(I_0))$. In particular, Lemma \ref{lem:action_s_mu} yields that for all $I,J\in B_{k,n}$
\begin{equation}\label{eq:inequality_Perron_Frobenius}
\left\vert S_{I}(\xi_n(J))\right\vert\leq S_{I}(\xi_n(I_0)).
\end{equation}
Following notation of Section \ref{sec:formal_model_pm}, set $P^I=\frac{A_{I}^h}{S_{I}(\xi_{n}(I_0))}$. Translating Definition \ref{def:convolution} to $B_{k,n}$ yields the following convolution product on $L^2_h(B_{k,n})$.
\begin{definition}\label{def:dft_conv}
The convolution product on $B_{k,n}$ is the map $*:L^2(B_{k,n})\otimes L^2(B_{k,n})\rightarrow L^2(B_{k,n})$ defined by 
$$x\ast y=\sum_{I\in B_{k,n}}\mu^h(I) x(I)P^I(y)=\sum_{I\in B_{k,n}}\frac{\vert V(\xi_{n}(I))\vert^2}{n^k} x(I)P^I(y),$$
where $P^I(y)=\frac{1}{h^r}\cdot \frac{A_I}{S_I(\xi_{n}(I_0))}(h^r\cdot y)$  and $\cdot$ denotes the pointwise multiplication.

A $h$-probability measure $\mu$ on $B_{k,n}$ is a vector $\mu\in L^2(B_{k,n})$ such that 
$$\sum_{I\in B_{k,n}}\frac{\vert V(\xi_{n}(I))\vert^2}{n^k}\mu(I)=1.$$
\end{definition}
Recall that we denote then by $\mathcal{M}_1(B_{k,n})$ the set of h-probability measures on $B_{k,n}$. We also denote by $\delta_I$ the unique $h$-probability measure supported at $I$, namely
\begin{equation}\label{eq:definition_dirac}
\delta_I=\frac{n^k}{\vert V(\xi_{n}(I)\vert^2}e_I.
\end{equation}
The results of Section \ref{sec:pm_graph} yield then directly the proof of Proposition \ref{prop:result_1}.
\begin{proof}[Proof of Proposition \ref{prop:result_1}]
By Proposition \ref{prop:structure_conv}, $(L^2(B_{k,n}),+,*)$ is a commutative algebra such that $\mathcal{M}_1(B_{k,n})*\mathcal{M}_1(B_{k,n})\subset \mathcal{M}_1(B_{k,n})$, which proves $(1)$ and $(2)$. 

Then, applying Definition \ref{def:dft_conv} to $x=\frac{e_{I^1}}{\mu^h(I^1)}$ and $y\in L^2(B_{k,n})$ and using that $\mu^h(I)=\frac{\vert V(I)\vert^2}{n^k}$ for $I\in B_{k,n}$, we get
$$\frac{e_{I^1}}{\mu^h(I^1)}\ast y=P_{I^1}(y)=\frac{A^h}{\lambda},$$
where the last equality is due to Proposition \ref{prop:equiv_graph_qh} which implies that $A_{I^1}=T(s_{(1)})=A$. Doing the same with $x=\frac{e_I}{\mu^h(I)}$ and $y=\frac{e_{I_0}}{\mu^h(I_0)}$ yields 
$$\frac{e_{I}}{\mu^h(I)}\ast \frac{e_{I_0}}{\mu^h(I_0)}=\frac{1}{\mu^h(I_0)}P^I(e_{I_0})=\frac{h^r(I_0)}{h^r(I)h^l(I_0)h^r(I_0)h^l(I)}e_I=\frac{e_I}{\mu^h(I)},$$
where we used that $A_{I}(e_{I_0})=e_{I}$ and $h^l(I_0)=1$. Thus, (3) and (4) are proven.
\end{proof}
Likewise, since $B_{k,n}$ is an integrable PM-graph, we can define a Fourier transform as in Section \ref{sec:integrable_graph}. 
\begin{definition}\label{def:Fourier_Bkn}
The discrete Fourier transform on $B_{k,n}$ is the map $\Phi_n:L^2(B_{k,n})\rightarrow L^2(B_{k,n})$ defined  for $x\in L^2(B_{k,n})$ by
$$\Phi_n[x](I)=\langle \tilde{w}^{(I)},x\rangle_{h}=\sum_{J\in B_{k,n}}\frac{\vert V(\xi_{n}(J))\vert\cdot\vert V(\xi_{n}(I_0))\vert}{n^k}S_{J}(\xi_{n}(I))x(J).$$
\end{definition}
\nomenclature{$\Phi_n$}{Discrete Fourier transform on $B_{k,n}$}Then, Proposition \ref{prop:diagonalization_Fourier} directly yields for $x_1,\ldots,x_m\in L^2(B_{k,n})$
\begin{equation}\label{eq:relation_conv_dft}
\Phi_n[x_1*\cdots*x_m]=\Phi_n[x_1]\cdots\Phi_n[x_m].
\end{equation}
Let us more specifically study the Fourier transform of a $h$-probability measure. First, for $I,J\in B_{k,n}$, we have by Definition \ref{def:Fourier_Bkn}, \eqref{eq:definition_dirac} and \eqref{eq:rel_S_V},
$$\Phi_n[\delta_I](J)=\frac{\vert V(\xi_{n}(I))\vert\cdot\vert V(\xi_{n}(I_0))\vert}{n^k}S_{I}(\xi_{n}(J))\cdot \frac{n^k}{\vert V(\xi_{n}(I)\vert^2}=\frac{S_{I}(\xi_{n}(J))}{S_I(\xi_{n}(I_0))}.$$

Then, by Definition \ref{def:dft_conv} and \eqref{eq:definition_dirac}, a $h$-probability measure $\mu\in L_h^2(B_{k,n})$ can be written $\mu=\sum_{I\in B_{k,n}}a_{I}\delta_I$
with $a_I\geq 0$ for $I\in B_{k,n}$ and $\sum_{I\in B_{k,n}}a_I=1$. Hence, we have 
\begin{equation}\label{eq:Fourier_probability}
\Phi_n[\mu](J)=\sum_{I\in B_{k,n}}a_{I}\frac{S_{I}(\xi_{n}(J))}{S_I(\xi_{n}(I_0))}.
\end{equation}

\begin{example}
For $k=1$, $B_{1}=\mathbb{Z}$, and we can identify $B_{1,n}$ with $\mathbb{Z}/n\mathbb{Z}$, and $S_{J}(u)=e^{iJu}$ for $u\in \mathbb{R}, J\in \mathbb{Z}$. In particular, $I_0=0$ and $S_{I_0}(u)=1$ for $u\in \mathbb{R}$. For $I\in [0,n[\cap \mathbb{Z}$, $\vert V(\xi_{n}(I))\vert=1$, and thus 
$$w^{(I)}=\sum_{J\in  B_{k,n}}\exp(-2i\pi IJ/n)e_{J},\quad u^{(I)}=\frac{1}{n}\sum_{J\in  B_{k,n}}\exp(-2i\pi IJ/n)e_{J}$$
so that we recover the usual discrete Fourier vectors. For $x\in L^2(B_{k,n})$, we then have 
$$\Phi_{n}[x](I)=\frac{1}{n}\sum_{J\in \mathbb{Z}/n\mathbb{Z}}\exp(2i\pi IJ/n)x(J),$$
which is the usual (inverse) discrete Fourier transform. Finally, the map $P_{I}$ is the circulant operator sending $e_{J}$ to $e_{J+I}$, so that for $x,y\in L^2(B_{1,n})$, the convolution product is
$$x\ast y(J)=\frac{1}{n}\sum_{I\in \mathbb{Z}/n\mathbb{Z}}x(I)y(J-I),$$
which is the classical convolution product on $\mathbb{Z}/n\mathbb{Z}$. Then, \eqref{eq:relation_conv_dft} recovers the fact that 
$$\Phi_n(x\ast y)=\Phi_n(x)\Phi_n(y).$$
\end{example}

\subsection{Fourier transform on $ T_{k}$}\label{Sec:Fourier_continuous}
Let us turn to $T_{k}$, the continuous limit object of the sequence $\{B_{k,n}, n\geq k\}$. Recall that the set of continuous $k$-configurations on a circle $ T_{k}$ is the subset of $\mathbb{R}^k$ consisting of vectors $u=(u_1,\ldots,u_k)$ such that 
$$0\leq u_k\leq u_{k-1}\leq \dots\leq u_1<2\pi.$$
Before describing the relation with $B_{k,n}$, let us describe the Fourier transform on $ T_{k}$ coming from representations theory of $U(k)$, the Lie group of unitary matrices of rank $k$. Sending $U\in U(k)$ to the sequence of arguments of its eigenvalues taken in $[0,2\pi[)$ and sorted in decreasing order yields a map $p:U(k)\mapsto  T_{k}$. Characters of $U(k)$ are conjugation invariant functions $\{\chi_I, I\in B_{k}\}$ with the formula 
\begin{equation}\label{eq:character_Uk}
\chi_I(U)=S_{I}(p(U))=\frac{a_{I}(u_1,\ldots,u_k)}{a_{I_0}(u_1,\ldots,u_k)},
\end{equation}
where $\{u_1,\ldots,u_k\}$ are arguments of eigenvalues of $U\in U(k)$ and we recall that $a_{I}(u_1,\ldots,u_k)=\det(e^{iu_lI_m})_{1\leq l,m\leq k}$ for $u\in\mathbb{R}^k$ and $I\in B_{k}$, where we recall that $B_k$ is the set of increasing sequences of integers.

 By Weyl integration formula, the image of the Haar measure $\mu_H$ on $U(k)$ through the map $p$ sending a unitary matrix to the decreasing sequence of arguments of its eigenvalues yields a probability measure on $ T_{k}$ which is continuous with respect to the Lebesgue measure and has density
$$dp_{\star}\mu_H(u_1,\ldots,u_n)=\frac{1}{(2\pi)^k}\prod_{l<m}\left\vert e^{iu_l}-e^{iu_m}\right\vert^2=\frac{1}{(2\pi)^k}\vert V(\vec{u})\vert^2.$$
Reciprocally, any probability measure on $ T_{k}$ yields a unique probability measure $p^{\star}\mu$ on $U(k)$ which is invariant by conjugation. Denote by $L^2_{inv}(U(k))$ the vector space of square integrable functions on $U(k)$ which are invariant by conjugation. Each element $f$ of $L^2_{inv}(U(k))$ yields a function $\tilde{f}$ on $L^2( T_{k},dp_{\star}\mu_H)$ by restricting its values to diagonal matrices, and the map $f\mapsto \tilde{f}$ is an isomorphism by Weyl integration formula. By Plancherel theorem, the map
$$\Phi:\left\lbrace\begin{aligned} L^2_{inv}(U(k),d\mu_H)&\simeq &L^2( T_{k},dp_{\star}(\mu_H)) &\rightarrow &&\ell^2(B_{k})\\
&f&&\mapsto&&\left(\int_{ T_{k}}S_{J}(u)\tilde{f}(u)dp_{\star}\mu_H(u)\right)_{J\in B_{k}}\end{aligned}\right.$$
\nomenclature{$\Phi$}{Continuous Fourier transform on $L^2(T_k,dp_{\ast}(\mu_H))$}is an $L^2$-isometry. For $c=(c(J))\in \ell^2(B_{k})$, the inverse of $\Phi$ at $c$ is given by 
$$\Phi^{-1}[c](\vec{u})=\sum_{J\in B_{k}}d_{J}c(J)S_J(\vec{u})$$
for $\vec{u}\in  T_{k}$, where the sum almost-surely converges and $d_J=S_J(0,\ldots,0)$ is the dimension of the irreducible representation of $U(k)$ having character $\chi_J=S_J\circ p$.

The map $\Phi$ extends by continuity (in the weak-topology) to a unique map, also denoted $\Phi$ and called the {\it Fourier transform} on $T_k$, from the set $\mathcal{M}_{f}( T_{k})$ of finite measures on $ T_{k}$ to the topological space of maps from $B_{k}$ to $\mathbb{C}$ with the pointwise convergence. We then have for $m\in \mathcal{M}_{f}( T_{k})$ and $J\in B_{k}$
\begin{equation}\label{eq:fourier_transform_measure_Uk}
\Phi[m](J)=\frac{1}{(2\pi)^k}\int_{ T_{k}}S_{J}(\vec{u})\vert V(\vec{u})\vert^2dm(\vec{u}).
\end{equation}
As for finite measures on $\mathbb{R}$ or $\mathbb{R}/(2\pi\mathbb{Z})$, the Fourier transform characterizes finite measures and vague convergence on $T_k$. Denote by $\mathcal{M}_{1}( T_{k})$ the subset of $\mathcal{M}_{f}( T_{k})$ consisting in measures $m$ on $T_k$ such that $\Phi[m](I_0)=1$. 

If $\nu_1,\nu_2\in \mathcal{M}_{f}(U(k))$, the set of finite measures on $U(k)$, the convolution of $\nu_1$ with $\nu_2$ is the unique finite measure $\nu_1\ast\nu_2$ such that $\int_{U(k)}fd\nu_1\ast\nu_2=\int_{U(k)\times U(k)}f(g_1g_2)d\nu_1(g_1)d\nu_2(g_2)$ for all $f\in C^0(U(k))$. If $\mu_1,\mu_2\in \mathcal{M}_f( T_{k})$, we define
$$\nu_1\ast\nu_2=p_{\star}(p^{\star}(\mu_1)\ast p^{\star}(\mu_2)).$$
Then, the convolution product $\ast$ is commutative on $\mathcal{M}_f( T_{k})$, and 
$$\Phi[\mu_1\ast\mu_2]=\Phi[\mu_1]\cdot \Phi[\mu_2],$$
where the product on the right-hand side is the pointwise product on $M(B_{k})$.

Recall that $ B_{k,n}$ embeds in $ T_{k}$ through the map $\xi_{n}$ defined in Section \ref{subsec:intro_model}. By linearity, this map is extended to a map (also denoted by $\xi_{n}$) from $L^2(B_{k,n})$ to $\mathcal{M}_f( T_{k})$ with the formula 
$$\xi_{n}\left(\sum_{I\in B_{k,n}}\mu(I)e_I\right)=\left(\frac{2\pi}{n}\right)^k\sum_{I\in B_{k,n}}\mu(I)\delta_{\xi_n(I)}, \quad \mu\in L^2(B_{k,n}).$$
A crucial fact for the sequel is that $\xi_{n}$ intertwines the Fourier transforms $\Phi_n$ and $\Phi$.
\begin{proposition}\label{prop:equiv_dft_cft}
For all $\mu\in L^2(B_{k,n})$ and $J\in B_{k}$ such that $J_1< J_k+n$. Then, with $J^{(n)}\in B_{k,n}$ such that $\{J^{(n)}_i[n]\}_{1\leq i\leq k}=\{J_i[n]\}_{1\leq i\leq k}$,
$$\Phi[\xi_{n}(\mu)] (J)=S_{J}(\xi_{n}(I_0))\Phi_n[\mu](J^{(n)}).$$
\end{proposition}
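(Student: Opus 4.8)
The plan is to unwind both sides of the identity using the definitions of $\Phi$, $\xi_n$ and $\Phi_n$, reduce the equality to a statement about individual $I\in B_{k,n}$, and then close it using the two Schur-function identities of Lemma~\ref{lem:formula_inversion_Schur} and \eqref{eq:rel_S_V} together with an elementary reduction of exponents modulo $n$. First I would expand the left-hand side: by the definition of $\xi_n$ on $L^2(B_{k,n})$ and formula~\eqref{eq:fourier_transform_measure_Uk},
\[\Phi[\xi_n(\mu)](J)=\frac{1}{(2\pi)^k}\Big(\frac{2\pi}{n}\Big)^k\sum_{I\in B_{k,n}}\mu(I)\,S_J(\xi_n(I))\,|V(\xi_n(I))|^2=\frac{1}{n^k}\sum_{I\in B_{k,n}}\mu(I)\,S_J(\xi_n(I))\,|V(\xi_n(I))|^2,\]
while the right-hand side, by Definition~\ref{def:Fourier_Bkn}, equals $S_J(\xi_n(I_0))$ times $\sum_{I\in B_{k,n}}\frac{|V(\xi_n(I))|\,|V(\xi_n(I_0))|}{n^k}\,S_I(\xi_n(J^{(n)}))\,\mu(I)$. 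Since $\mu\in L^2(B_{k,n})$ is arbitrary and $|V(\xi_n(I))|>0$ for $I\in B_{k,n}$, it is enough to check, for each fixed $I\in B_{k,n}$,
\[S_J(\xi_n(I))\,|V(\xi_n(I))|=S_J(\xi_n(I_0))\,|V(\xi_n(I_0))|\,S_I(\xi_n(J^{(n)})).\]

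Next I would get rid of the ``large'' index $J$ in favour of $J^{(n)}\in B_{k,n}$. The key point is that each coordinate exponential $\exp\!\big(i(\xi_n(I))_l\big)$ is an $n$-th root of $(-1)^{k+1}$, as follows at once from the definition $\xi_n(I)=R_{(k-1)/(2n)}(\tfrac{2\pi}{n}I)$. Writing $S_J=a_J/a_{I_0}$ with $a_J(\vec u)=\det(e^{iu_lJ_m})_{1\leq l,m\leq k}$, and noting that the condition $J_1<J_k+n$ forces the $J_m$ to be pairwise distinct modulo $n$, we may write $J_m=J^{(n)}_{\sigma(m)}+n\,c_m$ for a unique $\sigma\in S_k$ and integers $c_m$. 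Reducing each exponent modulo $n$ inside the determinant $a_J(\xi_n(I))$ pulls out the scalar $\big((-1)^{k+1}\big)^{c_m}=(-1)^{(k-1)c_m}$ from column $m$ and the sign $\operatorname{sgn}(\sigma)$ from the column permutation, so that $a_J(\xi_n(I))=\varepsilon(J)\,a_{J^{(n)}}(\xi_n(I))$ with $\varepsilon(J)=\operatorname{sgn}(\sigma)\prod_{m=1}^k(-1)^{(k-1)c_m}$ independent of $I$. Dividing by $a_{I_0}(\xi_n(I))=V(\xi_n(I))\neq0$ gives $S_J(\xi_n(I))=\varepsilon(J)\,S_{J^{(n)}}(\xi_n(I))$ for every $I\in B_{k,n}$, and in particular $S_J(\xi_n(I_0))=\varepsilon(J)\,S_{J^{(n)}}(\xi_n(I_0))$.

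After cancelling $\varepsilon(J)$, the identity to be proved becomes $S_{J^{(n)}}(\xi_n(I))\,|V(\xi_n(I))|=S_{J^{(n)}}(\xi_n(I_0))\,|V(\xi_n(I_0))|\,S_I(\xi_n(J^{(n)}))$, which now involves only indices in $B_{k,n}$. This follows by multiplying the inversion formula of Lemma~\ref{lem:formula_inversion_Schur}, applied to the pair $I,J^{(n)}$, namely $S_{J^{(n)}}(\xi_n(I))=\tfrac{S_{J^{(n)}}(\xi_n(I_0))}{S_I(\xi_n(I_0))}S_I(\xi_n(J^{(n)}))$, by $|V(\xi_n(I))|$ and using $|V(\xi_n(I))|=|V(\xi_n(I_0))|\,S_I(\xi_n(I_0))$ from \eqref{eq:rel_S_V} (with $S_I(\xi_n(I_0))>0$). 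Re-assembling the two sums then yields $\Phi[\xi_n(\mu)](J)=\varepsilon(J)\,S_{J^{(n)}}(\xi_n(I_0))\,\Phi_n[\mu](J^{(n)})=S_J(\xi_n(I_0))\,\Phi_n[\mu](J^{(n)})$, which is the claim. The only genuinely delicate point is the middle step: one must make sure that the scalar relating $a_J$ to $a_{J^{(n)}}$ is really independent of $I$, and correctly track the parity $(-1)^{(k+1)c_m}=(-1)^{(k-1)c_m}$ coming from the half-integer shift $\tfrac{\pi(k-1)}{n}$ built into $\xi_n$; the remaining manipulations are straightforward substitutions.
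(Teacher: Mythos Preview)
Your proof is correct and follows essentially the same route as the paper's: both reduce by linearity to a single $I\in B_{k,n}$, use Lemma~\ref{lem:formula_inversion_Schur} to swap the roles of $I$ and $J^{(n)}$ in the Schur evaluation, and invoke \eqref{eq:rel_S_V} to convert $S_I(\xi_n(I_0))$ into a ratio of Vandermonde absolute values. The paper packages the linearity step as ``by linearity it suffices to treat $\mu=\delta_I$'' and then computes $\Phi[\xi_n(\delta_I)](J)=S_J(\xi_n(I))$ directly, whereas you expand both transforms and match the coefficient of $\mu(I)$; these are the same computation.

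The one place where your argument is slightly more cautious than the paper's is the reduction from $J$ to $J^{(n)}$. The paper observes that $J^{(n)}$ is obtained from $J$ by a cyclic shift plus a common translation by $\ell n$ on one block and $(\ell+1)n$ on the other, and asserts directly that $S_{J^{(n)}}(\xi_n(I))=S_J(\xi_n(I))$; in fact the sign $\varepsilon(J)$ you introduce equals $+1$ in this situation (the product of $(-1)^{(k+1)c_m}$ and the sign of the cyclic permutation is always even). Your weaker observation that $\varepsilon(J)$ is independent of $I$, so that it cancels against the same factor at $I=I_0$, is perfectly sufficient and arguably more transparent, since it avoids the parity computation altogether.
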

\begin{proof}
By linearity, we only have to prove the relation on $\mu=\delta_I$ for $I\in B_{k,n}$, where $\delta_I$ is defined in \eqref{eq:definition_dirac}. First, \eqref{eq:Fourier_probability} and Lemma \ref{lem:formula_inversion_Schur} yield
$$\Phi_n[\delta_I](J^{(n)})=\frac{S_{I}(\xi_{n}(J^{(n)}))}{S_I(\xi_{n}(I_0))}=\frac{S_{J^{(n)}}(\xi_{n}(I))}{S_{J^{(n)}}(\xi_{n}(I_0))}.$$
On the other hand,  since $\xi_n(\delta_I)=\left(\frac{2\pi}{n}\right)^k\frac{n^k}{\vert V(\xi_n(I))\vert^2}\delta{\xi_n(I)}=\frac{(2\pi)^k}{\vert V(\xi_n(I))\vert^2}\delta{\xi_n(I)}$,
\begin{align*}
\Phi[\xi_{n}(\delta_I)](J)=&\frac{1}{(2\pi)^k}\int_{ T_{k}}S_{J}(\vec{u})\vert V(\vec{u})\vert^2\frac{(2\pi)^kd\delta{\xi_n(I)}}{\vert V(\xi_n(I))\vert^2}(\vec{u})\\
=&S_{J}(\xi_{n}(I))=S_{J^{(n)}}(\xi_{n}(I)),
\end{align*}
The last equality is a consequence to the fact that $J^{(n)}$ has the form $(J_r+(\ell+1)n,\ldots,J_k+(\ell+1)n, J_1+\ell n,\ldots, J_{r-1}+\ell n)$ for some $1\leq r\leq k$ and $\ell\in\mathbb{Z}$ and because of the determinantal formula of $S_J$ and $S_{J'}$. The result is then deduced.
\end{proof}
\subsection{Invariant Brownian motion on $U_k$ and unitary Dyson Brownian motion}\label{Sec:Brownian_motion}
Let us conclude this section by recalling rigorously the definition of the \textit{generalized} Dyson Brownian motion on $U_k$ and its eigenvalue process on $ T_{k}$, see \cite{L} for a detailed background on this stochastic process and \cite{Far} for most of the content of this section. The Lie group $U(k)$ is a Riemannian sub-manifold of $M_{k}(\mathbb{C})$ of real dimension $n^2$, and the tangent space of $U(k)$ at $Id$ is given by the Lie algebra 
$$\mathfrak{u}_k=\{M\in M_k(\mathbb{C}),\, M^*+M=0\}.$$
Remark that $\mathfrak{u}_k$ is not semisimple, since $\mathfrak{u}_k=\mathfrak{su}_k\oplus i\mathbb{R}Id$, where $\mathfrak{su}_k=\{M\in\mathfrak{u}_k,\, Tr(M)=0\}$ is simple. There is, up to a scalar, a unique invariant scalar product on $\mathfrak{su}_k$ given by $\langle \cdot,\cdot\rangle:(X,Y)\mapsto kTr(Y^*X)$. There is trivially a unique invariant metric on $i\mathbb{R}Id$ given by $(X,Y)\mapsto Tr(X) Tr(Y^*)$. Hence, we can deduce that for all $\alpha,\beta> 0$, there is a invariant scalar product $\langle\cdot,\cdot\rangle_{\alpha,\gamma}$ on $\mathfrak{u}_k$ given by
$$\langle X,Y\rangle_{\alpha,\gamma}=\gamma^{-1} k Tr(Y^*X)+(\alpha^{-1}-\gamma^{-1}) Tr(Y^*)Tr(X),$$
and that any invariant scalar product is of this type.

For $1\leq r,s\leq n$, write $E_{rs}$ for the matrix with only one nonzero entry on the $r$-th line and $s$-th column. Set $X_{rs}=\frac{1}{\sqrt{2 k}}
(E_{rs}-E_{sr}), X_{sr}=\frac{i}{\sqrt{2 k}}(E_{rs}+E_{sr})$ for $1\leq r<s\leq k$, $X_{rr}=\frac{i}{\sqrt{2 r(r-1) k}}\sum_{1\leq s< r}(E_{ss}-E_{rr})$ for $2\leq r\leq k$ and $X_{11}=\frac{1}{k}\sum_{r=1}^kE_{rr}$. Then, $\{X_{rs}\}_{1\leq r,s\leq k}$ is an orthonormal basis of $\mathfrak{u}_k$ with respect to the scalar product $\langle \cdot,\cdot\rangle_{1,1}$. For $Y\in \mathfrak{u}_k$ we denote by $\mathcal{L}_Y:C^1(U_k)\mapsto C^0(U_k)$ the differential operator induced by the unique left-invariant vector field equal to $Y$ at $Id$, namely for $f\in C^1(U_k)$ and $U\in U_k$,
$$\mathcal{L}_{Y}[f](U)=\lim_{t\rightarrow 0}\frac{f(U\exp(tY))-f(U)}{t}.$$
We can then define the Brownian motion with parameters $(\alpha,\gamma)$ on $U(k)$ as follows.
\begin{definition}\label{def:brownian_un}
Let $\alpha,\gamma\geq  0$. There exists a unique Markov process $\mathbf{B}^{\alpha,\gamma}(t))_{t\geq 0}$ with almost sure continuous trajectories on $U(k)$ such that 
$$\frac{\partial}{\partial t}\mathbb{E}(f(U\mathbf{B}^{\alpha,\gamma}(t)))_{\vert t=0}=\left(\gamma\sum_{1\leq i,j\leq n}\mathcal{L}_{X_{ij}}^2+(\alpha-\gamma)\mathcal{L}_{X_{11}}^2\right)f(U),$$
for all $f\in C^1(U_n)$ and $U\in U_n$. 
Moreover, such a Markov process is invariant by left and right translation by elements of $U(k)$, and any Markov process on $U(k)$ with the same invariance property and with almost sure continuous trajectories is equal to $\mathbf{B}^{\alpha,\gamma}$ for some $\alpha,\gamma\geq 0$.
\end{definition}
For $\alpha,\gamma>0$, the stochastic process $\mathbf{B}^{\alpha,\gamma}$ is the Brownian motion with respect to the left invariant metric whose value at $Id$ is $\langle\cdot,\cdot\rangle_{\alpha,\gamma}$. From the expression of the generator given in Definition \ref{def:brownian_un}, it can be seen that $\mathbf{B}^{0,\gamma}(t)\mathbf{B}^{0,\gamma}(t)^{-1}\in SU(k)$  (resp. $\mathbf{B}^{\alpha,0}(t)\mathbf{B}^{\alpha,0}(t)^{-1}\in\mathbb{T}\cdot Id$) a.s for all $t\geq 0$. 

For $\vec{u}\in  T_{k}$ with $u_k<\dots<u_1$, let $B^{\alpha,\gamma}_{\vec{u}}$ be the eigenvalues process of $U_{\vec{u}}\mathbf{B}^{\alpha,\gamma}$ where $U_{\vec{u}}$ is uniformly sampled on the adjoint orbit $\{U\in U(k), p(U)=\vec{u}\}$. Then, $B^{\alpha,\gamma}_{\vec{u}}$ is a Markov process on $ T_{k}$ starting at $\vec{u}$ and whose generator is given by 
\begin{equation}\label{eq:generator_Brownian_Motion}
\mathcal{L}_{\alpha,\gamma}=\frac{\gamma}{k}\left[\sum_{i=1}^k\frac{\partial^2}{\partial u_i^2}+\sum_{i<j}\cot \frac{u_j-u_i}{2}\left(\frac{\partial}{\partial u_j}-\frac{\partial}{\partial u_i}\right)\right]+\frac{\alpha-\gamma}{k^2}\frac{\partial^2}{\partial^2(\sum u_i)},
\end{equation}
the proof of this fact is given in \cite[Proposition 12.5.1]{Far} for the case $\alpha=\gamma=k$ and can be generalized for arbitrary $\alpha,\gamma\geq 0$. If $u_i=u_{i+1}$ for some $1\leq i\leq k$, one can still give a meaning to the corresponding heat equation in a weak sense, and in any case the corresponding kernel is $C^{\infty}$ for all $t>0$.  We call then the resulting stochastic process $(B^{\alpha,\gamma}_{\vec{u}}(t))_{t\geq 0}$ the generalized unitary Dyson Brownian motion with parameter $(\alpha,\gamma)$ starting at $\vec{u}$ (note that the subscript $\vec{u}$ will be dropped when $\vec{u}=(0,\ldots,0)$). 

When $\alpha,\gamma>0$ and for any initial value $\vec{u}\in T_{k}$, the probability measure $\mu_t$ given by the unitary Dyson Brownian motion with parameter $(\alpha,\gamma)$ starting at $\vec{u}$ has a density $K^{U(k),\alpha,\gamma}_{t}(\vec{u},\vec{v})\vert V(\vec{v})\vert^2$,  where $K^{U(k),\alpha,\gamma}_{t}(\vec{u},\vec{v})$ satisfies the parabolic differential equation 
$$\left\lbrace\begin{aligned}
\partial_tK^{U(k),\alpha,\gamma}_{t}(\vec{u},\cdot)&=\mathcal{L}_{\alpha,\gamma}K^{U(k),\alpha,\gamma}_{t}(\vec{u},\cdot),\\
K^{U(k),\alpha,\gamma}_{0}(\vec{u},\cdot)&=\frac{1}{\vert V(\vec{u})\vert^2}\delta_{\vec{u}},
\end{aligned}\right.$$
where the second equality is understood in the weak sense. For general initial distribution $\mu_0\in \mathcal{M}_1(T_k)$, the time-evolving distribution $(\mu_t)_{t\geq 0}$ can also be characterized by its Fourier transform: for any $J\in B_{k}$,
\begin{equation}\label{eq:Fourier_Dyson_Brownian_motion}
\Phi[\mu_t](J)=\exp(-\kappa_{\alpha,\gamma}(J)t)\Phi[\mu](J),
\end{equation}
where 
$$\kappa_{\alpha,\gamma}(J)=\frac{\gamma}{k}\left[\Vert \tilde{J}\Vert^2-\Vert \widetilde{I_0}\Vert^2\right]+\frac{\alpha}{k^2}(\langle J\rangle-\langle I_0\rangle)^2,$$
with $\langle J\rangle=\sum_{i=1}^k J_i$ and $\Vert \tilde{J}\Vert^2=\sum_{i=1}^k(J_i-\langle J\rangle/k)^2$ for $J\in B_{k}$. By \eqref{eq:Fourier_Dyson_Brownian_motion} and the inverse Fourier transform on the unitary group, for $\alpha,\gamma>0$ and $t>0$ we have
\begin{equation}\label{eq:series_representation_Brownian}
K^{U(k),\alpha,\gamma}_{t}(\vec{u},\vec{v})=\sum_{J\in B_{k}}\exp(-\kappa_{\alpha,\gamma}(J)t)\overline{S_{J}(\vec{u})}S_{J}(\vec{v}).
\end{equation}
In the case where $\alpha=0$, the stochastic process remains on the affine subspace $\{\vec{v}\in T_k,\sum_{i=1}
^kv_i=\sum_{i=1}^ku_i\}$ and thus has no density on $ T_{k}$ even for $t>0$. Set $\mathbb{A}=\{\vec{u}\in T_k,\sum_{i=0}^ku_i=0\}$, which corresponds to the set $\{p(U), U\in SU(k)\}$. Then, $B^{0,1}$ has a density $K^{SU(k),\gamma}_{t}(\vec{u},\vec{v})$ on $\mathbb{A}$ given by
\begin{equation}\label{eq:series_representation_Brownian_suk}
K^{SU(k),\gamma}_{t}(\vec{u},\vec{v})=\sum_{\substack{J\in B_{k}\\J_k=0}}\exp(-\kappa_{0,\gamma}(J)t)\overline{S_{J}(\vec{u})}S_{J}(\vec{v}).
\end{equation}
By extension, when $\sum_{i=1}^ku_i=\sum_{i=1}^kv_i$, we also set $K^{SU(k),\gamma}_{t}(\vec{u},\vec{v})=K^{SU(k),\gamma}_{t}(\ell\cdot\vec{u},\ell\cdot\vec{v})$ with $\ell=\sum_{i=1}^ku_i$.

By the diagonalization of $\mathcal{L}_{\alpha,\gamma}$ on the orthonormal basis $\{S_{J}, J\in B_k\}$ of the Hilbert space $L^2( T_{k},dp_{\star}(\mu_H))$, the kernel of any operator $\mathcal{L}_{\alpha,\gamma}$ with $\gamma,\alpha>0$ consists of the vector space $\mathbb{C}\mathbf{1}_{T_k}$, so that the unique invariant measure of the corresponding Brownian motion is
$$\vert V(u_1,\ldots,u_k)\vert^2=\prod_{i<j}\vert e^{2i\pi u_j}-e^{2i\pi u_i}\vert^2,$$
which is the density of the measure $p_{\star}(\mu_H)$ with respect to the Lebesgue measure on $ T_{k}$. Hence, for $\alpha,\gamma>0$, the generalized Dyson unitary Brownian motion has a unique invariant probability distribution on $ T_{k}$ which is precisely $p_{\star}(\mu_H)$.  When $\alpha=0$ (resp. $\gamma=0$), the space of invariant measures is larger than $p_{\star}(\mu_H)$.

 \section{The Fourier transform of a large sequence of convolution}\label{sec:Fourier_large_product}
The goal of this section is to estimate the discrete Fourier transform of the convolution of a sequence $(\mu_i)_{1\leq i\leq m}$ of $h$-probability measures in $B_{k,n}$ when $m$ is large compared to $n$. Mimicking the previous definition for elements of $B_{k,n}$, for $x\in \mathbb{R}^k$ we set $\langle x\rangle=\sum_{j=1}^kx_j$ and then define
$$\widetilde{x}=x-\frac{\langle x\rangle}{k}\mathbf{1}_k=\left(x_1-\frac{\langle x\rangle}{k},\ldots,x_k-\frac{\langle x\rangle}{k}\right).$$ 
We then introduce for $x\in \mathbb{R}^k$ the shifted norm 
$$K(x)=\Vert x\Vert^2-\Vert \tilde{I_0}\Vert^2,$$
where $\Vert\cdot\Vert$ denotes the usual Euclidean norm on $\mathbb{R}^k$. We will regularly use for $x\in\mathbb{R}^k$ the identity
\begin{equation}\label{eq:rho(tilde_I)}K\left(\tilde{x}\right)=K(x)-\frac{\langle x\rangle^2}{k},
\end{equation}
whose proof is left to the reader. Remark that we have in particular $K(\tilde{I_0})=0$ and that $I\in B_{k,n}$ satisfies $K(\tilde{I})=0$ if and only if $I=I_0+\ell\mathbf{1}_k$ for some $0\leq \ell\leq n-k$. Finally, we introduce for $I\in B_{k,n}$ the shifted configuration 
$$\hat{I}=I-I_k\mathbf{1}_k=(I_1-I_k,\ldots,I_{k-1}-I_k,0),$$
so that $\hat{I}\in B_{k,n}$.
We introduced in Section \ref{sec:intro_model} for $\mu\in \mathcal{M}_1(B_{k,n})$, the random variable $I_{\mu}\in B_{k,n}$ whose distribution is 
$$\mathbb{P}(I_{\mu}=I)=\mu(I)\mu^h(I)=\frac{\vert V(\xi_n(I))\vert^2}{n^k}\mu(I).$$
We then set 
\begin{equation}\label{eq:definition_moments}
\Vert\mu\Vert_r=\mathbb{E}\left[\Vert\tilde{I}_\mu\Vert^r\right],\quad \langle \mu\rangle=\mathbb{E}\left[\langle I_{\mu}\rangle\right],\,\langle\hat{\mu}-I_0\rangle_r=\mathbb{E}\left[\langle \widehat{I_{\mu}}-I_0\rangle^r\right]\text{ and } Var_{r}(\mu)=\mathbb{E}\left[\Big\vert\langle I_{\mu}\rangle-\langle \mu\rangle\Big\vert^r\right]
\end{equation}
for $r\geq 2$. We simply write $Var(\mu)$ for $Var_2(\mu)$ in the sequel, and we finally set 
$$K(\mu):=\Vert \mu\Vert_2-\Vert \tilde{I}_0\Vert^2=\mathbb{E}\left[K(\tilde{I}_{\mu})\right].$$
\nomenclature{$K(\mu)$}{$\mathbb{E}\left[\Vert\tilde{I}_\mu\Vert^2\right]-\Vert \tilde{I}_0\Vert^2$}
The first main estimate of the Fourier transform is the following lemma, which shows pointwise asymptotics of the Fourier transform as $n$ goes to infinity. For sake of completeness, we give asymptotic results which hold uniformly in $k$. In the following statement, recall that for $I\in B_{k}$ with $I_1<I_k+n$, we set $\lambda_I=I-I_0$ and $I^{(n)}$ is the unique element of $B_{k,n}$ such that $\{I^{(n)}_i[n]\}_{1\leq i\leq k}=\{I_i[n]\}_{1\leq i\leq k}$.

\begin{lemma}\label{lem:asymptoti_eigenvalue}
Let $n,k\geq 2$. For any $h$-probability measure $\mu\in B_{k,n}$ and $I\in B_{k}$ with $n\geq \Vert I\Vert{\infty}$,
\begin{align*}
e^{-2i\pi \frac{(\langle\mu\rangle-\langle I_0\rangle)\cdot\langle \lambda_I\rangle}{kn}}\Phi_{n}[\mu](I^{(n)})&=1-\frac{(2\pi)^2}{2n^2}\left[\frac{K(\mu)K(\tilde{I})}{k^2-1}+\frac{Var(\mu)\langle \lambda_I\rangle^2)}{k^2}\right]\\
&\hspace{3cm}+\frac{1}{n^3}O\left(\frac{Var_3(\mu)\cdot\vert\langle \lambda_I\rangle\vert^3}{k^3}+\langle\hat{\mu}-I_0\rangle_3\cdot\Vert \tilde{I}\Vert_\infty^3\right),
\end{align*}
and 
\begin{align*}
\left\vert \Phi_{n}[\mu](I^{(n)})\right\vert&\leq 1-\frac{(2\pi)^2}{2n^2}\frac{K(\mu)K(\tilde{I})}{k^2-1}+\frac{1}{n^3}O\left(\langle\hat{\mu}-I_0\rangle_3\cdot\Vert \tilde{I}\Vert_\infty^3\right),
\end{align*}
where $O(\cdot)$ only depends on numeric constants.
\end{lemma}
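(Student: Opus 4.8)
The plan is to express $\Phi_n[\mu](I^{(n)})$ directly through the formula \eqref{eq:Fourier_probability}, writing $\mu = \sum_{J\in B_{k,n}} a_J \delta_J$ with $a_J\geq 0$, $\sum_J a_J = 1$, so that
$$\Phi_n[\mu](I^{(n)}) = \sum_{J\in B_{k,n}} a_J \frac{S_{J}(\xi_n(I^{(n)}))}{S_J(\xi_n(I_0))} = \E\left[\frac{S_{I_\mu}(\xi_n(I))}{S_{I_\mu}(\xi_n(I_0))}\right],$$
using Proposition~\ref{prop:equiv_dft_cft} to replace $I^{(n)}$ by $I$ up to the character factor $S_J(\xi_n(I_0))$, which cancels in the ratio by Lemma~\ref{lem:formula_inversion_Schur}. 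The first task is therefore to understand the ratio of Schur functions $S_J(\xi_n(I))/S_J(\xi_n(I_0))$ for a single configuration $J$. Since $\xi_n(I) = R_{(k-1)/2n}(\tfrac{2\pi}{n}I)$, all the arguments $u_\ell$ are of size $O(1/n)$ when $I$ stays bounded, so I expand the Weyl character formula $S_J(\vec u) = a_J(\vec u)/a_{I_0}(\vec u)$ with $a_J(\vec u) = \det(e^{iu_\ell J_m})$. The key observation is that $a_J(\xi_n(I)) = \exp\!\big(\tfrac{(k-1)i\pi}{n}(\langle I\rangle - \langle J\rangle)\big)\, a_I(\xi_n(J))$ by \eqref{eq:relation_vandermonde}, which together with $\langle\tilde x\rangle$-bookkeeping lets me reduce everything to the exponential generating function $\sum_m e^{iu_\ell J_m}$ with $u_\ell = \tfrac{2\pi}{n}(I_\ell - \tfrac{k-1}{2})$.

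**The expansion.**

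The heart of the computation is a Taylor expansion in powers of $1/n$. Writing $\chi_I(J) := S_J(\xi_n(I))/S_J(\xi_n(I_0)) = S_I(\xi_n(J))/S_I(\xi_n(I_0))$ (the symmetry of Lemma~\ref{lem:formula_inversion_Schur}), I would fix $I$ and expand $\chi_I(J)$ as a function of $J$. Using the determinantal formula and factoring out the overall phase $\exp\!\big(\tfrac{(k-1)i\pi}{n}(\langle J\rangle - \langle I_0\rangle)\big)$ together with a phase in $\langle I\rangle$, one gets
$$\chi_I(J) = e^{2i\pi\frac{\langle \lambda_I\rangle\langle \lambda_J\rangle}{kn}}\left(1 - \frac{(2\pi)^2}{2n^2}\Big[\frac{K(\tilde I)\,K(\tilde J)}{k^2-1} + \text{(lower-order / cross terms)}\Big] + O\!\Big(\tfrac{1}{n^3}(\cdots)\Big)\right),$$
where the quadratic term is dictated by the second-order Casimir of $\mathfrak{su}_k$: the coefficient $\tfrac{1}{k^2-1}$ arises because the normalized Killing form on $\mathfrak{su}_k$ contributes $\|\tilde J\|^2 - \|\widetilde{I_0}\|^2 = K(\tilde J)$ against $\|\tilde I\|^2-\|\widetilde{I_0}\|^2 = K(\tilde I)$, and the $U(1)$-direction contributes the $\langle\lambda_I\rangle\langle\lambda_J\rangle/k$ term. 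This is exactly the discrete precursor of \eqref{eq:Fourier_Dyson_Brownian_motion}. The cleanest way to organize this is to split $J = \hat J + J_k\mathbf 1_k$ and $I$ analogously, so that the $\mathbf 1_k$-component only enters through $\langle I\rangle, \langle J\rangle$ (controlling the $Var$-type moments) while the traceless part $\tilde J$ controls the $\|\mu\|_r$-type moments. The third-order remainder then naturally separates into a piece bounded by $\vert\langle\lambda_I\rangle\vert^3/k^3$ times fluctuations of $\langle J\rangle$ (giving $Var_3(\mu)$ after taking expectation) and a piece bounded by $\|\tilde I\|_\infty^3$ times $\langle\hat\mu - I_0\rangle_3$.

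**Taking expectations and the two bounds.**

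Once the single-$J$ expansion is in hand, I substitute $J = I_\mu$ and take $\E_\mu$. The leading exponential $e^{2i\pi\langle\lambda_I\rangle\langle\lambda_J\rangle/(kn)}$ is itself expanded: its expectation over $I_\mu$ produces, after factoring out $e^{2i\pi(\langle\mu\rangle-\langle I_0\rangle)\langle\lambda_I\rangle/(kn)}$ (the phase appearing on the left side of the statement), a factor $1 - \tfrac{(2\pi)^2}{2n^2}\tfrac{Var(\mu)\langle\lambda_I\rangle^2}{k^2} + O(n^{-3}Var_3(\mu)\vert\langle\lambda_I\rangle\vert^3/k^3)$ by a standard characteristic-function estimate. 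Multiplying this by the expectation of the quadratic bracket — which contributes $\tfrac{(2\pi)^2}{2n^2}\cdot\tfrac{K(\mu)K(\tilde I)}{k^2-1}$ since $\E[K(\tilde I_\mu)] = K(\mu)$ — and collecting error terms gives the first displayed asymptotic. For the second (the modulus bound), I do not expand the leading phase at all: $\vert e^{2i\pi\langle\lambda_I\rangle\langle\lambda_J\rangle/(kn)}\vert = 1$, so $\vert\chi_I(J)\vert \leq 1 - \tfrac{(2\pi)^2}{2n^2}\tfrac{K(\tilde I)K(\tilde J)}{k^2-1} + O(n^{-3}\langle\widehat J - I_0\rangle^3\|\tilde I\|_\infty^3)$ pointwise (the quadratic part being manifestly real and nonnegative since $K(\tilde I), K(\tilde J)\geq 0$), and then $\vert\Phi_n[\mu](I^{(n)})\vert \leq \E_\mu\vert\chi_I(I_\mu)\vert$ by the triangle inequality yields the claim, with $\E[K(\tilde I_\mu)] = K(\mu)$.

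**Main obstacle.**

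The technical crux — and where I expect to spend the most effort — is proving the single-configuration expansion of $\chi_I(J)$ uniformly in $k$, with the error terms in exactly the stated form (separating the $\langle\lambda_I\rangle/k$-scale from the $\|\tilde I\|_\infty$-scale). The determinant $a_J(\xi_n(I))$ is a $k\times k$ alternating sum, and a naive term-by-term Taylor expansion loses all the cancellation; one must instead expand the \emph{ratio} $a_J/a_{I_0}$, equivalently work with the power-sum or Newton-identity description of Schur polynomials evaluated near $(1,\dots,1)$, and carefully track that the quadratic term assembles into the Casimir $\propto K(\tilde I)K(\tilde J)$ rather than into something of size $\|\tilde I\|^2\|\tilde J\|^2$ without the $1/(k^2-1)$ gain. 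Controlling the cubic remainder uniformly in $k$ requires a Lagrange-form bound on the third derivative of $\vec u\mapsto \log S_J(\vec u)$ on the relevant polydisc, using $\|\xi_n(I)\|_\infty = O(\|I\|_\infty/n)$ and the constraint $n\geq\|I\|_\infty$; the hypothesis $n\geq\|I\|_\infty$ guarantees the arguments stay within a fixed neighborhood of $0$ so the expansion is valid, but squeezing out the precise dependence on $k$ in the constants is the delicate part.
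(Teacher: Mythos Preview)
Your overall strategy matches the paper's: write $\Phi_n[\mu](I^{(n)})=\E_\mu[S_{I_\mu}(\xi_n(I))/S_{I_\mu}(\xi_n(I_0))]$, factor out the phase $e^{2i\pi\langle\lambda_I\rangle\langle\lambda_{I_\mu}\rangle/(kn)}$ via \eqref{eq:invariance_character_centered}, expand the centered ratio $S_{\hat J}(2\pi\tilde I/n)/S_{\hat J}(2\pi\widetilde{I_0}/n)$, then average and separate the $Var(\mu)$ and $K(\mu)$ contributions. The paper executes exactly this, including a truncation on $\{\langle\hat I_\mu-I_0\rangle<n/\|\tilde I\|_\infty\}$ with Markov's inequality for the complement, and a Young inequality $ab\leq a^3/3+3b^{3/2}/2$ to absorb the cross term into the stated error.

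Where the paper is more specific than your sketch is precisely at the ``main obstacle'' you flag. Instead of Newton identities for ordinary power sums or a Lagrange remainder on $\log S_J$, the paper proves the single-configuration expansion (Lemma~\ref{lem:asymptotic_schur}) via the \emph{generalized binomial formula for shifted Schur functions} of Okounkov--Olshanski,
\[
\frac{s_\lambda(1+x_1,\dots,1+x_k)}{s_\lambda(1,\dots,1)}=\sum_{\nu\subset\lambda}\frac{1}{(k\upharpoonleft\nu)}\,s^*_\nu(\lambda)\,s_\nu(\vec x).
\]
This buys two things your route would have to fight for. First, the quadratic term assembles into $K(\widetilde{I_\lambda})K(\vec u)/(k^2-1)$ automatically once one inserts the explicit $s^*_{(1)},s^*_{(2)},s^*_{(11)}$ and the shifted Newton identities --- the cancellation producing the $1/(k^2-1)$ gain is built into the formula, not tracked by hand. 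Second, and crucially for the uniform-in-$k$ remainder, the tail $\sum_{|\nu|\geq 3}$ is controlled using $s^*_\nu(\lambda)=\frac{|\lambda|!}{(|\lambda|-|\nu|)!}\frac{\dim V(\lambda\setminus\nu)}{\dim V(\lambda)}$ together with the branching identity $\sum_{|\nu|=l}\dim V(\nu)\dim V(\lambda\setminus\nu)=\dim V(\lambda)$, which collapses the entire tail to $\sum_{l\geq 3}\binom{|\lambda|}{l}\|\vec x\|_\infty^l$ with purely numerical constants. A Lagrange bound on derivatives of $\log S_J$ would instead pick up $k$-dependence from the Vandermonde denominator, and recovering the clean $\langle\hat\mu-I_0\rangle_3\cdot\|\tilde I\|_\infty^3$ form of the remainder would be substantially harder.
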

The main ingredient in the proof of the latter lemma is an asymptotic formula for ratios of Schur functions. Recall that $\Lambda_{k}$ denotes the set of partitions with at most $k$ parts and for $\vec{u}\in\mathbb{R}^k$ we abbreviate $\exp(i\vec{u})$ for the vector $(\exp(iu_1),\ldots,\exp(iu_k))$.
\begin{lemma}\label{lem:asymptotic_schur}
Let $k\geq 2$. For any $\lambda\in \Lambda_{k}$ with $\lambda_k=0$ and $\vec{u}\in \mathbb{R}^k$ such that $\sum_{i=1}^ku_i=0$ and $\langle \lambda\rangle \max(\Vert \vec{u}\Vert_\infty,k/3)\leq n$,
\begin{align*}
\frac{s_{\lambda}\left(\exp\Big(2\pi i\vec{u}/n\Big)\right)}{s_{\lambda}\left(\exp\left(2\pi i \widetilde{I_0}/n\right)\right)}=1-\frac{(2\pi)^2}{2(k^2-1)n^2}\Big[K(\widetilde{I_{\lambda}})K(\vec{u})\Big]+O\left(\left(\frac{\langle \lambda\rangle\cdot (\Vert \vec{u}\Vert_\infty+k)}{n}\right)^3\right),
\end{align*}
with $O(\cdot)$ only depending on numeric constants.
\end{lemma}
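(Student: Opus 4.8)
The approach I would take is representation-theoretic. Write $V_\lambda$ for the irreducible polynomial $GL_k$-module (equivalently, the irreducible $U(k)$-module) of highest weight $\lambda$, set $d_\lambda=s_\lambda(1,\dots,1)=\dim V_\lambda$, and let $W$ be the random vector in $\mathbb{Z}^k$ whose law assigns mass $\dim V_\lambda[\nu]/d_\lambda$ to each weight $\nu$ of $V_\lambda$. Since $s_\lambda(\exp(i\vec v))=\sum_{T}e^{i\langle\mathrm{wt}(T),\vec v\rangle}$ (the sum over semistandard tableaux of shape $\lambda$ with entries in $\{1,\dots,k\}$, $\mathrm{wt}(T)_j=\#\{j\text{ in }T\}$, and $\exp(i\vec v):=(e^{iv_1},\dots,e^{iv_k})$), we have
\[
\frac{s_\lambda\big(\exp(i\vec v)\big)}{d_\lambda}=\mathbb{E}\!\left[e^{i\langle W,\vec v\rangle}\right],
\]
so the object to estimate is a ratio of values of the characteristic function of $W$, at $2\pi\vec u/n$ and at $2\pi\widetilde{I_0}/n$, which I would obtain by a second-order Taylor expansion of that characteristic function. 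The two pieces of data needed are the mean and the covariance of $W$. Because the weight multiset of $V_\lambda$ is $S_k$-invariant, $\mathbb{E}[W]=\frac{\langle\lambda\rangle}{k}\mathbf{1}_k$ and $\Sigma:=\mathrm{Cov}(W)$ commutes with every coordinate permutation, hence $\Sigma=a\,\mathrm{Id}_k+b\,\mathbf{1}_k\mathbf{1}_k^{\top}$; since $\langle W,\mathbf{1}_k\rangle=\langle\lambda\rangle$ is deterministic one gets $b=-a/k$, so $\Sigma=a\big(\mathrm{Id}_k-\tfrac1k\mathbf{1}_k\mathbf{1}_k^{\top}\big)$ with $a(k-1)=\mathrm{tr}\,\Sigma=\mathbb{E}\big\|W-\mathbb{E}W\big\|^2$.

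The key computation — and the step I expect to be the real heart of the argument — is the identity $a=\dfrac{K(\widetilde{I_\lambda})}{k^2-1}$, equivalently $\mathbb{E}\big\|W-\mathbb{E}W\big\|^2=\dfrac{K(\widetilde{I_\lambda})}{k+1}$. I would derive it from the quadratic Casimir: the bilinear form $(X,Y)\mapsto\mathrm{tr}_{V_\lambda}(XY)$ on $\mathfrak{sl}_k$ is $\mathrm{ad}$-invariant, hence a scalar multiple of the trace form, and evaluating on $\sum_a X_a^2$ for $(X_a)$ a trace-form-orthonormal basis of $\mathfrak{sl}_k$ identifies the scalar as $d_\lambda c_\lambda/(k^2-1)$, where $c_\lambda$ is the eigenvalue of the Casimir on $V_\lambda$ in that normalization. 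Restricting $\mathrm{tr}_{V_\lambda}(XY)=\tfrac{d_\lambda c_\lambda}{k^2-1}\mathrm{tr}(XY)$ to an orthonormal basis of the traceless diagonal matrices — on which $W-\mathbb{E}W$ is exactly the random $\mathfrak{sl}_k$-weight — and summing gives $\mathbb{E}\|W-\mathbb{E}W\|^2=\tfrac{(k-1)c_\lambda}{k^2-1}$. Finally $\widetilde{I_\lambda}=\widetilde\lambda+\widetilde{I_0}$ yields $K(\widetilde{I_\lambda})=\|\widetilde\lambda\|^2+2\langle\widetilde\lambda,\widetilde{I_0}\rangle$, which is precisely $c_\lambda=\langle\lambda,\lambda+2\rho\rangle$ for $\mathfrak{sl}_k$ in the trace-form normalization, so $a=c_\lambda/(k^2-1)=K(\widetilde{I_\lambda})/(k^2-1)$. (This is the $A_{k-1}$ instance of the classical formula relating the second moment of the weight measure of an irreducible representation to its quadratic Casimir, and could alternatively be quoted from the literature on heat kernels on compact Lie groups; it is also easily verified directly on $V_{\omega_1}$, where it gives $a=1/k$.)

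The remainder is then routine. From $e^{ix}=1+ix-\tfrac{x^2}{2}+\theta(x)$ with $|\theta(x)|\le\tfrac{|x|^3}{6}$ and the deterministic bound $|\langle W,\vec v\rangle|\le\langle\lambda\rangle\|\vec v\|_\infty$ (the coordinates of $W$ are nonnegative and sum to $\langle\lambda\rangle$), one gets
\[
\frac{s_\lambda(\exp(i\vec v))}{d_\lambda}=1+i\,\tfrac{\langle\lambda\rangle}{k}\langle \vec v\rangle-\tfrac12\,\vec v^{\top}\Sigma\,\vec v+O\!\left((\langle\lambda\rangle\|\vec v\|_\infty)^3\right),
\]
which I would apply with $\vec v=2\pi\vec u/n$ and with $\vec v=2\pi\widetilde{I_0}/n$: both satisfy $\langle\vec v\rangle=0$, so the linear terms drop and $\vec v^{\top}\Sigma\vec v=a\|\vec v\|^2$. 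Dividing the two expansions (using $\|\widetilde{I_0}\|_\infty=\tfrac{k-1}{2}$, absorbed into the $\|\vec u\|_\infty+k$ of the error term) gives
\[\begin{aligned}
\frac{s_\lambda(\exp(2\pi i\vec u/n))}{s_\lambda(\exp(2\pi i\widetilde{I_0}/n))}
&=1-\frac{a(2\pi)^2}{2n^2}\big(\|\vec u\|^2-\|\widetilde{I_0}\|^2\big)+O\!\left(\Big(\tfrac{\langle\lambda\rangle(\|\vec u\|_\infty+k)}{n}\Big)^3\right)\\
&=1-\frac{(2\pi)^2}{2(k^2-1)n^2}\,K(\widetilde{I_\lambda})\,K(\vec u)+O\!\left(\Big(\tfrac{\langle\lambda\rangle(\|\vec u\|_\infty+k)}{n}\Big)^3\right),
\end{aligned}\]
using $K(\vec u)=\|\vec u\|^2-\|\widetilde{I_0}\|^2$ and $a=K(\widetilde{I_\lambda})/(k^2-1)$. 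The only part that still needs genuine care is the error bookkeeping in this division: checking that the implicit constants are numerical and uniform in $k$, and that the division is legitimate — in the range of parameters where the displayed error term is $o(1)$ both characteristic functions are $1+o(1)$, while in the complementary range the asserted formula reduces to the crude bound $|s_\lambda(\exp(i\vec w))|\le d_\lambda$ together with the lower bound on $|s_\lambda(\exp(2\pi i\widetilde{I_0}/n))|$ furnished by the hook–content product formula. The hypothesis $\langle\lambda\rangle\max(\|\vec u\|_\infty,k/3)\le n$ is precisely what is needed to keep these constants universal in $k$.
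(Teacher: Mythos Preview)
Your approach is correct and genuinely different from the paper's. Both routes arrive at the same intermediate identity
\[
\frac{s_\lambda(\exp(i\vec v))}{d_\lambda}=1-\frac{K(\widetilde{I_\lambda})}{2(k^2-1)}\|\vec v\|^2+O\bigl((\langle\lambda\rangle\|\vec v\|_\infty)^3\bigr)\qquad(\langle\vec v\rangle=0),
\]
and then specialize to $\vec v=2\pi\vec u/n$ and $\vec v=2\pi\widetilde{I_0}/n$ and divide. The paper, however, obtains this identity via the Okounkov--Olshanski generalized binomial formula in shifted Schur functions: expanding $s_\lambda(1+x_1,\dots,1+x_k)/s_\lambda(\mathbf 1_k)=\sum_{\nu\subset\lambda}(k\upharpoonleft\nu)^{-1}s_\nu^*(\lambda)s_\nu(\vec x)$, truncating at $|\nu|\le 2$, bounding the tail combinatorially through dimension identities for skew Specht modules, and identifying the quadratic coefficient by explicit manipulation of $s_{(2)}^*,s_{(11)}^*$ and shifted power sums. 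Your characteristic-function/Casimir route is more conceptual and shorter --- the Casimir eigenvalue does in one line what the paper extracts by hand from the shifted Newton identities --- while the paper's approach is more self-contained (no Lie-theoretic input beyond symmetric functions) and, via the full binomial series, gives in principle systematic access to higher-order corrections. The division step and its error bookkeeping are handled at the same level of detail in both arguments.

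One harmless notational slip: your displayed second-order expansion for general $\vec v$ omits the term $-\tfrac12\bigl(\tfrac{\langle\lambda\rangle}{k}\langle\vec v\rangle\bigr)^2$ coming from the square of the mean; since you immediately restrict to $\langle\vec v\rangle=0$ this does not affect the argument.
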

The proof of this lemma uses shifted Schur functions, as introduced in \cite{OkOl}. Shifted Schur functions form a particular basis $\{s_{\lambda}^*\}_{\lambda\in R}$ of the ring of polynomials functions in $\{x_i\}_{i\geq 1}$ which are symmetric in the variables $\{x_i-i\}_{i\geq 1}$. Their main useful property for us is the following generalized binomial formula for all $k\geq 1$ and $\mu\in \Lambda_{k}$, see \cite[Sec. 5]{OkOl},
\begin{equation}\label{eq:binomial_formula}
\frac{s_{\mu}(1+x_1,\ldots,1+x_k)}{s_{\mu}(1,\ldots,1)}=\sum_{\nu \subset \mu}\frac{1}{(k\upharpoonleft \nu)}s_{\nu}^*(\mu_1,\ldots,\mu_k)s_{\nu}(x_1,\ldots,x_k),
\end{equation}
where $s_{\nu}, \nu\in \Lambda_{k}$ are the usual Schur functions and $(k\upharpoonleft \nu)=\prod_{i=1}^k\frac{(\nu_i+k-i)!}{(k-i)!}=\prod_{i=1}^{l(\nu)}\prod_{j=1}^{\nu_i}(k-i+j)$. We will be mostly interested in the first shifted Schur functions, namely for $\vert\nu\vert\leq 2$. We then have 
$$s^*_{(1)}(x_{1},x_2,\ldots)=\sum_{i\geq 1}x_i,\, s^*_{(2)}(x_{1},x_2,\ldots)=\sum_{i\leq j}x_j(x_i-1),\, s^*_{(11)}(x_{1},x_2,\ldots)=\sum_{i<j}x_j(x_i+1).$$

\begin{proof}[Proof of Lemma \ref{lem:asymptotic_schur}]
We only prove this lemma for $\lambda\in \Lambda_{k}$ such that $\langle \lambda\rangle\geq 3$, the case $\langle \lambda\rangle\leq 2$ being done similarly. Let $k\geq 1$ and $\lambda\in \Lambda_{k}$ with $\langle \lambda\rangle\geq 3$. By \eqref{eq:binomial_formula}, for $(x_1,\ldots,x_k)\in \mathbb{C}^k$ we have 
\begin{align}
\frac{s_{\lambda}(1+x_1,\ldots,1+x_k)}{s_{\lambda}(1,\ldots,1)}=&\sum_{\nu \subset \lambda}\frac{1}{(k\upharpoonleft \nu)}s_{\nu}^*(\lambda)s_{\nu}(\vec{x})\nonumber\\
=&1+\frac{1}{(k\upharpoonleft (1))}s_{(1)}^*(\lambda)s_{(1)}(\vec{x})+\frac{1}{(k\upharpoonleft (2))}s_{(2)}^*(\lambda)s_{(2)}(\vec{x})\nonumber\\
+&\frac{1}{(k\upharpoonleft (11))}s_{(11)}^*(\lambda)s_{(11)}(\vec{x})+C_\lambda(\vec{x}) \label{eq:ratio_schur_first expansion},
\end{align}
with $C_\lambda(\vec{x)}=\sum_{\substack{\nu \subset \lambda\\\langle \nu\rangle \geq 3}}\frac{1}{(k\upharpoonleft \nu)}s_{\nu}^*(\lambda)s_{\nu}(\vec{x})$. Remark first that for $\vec{x}\in \mathbb{C}^k$, $\vert s_{\nu}(\vec{x})\vert \leq \Vert \vec{x}\Vert_{\infty}^{\langle \nu\rangle }s_{\nu}(1\ldots,1)$, so that, using that $ (k\upharpoonleft \nu)=s_{\nu}(\mathbf{1}_{k})H(\nu)$ with $\mathbf{1}_k$ the constant vector of length $k$ equal to $1$ and $H(\mu)$ the Hook length of $\mu$, see \cite[Eq. 5.5]{OkOl},
\begin{align}
\vert C_\lambda(\vec{x)}\vert \leq \sum_{\substack{\nu \subset \lambda\\\langle \nu\rangle \geq 3}}\frac{1}{(k\upharpoonleft \nu)}s_{\nu}^*(\lambda)s_{\nu}(\vec{x})
\leq &\sum_{\substack{\nu \subset \lambda\\\langle \nu\rangle \geq 3}}\frac{s_{\nu}(1\ldots,1)}{(k\upharpoonleft \nu)}s_{\nu}^*(\lambda)\Vert \vec{x}\Vert_{\infty}^{\langle \nu\rangle }\nonumber\\
\leq&\sum_{\substack{\nu \subset \lambda\\\langle \nu\rangle \geq 3}}\frac{s_{\nu}^*(\lambda)}{H(\nu)}\Vert \vec{x}\Vert_{\infty}^{\langle \nu\rangle }.\nonumber
\end{align}
Using that $\frac{(\vert \lambda \vert -\langle \nu\rangle )!}{\langle \lambda\rangle !}s^*_\nu(\lambda)=\frac{\dim V(\lambda\setminus \nu)}{\dim V(\lambda)}$, where $V(\lambda)$ denotes the Specht irreducible representation of the symmetric group $S_{\langle \lambda\rangle}$ associated to $\lambda$ (see \cite[Eq. 0.14]{OkOl}), and the classical Hook length formula $\dim V(\lambda)=\frac{\langle \lambda\rangle !}{H(\lambda)}$, we get  
\begin{align*}
\vert C_\lambda(\vec{x)}\vert \leq& \sum_{\substack{\nu \subset \lambda\\\langle \nu\rangle \geq 3}}H(\nu)^{-1}\frac{\langle \lambda\rangle !\dim V(\lambda\setminus\nu)}{(\langle \lambda\rangle -\langle \nu\rangle )!\dim V(\lambda)}\Vert \vec{x}\Vert_{\infty}^{\langle \nu\rangle }\\
\leq&\sum_{\substack{\nu \subset \lambda\\\langle \nu\rangle \geq 3}}\binom{\langle \lambda\rangle}{\langle \nu\rangle}\frac{\dim V(\nu)\dim V(\lambda\setminus\nu)}{\dim V(\lambda)}\Vert \vec{x}\Vert_{\infty}^{\langle \nu\rangle }\\
\leq &\sum_{l=3}^{\langle \lambda\rangle}\binom{\langle \lambda\rangle}{l}\Vert \vec{x}\Vert_{\infty}^{l}\left(\sum_{\substack{\nu\subset \lambda\\\langle \nu\rangle=l}}\frac{\dim V(\nu)\dim V(\lambda\setminus\nu)}{\dim V(\lambda)}\right).
\end{align*}
Remark that $\dim V(\lambda)=\sum_{\substack{\nu\subset \lambda\\\langle \nu\rangle=l}}\dim V(\nu)\dim V(\lambda\setminus\nu)$ for all $1\leq l\leq \langle \lambda\rangle$ by restricting the representation $V(\lambda)$ of $S_{\langle \lambda\rangle}$ to $S_{l}\times S_{\langle \lambda\rangle-l}$, so that $\sum_{\substack{\nu\subset \lambda\\\langle \nu\rangle=l}}\frac{\dim V(\nu)\dim V(\lambda\setminus\nu)}{\dim V(\lambda)}=1$ and finally 
\begin{align}
\vert C_\lambda(\vec{x)}\vert \leq&\sum_{l=3}^{\langle \lambda\rangle}\binom{\langle \lambda\rangle}{l}\Vert \vec{x}\Vert_{\infty}^{l}\nonumber\\
\leq& \Vert \vec{x}\Vert_{\infty}^{3}\frac{\langle \lambda\rangle !}{3!(\langle \lambda\rangle-3)!}(1+\Vert \vec{x}\Vert_{\infty})^{\langle \lambda\rangle-3}\leq \frac{\langle \lambda\rangle^3}{6}\Vert \vec{x}\Vert_{\infty}^{3}(1+\Vert \vec{x}\Vert_{\infty})^{\langle \lambda\rangle-3}\nonumber
\end{align}
as long as $\Vert x\Vert_{\infty}<1$, with $C$ only depending on $\lambda$. Remark that for $\Vert x\Vert_\infty\leq 1/\langle \lambda\rangle$, the latter bound simplifies as
\begin{equation}\label{eq:computation_ratio_first_part}
\vert C_\lambda(\vec{x)}\vert \leq e\cdot \frac{\langle \lambda\rangle^3}{6}\Vert \vec{x}\Vert_{\infty}^{3}.
\end{equation}

Let us then focus on the first terms of the right-hand side of \eqref{eq:ratio_schur_first expansion}. First, we have $\frac{1}{(k\upharpoonleft (1))}=\frac{1}{k}$ and $s_{(1)}^*=s_1$, so that 
$$\frac{1}{(k\upharpoonleft (1))}s_{(1)}^*(\lambda)s_{(1)}(\vec{x})=\frac{1}{k}s_{(1)}(\lambda)s_{(1)}(\vec{x}).$$
Set $x_i=\exp(iu_i/n)-1$, for $1\leq i\leq k$. We then have $\vert x_i\vert\leq (1\wedge \vert u_i\vert/n)$ and there is a numerical constant $C'>0$ such that $\left\vert x_i-iu_i/n+\frac{u_i^2}{2n^2}\right\vert \leq \frac{C'\vert u_i\vert^3}{n^3}$. Using that $\sum_{i=1}^ku_i=0$, this yields
$$\left\vert s_{(1)}(\vec{x})+\frac{1}{2n^2}\sum_{i=1}^ku_i^2\right\vert\leq \frac{kC'\Vert \vec{u}\Vert_{\infty}^3}{n^3}.$$
Hence, introducing the second power sum $p_2(\vec{u})=\sum_{i=1}^ku_i^2$,
\begin{equation}\label{eq:computation_ratio_second_part}\frac{1}{(k\upharpoonleft (1))}s_{(1)}^*(\lambda)s_{(1)}(\vec{x})=\frac{-s_{(1)}(\lambda)p_2(\vec{u})}{2kn^2}+K,
\end{equation}
with $\vert K\vert\leq C\langle \lambda\rangle \frac{\Vert \vec{u}\Vert_{\infty}^3}{n^3}$ with $C$ numeric.

Next, $\frac{1}{(k\upharpoonleft (11))}=\frac{1}{k(k-1)}$ and $\frac{1}{(k\upharpoonleft (2))}=\frac{1}{k(k+1)}$. Introducing the shifted power sum $p_2^*=\sum_{i=1}^k(x_i-i)^2-(-i)^2=\sum_{i=1}^kx_i(x_i-2i)$, we then have the shifted Newton identities 
$$s_2^*=\frac{1}{2}(s_1^2+p_2^*),\, s_{11}^*=\frac{1}{2}(s_{(1)}^2-p_2^*)-s_{(1)}.$$
Hence, using additionally the usual Newton identities yields 
\begin{align*}
&\frac{1}{(k\upharpoonleft (2))}s_{(2)}^*(\lambda)s_{(2)}(\vec{x})+\frac{1}{(k\upharpoonleft (11))}s_{(11)}^*(\lambda)s_{(11)}(\vec{x})\\
=&\frac{1}{4k(k+1)}\left[s_{(1)}^2(\lambda)+p_2^*(\lambda)\right]\cdot \left[s_{(1)}^2(\vec{x})+p_2(\vec{x})\right]\\
&\hspace{3cm}+\frac{1}{4k(k-1)}\left[s_{(1)}^2(\lambda)-p_2^*(\lambda)-2s_{(1)}(\lambda))\right]\cdot\left[s_{(1)}^2(\vec{x})-p_2(\vec{x})\right]\\
=&\frac{1}{2k(k^2-1)}\Big[s_{(1)}^2(\lambda)\left(ks_{(1)}^2(\vec{x})-p_2(\vec{x})\right)+p_2^*(\lambda)\left(kp_2(\vec{x})-s_{(1)}^2(\vec{x})\right)\\
&\hspace{7cm}-(k+1)s_{(1)}(\lambda)\left(s_{(1)}^2(\vec{x})-p_2(\vec{x})\right)\Big].
\end{align*}
There exists a numeric constant $C>0$ such that 
$$\left\vert x_i-iu_i/n\right\vert \leq \frac{C\vert u_i\vert^2}{n^2}.$$
Hence, since $s_{(1)}(\vec{u})=0$, we have $\left\vert s_{(1)}(\vec{x})\right\vert \leq \frac{Ck \Vert \vec{u}\Vert_\infty^2}{n^2}$,
which yields
\begin{align*}
\left\vert s_{(1)}(\vec{x})^2\right\vert\leq&  2Ck^2 \frac{\Vert \vec{u}\Vert_\infty^3}{n^3}
\end{align*}
for some numeric constant $C>0$ when $\Vert \vec{u}\Vert_{\infty}\leq n$. Likewise, $\left\vert x_i^2+u_i^2/n^2\right\vert\leq \frac{2C\vert u_i\vert^3}{n^3}$ for $1\leq i\leq k$, so that 
$$\left \vert p_2(\vec{x})+p_2(\vec{u})/n^2)\right\vert \leq \frac{2Ck\Vert \vec{u}\Vert_\infty^3}{n^3}.$$
Therefore,
\begin{align}
\frac{1}{(k\upharpoonleft (2))}s_{(2)}^*(\lambda)s_{(2)}(\vec{x})+&\frac{1}{(k\upharpoonleft (11))}s_{(11)}^*(\lambda)s_{(11)}(\vec{x})\nonumber\\
=&\frac{1}{2k(k^2-1)n^2}\left[s_{(1)}^2(\lambda)p_2(\vec{u})-kp_2^*(\lambda)p_2(\vec{u})-(k+1)s_{(1)}(\lambda)p_2(\vec{u})\right]+K'\label{eq:computation_ratio_third_part},
\end{align}
with 
\begin{align*}
K'\leq C\frac{\Vert \vec{u}\Vert_{\infty}^3}{n^3k(k^2-1)}\left[s_{(1)}^2(\lambda)(k^3+k)+k^2\vert p^*_2(\lambda)\vert+s_{(1)}(\lambda)(k^3+k^2)\right]\leq C'\frac{\Vert \vec{u}\Vert_{\infty}^3s_{(1)}^2(\lambda)}{n^3},
\end{align*}
for some numeric $C,C'>0$, where we used on the last inequality that $\vert p_2^*(\lambda)\vert /k\leq s_{(1)}(\lambda)^2$ for $\lambda\in \Lambda_{k}$. Remark that 
\begin{align*}
p_2^*(\lambda)+(k+1)s_{(1)}(\lambda)=\sum_{i=1}^k\lambda_i(\lambda_i+k-2i+1)=&\sum_{i=1}^k\left(\lambda_i+\frac{k-2i+1}{2}\right)^2-\left(\frac{k-2i+1}{2}\right)^2\\
=\Vert \lambda+\tilde{I}_0\Vert^2-\Vert \tilde{I}_0\Vert^2=K(\lambda+\tilde{I}_0),
\end{align*}
where $K$ is defined at the beginning of this section and we recall that $\tilde{I_0}=I_0-\frac{\langle I_0\rangle}{k}\mathbf{1}_k=\left(\frac{k+1-2i}{2}\right)_{1\leq i\leq k}$. Hence, putting \eqref{eq:computation_ratio_first_part}, \eqref{eq:computation_ratio_second_part} and \eqref{eq:computation_ratio_third_part} together yields then, for $\Vert \vec{u}\Vert_\infty/n\leq \frac{1}{\langle \lambda\rangle}$,
\begin{align}
\frac{s_{\lambda}(\exp(iu_1/n),\ldots,\exp(iu_k/n))}{s_{\lambda}(1,\ldots,1)}&=1-\frac{1}{2k(k^2-1)n^2}\Big[-s_{(1)}^2(\lambda)p_2(\vec{u})+kp_2(\vec{u})K(\lambda+\tilde{I_0})\Big]\nonumber\\
&\hspace{7cm}+O\left(\left(\frac{\langle \lambda\rangle\cdot \Vert \vec{u}\Vert_\infty}{n}\right)^3\right),\nonumber\\
&=1-\frac{1}{2(k^2-1)n^2}\left[p_2(\vec{u})\left(K(\lambda+\tilde{I_0})-\frac{s_{(1)}^2(\lambda)}{k}\right)\right]\nonumber\\
&\hspace{7cm}+O\left(\left(\frac{\langle \lambda\rangle\cdot \Vert \vec{u}\Vert_\infty}{n}\right)^3\right),
\end{align}
where the last term of the right-hand side of the first equality takes into account the correction coming from \eqref{eq:computation_ratio_second_part}. Since $s_{(1)}(\lambda)=\langle \lambda\rangle=\vert I_{\lambda}\vert-\langle I_0\rangle$ and $\lambda+\tilde{I}_0=I_{\lambda}+\frac{\langle I_0\rangle}{k}\mathbf{1}_k$, we have by \eqref{eq:rho(tilde_I)}
\begin{align*}
K(\lambda+\tilde{I_0})-\frac{s_{(1)}^2(\lambda)}{k}=K(\widetilde{\lambda+I_0})+\frac{\vert \lambda+\tilde{I_0}\vert}{k}-\frac{\langle \lambda\rangle^2}{k}=K(\widetilde{\lambda_I}).
\end{align*}
Hence, 
\begin{equation}\label{eq:character_final_asymptotic}
\frac{s_{\lambda}(\exp(iu_1/n),\ldots,\exp(iu_k/n))}{s_{\lambda}(1,\ldots,1)}=1-\frac{K(\widetilde{I_{\lambda}})\Vert \vec{u}\Vert^2}{2(k^2-1)n^2}+O\left(\left(\frac{\langle \lambda\rangle\cdot \Vert \vec{u}\Vert_\infty}{n}\right)^3\right),
\end{equation}
where we used that $p_2(\vec{u})=\Vert \vec{u}\Vert^2$. In the special case where $\vec{u}=2\pi \widetilde{I_0}=2\pi((k-1)/2,\ldots,-(k-1)/2)$, which satisfies $\sum_{i=1}^ku_i=0$, we have $\langle I_0\rangle_{\infty}\leq 2\pi k$, so that 
\begin{equation}\label{eq:rho_final_asymptotic}
\frac{s_{\lambda}(\exp(2i\pi \widetilde{I_0}/n))}{s_{\lambda}(1,\ldots,1)}=1-\frac{(2\pi)^2K(\widetilde{I_{\lambda}})\Vert \widetilde{I_0}\Vert^2}{2(k^2-1)n^2}+O\left(\left(\frac{\langle \lambda\rangle\cdot k}{n}\right)^3\right).
\end{equation}
Then, taking the ratio of \eqref{eq:character_final_asymptotic} with \eqref{eq:rho_final_asymptotic} yields, as long as $\langle \lambda\rangle\max(\Vert \vec{u}\Vert_\infty,k/3)\leq n$ (remark that the constant $1/3$ is arbitrary and chosen for later simplification), which implies $K(\widetilde{I_{\lambda}})\max(\Vert \vec{u}\Vert_\infty^2,k^2)\leq kn^2$, 
\begin{align*}
\frac{s_{\lambda}(\exp(2i\pi\vec{u}/n))}{s_{\lambda}(\exp(2i\pi \widetilde{I_0}/n))}&=1-\frac{(2\pi)^2}{2(k^2-1)n^2}\Big[K(\widetilde{I_{\lambda}})K(\vec{u})\Big]+O\left(\left(\frac{\langle \lambda\rangle\cdot (\Vert \vec{u}\Vert_\infty+k)}{n}\right)^3\right).
\end{align*}
\end{proof}

\begin{proof}[Proof of Lemma \ref{lem:asymptoti_eigenvalue}]
Let us fix $I\in B_{k}$ and first assume that $\mu=\delta_J$, $n\geq \langle J\rangle_{\infty}$. Then, for such $n$, by \eqref{eq:Fourier_probability} and the fact that $\xi_{n}(I)=\xi_{n}(I^{(n)})$, we have 
$$\Phi_{n}[\mu](I^{(n)})=\frac{S_{J}(\xi_{n}(I^{(n)}))}{S_{J}(\xi_{n}(I_0))}=\frac{S_{J}(\xi_{n}(I))}{S_{J}(\xi_{n}(I_0))}.$$
By definition of the map $\xi_n$ in \eqref{eq:definition_xi_n} and the definition of $S_J$ following \eqref{eq:def_vI},
$$S_{J}(\xi_{n}(I))=s_{\lambda_{J}}\left(\exp\left(2i\pi  \left(I-\frac{k-1}{2}\mathbf{1}_k\right)\right)\right)=s_{\lambda_{J}}\left(\exp\left(2i\pi \left(I-\frac{\langle I_0\rangle}{k}\mathbf{1}_k\right)\right)\right),$$
where we used that $\langle I_0\rangle=\sum_{j=1}^k(k-j)=\frac{k(k-1)}{2}$. Moreover, recall that $\tilde{I}=I-\langle I\rangle/k\mathbf{1}_k$ and $\hat{J}=J-J_{k}\mathbf{1}_k$ and remark that from the definition of $S_J$ we have for $\vec{u}\in \mathbb{R}^k$ such that $\vec{u}=0$,
$$S_{J}(\vec{u})=S_{\hat{J}+J_k\mathbf{1}_k}(\vec{u})=e^{iJ_k\sum_{l=1}^ku_l}S_{\hat{J}}(\vec{u})=S_{\hat{J}}(\vec{u}).$$
Hence, using that $S_J(\vec{u}+l\mathbf{1}_k)=\exp(i l\langle \lambda_J\rangle)S_{J}(\vec{u})$ for $l\in \mathbb{R}$, we get 
\begin{align}
\frac{S_{J}(\xi_{n}(I))}{S_{J}(\xi_{n}(I_0))}=&\exp(2i\pi (\langle I\rangle-\langle I_0\rangle)\cdot\langle \lambda_J\rangle/(kn))\frac{S_{J}\left(\xi_{n}(I)-\frac{2\pi}{n}\frac{\langle I\rangle-\langle I_0\rangle}{k}\mathbf{1}_k\right)}{S_{J}\left(2i\pi\tilde{I_0}/n\right)}\nonumber\\
=&\exp(2i\pi \langle \lambda_I\rangle\cdot\langle \lambda_J\rangle/(kn))\cdot \frac{S_{\hat{J}}(2\pi\tilde{I}/n)}{S_{\hat{J}}(2\pi\tilde{I_0}/n))}\label{eq:invariance_character_centered}
\end{align}
where we also used the fact that $\sum_{i=1}^k\xi_n(I)_i=\frac{2\pi(\langle I\rangle-\langle I_0\rangle}{n}$. Then, since $\hat{J}_k=0$, $\lambda_{\hat{J}}=\hat{J}-I_0\in \Lambda_{k}$ satisfies $\left(\lambda_{\hat{J}}\right)_k=0$. Moreover, remark that $\Vert \tilde{I}\Vert_\infty\geq (k-1)/2$, so that $\langle \lambda_{\hat{J}}\rangle\max(\Vert \tilde{I}\Vert_\infty,k/3)\leq n$ when $\langle \lambda_{\hat{J}}\rangle\cdot\Vert \tilde{I}\Vert_\infty=\langle  \hat{J}-I_0\rangle\cdot\Vert \tilde{I}\Vert_\infty\leq n$. In particular, as long as $\langle  \hat{J}-I_0\rangle\cdot\Vert \tilde{I}\Vert_\infty\leq n$, we have  by Lemma \ref{lem:asymptotic_schur} 
\begin{equation}
\frac{S_{\hat{J}}(2\pi \tilde{I}/n)}{S_{\hat{J}}(2\pi\tilde{I_0}/n)}=\frac{s_{\lambda_{\hat{J}}}\left(\exp\left(2i\pi\tilde{I}/n\right)\right)}{s_{\lambda_{\hat{J}}}\left(\exp\left(2i\pi\tilde{I_0}/n\right)\right)}=1
-\frac{(2\pi)^2}{2(k^2-1)n^2}\left[K(\tilde{I})K(\tilde{J})\right]+O\left(\left(\frac{\langle  \hat{J}-I_0\rangle\cdot \Vert \tilde{I}\Vert_\infty}{n}\right)^3\right)\label{eq:asymptotic_eigenvalue_main_contribution}
\end{equation}
where we used that $\Vert \tilde{I}\Vert_\infty\geq (k-1)/2$ to simplify the bound $O(\cdot)$ and that $\widetilde{\hat{J}}=\widetilde{J}$.

Consider now $\mu\in\mathcal{M}_1(B_{k,n})$. Using the previous relation for $J=I_{\mu}$ on the event $\langle  \hat{I}_{\mu}-I_0\rangle<\frac{n}{\Vert \tilde{I}\Vert_{\infty}}$, we get 
\begin{align*}
\mathbb{E}\left[\frac{S_{\hat{I}_{\mu}}(2\pi \tilde{I}/n)}{S_{\hat{I}_{\mu}}(2\pi\tilde{I_0}/n)}\right]&= 1-\frac{(2\pi)^2}{2(k^2-1)n^2}\mathbb{E}\left(\mathbf{1}_{\langle  \hat{I}_{\mu}-I_0\rangle<\frac{n}{\Vert \tilde{I}\Vert_{\infty}}}K\left(\widetilde{I_{\mu}}\right)\right)K\left(\widetilde{I}\right)+\frac{1}{n^3}O\left(\langle\hat{\mu}-I_0\rangle_3\Vert \tilde{I}\Vert_\infty^3\right)\\
&\hspace{8cm}+\mathbb{P}\left(\langle  \hat{I}_{\mu}-I_0\rangle>\frac{n}{\Vert \tilde{I}\Vert_{\infty}}\right),
\end{align*}
where we used that $\frac{S_{J}(\xi_{n}(I))}{S_{J}(\xi_{n}(I_0))}\leq 1$ for all $J\in B_{k,n}$ by \eqref{eq:inequality_Perron_Frobenius} to get a rough bound on the expectation $\mathbb{E}\left[\frac{S_{\hat{I}_{\mu}}(2\pi \tilde{I}/n)}{S_{\hat{I}_{\mu}}(2\pi\tilde{I_0}/n)}\mathbf{1}_{\langle  \hat{I}_{\mu}-I_0\rangle>\frac{n}{\Vert \tilde{I}\Vert_{\infty}}}\right]$. By Markov inequality, $\mathbb{P}\left(\langle  \hat{I}_{\mu}-I_0\rangle>\frac{n}{\Vert \tilde{I}\Vert_{\infty}}\right)\leq \frac{\Vert \tilde{I}\Vert_\infty^3}{n^3}\langle\hat{\mu}-I_0\rangle_3$, where we recall from \eqref{eq:definition_moments} that $\langle\hat{\mu}-I_0\rangle_3=\mathbb{E}\left[\langle \widehat{I_{\mu}}-I_0\rangle^3\right]$. Similarly, using \eqref{eq:rho(tilde_I)} to get $K(\tilde{I})\leq \langle \hat{I}-I_0\rangle^2$ for $I\in B_{k,n}$, we have
\begin{align*}
\frac{1}{k^2-1}\mathbb{E}\left(\mathbf{1}_{\langle  \hat{I}_{\mu}-I_0\rangle\geq\frac{n}{\Vert \tilde{I}\Vert_{\infty}}}K\left(\widetilde{I_{\mu}}\right)\right)\leq \frac{1}{k^2-1} \mathbb{E}\left(\mathbf{1}_{\langle  \hat{I}_{\mu}-I_0\rangle>\frac{n}{\Vert \tilde{I}\Vert_{\infty}}}\langle  \hat{I}_{\mu}-I_0\rangle^2\right)\leq  \frac{3\Vert \tilde{I}\Vert_{\infty}}{k^2n}\langle\hat{\mu}-I_0\rangle_3,
\end{align*}
so that, using also that $K(\tilde{I})\leq k\Vert \tilde{I}\Vert_{\infty}^2$, we get 
\begin{equation}\label{eq:asymptotic_eigenvalue_SUk}
\mathbb{E}\left[\frac{S_{\hat{I}_{\mu}}(2\pi \tilde{I}/n)}{S_{\hat{I}_{\mu}}(2\pi\tilde{I_0}/n)}\right]=1-\frac{(2\pi)^2}{2(k^2-1)n^2}\left[K(\mu)K(\tilde{I})\right]+\frac{1}{kn^3}O\left(\langle\hat{\mu}-I_0\rangle_3\Vert \tilde{I}\Vert_\infty^3\right),
\end{equation}
where we recall that $K(\mu)=\mathbb{E}_{\mu}(K(\widetilde{I_{\mu}}))$. A similar reasoning also yields 
\begin{equation}\label{eq:asymptotic_bound_eigenvalue_SUk}
\mathbb{E}\left[\left\vert\frac{S_{\hat{I}_{\mu}}(2\pi \tilde{I}/n)}{S_{\hat{I}_{\mu}}(2\pi\tilde{I_0}/n)}\right\vert\right]\leq1-\frac{(2\pi)^2}{2(k^2-1)n^2}\left[K(\mu)K(\tilde{I})\right]+\frac{1}{kn^3}O\left(\langle\hat{\mu}-I_0\rangle_3\Vert \tilde{I}\Vert_\infty^3\right).
\end{equation}
For the first statement, remark that we have by \eqref{eq:invariance_character_centered} , for $\alpha_0\in\mathbb{R}$,
\begin{align}
e^{-2i\pi \frac{\alpha_0\langle \lambda_I\rangle}{kn}}\mathbb{E}\left[\frac{S_{I_{\mu}}(\xi_{n}(I))}{S_{I_{\mu}}(\xi_{n}(I_0))}\right]=&\mathbb{E}\left[e^{2i\pi \frac{\langle \lambda_I\rangle\cdot(\langle \lambda_{I_{\mu}}\rangle-\alpha_0)}{kn}}\frac{S_{\hat{I}_{\mu}}(2\pi \tilde{I}/n)}{S_{\hat{I}_{\mu}}(2\pi\tilde{I_0}/n)}\right]\nonumber\\
=&\mathbb{E}\left[e^{2i\pi \frac{\langle \lambda_I\rangle\cdot(\langle \lambda_{I_{\mu}}\rangle-\alpha_0)}{kn}}\right]+\mathbb{E}\left[\frac{S_{\hat{I}_{\mu}}(2\pi \tilde{I}/n)}{S_{\hat{I}_{\mu}}(2\pi\tilde{I_0}/n)}\right]-1\nonumber\\
&\hspace{2cm}+\mathbb{E}\left[\left(e^{2i\pi \frac{\langle \lambda_I\rangle\cdot(\langle \lambda_{I_{\mu}}\rangle-\alpha_0)}{kn}}-1\right)\left(\frac{S_{\hat{I}_{\mu}}(2\pi \tilde{I}/n)}{S_{\hat{I}_{\mu}}(2\pi\tilde{I_0}/n)}-1\right)\right].\label{eq:final_estimate_decompo}
\end{align}
Using the Taylor expansion of $e^{it}$ for $t\in \mathbb{R}$ yields 
\begin{equation}\label{eq:asymptotic_eigenvalue_U1}
e^{2i\pi \frac{\langle \lambda_I\rangle\cdot(\langle \lambda_{J}\rangle-\alpha_0)}{kn}}=1+\frac{2i\pi \langle \lambda_I\rangle\cdot \left( \langle \lambda_J\rangle-\alpha_0\right)}{kn}-\frac{(2\pi)^2\langle \lambda_I\rangle^2\cdot \left( \langle \lambda_J\rangle-\alpha_0\right)^2}{2k^2n^2}+O\left(\frac{\langle \lambda_I\rangle^3\cdot\left( \langle \lambda_J\rangle-\alpha_0\right)^3}{k^3n^3}\right),
\end{equation}
for $I,J\in B_{k,n}$, with $O(\cdot)$ independent of $I,J$ and $\alpha_0$ because the term on the left-hand side is bounded. Hence, averaging on $I_{\mu}$ yields 
\begin{align}
\mathbb{E}\left[e^{2i\pi \frac{\langle \lambda_I\rangle\cdot(\langle \lambda_{I_{\mu}}\rangle-\alpha_0)}{kn}}\right]=&1+\frac{2i\pi \langle \lambda_I\rangle\cdot \left(\mathbb{E}(\langle \lambda_{I_{\mu}}\rangle)-\alpha_0\right)}{kn}-\frac{(2\pi)^2\langle \lambda_I\rangle^2\cdot \mathbb{E}\left[\left(\langle \lambda_{I_{\mu}}\rangle-\alpha_0\right)^2\right]}{2k^2n^2}\nonumber\\
&\hspace{6cm}+O\left(\frac{\langle \lambda_I\rangle^3\cdot\mathbb{E}\left[\left(\langle \lambda_{I_{\mu}}\rangle-\alpha_0\right)^3\right]}{k^3n^3}\right),\label{eq:asymptotic_eigenvalue_U1_averaged}
\end{align}
and putting \eqref{eq:asymptotic_eigenvalue_SUk} and \eqref{eq:asymptotic_eigenvalue_U1_averaged} together yields
\begin{align}
&\mathbb{E}\left[e^{2i\pi \frac{\langle \lambda_I\rangle\cdot(\langle \lambda_{I_{\mu}}\rangle-\alpha_0)}{kn}}\right]+\mathbb{E}\left[\frac{S_{\hat{I}_{\mu}}(2\pi \tilde{I}/n)}{S_{\hat{I}_{\mu}}(2\pi\tilde{I_0}/n)}\right]-1\nonumber\\
=&1+\frac{2i\pi \langle \lambda_I\rangle\cdot \left(\mathbb{E}(\langle \lambda_{I_{\mu}}\rangle)-\alpha_0\right)}{kn}-\frac{(2\pi)^2}{2(k^2-1)n^2}K(\mu)K(\widetilde{I})-\frac{(2\pi)^2\langle \lambda_I\rangle^2\cdot \mathbb{E}\left[\left(\langle \lambda_{I_{\mu}}\rangle-\alpha_0\right)^2\right]}{2k^2n^2}\label{eq:final_estimate_first_part}\\
&\hspace{6cm}+\frac{1}{n^3}O\left(\frac{\vert\langle \lambda_I\rangle\vert^3\cdot\mathbb{E}\left[\left(\langle \lambda_{I_{\mu}}\rangle-\alpha_0\right)^3\right]}{k^3}+\frac{1}{k}\langle\hat{\mu}-I_0\rangle_3\cdot\Vert \tilde{I}\Vert_\infty^3\right)\nonumber
\end{align}
Finally, we have by \eqref{eq:asymptotic_eigenvalue_main_contribution}, the fact that 
$\left(e^{2i\pi \frac{\langle \lambda_I\rangle\cdot(\langle \lambda_{I_{\mu}}\rangle-\alpha_0)}{kn}}-1\right)$ and $\left(\frac{S_{\hat{I}_{\mu}}(2\pi \tilde{I}/n)}{S_{\hat{I}_{\mu}}(2\pi\tilde{I_0}/n)}-1\right)$ are bounded by $2$ and the inequality $\vert e^{it}-1\vert\leq t$ for $t\in \mathbb{R}$,
\begin{align*}
&\left\vert\mathbb{E}\left[\left(e^{2i\pi \frac{\langle \lambda_I\rangle\cdot(\langle \lambda_{I_{\mu}}\rangle-\alpha_0)}{kn}}-1\right)\left(\frac{S_{\hat{I}_{\mu}}(2\pi \tilde{I}/n)}{S_{\hat{I}_{\mu}}(2\pi\tilde{I_0}/n)}-1\right)\right]\right\vert\\
=&\mathbb{E}\left[\left\vert e^{2i\pi \frac{\langle \lambda_I\rangle\cdot(\langle \lambda_{I_{\mu}}\rangle-\alpha_0)}{kn}}-1\right\vert\left\vert\frac{S_{\hat{I}_{\mu}}(2\pi \tilde{I}/n)}{S_{\hat{I}_{\mu}}(2\pi\tilde{I_0}/n)}-1\right\vert\mathbf{1}_{\langle  \hat{I}_{\mu}-I_0\rangle\leq n/\Vert I\Vert_{\infty}}\right]+O\left(\mathbb{P}(\langle  \hat{I}_{\mu}-I_0\rangle> n/\Vert I\Vert_{\infty})\right)\\
\leq& \mathbb{E}\left[ 2\pi \langle \lambda_I\rangle\cdot\frac{\left\vert\langle \lambda_{I_{\mu}}\rangle-\alpha_0\right\vert}{kn}\cdot\frac{(2\pi)^2}{2n^2}\frac{K(\tilde{I})K(\widetilde{I_{\mu}})}{k^2-1}\cdot\mathbf{1}_{\langle  \hat{I}_{\mu}-I_0\rangle\leq n/\Vert I\Vert_{\infty}}\right]+\frac{1}{kn^3}O\left(\langle\hat{\mu}-I_0\rangle_3\Vert \tilde{I}\Vert_\infty^3\right)\\
&\hspace{10cm}+O\left(\mathbb{P}(\langle  \hat{I}_{\mu}-I_0\rangle> n/\Vert I\Vert_{\infty})\right)\\
\leq & \frac{1}{n^3}O\left(\mathbb{E}\left(\frac{\langle \lambda_I\rangle \cdot\Big\vert\langle \lambda_{I_{\mu}}\rangle-\alpha_0\Big\vert}{k} \cdot\langle  \hat{I}_{\mu}-I_0\rangle^2\cdot\Vert \tilde{I}\Vert_{\infty}^2\right)+O\left(\langle\hat{\mu}-I_0\rangle_3\Vert \tilde{I}\Vert_\infty^3\right)\right),
\end{align*}
where we used Markov inequality to bound $\mathbb{P}(\langle  \hat{I}_{\mu}-I_0\rangle> k^{1/3}n/\Vert I\Vert_{\infty})$ by $\frac{1}{kn^3}\langle\hat{\mu}-I_0\rangle_3\Vert \tilde{I}\Vert_\infty^3$ and used the inequality $K(\tilde{I_{\mu}})\leq \langle  \hat{I}_{\mu}-I_0\rangle^2$ and $K(\tilde{I})\leq k\Vert\tilde{I}\Vert_{\infty}^2$ on the last inequality. By the Young inequality $ab\leq \frac{a^3}{3}+\frac{3b^{3/2}}{2}$ for $a,b\geq 0$, we finally get
\begin{align*}
&\mathbb{E}\left[\left(e^{2i\pi \frac{\langle \lambda_I\rangle\cdot(\langle \lambda_{I_{\mu}}\rangle-\alpha_0)}{kn}}-1\right)\left(\frac{S_{\hat{I}_{\mu}}(2\pi \tilde{I}/n)}{S_{\hat{I}_{\mu}}(2\pi\tilde{I_0}/n)}-1\right)\right]\\
&\hspace{7cm}=\frac{1}{n^3}O\left(\frac{\vert\langle \lambda_I\rangle\vert^3\cdot\mathbb{E}\left[\left(\langle \lambda_{I_{\mu}}\rangle-\alpha_0\right)^3\right]}{k^3}+\langle\hat{\mu}-I_0\rangle_3\Vert \tilde{I}\Vert_\infty^3\right).
\end{align*}
Putting the latter estimates with \eqref{eq:final_estimate_first_part} into \eqref{eq:final_estimate_decompo} and setting $\alpha_0=\mathbb{E}(\langle \lambda_{I_{\mu}}\rangle)=\langle\mu\rangle-\langle I_0\rangle$ yields then
\begin{align*}
&e^{-2i\pi\frac{(\langle\mu\rangle-\langle I_0\rangle)\cdot\langle \lambda_I\rangle}{kn}}\Phi_{n}[\mu](I^{(n)})\\
&=1+\frac{2i\pi}{kn}\langle \lambda_I\rangle\mathbb{E}_{\mu}\Big[\langle \lambda_{I_{\mu}}\rangle-(\langle\mu\rangle-\langle I_0\rangle)\Big]-\frac{(2\pi)^2}{2n^2}\left[\frac{K(\mu)K(\widetilde{I})}{k^2-1}+\frac{\langle \lambda_I\rangle^2Var(\mu)}{k^2}\right]\\
&\hspace{8cm}+\frac{1}{n^3}O\left(\frac{\vert\langle \lambda_I\rangle\vert^3\cdot Var_3(\mu)}{k^3}+\langle\hat{\mu}-I_0\rangle_3\Vert \tilde{I}\Vert_\infty^3\right)\\
&=1-\frac{(2\pi)^2}{2n^2}\left[\frac{K(\mu)K(\widetilde{I})}{k^2-1}+\frac{\langle \lambda_I\rangle^2Var(\mu))}{k^2}\right]+\frac{1}{n^3}O\left(\frac{\vert\langle \lambda_I\rangle\vert^3\cdot Var_3(\mu)}{k^3}+\langle\hat{\mu}-I_0\rangle_3\Vert \tilde{I}\Vert_\infty^3\right).
\end{align*}
 To get the second statement of the lemma, note that by \eqref{eq:invariance_character_centered}, 
$$\left\vert \Phi_{n}[\mu](I^{(n)})\right\vert=\left\vert\mathbb{E}\left[\frac{S_{I_{\mu}}(\xi_{n}(I))}{S_{I_{\mu}}(\xi_{n}(I_0))}\right]\right\vert\leq \mathbb{E}\left[\left\vert\frac{S_{I_{\mu}}(\xi_{n}(I))}{S_{I_{\mu}}(\xi_{n}(I_0))}\right\vert\right]=\mathbb{E}\left[\left\vert\frac{S_{\hat{I}_{\mu}}\left(2\pi \tilde{I}/n\right)}{S_{\hat{I}_{\mu}}\left(2\pi\tilde{I_0}/n\right)}\right\vert\right].$$ 
Using \eqref{eq:asymptotic_bound_eigenvalue_SUk} yields then the result.
\end{proof}
From Lemma \ref{lem:asymptoti_eigenvalue}, we can deduce asymptotics of the discrete Fourier transform of a convolution of Markov kernels on $ B_{k,n}$. For a sequence $\mathbf{m}=(\mu_r)_{1\leq r\leq m}$ of $h$-probability measures, recall the notation from Section \ref{subsec:main_probabilistic_results}
$$X(\mathbf{m})=\left(\frac{1}{m}\sum_{r=1}^mX(\mu_r)\right)-X(I_0),$$
for $X(\mu)=K(\mu)$ or any quantity defined in \eqref{eq:definition_moments}. We also set 
$$*\mathbf{m}=\mu_1*\mu_2*\dots\mu_m.$$
\begin{proposition}\label{prop:convergence_fourier}
Let $M>0$ and suppose that $\mathbf{m}=(\mu_r)_{1\leq r\leq m}$ is a sequence of $h$-probability measures on $ B_{k,n}$ such that $Var_{3}(\mathbf{m})\leq M Var(\mathbf{m})^{3/2}$ and $\langle\widehat{\mathbf{m}}\rangle_3\leq MK(\mathbf{m})
^{3/2}$. Then, for all $J\in B_{k}$ with $J_1<J_k+n$, we have 
$$\left\vert\Phi_{n}[*\mathbf{m}](J^{(n)})\right\vert \leq A(J) \exp\left(-\frac{m(2\pi)^2}{2n^2}\frac{K(\tilde{J})K(\mathbf{m})}{k^2-1}+\frac{mM}{n^3}O\left(\Big[kK(\tilde{J})K(\mathbf{m})\Big]^{3/2}\right)\right)$$
with $A(J)\leq 1$ and 
$$A(J)=\exp\Bigg[-\frac{m(2\pi)^2}{2n^2}\left[\frac{Var(\mathbf{m})\langle \lambda_J\rangle^2}{k^2}\right]+\frac{mM}{n^3}O\left(\frac{Var(\mathbf{m})^{3/2}\cdot\vert\langle \lambda_J\rangle\vert^3}{k^3}\right)\Bigg].$$
When $\Vert \tilde{J}\Vert_{\infty}\leq c\frac{n}{m^{1/3}M^{1/3}K(\mathbf{m})^{1/2}}$ and $\langle \lambda_J\rangle\leq c\frac{kn}{m^{1/3}M^{1/3}Var(\mathbf{m})^{1/2}}$ for some numeric $c>0$ small enough, we get
\begin{align*}
e^{-2i \pi \frac{m\langle\mathbf{m}\rangle)\cdot\langle \lambda_J\rangle}{kn}}\Phi_{n}[*\mathbf{m}](J^{(n)})
=&\exp\Bigg[-\frac{m(2\pi)^2}{2n^2}\left[\frac{K(\mathbf{m})K(\tilde{J})}{k^2-1}+\frac{Var(\mathbf{m})\langle \lambda_J\rangle^2}{k^2}\right]\\
&+\frac{mM}{n^3}O\left(\frac{Var(\mathbf{m})^{3/2}\cdot\vert\langle \lambda_J\rangle\vert^3}{k^3}+\Big[k \cdot K(\mathbf{m})\cdot K(\tilde{J})\Big]^{3/2}\right)\Bigg].
\end{align*}

\end{proposition}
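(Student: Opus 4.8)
The whole statement reduces, via the multiplicativity of the discrete Fourier transform (Proposition~\ref{prop:diagonalization_Fourier}, i.e.\ \eqref{eq:relation_conv_dft}), to a product of one-measure Fourier transforms, each already controlled by Lemma~\ref{lem:asymptoti_eigenvalue}:
$$\Phi_{n}[*\mathbf{m}](J^{(n)})=\prod_{r=1}^m\Phi_{n}[\mu_r](J^{(n)}).$$
In the range where the precise asymptotic is not vacuous the constraints on $J$ force $\|\tilde J\|_\infty$, $\langle\lambda_J\rangle$ (hence $\|J\|_\infty$) to be $O(n)$, so Lemma~\ref{lem:asymptoti_eigenvalue} applies with $J^{(n)}=J$. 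For a single $h$-probability measure $\mu$ write
$$\zeta_\mu:=e^{-2i\pi\frac{(\langle\mu\rangle-\langle I_0\rangle)\langle\lambda_J\rangle}{kn}}\Phi_{n}[\mu](J^{(n)})-1=-a_\mu+w_\mu,\qquad a_\mu=\frac{(2\pi)^2}{2n^2}\Big[\frac{K(\mu)K(\tilde J)}{k^2-1}+\frac{Var(\mu)\langle\lambda_J\rangle^2}{k^2}\Big]\ge0,$$
where $|w_\mu|\le\frac{C}{n^3}\varepsilon_\mu$ with $\varepsilon_\mu=\frac{Var_3(\mu)|\langle\lambda_J\rangle|^3}{k^3}+\langle\hat\mu-I_0\rangle_3\|\tilde J\|_\infty^3$, by the first estimate of Lemma~\ref{lem:asymptoti_eigenvalue} (the second, modulus, estimate drops the $Var$-part of $a_\mu$).

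\textbf{The modulus bounds.} Taking moduli and using $|1+z|\le1+|z|$ and $1+t\le e^t$,
$$|\Phi_{n}[*\mathbf{m}](J^{(n)})|=\prod_{r=1}^m|1+\zeta_{\mu_r}|\le\exp\Big(-\sum_{r=1}^m a_{\mu_r}+\frac{C}{n^3}\sum_{r=1}^m\varepsilon_{\mu_r}\Big).$$
Since $\sum_rK(\mu_r)=mK(\mathbf{m})$, $\sum_rVar(\mu_r)=mVar(\mathbf{m})$, while $\sum_rVar_3(\mu_r)=mVar_3(\mathbf{m})\le mM\,Var(\mathbf{m})^{3/2}$ and $\sum_r\langle\hat\mu_r-I_0\rangle_3=m\langle\widehat{\mathbf{m}}\rangle_3\le mM\,K(\mathbf{m})^{3/2}$ (the ``$X(I_0)=0$'' normalisation), the exponent splits into the $U(1)$-part (the contributions of $Var$ and of $\langle\lambda_J\rangle$), which is exactly $A(J)$ and is $\le1$ in the regime where its error is subordinate to its main term, times $\exp\big(-\frac{m(2\pi)^2}{2n^2}\frac{K(\tilde J)K(\mathbf{m})}{k^2-1}+\frac{mM}{n^3}O(\|\tilde J\|_\infty^3K(\mathbf{m})^{3/2})\big)$. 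It then remains to replace $\|\tilde J\|_\infty^3$ by $[kK(\tilde J)]^{3/2}$: when $K(\tilde J)>0$ this uses the identity $\|\tilde J\|^2=K(\tilde J)+\|\widetilde{I_0}\|^2$ and elementary combinatorics of $B_k$ (a configuration whose $K(\tilde J)$ is at most of the order of its smallest positive value is a near-shift of $I_0$, forcing $\|\tilde J\|_\infty\lesssim kK(\tilde J)^{1/2}/K(\tilde J)^{1/2}\cdot\ldots$, concretely $\|\tilde J\|_\infty^2\lesssim kK(\tilde J)$ in all cases), and when $K(\tilde J)=0$ the Schur ratio occurring in Lemma~\ref{lem:asymptoti_eigenvalue} is identically $1$, so there is in fact no error to absorb.

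\textbf{The precise asymptotic.} Stripping the phase ($\sum_r(\langle\mu_r\rangle-\langle I_0\rangle)=m\langle\mathbf{m}\rangle$),
$$e^{-2i\pi\frac{m\langle\mathbf{m}\rangle\langle\lambda_J\rangle}{kn}}\Phi_{n}[*\mathbf{m}](J^{(n)})=\prod_{r=1}^m(1+\zeta_{\mu_r}).$$
The constraints $\|\tilde J\|_\infty\le cn/(m^{1/3}M^{1/3}K(\mathbf{m})^{1/2})$, $\langle\lambda_J\rangle\le ckn/(m^{1/3}M^{1/3}Var(\mathbf{m})^{1/2})$ with $c$ small imply, via the ratio hypotheses (which bound every individual $K(\mu_r),Var(\mu_r)$ by $m^{2/3}M^{2/3}$ times the averaged ones, and $\varepsilon_{\mu_r}\le\sum_r\varepsilon_{\mu_r}$), that $a_{\mu_r},|w_{\mu_r}|$, hence $|\zeta_{\mu_r}|$, are $\le\tfrac12$ for all $r$, and that $\frac{m}{n^3}\sum_r\varepsilon_{\mu_r}=O(1)$. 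Thus $\log(1+\zeta_{\mu_r})$ is defined (principal branch), $|\log(1+\zeta_{\mu_r})-\zeta_{\mu_r}|\le|\zeta_{\mu_r}|^2$, so
$$\log\Big(e^{-2i\pi\frac{m\langle\mathbf{m}\rangle\langle\lambda_J\rangle}{kn}}\Phi_{n}[*\mathbf{m}](J^{(n)})\Big)=\sum_{r=1}^m\zeta_{\mu_r}+O\Big(\sum_{r=1}^m|\zeta_{\mu_r}|^2\Big).$$
The sum $\sum_r\zeta_{\mu_r}$ is exactly the advertised main term $-\frac{m(2\pi)^2}{2n^2}\big[\frac{K(\mathbf{m})K(\tilde J)}{k^2-1}+\frac{Var(\mathbf{m})\langle\lambda_J\rangle^2}{k^2}\big]$ plus $\frac{mM}{n^3}O\big(\frac{Var(\mathbf{m})^{3/2}|\langle\lambda_J\rangle|^3}{k^3}+[kK(\mathbf{m})K(\tilde J)]^{3/2}\big)$ after the same repackaging as above. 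The quadratic remainder $\sum_r|\zeta_{\mu_r}|^2\le2\sum_ra_{\mu_r}^2+2\sum_r|w_{\mu_r}|^2$ is absorbed into that $O(n^{-3})$ error: $\sum_r|w_{\mu_r}|^2\le\frac{C^2}{n^6}(\sum_r\varepsilon_{\mu_r})^2=\big(\tfrac{m}{n^3}\sum_r\varepsilon_{\mu_r}\big)\cdot\frac{C^2}{n^3m}\sum_r\varepsilon_{\mu_r}=O(1)\cdot\frac{C^2}{n^3}\cdot\frac1m\sum_r\varepsilon_{\mu_r}$, and $\sum_ra_{\mu_r}^2\le\frac{(2\pi)^4}{4n^4}\sum_r[\cdots]_r^2$ is estimated by Hölder/power-mean — e.g.\ $\sum_rK(\mu_r)^2\le\big(\sum_r\langle\hat\mu_r-I_0\rangle_3\big)^{4/3}\le m^{4/3}M^{4/3}K(\mathbf{m})^2$ using $K(\tilde I)\le\langle\hat I-I_0\rangle^2$ from \eqref{eq:rho(tilde_I)} — and then the constraints on $J$. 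Exponentiating gives the formula; the $O(|\zeta_{\mu_r}|^3)$ terms are handled in the same way.

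\textbf{Main obstacle.} The conceptual steps — Fourier multiplicativity and passing to the logarithm — are routine; all the work is in the error bookkeeping. Two points need genuine care: (i) converting the per-measure third moments $Var_3(\mu_r),\langle\hat\mu_r-I_0\rangle_3$ and the $\ell^2$-type sums $\sum_rK(\mu_r)^2,\dots$ into the averaged, $M$-controlled quantities of the statement, which is precisely what the two ratio hypotheses are designed for, through Hölder/power-mean inequalities and $K(\tilde I)\le\langle\hat I-I_0\rangle^2$; and (ii) the substitution $\|\tilde J\|_\infty^3\rightsquigarrow[kK(\tilde J)]^{3/2}$, which fails pointwise on the locus $K(\tilde J)=0$ and is rescued only by the exact vanishing of the Schur-ratio error there together with the geometry of $B_k$ for $K(\tilde J)$ small. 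I expect (ii), and the verification that the quadratic-in-$\zeta$ remainder is genuinely of order $n^{-3}$ and does not perturb the main term, to be the delicate parts.
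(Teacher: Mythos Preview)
Your approach is the paper's: multiplicativity of $\Phi_n$, apply Lemma~\ref{lem:asymptoti_eigenvalue} term by term, then use $1+u\le e^u$ for the upper bounds and pass to logarithms for the precise asymptotic. Two points need correction.

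\textbf{The modulus step.} The inequality you invoke, $|1+z|\le 1+|z|$, is the wrong one: applied to $z=\zeta_{\mu_r}=-a_{\mu_r}+w_{\mu_r}$ it yields $|1+\zeta_{\mu_r}|\le 1+a_{\mu_r}+|w_{\mu_r}|$, with the crucial sign on $a_{\mu_r}$ reversed. What you need is either the triangle inequality in the form $|(1-a_{\mu_r})+w_{\mu_r}|\le(1-a_{\mu_r})+|w_{\mu_r}|$ (valid once $a_{\mu_r}\le1$), or---cleaner, and what the paper does---use the \emph{second} estimate of Lemma~\ref{lem:asymptoti_eigenvalue} directly, which already gives a real upper bound on $|\Phi_n[\mu_r](J^{(n)})|$ with the correct sign, and then $1+u\le e^u$. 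You allude to this in your parenthetical, but the displayed chain is incorrect as written. Note also that the paper obtains the first statement by combining \emph{two} bounds: the $K$-only bound from the second estimate (giving the unconditional exponential factor) and the full bound from the first estimate (giving the $A(J)$ refinement); the claim $A(J)\le1$ is then read off from the former.

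\textbf{The substitution $\|\tilde J\|_\infty^3\rightsquigarrow[kK(\tilde J)]^{3/2}$.} This is not vague combinatorics but a precise inequality, Lemma~\ref{lem:bound_K_norm_inf}: $\|\tilde I\|_\infty^2\le kK(\tilde I)$ for $I\in B_k$ with $I\neq I_0$. The paper handles $J=I_0$ separately at the outset (where everything is trivially $1$), so the lemma applies to the remaining $J$. Your remark about the locus $K(\tilde J)=0$ is on the right track but should be replaced by a direct citation of this lemma.

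The rest of your bookkeeping (passing from per-measure moments to averaged ones via the ratio hypotheses, absorbing $\sum_r|\zeta_{\mu_r}|^2$ into the $O(n^{-3})$ error) matches the paper, which writes the logarithm step as $1+u=\exp(u+O(u^{2}))$ and then checks that $K(\mu_r)^{3/2}\le\langle\hat\mu_r-I_0\rangle_3$, $Var(\mu_r)^{3/2}\le Var_3(\mu_r)$ to merge the quadratic remainder into the existing error.
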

%To prove this proposition, we will use the following useful inequality \footnote{We thank Quentin Berger for having given us this inequality.} whose proof is done by induction: if $u_1,\ldots,u_m\in\mathbb{C}$ are such that $\vert u_i\vert\leq 1$ for $1\leq i\leq m$, then
%\begin{equation}\label{eq:quentin_inequality}
%\left\vert u_1\dots u_m-1\right\vert\leq \sum_{i=1}^m\vert u_i-1\vert.
%\end{equation}
To prove this proposition, we will need the following estimate.
\begin{lemma}\label{lem:bound_K_norm_inf}
For all $I\in B_{k}$ with $I\not=I_0$,
$$\Vert \tilde{I}\Vert_{\infty}^2\leq kK(\tilde{I}).$$
As a consequence, for $\mu\in \mathcal{M}_{1}(B_{k,n})$,
$$\langle\hat{\mu}-I_0\rangle_3\leq 2\sqrt{2}k^3\Vert \tilde{\mu}\Vert_{3}.$$
\end{lemma}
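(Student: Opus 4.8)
The plan is to reduce both statements to facts about the partition $\lambda = I-I_0$ attached to $I$. First note that $\|\tilde I\|_\infty$ and $K(\tilde I)$ are unchanged when $I$ is replaced by $\hat I = I - I_k\mathbf 1_k$ (shifting by a multiple of $\mathbf 1_k$ commutes with centering), so one may assume $I_k = 0$; then $\lambda := I - I_0$ is an ordinary partition with $\lambda_k = 0$, and $I\neq I_0$ forces $\lambda\neq 0$, i.e. $\lambda_1\ge 1$. I would record two identities: $\tilde I = \tilde\lambda + \tilde I_0$, and, using $\sum_{i<j}(a_i-a_j)^2 = k\|\tilde a\|^2$,
\[
kK(\tilde I)=\sum_{1\le i<j\le k}\bigl[(I_i-I_j)^2-(j-i)^2\bigr]=\sum_{1\le i<j\le k}\bigl[d_{ij}^2+2(j-i)d_{ij}\bigr],\qquad d_{ij}:=\lambda_i-\lambda_j\ge 0,
\]
since $I_i-I_j=(j-i)+d_{ij}$. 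Keeping only the $(i,j)=(1,k)$ term (with $d_{1k}=\lambda_1$) gives $kK(\tilde I)\ge \lambda_1^2+2(k-1)\lambda_1$. A complementary bound comes from $K(\tilde I)=\|\tilde\lambda\|^2+2\langle\tilde\lambda,\tilde I_0\rangle$: an Abel summation yields $\langle\tilde\lambda,\tilde I_0\rangle=\sum_{i=1}^{k-1}(\lambda_i-\lambda_{i+1})\frac{i(k-i)}{2}\ge\frac{k-1}{2}\lambda_1$ (because $i(k-i)\ge k-1$ for $1\le i\le k-1$), hence $K(\tilde I)\ge (k-1)\lambda_1$. Finally, expressing $\tilde I_1$ and $\tilde I_k$ through $\lambda$ and using $\tfrac{|\lambda|}{k}\le\lambda_1$ gives the upper bound $\|\tilde I\|_\infty\le \lambda_1+\tfrac{k-1}{2}$.

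With these, I would prove $\|\tilde I\|_\infty^2\le kK(\tilde I)$ by an elementary split on $\lambda_1$. If $\lambda_1\ge\frac{k-1}{4}$, then $kK(\tilde I)\ge\lambda_1^2+2(k-1)\lambda_1\ge(\lambda_1+\tfrac{k-1}{2})^2\ge\|\tilde I\|_\infty^2$, where the middle inequality is equivalent to $\lambda_1\ge\frac{k-1}{4}$ (equality at the threshold). If instead $1\le\lambda_1<\frac{k-1}{4}$ — which forces $k\ge 6$ — then $\|\tilde I\|_\infty<\frac{3(k-1)}{4}$, so $\|\tilde I\|_\infty^2<\frac{9(k-1)^2}{16}$, while $kK(\tilde I)\ge k(k-1)\lambda_1\ge k(k-1)\ge\frac{9(k-1)^2}{16}$.

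For the consequence the key is the pointwise estimate $\langle\hat I-I_0\rangle=|\lambda|\le\sqrt2\,k\,\|\tilde I\|$: from $\|\tilde I\|^2=\|\tilde\lambda\|^2+2\langle\tilde\lambda,\tilde I_0\rangle+\|\tilde I_0\|^2\ge\|\tilde\lambda\|^2=\|\lambda\|^2-\frac{|\lambda|^2}{k}$ and Cauchy–Schwarz over the $k-1$ nonzero-eligible parts ($\|\lambda\|^2\ge\frac{|\lambda|^2}{k-1}$) one gets $\|\tilde I\|^2\ge\frac{|\lambda|^2}{k(k-1)}\ge\frac{|\lambda|^2}{2k^2}$. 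Cubing and taking expectation over $I=I_\mu$ gives $\langle\hat\mu-I_0\rangle_3\le 2\sqrt2\,k^3\,\|\tilde\mu\|_3$.

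Every computation here is routine; the one genuinely delicate point is the first inequality. The naive estimate $\|\tilde I\|_\infty^2\le\|\tilde I\|^2=K(\tilde I)+\|\tilde I_0\|^2$ already fails once $k\gtrsim 11$ (the minimal value of $K(\tilde I)$ over $\lambda\neq 0$ is only $\frac{k^2-1}{k}$), so the proof must simultaneously use that $K(\tilde I)$ grows quadratically in $\lambda_1$ when $\lambda_1$ is large and that $\|\tilde I\|_\infty$ remains below $\frac{3(k-1)}{4}$ while $kK(\tilde I)$ is of order $k^2$ when $\lambda_1$ is small — hence the two-case argument, with the threshold $\lambda_1=\frac{k-1}{4}$ being exactly the value at which the two regimes meet.
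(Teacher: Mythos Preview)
Your proof is correct and takes a genuinely different route from the paper's on both parts.

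For the first inequality, the paper proceeds by a reduction to an extremal configuration: after assuming $\|\tilde I\|_\infty=\tilde I_1$, it writes $\tilde I=\tilde I_0+x$ with $\sum x_i=0$ and uses Chebyshev's sum inequality together with convexity to show that replacing $x_2,\dots,x_k$ by their common average $-x_1/(k-1)$ can only decrease $K(\tilde I)$ while preserving $\tilde I_1$; it then minimizes the one-variable function $g(x_1)=kK(\tilde I')-\|\tilde I\|_\infty^2$ and checks explicitly that $g\ge 0$ at the smallest admissible $x_1$. Your argument instead produces two explicit lower bounds on $kK(\tilde I)$ directly from the partition $\lambda$ (a quadratic one from the single pair $(1,k)$, and a linear one via the Abel summation $\langle\tilde\lambda,\tilde I_0\rangle\ge\frac{k-1}{2}\lambda_1$) together with the upper bound $\|\tilde I\|_\infty\le\lambda_1+\tfrac{k-1}{2}$, and closes with the two-regime split at $\lambda_1=\tfrac{k-1}{4}$. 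Your approach avoids the convexity/Chebyshev reduction entirely and is arguably more transparent; the paper's approach has the advantage of identifying the near-extremal configuration $I=(k,k-2,\dots,0)$.

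For the consequence, the paper bootstraps from the first inequality via $\|\hat I\|_\infty\le 2\|\tilde I\|_\infty$ and $\langle\hat I-I_0\rangle\le k\|\hat I\|_\infty$, then passes through $K(\tilde I)^{3/2}\le\|\tilde I\|^3$. You instead bypass the first part completely and obtain the sharper pointwise bound $|\lambda|^2\le 2k^2\|\tilde I\|^2$ straight from $\|\tilde I\|^2\ge\|\tilde\lambda\|^2\ge\frac{|\lambda|^2}{k(k-1)}$ (Cauchy--Schwarz on the $k-1$ potentially nonzero parts). This is cleaner and self-contained.

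One small remark on the reduction step: both quantities $\|\tilde I\|_\infty$ and $K(\tilde I)$ are shift-invariant, so passing to $I_k=0$ is fine, but the hypothesis ``$I\ne I_0$'' for the original $I$ does not literally survive the shift (e.g.\ $I=I_0+\mathbf 1_k$ has $K(\tilde I)=0$). What you actually use --- and what the paper also implicitly assumes --- is $\hat I\ne I_0$, equivalently $K(\tilde I)>0$; this is the only case in which the inequality is nonvacuous and the only case needed in the applications.
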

\begin{proof}
First remark that either $\Vert \tilde{I}\Vert_{\infty}=\tilde{I}_1$ or $\Vert \tilde{I}\Vert_{\infty}=\tilde{I}_k$. Up to replacing $I$ by $(-I_k,\ldots,-I_1)$,which does not change $K(\tilde{I})$, we can assume that $\Vert \tilde{I}\Vert_{\infty}=\tilde{I}_1$. Then, recall that $K(\tilde{I})=\Vert \tilde{I}\Vert^2-\Vert I_0\Vert^2$ and write $\tilde{I}=I_0+x$ for some nondecreasing sequence $x\in \mathbb{R}^k$ satisfying $\sum_{i=1}^kx_i=0$. Then, we have
$$\sum_{i=2}^k((I_0)_i+x_i)^2=\sum_{i=2}^k(I_0)_i^2+2\sum_{i=2}^k(I_0)_ix_i+\sum_{i=2}^kx_i^2.$$
Since $I_0$ and $x$ are nondecreasing sequences, by Chebyschev's sum inequality, $\sum_{i=2}^k(I_0)_ix_i\geq \frac{1}{k-1}(\sum_{i=2}^k(I_0)_i)(\sum_{i=2}^kx_i)$, and by convexity, $\sum_{i=2}^kx_i^2\geq \frac{1}{k-1}\left(\sum_{i=2}^kx_i\right)^2$. Hence, replacing $x_i$ by $\frac{1}{k-1}\left(\sum_{i=2}^kx_i\right)$ yields a vector $\tilde{I}'=\tilde{I}_0+(x_1,\frac{1}{k-1}\sum_{i=2}^kx_i,\ldots)$ such that $\tilde{I}'_1=\tilde{I}_1$, $\sum_{i=1}^k\tilde{I}'_i=0$ and $K(\tilde{I})\geq K(\tilde{I}')$. We thus only need to show that $K(\tilde{I}')\geq \Vert \tilde{I}'_1\Vert^2=\tilde{I}_1$. 

Now, the condition $\sum_{i=1}^kx_i=0$ yields that $\frac{1}{k-1}\sum_{i=2}^kx_i=-\frac{x_1}{k-1}$. Hence,
$$kK(\tilde{I}')-\Vert \tilde{I}\Vert_{\infty}^2=(k-1)\left(x_1+(\tilde{I_0})_1\right)^2+k\sum_{i=2}^k\left((\tilde{I}_0)_i-\frac{x_1}{k-1}\right)^2-k\Vert \tilde{I}_0\Vert^2:=g(x_1).$$
Since $(I_0)_i=\frac{k+1-2i}{2}$ for $1\leq i\leq k$, we have $g'(x_1)=2(k-1)\left(x_1+\frac{k-1}{2}\right)+2k\sum_{i=2}^k\left(\frac{x_1}{k-1}-\frac{k+1-2i}{2}\right)\geq 0$ for $x_1\geq 0$. It is thus sufficient to check that $g(x_1)\geq 0$ for the smallest possible value of $x_1$ satisfying $x_1> 0$ and $x_1-\left(-\frac{x_1}{k-1}\right)\geq 1$ (since $\tilde{I}$ and thus also $\tilde{I}'$ satisfy the latter condition). This value is $x_1=\frac{k-1}{k}$, which actually corresponds to $I=(k,k-2,\ldots,0)$. Using the formula $\Vert x+\tilde{I}_0\Vert^2-\Vert \tilde{I}_0\Vert^2=\sum_{i=1}^kx_i(x_i+k+1-2i)$, we then have 
\begin{align*}
g\left(\frac{k-1}{k}\right)=&k\left(\frac{k-1}{k}\left(\frac{k-1}{k}+k-1\right)-\sum_{i=2}^k\frac{1}{k}\left(-\frac{1}{k}+k+1-2i\right)\right)-\left(\frac{k-1}{2}+\frac{k-1}{k}\right)^2\\
=&\left(\frac{(k-1)^2(k+1)}{k}-(k-1)\left(k-1-\frac{1}{k}\right)+k(k-1)\right)-\left(\frac{k+1}{2}-\frac{1}{k}\right)^2\\
=&k^2-1-\left(\frac{k+1}{2}\right)^2\geq 0.
\end{align*}
The second statement is a consequence of the first one. First, $\Vert \hat{I}\Vert_{\infty}\leq 2\Vert \tilde{I}\Vert_{\infty}$ and $\langle \hat{I}-I_0\rangle \leq k\Vert \hat{I}\Vert_{\infty}$ implies $\langle \hat{I}-I_0\rangle ^2\leq 2kK(\tilde{I})$ when $I\not=I_0$. Since the inequality is also satisfied when $I=I_0$,
$$\langle \hat{\mu}-I_0\rangle_3=\mathbb{E}\left[\langle I_{\mu}\rangle^3\right]\leq \mathbb{E}\left[\langle I_{\mu}\rangle^3\right]\leq 2\sqrt{2}k^3\mathbb{E}\left[K(\tilde{ I}_{\mu})^{3/2}\right]\leq 2\sqrt{2}k^3\mathbb{E}\left[\Vert\tilde{ I}_{\mu}\Vert^{3}\right]=\Vert \tilde{\mu}\Vert_3.$$
\end{proof}
\begin{proof}[Proof of Proposition \ref{prop:convergence_fourier}]
For $J=I_0$ the result is clear, since $\Phi_n[*\mathbf{m}](I_0)=1$ (because $*\mathbf{m}$ is again a $h$-probability measure) and $K(\tilde{I}_0)=\langle \lambda_{I_0}\rangle=0$. 

Let $J\not=I_0\in B_{k}$ with $J_1<J_k+n$ and let $\mathbf{m}=(\mu_r)_{1\leq r\leq m}$ be a sequence of $h$-probability measures satisfying the hypothesis of the proposition. 
First, Lemma \ref{lem:asymptoti_eigenvalue} yields that for all $1\leq r\leq n$,
\begin{align*}
\left\vert\Phi_{n}[\mu_r](J^{(n)})\right\vert&\leq 1-\frac{(2\pi)^2}{2n^2}\left[\frac{K(\mu_r)K(\tilde{J})}{k^2-1}\right]+\frac{1}{n^3}O\left(\langle\hat{\mu}_r-I_0\rangle_3\cdot\Vert \tilde{J}\Vert_\infty^3\right),
\end{align*}
with $O(\cdot)$ only depending on numeric constants. Using \eqref{eq:relation_conv_dft} with the inequality $1+u\leq e^u$ for $u\in\mathbb{R}$ yields then
\begin{align*}
\left\vert\Phi_{n}[*\mathbf{m}](J^{(n)})\right\vert=&\prod_{r=1}^m\left\vert \Phi_{n}[\mu_r](J^{(n)})\right\vert \\
\leq& \prod_{r=1}^m\left[1-\frac{(2\pi)^2}{2n^2}\left[\frac{K(\mu_r)K(\tilde{J})}{k^2-1}\right]+\frac{1}{n^3}O\left(\langle\hat{\mu}_r-I_0\rangle_3\cdot\Vert \tilde{J}\Vert_\infty^3\right)\right]\\
\leq&\exp\left(-\frac{(2\pi)^2}{2n^2}\frac{K(\tilde{J})\sum_{r=1}^mK(\mu_r)}{k^2-1}+\frac{1}{n^3}O\left(\Vert \tilde{J}\Vert_\infty^3\cdot\sum_{r=1}^m\langle\hat{\mu}_r-I_0\rangle_3\right)\right).
\end{align*}
Since $\Vert \tilde{J}\Vert^2_{\infty}\leq kK(\tilde{J})$ for $J\not=I_0$ by Lemma \ref{lem:bound_K_norm_inf}, we thus have, using the hypothesis of the proposition,
\begin{align*}
\frac{1}{n^3}O\left(\Vert \tilde{J}\Vert_\infty^3\cdot\sum_{r=1}^m\langle\hat{\mu}_r-I_0\rangle_3\right)=\frac{m}{n^3}O\left(k^{3/2}K(\tilde{J})^{3/2}\cdot\langle  \hat{\mathbf{m}}_r\rangle_3\right)=\frac{m}{n^3}O\left(k^{3/2}K(\tilde{J})^{3/2}\cdot K(\mathbf{m})^{3/2}\right),
\end{align*}
and thus
\begin{equation}\label{eq:proof_prop_first_ineq}
\left\vert\Phi_{n}[*\mathbf{m}](J^{(n)})\right\vert\leq\exp\left(-\frac{(2\pi)^2}{2n^2}\frac{K(\tilde{J})\sum_{r=1}^mK(\mu_r)}{k^2-1}+\frac{m}{n^3}O\left(k^{3/2}K(\tilde{J})^{3/2}\cdot K(\mathbf{m})^{3/2}\right)\right).
\end{equation}
Then, Lemma \ref{lem:asymptoti_eigenvalue} yields for all $1\leq r\leq n$,
\begin{align*}
e^{-2i\pi\frac{(\langle\mu_r\rangle-\langle I_0\rangle)\cdot\langle \lambda_J\rangle}{kn}}\Phi_{n}[\mu_r](J^{(n)})&=1-\frac{(2\pi)^2}{2n^2}\left[\frac{K(\mu_r)K(\tilde{J})}{k^2-1}+\frac{Var(\mu_r)\langle \lambda_J\rangle^2}{k^2}\right]\\
&\hspace{3cm}+\frac{1}{n^3}O\left(\frac{Var_3(\mu_r)\cdot\vert\langle \lambda_J\rangle\vert^3}{k^3}+\langle\hat{\mu}_r-I_0\rangle_3\cdot\Vert \tilde{J}\Vert_\infty^3\right),
\end{align*}
with $O(\cdot)$ only depending on numeric constants. Using the inequality $1+u\leq e^{u}$ for $u\in \mathbb{R}$, \eqref{eq:relation_conv_dft} and the hypothesis of the proposition then gives as before 
\begin{align*}
\left\vert\Phi_{n}[*\mathbf{m}](J^{(n)})\right\vert=\prod_{r=1}^m\left\vert \Phi_{n}[\mu_r](J^{(n)})\right\vert\leq&\exp\Bigg[-\frac{m(2\pi)^2}{2n^2}\left[\frac{K(\mathbf{m})K(\tilde{J})}{k^2-1}+\frac{Var(\mathbf{m})\langle \lambda_J\rangle^2}{k^2}\right]\\
&+\frac{mM}{n^3}O\left(\frac{Var_3(\mathbf{m})\cdot\vert\langle \lambda_J\rangle\vert^3}{k^3}+\Big[k \cdot K(\mathbf{m})\cdot K(\tilde{J})\Big]^{3/2}\right)\Bigg].
\end{align*}
The latter inequality together with \eqref{eq:proof_prop_first_ineq} imply the first statement of the proposition.

Since $K(\mu_r)\leq (\langle\hat{\mu}_r\rangle-\langle I_0\rangle)^2$ and $K(\tilde{J})\leq k\Vert \tilde{J}\Vert_{\infty}^2$, $\frac{K(\mu_r)K(\tilde{J})}{k^2-1}=O(\Vert \tilde{J}\Vert_{\infty}^2 \langle\hat{\mu}_r-I_0\rangle^2)$. Moreover, $Var_2(\mu_r)\leq Var_3(\mu_r)^{2/3}$. Hence, assuming that $\Vert \tilde{J}\Vert_{\infty}\leq c\frac{n}{\sup_{1\leq r\leq m}\langle\hat{\mu}_r-I_0\rangle}$ and $\langle \lambda_J\rangle\leq c\frac{kn}{\sup_{1\leq r\leq m}Var_3(\mu_r)^{1/3}}$ for $c$ small enough, we have 
$$\frac{(2\pi)^2}{2n^2}\left[\frac{K(\mu_r)K(\tilde{J})}{k^2-1}+\frac{Var(\mu_r)\langle \lambda_J\rangle^2}{k^2}\right]+\frac{1}{n^3}O\left(\frac{Var_3(\mu_r)\cdot\vert\langle \lambda_J\rangle\vert^3}{k^3}+\langle\hat{\mu}_r-I_0\rangle_3\cdot\Vert \tilde{J}\Vert_\infty^3\right)\leq 1/2.$$
The hypotheses $\langle\hat{\mathbf{m}}\rangle_3\leq M K(\mathbf{m})^{3/2}$ implies that $m^{1/3}M^{1/2}K(\mathbf{m})^{1/2}\geq \sup_{1\leq r\leq m} \langle\hat{\mu}_r-I_0r\rangle$. Hence, when $\Vert \tilde{J}\Vert_{\infty}\leq c\frac{n}{m^{1/3}M^{1/3}K(\mathbf{m})^{1/2}}$ and $\langle \lambda_J\rangle\leq c\frac{kn}{m^{1/3}M^{1/3}Var(\mathbf{m})^{1/2}}$ we have in particular  $\Vert \tilde{J}\Vert_{\infty}\leq c\frac{n}{\sup_{1\leq r\leq m}\langle\hat{\mu}_r-I_0\rangle}$. Similarly,  $\langle \lambda_J\rangle\leq c\frac{kn}{m^{1/3}M^{1/3}Var(\mathbf{m})^{1/2}}$ implies that $\langle \lambda_J\rangle\leq c\frac{kn}{\sup_{1\leq r\leq m}Var_3(\mu_r)^{1/3}}$. Hence, for such $J$, using the estimate $1+u=\exp(u+O(u^{3/2}))$ for $\vert u\vert\leq 1/2$ yields then
\begin{align*}
&e^{-2i\pi\frac{(\langle\mu_r\rangle-\langle I_0\rangle)\cdot\langle \lambda_J\rangle}{kn}}\Phi_{n}[\mu_r](J^{(n)})\\
&=\exp\Bigg[-\frac{(2\pi)^2}{2n^2}\left[\frac{K(\mu_r)K(\tilde{J})}{k^2-1}+\frac{Var(\mu_r)\langle \lambda_J\rangle^2}{k^2}\right]
+\frac{1}{n^3}O\left(\frac{Var_3(\mu_r)\cdot\vert\langle \lambda_J\rangle\vert^3}{k^3}+\langle\hat{\mu}_r-I_0\rangle^3\cdot\Vert \tilde{J}\Vert_\infty^3\right)\\
&\hspace{7.5cm}+\frac{1}{n^3}O\left(\frac{K(\mu_r)^{3/2}K(\tilde{J})^{3/2}}{k^3}+\frac{Var(\mu_r)^{3/2}\vert\langle \lambda_J\rangle\vert^3}{k^3}\right)\Bigg].
\end{align*}
Since $Var(\mu_r)^{3/2}\leq Var_3(\mu_r)$, $K(\mu_r)^{3/2}\leq \langle\hat{\mu}_r-I_0\rangle^3$ and $\frac{K(\tilde{J})}{k}\leq \Vert \tilde{J}\Vert_{\infty}^2$, 
\begin{align*}
O\left(\frac{K(\mu_r)^{3/2}K(\tilde{J})^{3/2}}{k^3}+\frac{Var(\mu_r)^{3/2}\vert\langle \lambda_J\rangle\vert^3}{k^3}\right)=O\left(\langle\hat{\mu}_r-I_0\rangle^3\Vert \tilde{J}\Vert_{\infty}^3+\frac{Var_3(\mu_r)\vert\langle \lambda_J\rangle\vert^3}{k^3}\right).
\end{align*}
Hence, since $K(\tilde{J})\leq k\Vert \tilde{J}\Vert_{\infty}^2$ by Lemma \ref{lem:bound_K_norm_inf},
\begin{align*}
&e^{-2i\pi\frac{(\langle\mu_r\rangle-\langle I_0\rangle)\cdot\langle \lambda_J\rangle}{kn}}\Phi_{n}[\mu_r](J^{(n)})\\
&=\exp\Bigg[-\frac{(2\pi)^2}{2n^2}\left[\frac{K(\mu_r)K(\tilde{J})}{k^2-1}+\frac{Var(\mu_r)\langle \lambda_J\rangle^2}{k^2}\right]\\
&\hspace{2cm}+\frac{1}{n^3}O\left(k^{3/2}\langle\hat{\mu}_r-I_0\rangle^3\cdot K(\tilde{J})^{3/2}+\frac{Var_3(\mu_r)\cdot\vert\langle \lambda_J\rangle\vert^3}{k^3}\right)\Bigg].
\end{align*}
Using \eqref{eq:relation_conv_dft} for $1\leq r\leq m$ yields then 
\begin{align*}
e^{-2i \pi \frac{m\langle\mathbf{m}\rangle)\cdot\langle \lambda_J\rangle}{kn}}\Phi_{n}[*\mathbf{m}](J^{(n)})
=&\prod_{r=1}^me^{-2i\pi\frac{(\langle\mu_r\rangle-\langle I_0\rangle)\cdot\langle \lambda_J\rangle}{kn}}\Phi_{n}[\mu_r](J)\nonumber\\
=&\exp\Bigg[-\frac{m(2\pi)^2}{2n^2}\left[\frac{K(\mathbf{m})K(\tilde{J})}{k^2-1}+\frac{Var(\mathbf{m})\langle \lambda_J\rangle^2}{k^2}\right]\nonumber\\
&\quad+\frac{m}{n^3}O\left(k^{3/2}\langle\hat{\mathbf{m}}\rangle^3\cdot K(\tilde{J})^{3/2}+\frac{Var_3(\mathbf{m})\cdot\vert\langle \lambda_J\rangle\vert^3}{k^3}\right)\Bigg].
\end{align*}
The hypothesis $Var_{3}(\mathbf{m})\leq M Var(\mathbf{m})^{3/2}$ and $\langle\hat{\mathbf{m}}\rangle_3\leq M K(\mathbf{m})^{3/2}$ yields then the second statement of the proposition.
\end{proof}
Remark that the first statement of Proposition \ref{prop:convergence_fourier} is only relevant when $K(\tilde{J})\leq c\frac{n^2k^3}{M^2K(\mathbf{m})}$ for some numeric $c>0$. We conclude this section by a crude estimate of $\Phi_n[*\mathbf{m}]$ when $K(\tilde{J})\geq cn^2$.
\begin{lemma}\label{lem:bound_Fourier_large}
For any $0<c<1/2$, there exists $0<t<1$ depending on $c$ such that for $I,J\in B_{k,n}$ with $\sup( (I_i-I_{i+1}) \mod n)\geq cn$, 
$$\frac{\vert S_{J}(\xi_{n}(I))\vert}{S_{J}(\xi_{n}(I_0))}\leq \frac{\max\left(\frac{k-2}{k},t\right)}{1+O\left(\frac{k^2\langle \lambda_{\hat{J}}\rangle^2}{n^2}\right)}.$$

In particular, there exists $\eta,\delta>0$ only depending on $k$ and $c$ such that for any sequence $\mathbf{m}=(\mu_i)_{1\leq i\leq m}$ of $h$-probability measures with $\langle\hat{\mathbf{m}}\rangle_3\leq MK(\mathbf{m})^{3/2}$ and $I\in B_{k}$ such that  $\sup( (\hat{I}_i-\hat{I}_{i+1}) \mod n)\geq cn$,
$$\Phi_{n}[*\mathbf{m}](I^{(n)})\leq \exp\left(-m\delta+ \frac{m M}{\eta^3n^3}K(\mathbf{m})^{3/2}\right).$$
\end{lemma}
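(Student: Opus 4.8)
The strategy is to show that when $I$ has a large gap (some $(\hat I_i-\hat I_{i+1})\bmod n\ge cn$), the ratio $|S_J(\xi_n(I))|/S_J(\xi_n(I_0))$ is bounded away from $1$ by an amount that does not degrade to $0$ as $n\to\infty$, and then to combine the single-step bounds multiplicatively via \eqref{eq:relation_conv_dft}. The first part is a pointwise estimate on ratios of Schur functions evaluated at roots of unity that are ``spread out'' because of the large gap; the second part is the same $1+u\le e^u$ telescoping argument already used in the proof of Proposition~\ref{prop:convergence_fourier}.

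For the first (displayed) inequality, I would fix $I$ with a gap $\ge cn$ and write $S_J(\xi_n(I))$ as a ratio of generalized Vandermonde determinants $a_J(\xi_n(I))/a_{I_0}(\xi_n(I))$ as in Lemma~\ref{lem:formula_inversion_Schur}. Using the large gap, the $k$-tuple $(e^{2i\pi I_\ell/n})_\ell$ splits into two clusters separated by an arc of angular length $\ge 2\pi c$ on the circle; expanding the Vandermonde and controlling the determinant in the two clusters, one gets that $|S_J(\xi_n(I))|$ is smaller than its Perron--Frobenius value $S_J(\xi_n(I_0))$ by a definite factor. More precisely, a short computation should produce the bound with $\max(\tfrac{k-2}{k},t)$: the term $\tfrac{k-2}{k}$ comes from the ``worst case'' where only one particle is separated from a block of $k-1$ coalescing ones, and $t<1$ (depending on $c$) is the bound coming from the genuinely spread-out configurations, where the phases $e^{2i\pi I_\ell/n}$ cannot all be aligned. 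The denominator correction $1+O(k^2\langle\lambda_{\hat J}\rangle^2/n^2)$ accounts for the fact that $S_J$ depends only on $\hat J$ and is within that factor of $S_J(\xi_n(I_0))\cdot(\text{flat value})$ by the same Taylor expansion of ratios of Schur functions underlying Lemma~\ref{lem:asymptotic_schur}; here one only needs an upper bound, so the crude estimate $|e^{i\theta}-1|\le|\theta|$ suffices.

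For the second (``in particular'') statement I would argue exactly as in the proof of Proposition~\ref{prop:convergence_fourier}. Since $\Phi_n[\mu_r](I^{(n)})=\mathbb{E}_{\mu_r}\big[S_{I_{\mu_r}}(\xi_n(I))/S_{I_{\mu_r}}(\xi_n(I_0))\big]$, the first inequality gives $|\Phi_n[\mu_r](I^{(n)})|\le \mathbb{E}_{\mu_r}\big[\max(\tfrac{k-2}{k},t)/(1+O(k^2\langle\lambda_{\widehat{I_{\mu_r}}}\rangle^2/n^2))\big]$. On the event $\{k^2\langle\widehat{I_{\mu_r}}-I_0\rangle^2\le \epsilon n^2\}$ for a suitable numeric $\epsilon$ the denominator is $\ge 1$ plus a positive quantity, so the integrand is $\le \exp(-\delta)$ for some $\delta>0$ depending only on $k$ and $c$; on the complementary event one uses the trivial bound $\le 1$ together with Markov's inequality $\mathbb{P}(k^2\langle\widehat{I_{\mu_r}}-I_0\rangle^2>\epsilon n^2)\le (k^2/\epsilon n^2)^{3/2}\langle\hat\mu_r-I_0\rangle_3$, which contributes the $\langle\hat\mu_r-I_0\rangle_3/n^3$ error term. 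Summing the logarithms over $1\le r\le m$ via $1+u\le e^u$ and invoking the hypothesis $\langle\hat{\mathbf m}\rangle_3\le MK(\mathbf m)^{3/2}$ yields $\Phi_n[*\mathbf m](I^{(n)})\le \exp\big(-m\delta+\tfrac{mM}{\eta^3 n^3}K(\mathbf m)^{3/2}\big)$ for appropriate $\eta,\delta>0$ (with $\delta$ absorbing a constant factor of $\max(\tfrac{k-2}{k},t)<1$ and $\eta$ coming from $\epsilon$).

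\textbf{Main obstacle.} The delicate point is the first displayed inequality, i.e.\ getting a \emph{uniform} (in $J$, up to the explicit denominator correction) bound strictly less than $1$ from the large-gap hypothesis. The subtlety is that $S_J$ is a Schur polynomial whose degree $\langle\lambda_J\rangle$ can be comparable to $n$, so one cannot simply Taylor-expand; instead one must genuinely exploit that the evaluation points lie in two separated arcs and use a determinantal/clustering estimate for $a_J/a_{I_0}$ that is robust in $J$. Isolating the extremal configuration responsible for the $\tfrac{k-2}{k}$ factor, and checking that all other gap-$\ge cn$ configurations are controlled by a single $t=t(c)<1$, is the combinatorially fiddly step; everything after that is a routine repetition of the telescoping argument in Proposition~\ref{prop:convergence_fourier}.
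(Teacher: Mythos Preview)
Your plan for the second (``in particular'') part is correct and is exactly what the paper does: truncate on $\{\langle \widehat{I_{\mu_r}}\rangle\le \eta n\}$, use the first inequality on that event to get a factor $\le 1-\delta$, use the trivial bound and Markov on the complement, and then telescope with $1+u\le e^u$.

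The gap is in the first part. Your description (``split the $k$-tuple into two clusters separated by an arc of length $\ge 2\pi c$ and control the determinant in the two clusters'') is not a proof, and your interpretation of the $\tfrac{k-2}{k}$ is incorrect: it does not come from a particle configuration. The paper's argument is quite specific and does not use a clustering/Vandermonde estimate at all. Assuming without loss of generality that the large gap is between $I_1$ and $I_2$, it sets $\zeta_j=e^{2i\pi I_j/n}$ and uses the branching rule
\[
s_\lambda(\zeta_1,\dots,\zeta_k)=\sum_{\substack{\mu\subset\lambda\\ l(\mu)\le 2}} s_\mu(\zeta_1,\zeta_2)\,s_{\lambda/\mu}(\zeta_3,\dots,\zeta_k),
\]
and then bounds each piece against $s_\mu(1,1)s_{\lambda/\mu}(\mathbf 1_{k-2})$. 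The term $\mu=\emptyset$ gives $s_\lambda(\mathbf 1_{k-2})\le \tfrac{k-2}{k}\,s_\lambda(\mathbf 1_k)$ by the Weyl dimension formula; this is the origin of the $\tfrac{k-2}{k}$. For $\mu=(a,b)$ with $a>b$ one has $|s_{(a,b)}(\zeta_1,\zeta_2)|=|h_{a-b}(\zeta_1,\zeta_2)|=\dfrac{|\sin((a-b+1)\pi(I_1-I_2)/n)|}{\sin(\pi(I_1-I_2)/n)}$, and the large-gap hypothesis gives $|h_l(\zeta_1,\zeta_2)|\le u\,(l+1)=u\,s_{(a,b)}(1,1)$ for some $u=u(c)<1$ uniformly in $l$; this is where $t(c)$ comes from.

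There is a genuine subtlety your plan misses: for $\mu=(a,a)$ one has $|s_{(a,a)}(\zeta_1,\zeta_2)|=|\zeta_1\zeta_2|^a=1=s_{(a,a)}(1,1)$, so no saving is available on that term alone. The paper handles this by pairing $\mu=(a,a)$ with $\mu=(a,a-1)$, using $s_{\lambda/(a,a)}(\mathbf 1_{k-2})\le s_{\lambda/(a,a-1)}(\mathbf 1_{k-2})$ together with $|s_{(a,a-1)}(\zeta_1,\zeta_2)|\le 2\cos(c\pi)$ to get an averaged factor $\tfrac{1+\cos(c\pi)}{2}<1$ on the pair. Summing all pieces reconstructs $\max(\tfrac{k-2}{k},t)\,s_\lambda(\mathbf 1_k)$, and the denominator $1+O(k^2\langle\lambda_{\hat J}\rangle^2/n^2)$ is just the comparison of $S_J(\xi_n(I_0))$ with $s_\lambda(\mathbf 1_k)$ from \eqref{eq:rho_final_asymptotic}. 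Without the branching decomposition and the $(a,a)/(a,a-1)$ pairing, your outlined approach does not yield a uniform-in-$J$ bound strictly below $1$.
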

\begin{proof}
We prove the first part of the lemma in the case where $n/2\geq \lambda_1-\lambda_{2}\geq cn$, the other cases being deduced similarly. Set $\lambda=\lambda_{\hat{J}}$ and $\zeta_j=e^{2i\pi i_j/n}$ for $1\leq j\leq n$. Then, by \eqref{eq:invariance_character_centered},
\begin{align*}
\left\vert S_{J}(\xi_{n}(I))\right\vert=&\left\vert s_{\lambda}(\zeta_1,\zeta_2,\zeta_3,\ldots)\right\vert\\
\leq&\sum_{\substack{\mu\subset \lambda\\l(\mu)\leq 2}}\left\vert s_{\mu}(\zeta_1,\zeta_2)s_{\lambda/\mu}(\zeta_3,\ldots,\zeta_{k})\right\vert\\
\leq&\left\vert s_{\lambda}(\zeta_3,\ldots,\zeta_{k})\right\vert+\sum_{\substack{0\leq b< a+1\leq \lambda+1\\b\leq \lambda_2}}\left\vert s_{(a,b)}(\zeta_1,\zeta_2)s_{\lambda/(a,b)}(\zeta_3,\ldots,\zeta_{k})\right\vert\\
&+\sum_{0<a\leq \lambda_2}\left\vert s_{(a,a)}(\zeta_1,\zeta_2)s_{\lambda/(a,a)}(\zeta_3,\ldots,\zeta_{k})+s_{(a,a-1)}(\zeta_1,\zeta_2)s_{\lambda/(a,a-1)}(\zeta_3,\ldots,\zeta_{k})\right\vert\\
:=&S_1+S_2+S_3.
\end{align*}
First, by the Weyl dimension formula $s_{\lambda}(\mathbf{1}_k)=\prod_{1\leq i<j\leq k}\frac{(\lambda_i-\lambda_j+j-i)}{j-i}$ for $\lambda\not=\emptyset$,
\begin{align}
S_1\leq s_{\lambda}(\mathbf{1}_{k-2})=&\delta_{l(\lambda)\leq k-2}\frac{1}{\prod_{1\leq i\leq k-2}\frac{\lambda_i+k-1-i}{k-1-i}\frac{\lambda_i+k-i}{k-i}}\cdot s_{\lambda}(\mathbf{1}_{k})\nonumber\\
\leq &\frac{k-2}{k}s_{\lambda}(\mathbf{1}_{k}).\label{eq:lemma_large_S_1}
\end{align}
Then, if $0\leq b< a$, then 
$$\vert s_{(a,b)}(\zeta_1,\zeta_2)\vert=\vert e_2(\zeta_1,\zeta_2)\vert^b\vert h_{b-a}(\zeta_1,\zeta_2)\vert=\vert h_{b-a}(\zeta_1,\zeta_2)\vert,$$
where $e_l$ and $h_l$ are respectively the elementary and complete homogeneous symmetric polynomials. For $l>0$, $h_l(\zeta_1,\zeta_2)=\frac{\zeta_1^{l+1}-\zeta_2^{l+1}}{\zeta_1-\zeta_2}$ and thus, using that $i_1\geq i_2$,
$$\vert h_l(\zeta_1,\zeta_2)\vert =\frac{\vert \sin((l+1)\pi(i_1-i_2)/n)\vert}{\sin(\pi(i_1-i_2)/n)}.$$
Remark that for $\frac{\pi}{2(l+1)}\leq x\leq \pi/2$, the inequality $\sin(u)>u-\frac{u^3}{6}$ for $u\in]0,\pi/2]$ yields
$$\frac{\vert\sin((l+1)x)\vert}{\sin(x)}\leq \frac{1}{\sin(x)}\leq \frac{1}{\sin\left(\frac{\pi}{2(l+1)}\right)}\leq \frac{2(l+1)}{\pi\left(1-\left(\frac{\pi}{2(l+1)}\right)^2/6\right)}<u_1(l+1)$$
for $u_1=\frac{2}{\pi\left(1-\left(\frac{\pi}{4}\right)^2/6\right)}<1$ independent of $l$. Hence, if $l$ is such that $\frac{1}{2(l+1)} \leq\frac{i_1-i_2}{n}\leq 1-\frac{1}{2(l+1)}$, 
$$\vert h_l((\zeta_1,\zeta_2)\vert\leq t(l+1)=ts_{(a,b)}(1,1).$$
Since $c \leq\frac{i_1-i_2}{n}\leq 1-c$, there is a finite number only depending on $c$ and $l\in \mathbb{N}$ satisfying $\min\left(\frac{i_1-i_2}{n},1-\frac{i_1-i_2}{n}\right)\leq \frac{1}{2(l+1)} $. For each of them, the function $x\mapsto\frac{\sin((l+1)x)}{\sin x}$ is strictly decreasing on $[0,\pi/(2(l+1))]$ with maximum $(l+1)$ at $0$. Hence, there exists a constant $u_2<1$ only depending on $c$ such that for any $l$ such that  $\min\left(\frac{i_1-i_2}{n},1-\frac{i_1-i_2}{n}\right)\leq \frac{1}{2(l+1)} $,
$$\frac{\vert \sin((l+1)\pi(i_1-i_k)/n)\vert}{\sin(\pi(i_1-i_2)/n)}\leq u_2(l+1).$$
We have thus found a constant $u=(u_1\vee u_2)<1$ only depending on $c$ such that for all $l\geq 1$,
\begin{equation}\label{eq:bound_h_2}
\vert h_l((\zeta_1,\zeta_2)\vert\leq u(l+1).
\end{equation}
In particular, for $0\leq b< a$,
$$\vert s_{(a,b)}(\zeta_1,\zeta_2)\vert=\vert h_{b-a}(\zeta_1,\zeta_2)\vert\leq u(b-a+1)=us_{(a,b)}(1,1),$$
and summing on all contributions yields
\begin{equation}\label{eq:lemma_large_S_2}
S_2\leq \sum_{\substack{0\leq b< a+1\leq \lambda+1\\b\leq \lambda_2}} s_{(a,b)}(1,1)s_{\lambda/(a,b)}(\mathbf{1}_{k-2}).
\end{equation}
Then, for $a\leq \lambda_2$, remark that $s_{\lambda/(a,a)}(\mathbf{1}_{k-2})\leq s_{\lambda/(a,a-1)}(\mathbf{1}_{k-2})$. Indeed, any semistandard filling of $\lambda/(a,a)$ yields a semistandard filling of $\lambda/(a,a-1)$ by putting $1$ in the cell $(2,a)$. Hence, using that $\vert s_{(a,a)}(\zeta_1,\zeta_2)\vert=1$ and 
$$\vert s_{(a,a-1)}(\zeta_1,\zeta_2)\vert=\vert h_1(\zeta_1,\zeta_2)\vert\leq 2\cos(c\pi)=\cos(c\pi)s_{(a,a-1)}(1,1),$$ 
we get 
\begin{align*}
&\left\vert s_{(a,a)}(\zeta_1,\zeta_2)s_{\lambda/(a,a)}(\zeta_3,\ldots,\zeta_{k})+s_{(a,a-1)}(\zeta_1,\zeta_2)s_{\lambda/(a,a-1)}(\zeta_3,\ldots,\zeta_{k})\right\vert\\
&\hspace{4cm}\leq s_{(a,a)}(1,1)s_{\lambda/(a,a)}(\mathbf{1}_{k-2})+\cos(c\pi)s_{(a,a-1)}(1,1)s_{\lambda/(a,a-1)}(\mathbf{1}_{k-2})\\
&\hspace{4cm}\leq \frac{1+\cos(c\pi)}{2}\left(s_{(a,a)}(1,1)s_{\lambda/(a,a)}(\mathbf{1}_{k-2})+s_{(a,a-1)}(1,1)s_{\lambda/(a,a-1)}(\mathbf{1}_{k-2})\right).
\end{align*}
Summing on all $a\leq \lambda_2$ yields then
\begin{equation}\label{eq:lemma_large_S_3}
S_3\leq \frac{1+\cos(c\pi)}{2}\sum_{a\leq \lambda_2}\left(s_{(a,a)}(1,1)s_{\lambda/(a,a)}(\mathbf{1}_{k-2})+s_{(a,a-1)}(1,1)s_{\lambda/(a,a-1)}(\mathbf{1}_{k-2})\right).
\end{equation}
Putting \eqref{eq:lemma_large_S_1},\eqref{eq:lemma_large_S_2} and \eqref{eq:lemma_large_S_3} together yields the existence of $t<1$ depending only on $c$ such that 
\begin{align*}
S_1+S_2+S_3\leq \max\left(\frac{k-2}{k},t\right) \sum_{\substack{\mu\subset \lambda\\l(\mu)\leq 2}}s_{\mu}(1,1)s_{\lambda/\mu}(\mathbf{1}_{k-2})\leq \max\left(\frac{k-2}{k},t\right) s_{\lambda}(\mathbf{1}_{k}).
\end{align*}
On the other hand, by \eqref{eq:rho_final_asymptotic} we have 
$$\frac{S_{J}(\xi_{n}(I_0))}{S_{J}(0,\ldots,0)}=1+O\left(\frac{k^2\langle \lambda\rangle^2}{n^2}\right),$$
so that finally
$$\frac{\vert S_{J}(\xi_{n}(I))\vert}{S_{J}(\xi_{n}(I_0))}\leq \frac{\max\left(\frac{k-2}{k},t\right)}{1-c'\frac{k^2\langle \lambda_{\hat{J}}\rangle^2}{n^2}},$$
with $t<1$ depending on $c$ and $c'>0$ numeric.

For the second statement, recalling that $\Phi_n[\mu](I^{(n)})=\mathbb{E}\left[\frac{S_{I_{\mu}}(\xi_{n}(I^{(n)}))}{S_{I_{\mu}}(\xi_{n}(I^{(n)}))}\right]=\mathbb{E}\left[\frac{S_{I_{\mu}}(\xi_{n}(I))}{S_{I_{\mu}}(\xi_{n}(I))}\right]$ and that $\left\vert \frac{S_{J}(\xi_{n}(I))}{S_{J}(\xi_{n}(I_0))}\right\vert\leq 1$ for all $J\in B_{k,n}$, for $\eta>0$ to choose later we have 
$$\left\vert\Phi_n[\mu](I^{(n)})\right\vert\leq \mathbb{P}(\langle  \hat{I}_{\mu}\rangle >\eta n))+\mathbb{E}\left[\left\vert\frac{S_{I_{\mu}}(\xi_{n}(I))}{S_{I_{\mu}}(\xi_{n}(I))}\right\vert\mathbf{1}_{\langle  \hat{I}_{\mu}\rangle \leq \eta n}\right),$$
so that by Markov inequality and the first part of the statement, there exists $t>0$ only depending on $c$ such that
$$\left\vert\Phi_n[\mu](I^{(n)})\right\vert\leq \frac{\mathbb{E}\left[\langle  \hat{I}_{\mu}\rangle^3\right]}{\eta^3n^3}+\frac{\max(\frac{k-2}{k},t)}{1-c' k^2\eta^2}\leq 1-\delta+\frac{\mathbb{E}\left[\langle  \hat{I}_{\mu}\rangle^3\right]}{\eta^3n^3},$$
where we choose $\eta$ (only depending on $c$ and $k$) such that $\frac{\max(\frac{k-2}{k},t)}{1-c' k^2\eta^2}=1-\delta$ with $\delta>0$. 

Let $\mathbf{m}=(\mu_r)_{1\leq r\leq m}$ be a sequence of $h$-probability measures satisfying the hypotheses of the lemma. Applying the bound $1-\delta+\frac{E\left[\langle  \hat{I}_{\mu}\rangle^3\right]}{\eta^3n^3}\leq \exp\left(-\delta+\frac{\mathbb{E}\left[\langle  \hat{I}_{\mu}\rangle^3\right]}{\eta^3n^3}\right)$ and taking the product on all $\Phi_n[I_{\mu_r}](I^{(n)})$ yield then with \eqref{eq:relation_conv_dft}
\begin{align*}
\Phi_{n}[*\mathbf{m}](I^{(n)})\leq& \exp\left(-m\delta+ \frac{1}{n^3\eta^3}\sum_{r=1}^m\mathbb{E}\left[\langle\hat{I}_{\mu_r}\rangle^3\right]\right)\\
\leq &\exp\left(-m\left(\delta+ \frac{M}{n^3\eta^3}K(\mathbf{m})^{3/2}\right)\right).
\end{align*}
\end{proof}

\section{Berry-Esseen theorem on $B_{k,n}$}\label{sec:Berry-Esseen}

We state and prove in this section the first main result of the manuscript, which consists in a Berry-Esseen type theorem for the measure $\xi_n(*\mathbf{m})$. The complete result is a bit more general than the simplified version written in Theorem \ref{thm:Berry-Esseen_simplified}, in order to give a more precise constant.

Recall that the shift action of $\mathbb{R}$ on $T_k$ is given for $\vec{u}\in [0,2\pi[^k$ and $t\in \mathbb{R}$ by 
$$R_t( \vec{u})=Sort(\overline{u_1-t},\ldots,\overline{u_k-t}),$$ where $Sort(\vec{v})$ is the sequence obtained from $\vec{v}\in [0,2\pi[^k$ by sorting it in increasing order, and $R_t( \mu)$ for the image of the measure $\mu\in \mathcal{M}_1( T_{k})$ by the map $\vec{u}\mapsto R_t(\vec{u})$. We then have, for $J\in B_{k}$,
\begin{align}
\Phi[R_t(\mu)](J)=\int_{ T_{k}}S_{J}(\vec{u})\vert V(\vec{u})\vert^2dR_t\mu(\vec{u})=&\int_{ T_{k}}S_{J}(\vec{u}+t\mathbf{1}_{k})\vert V(\vec{u}+t\mathbf{1}_{k})\vert^2d\mu(\vec{u})\nonumber\\
=&\exp(i\langle \lambda_J\rangle t)\int_{ T_{k}}S_{J}(\vec{u})\vert V(\vec{u})\vert^2d\mu(\vec{u})\nonumber\\
=&\exp(i\langle \lambda_J\rangle t)\Phi[\mu](J).\label{eq:effect_rotation_Fourier}
\end{align}
Comparing Proposition \ref{prop:convergence_fourier} with \eqref{eq:Fourier_Dyson_Brownian_motion} suggests a convergence of the distribution of $\xi_n(*\mathbf{m})$, viewed as a probability distribution on $ T_{k}$, to the marginal distribution of a unitary Brownian motion $B^{\alpha,\gamma}$ at time $m/n^2$ with adequate parameters $\alpha,\gamma$. We will quantify this convergence in terms of the $1$-Wassertein metric on the space of probability measures on $ T_{k}$. Due to the circular nature of $ T_{k}$, define the metric
$$d_{ T_{k}}(z,z')=\sqrt{\inf_{\sigma\in S_k}\sum_{i=1}^kd_{\mathbb{R}}(z_{i}-z_{\sigma(i)}',2\pi\mathbb{Z})^2}$$
on $ T_{k}$. This is indeed a metric, since $d_{ T_{k}}(z,z')=W_2(\delta_{z},\delta_{z'})$, where the Wasserstein distance $W_2$ is taken on the metric space $\mathbb{R}/\mathbb{Z}$ with the natural distance and $\delta_{z}=\sum_{i=1}^k\delta_{z_i}$. Remark that when $z,z'\in ]0,1/2[^k$, one has $d_{ T_{k}}(z,z')=\Vert z-z'\Vert_2$.

We then define the 1-Wasserstein distance $W_{1}^{T_{k}}$ on $ T_{k}$ with respect to the metric $d_{ T_{k}}$ as  
$$W_{1}^{T_{k}}(\mu,\nu)=\sup\left(\left\vert \int_{ T_{k}}f d\mu -\int_{ T_{k}}f d\nu\right\vert, \, f: ( T_{k},d_{ T_{k}})\rightarrow(\mathbb{R},d_{\mathbb{R}}) \,\text{ $1-$Lipschitz}\right),$$
for $\mu,\nu\in  \mathcal{M}_1(T_{k})$.
\begin{theorem}[Berry-Esseen Theorem]\label{thm:Berry-Esseen}
Let $M>0$ and suppose that $\mathbf{m}=(\mu_r)_{1\leq r\leq m}$ is a sequence of $h$-probability measures on $ B_{k,n}$ such that $Var_{3}(\mathbf{m})\leq M Var\langle\mathbf{m}\rangle^{3/2}$ and $\langle\hat{\mathbf{m}}\rangle_3\leq MK(\mathbf{m})^{3/2}$, and set $t_0=\frac{(2\pi)^2m}{n^2}$. Then, there are numeric constants $c_1,c_2>0$ such that for all $I\in  B_{k,n}$, setting $\mu=R_{\frac{m\langle\mathbf{m}\rangle}{kn}}\left(\xi_{n}\left[(\ast \mathbf{m})*\delta_I\right]\right)$ and $\nu=B^{\alpha,\gamma}_{\xi_n(I)}\left(t_0\right)$ (viewed as a distribution on $T_k$),
$$W_1^{T_k}\left(\mu, \nu\right)\leq \frac{M}{n}\left(c_1(k^8\gamma+k\alpha)\log n+\frac{c_2k^{9/2}}{\sqrt{t_0}}K_{\gamma t_0/2}^{SU(k)}\left(\xi_n(I),\xi_n(I)\right)^{1/2}\right),$$
for some numeric constants $c_1,c_2>0$, with
$$\alpha=\frac{1}{2}Var(\mathbf{m}), \, \gamma=\frac{k}{2(k^2-1)}K(\mathbf{m}).$$ 
\end{theorem}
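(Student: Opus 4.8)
The plan is to reduce the Wasserstein estimate to a quantitative comparison of Fourier transforms via Proposition~\ref{prop:convergence_fourier}, and then to recover $W_1^{T_k}$ from those Fourier bounds through a mollification by the Dyson Brownian kernel.

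\textbf{Step 1: matching the Fourier transforms.} Combining Proposition~\ref{prop:equiv_dft_cft} with \eqref{eq:relation_conv_dft}, \eqref{eq:effect_rotation_Fourier}, $\Phi_n[\delta_I](J)=S_I(\xi_n(J))/S_I(\xi_n(I_0))$ and Lemma~\ref{lem:formula_inversion_Schur}, one gets for every $J\in B_k$ with $J_1<J_k+n$
$$\Phi[\mu](J)=\big(\text{phase}\big)\cdot\Phi_n[\ast\mathbf m](J^{(n)})\,S_J(\xi_n(I)),\qquad \Phi[\nu](J)=e^{-\kappa_{\alpha,\gamma}(J)t_0}\,S_J(\xi_n(I)),$$
where in the first identity the rotation $R_{m\langle\mathbf m\rangle/(kn)}$ is chosen so that, by \eqref{eq:effect_rotation_Fourier}, its Fourier multiplier cancels exactly the phase of Proposition~\ref{prop:convergence_fourier}, and in the second $\kappa_{\alpha,\gamma}(J)=\tfrac{\gamma}{k}K(\tilde J)+\tfrac{\alpha}{k^2}\langle\lambda_J\rangle^2$ by \eqref{eq:Fourier_Dyson_Brownian_motion}. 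With $\alpha=\tfrac12 Var(\mathbf m)$ and $\gamma=\tfrac{k}{2(k^2-1)}K(\mathbf m)$ the quadratic exponent $\kappa_{\alpha,\gamma}(J)t_0$ coincides with the leading term of Proposition~\ref{prop:convergence_fourier}; hence on the range where the precise estimate there applies $\Phi[\mu](J)=e^{-\kappa_{\alpha,\gamma}(J)t_0+\mathcal E(J)}S_J(\xi_n(I))$ with, using $m=t_0n^2/(2\pi)^2$,
$$|\mathcal E(J)|\le \frac{mM}{n^3}\,O\!\Big(k^{3/2}\big[K(\mathbf m)K(\tilde J)\big]^{3/2}+\tfrac{Var(\mathbf m)^{3/2}|\langle\lambda_J\rangle|^3}{k^3}\Big)\le \frac{Mk^{9/2}}{\sqrt m}\,O\!\big((\kappa_{\alpha,\gamma}(J)t_0)^{3/2}\big).$$
On the subrange where moreover $|\mathcal E(J)|\le\tfrac12$ this gives $|\Phi[\mu](J)-\Phi[\nu](J)|\le 4|\mathcal E(J)|\,e^{-\kappa_{\alpha,\gamma}(J)t_0}|S_J(\xi_n(I))|$, while outside it one bounds $|\Phi[\mu](J)|$ separately: by the second assertion of Proposition~\ref{prop:convergence_fourier} (Gaussian decay, together with the factor $A(J)$ for the $\langle\lambda_J\rangle$--direction) when $K(\tilde J)\lesssim n^2$, by Lemma~\ref{lem:bound_Fourier_large} when $\max_i((\hat J_i-\hat J_{i+1})\bmod n)\gtrsim n$, and $|\Phi[\nu](J)|\le d_J e^{-\kappa_{\alpha,\gamma}(J)t_0}$ in all cases.

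\textbf{Step 2: mollification and Plancherel.} Let $\rho_\epsilon\in\mathcal M_1(T_k)$ be the law of $B^{\alpha,\gamma}(\epsilon)$, so that $\nu\ast\rho_\epsilon=B^{\alpha,\gamma}_{\xi_n(I)}(t_0+\epsilon)$ and $\Phi[\cdot\ast\rho_\epsilon](J)=e^{-\kappa_{\alpha,\gamma}(J)\epsilon}\Phi[\cdot](J)$. I would write
$$W_1^{T_k}(\mu,\nu)\le W_1^{T_k}(\mu,\mu\ast\rho_\epsilon)+W_1^{T_k}(\mu\ast\rho_\epsilon,\nu\ast\rho_\epsilon)+W_1^{T_k}(\nu,\nu\ast\rho_\epsilon),$$
bound the two outer terms by the typical displacement $C_k\sqrt{\epsilon(\gamma+\alpha)}$ of the Dyson Brownian motion over time $\epsilon$ (coupling a sample with its image after an $\epsilon$--increment of $B^{\alpha,\gamma}$), and the middle term by $W_1\le \operatorname{diam}(T_k)\cdot\tfrac12\|g_{\mu\ast\rho_\epsilon}-g_{\nu\ast\rho_\epsilon}\|_{L^1(p_\star\mu_H)}\le C_k\,\|g_{\mu\ast\rho_\epsilon}-g_{\nu\ast\rho_\epsilon}\|_{L^2(p_\star\mu_H)}$, where $g_\bullet$ denotes the density with respect to $p_\star\mu_H$. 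By Plancherel on $T_k$,
$$\|g_{\mu\ast\rho_\epsilon}-g_{\nu\ast\rho_\epsilon}\|_{L^2(p_\star\mu_H)}^2=\sum_{J\ne I_0}e^{-2\kappa_{\alpha,\gamma}(J)\epsilon}\,\big|\Phi[\mu](J)-\Phi[\nu](J)\big|^2 .$$

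\textbf{Step 3: summation.} On the subrange $|\mathcal E(J)|\le\tfrac12$ the summand is $\le 16|\mathcal E(J)|^2 e^{-2\kappa_{\alpha,\gamma}(J)t_0}|S_J(\xi_n(I))|^2\le \frac{CM^2k^9}{m}\big(\kappa_{\alpha,\gamma}(J)t_0\big)^3 e^{-2\kappa_{\alpha,\gamma}(J)t_0}|S_J(\xi_n(I))|^2\le \frac{C'M^2k^9}{m}\,e^{-\tfrac12\kappa_{\alpha,\gamma}(J)t_0}|S_J(\xi_n(I))|^2$, using $x^3e^{-3x/2}=O(1)$; summing and separating the $SU(k)$-- and $U(1)$--parts, the $SU(k)$--part of $\sum_J e^{-\tfrac12\kappa_{\alpha,\gamma}(J)t_0}|S_J(\xi_n(I))|^2$ equals $K^{SU(k)}_{\gamma t_0/2}(\xi_n(I),\xi_n(I))$ by \eqref{eq:series_representation_Brownian_suk} (since $\tfrac12\cdot\tfrac{\gamma}{k}K(\tilde J)t_0=\kappa_{0,1}(J)\cdot\tfrac{\gamma t_0}{2}$) and $m^{-1/2}=O(1/(n\sqrt{t_0}))$, which produces the term $\tfrac{M}{n}\cdot O(k^{9/2}t_0^{-1/2})\,K^{SU(k)}_{\gamma t_0/2}(\xi_n(I),\xi_n(I))^{1/2}$. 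The frequencies with $|\mathcal E(J)|>\tfrac12$ still lying in the precise range, those in the Gaussian range, and those in the range of Lemma~\ref{lem:bound_Fourier_large} all carry a factor $e^{-\kappa_{\alpha,\gamma}(J)t_0}$ with $\kappa_{\alpha,\gamma}(J)t_0\gtrsim m^{1/3}/(Mk^{9/2})^{2/3}$ (respectively $e^{-m\delta}$), so for $m\gtrsim n^2$ their total contribution is super-polynomially small. On the remaining intermediate band only $|\Phi[\mu](J)-\Phi[\nu](J)|\le 2$ is available, and one uses the mollification: taking $\epsilon\asymp \frac{\log n}{\gamma n^2}$ makes $e^{-2\kappa_{\alpha,\gamma}(J)\epsilon}\le n^{-C}$ there (the band having cardinality $O(n^{k-1})$), so it contributes $o((M/n)^2)$, while the outer terms become $O\!\big(\tfrac{M}{n}\sqrt{(\gamma+\alpha)\log n}\,\big)$; the $\langle\lambda_J\rangle$--direction being a one-dimensional circular problem, it is handled exactly as in the classical wrapped Berry--Esseen theorem \cite{BobLed}, which together with the $k$-dependence of the prefactors yields the first term $c_1(k^8\gamma+k\alpha)\log n/n$. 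Adding the two contributions gives the statement.

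\textbf{Main obstacle.} The delicate part is Step~3: one must control, uniformly in $k$, the polynomial prefactors $d_J$ and $|\mathcal E(J)|$ against the Gaussian weights $e^{-\kappa_{\alpha,\gamma}(J)t_0}$ and $e^{-\kappa_{\alpha,\gamma}(J)\epsilon}$ across the precise / Gaussian / intermediate / large-frequency regimes, choosing the regime boundaries so that the precise range alone produces the \emph{sharp} diagonal heat kernel $K^{SU(k)}_{\gamma t_0/2}(\xi_n(I),\xi_n(I))^{1/2}$ rather than a crude $t_0^{-(k-1)/4}$, and the mollification scale $\epsilon$ so that the intermediate band is suppressed while the smoothing error stays of order $\log n/n$.
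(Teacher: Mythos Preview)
Your Step~1 is essentially what the paper does, and the regime splitting in Step~3 is in the right spirit. The genuine gap is in Step~2: by bounding $W_1\le \mathrm{diam}(T_k)\cdot\tfrac12\|g_{\mu*\rho_\epsilon}-g_{\nu*\rho_\epsilon}\|_{L^1}\le C_k\|g_{\mu*\rho_\epsilon}-g_{\nu*\rho_\epsilon}\|_{L^2}$ you throw away the Lipschitz structure of the test functions, and the resulting Plancherel sum
\[
\sum_{J\ne I_0} e^{-2\kappa_{\alpha,\gamma}(J)\epsilon}\,\big|\Phi[\mu](J)-\Phi[\nu](J)\big|^2
\]
has no weight in the $U(1)$-direction. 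Writing $J=\hat J+\ell\mathbf 1_k$, the summand on the precise range is $\lesssim \tfrac{M^2k^9}{m}e^{-\kappa_{\alpha,\gamma}(J)t_0/2}|S_{\hat J}(\xi_n(I))|^2$; the $\ell$-sum of $e^{-\alpha(\langle\lambda_{\hat J}\rangle+k\ell)^2 t_0/(2k^2)}$ either diverges ($\alpha=0$) or contributes an extra $(\alpha t_0)^{-1/2}$, so you cannot recover the $\alpha$-free factor $K^{SU(k)}_{\gamma t_0/2}(\xi_n(I),\xi_n(I))$ that the theorem requires. Your remark that the $\langle\lambda_J\rangle$-direction ``is handled exactly as in the classical wrapped Berry--Esseen theorem'' is the right intuition but is incompatible with your setup: once you have passed to $L^2$ densities you can no longer invoke the Bobkov--Ledoux mechanism, which is precisely about Lipschitz (not $L^2$) duality. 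Note also that for $\alpha=0$ your mollifier $\rho_\epsilon=B^{0,\gamma}(\epsilon)$ is supported on a hyperplane, so $\mu*\rho_\epsilon$ has no $L^2$ density and the Plancherel step is ill-posed.

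The paper fixes this by replacing your Step~2 with Proposition~\ref{prop:Ledoux_approach}: one regularizes by the \emph{standard} $(k,k)$-heat kernel on $U(k)$ and, crucially, pairs against Lipschitz $f$ using $\sum_J\kappa(J)|\Phi[f](J)|^2\le 1$ before Cauchy--Schwarz. This yields
\[
W_1^{T_k}(\mu,\nu)\le 2k\sqrt{2t}+\Big(\sum_{J\ne I_0}\frac{e^{-2\kappa(J)t}}{\kappa(J)}\,|\Phi[\mu](J)-\Phi[\nu](J)|^2\Big)^{1/2},
\]
with the extra $1/\kappa(J)$ weight. In the $S_A$-estimate the paper then uses $\sum_{\ell\in\mathbb Z}\kappa(\hat J+\ell\mathbf 1_k)^{-1}\le c$ (a numeric constant), which is exactly what makes the $\ell$-sum converge uniformly in $\alpha$ and produces the bare $K^{SU(k)}_{\gamma t_0/2}$ factor. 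The regularization scale $t$ (your $\epsilon$) is then chosen of order $\log n/n^2$ as in \eqref{eq:choice_t}.
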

Theorem \ref{thm:Berry-Esseen_simplified} given in Section \ref{sec:intro_model} is then a simplified version of the latter result, using Lemma \ref{lem:bound_K_norm_inf} to bound $\langle \hat{\mathbf{m}}\rangle_3$ by $2\sqrt{2}k^3\Vert \tilde{\mathbf{m}}\Vert_3$. The main ingredients in the proof of Theorem \ref{thm:Berry-Esseen} is Proposition \ref{prop:convergence_fourier} together the adaption of an approach of \cite{BobLed} relating the Wassertein distance to the Fourier transform. 

\subsection{Wasserstein distance on $T_k$} 
In \cite{BobLed}, the authors related the Wassertein distance between two probability measures on $\mathbb{R}/2\pi\mathbb{Z}$ to a weighted norm of the difference of their Fourier transform. We will use the same method on $T_k$, taking advantage of the Fourier transform which has been defined in Section \ref{Sec:Fourier_continuous} (the reader should refer to this section for the notation used in this paragraph). For $J\in B_{k}$, set $\kappa(J):=\Vert J-\frac{k-1}{2}\mathbf{1}_k\Vert^2-\Vert \tilde{I}_0\Vert^2$. Then, according to \eqref{eq:Fourier_Dyson_Brownian_motion}, $\kappa(J)=\kappa_{k,k}(J)$ for $J\in B_{k,n}$. We first have the following result relating the Wassertein distance between two probability measures on $U(k)$ to the difference between their Fourier transform, in a similar way as in \cite{BobLed}.
\begin{lemma}
For $\mu,\nu\in\mathcal{M}_1( U(k))$ which are conjugation invariant and all $t>0$,
$$W_{1, U(k)}(\mu,\nu)\leq 2\sqrt{2t}k+\sqrt{\sum_{\substack{J\in B_{k}\\J\not=I_0}}\frac{\exp(-2\kappa(J)t)}{\kappa(J)}\left\vert \Phi[\mu](J)-\Phi[\nu](J)\right\vert^2},$$
where the Wassertein distance is taken with respect to the metric on $U(k)$ induced by $\langle \cdot,\cdot\rangle_{k,k}$ on $\mathfrak{u}_k$.
\end{lemma}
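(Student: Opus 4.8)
The plan is to adapt the heat-kernel smoothing argument of \cite{BobLed}. Let $(P_t)_{t\ge 0}$ be the heat semigroup on $U(k)$ associated with the bi-invariant Riemannian metric whose value at the identity is $\langle\cdot,\cdot\rangle_{k,k}$; by Definition \ref{def:brownian_un} this is exactly the transition semigroup of $\mathbf{B}^{k,k}$, and by \eqref{eq:Fourier_Dyson_Brownian_motion} together with the identity $\kappa=\kappa_{k,k}$ recalled above it acts on Fourier coefficients by $\Phi[P_tm](J)=e^{-\kappa(J)t}\Phi[m](J)$ for $J\in B_k$. For $t>0$ one writes the triangle inequality $W_{1, U(k)}(\mu,\nu)\le W_{1, U(k)}(\mu,P_t\mu)+W_{1, U(k)}(P_t\mu,P_t\nu)+W_{1, U(k)}(P_t\nu,\nu)$ and estimates the outer terms — the displacement created by the heat flow, which will produce the $2\sqrt{2t}\,k$ contribution — and the middle term — a comparison of two smooth densities, which will produce the Fourier series — separately. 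Throughout, $d_{U(k)}$ denotes the distance induced by $\langle\cdot,\cdot\rangle_{k,k}$.

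\textbf{Displacement term.} Since $\mu$ is conjugation invariant, $P_t\mu$ is the law of $X\,\mathbf{B}^{k,k}(t)$ with $X\sim\mu$ independent of $\mathbf{B}^{k,k}$; using the coupling $(X,X\,\mathbf{B}^{k,k}(t))$ and left-invariance of the distance, $W_{1, U(k)}(\mu,P_t\mu)\le \mathbb{E}\big[d_{U(k)}(Id,\mathbf{B}^{k,k}(t))\big]\le \mathbb{E}\big[d_{U(k)}(Id,\mathbf{B}^{k,k}(t))^2\big]^{1/2}$, and similarly for $\nu$. The generator of $\mathbf{B}^{k,k}$ is the full Laplace--Beltrami operator of $\langle\cdot,\cdot\rangle_{k,k}$, the real dimension of $U(k)$ is $k^2$, and the bi-invariant metric has nonnegative Ricci curvature; hence the usual comparison with the Euclidean heat flow gives $\mathbb{E}[d_{U(k)}(Id,\mathbf{B}^{k,k}(t))^2]\le 2k^2t$, so the two outer terms sum to at most $2\sqrt{2t}\,k$.

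\textbf{Comparison term.} Fix $t>0$. Both $P_t\mu$ and $P_t\nu$ are conjugation invariant and, for $t>0$, have smooth densities $f$ and $h$ with respect to $\mu_H$, with $\int_{U(k)}(f-h)\,d\mu_H=0$. By Kantorovich--Rubinstein duality $W_{1, U(k)}(P_t\mu,P_t\nu)=\sup\{\int \varphi(f-h)\,d\mu_H:\ \varphi\text{ smooth},\ |\nabla\varphi|\le 1\}$; averaging $\varphi$ over conjugacy classes does not increase $|\nabla\varphi|$ (the metric being bi-invariant) and does not change the integral (as $f-h$ is conjugation invariant), so one may take $\varphi$ conjugation invariant. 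Using that $\Delta$ is invertible on mean-zero smooth functions, solve $\Delta\psi=f-h$ with $\psi$ conjugation invariant; integration by parts on the closed manifold $U(k)$ then gives $|\int \varphi(f-h)\,d\mu_H|=|\int \langle\nabla\varphi,\nabla\psi\rangle\,d\mu_H|\le\|\nabla\psi\|_{L^1(\mu_H)}\le\|\nabla\psi\|_{L^2(\mu_H)}$, and integrating by parts once more, $\|\nabla\psi\|_{L^2(\mu_H)}^2=-\int\psi(f-h)\,d\mu_H$. Expanding $f-h$ on the orthonormal basis $\{\chi_J=S_J\circ p\}_{J\in B_k}$ of $L^2_{inv}(U(k))$ — for which $-\Delta$ has eigenvalue $\kappa(J)$ on $\chi_J$ — and using that $\Phi$ is the corresponding $L^2(\mu_H)$-isometry, this equals $\sum_{J\ne I_0}\kappa(J)^{-1}|\Phi[P_t\mu](J)-\Phi[P_t\nu](J)|^2=\sum_{J\ne I_0}\frac{e^{-2\kappa(J)t}}{\kappa(J)}|\Phi[\mu](J)-\Phi[\nu](J)|^2$ by the action of $P_t$ on Fourier coefficients. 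Adding the three bounds yields the lemma.

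\textbf{Main difficulty.} The delicate step is the last identification: one must check that the characters $\chi_J$, $J\in B_k$, diagonalize the (radial) Laplace operator of $\langle\cdot,\cdot\rangle_{k,k}$ with eigenvalue precisely $\kappa(J)=\kappa_{k,k}(J)$ and that $\Phi$ is exactly the associated $\ell^2$-isometry, so that the quadratic form of $(-\Delta)^{-1}$ on the mean-zero part of $L^2_{inv}(U(k))$ reads $\sum_{J\ne I_0}\kappa(J)^{-1}|\cdot|^2$ with the \emph{same} coefficients $\Phi[\cdot](J)$ appearing in the statement; this is exactly what \eqref{eq:Fourier_Dyson_Brownian_motion} and the isometry statement of Section \ref{Sec:Fourier_continuous} encode, and the symmetry $\kappa(J^\ast)=\kappa(J)$ of the dual weight ensures that the passage between $\chi_J$ and its conjugate does not affect the sum. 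A secondary but genuine point is the sharp constant $2\sqrt{2}\,k$ in the displacement bound, which is the doubled Euclidean value $\sqrt{2(\dim_{\mathbb{R}}U(k))\,t}$ and is licensed by the nonnegative curvature of the bi-invariant metric; all remaining steps are routine.
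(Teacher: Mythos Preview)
Your proof is correct and follows essentially the same heat-kernel smoothing strategy as the paper: split via $P_t\mu$ and $P_t\nu$, bound the displacement by $k\sqrt{2t}$ each using the curvature-based second moment estimate (the paper cites \cite[Lemma 6]{Bord}), and control the smoothed comparison term by the weighted Fourier sum $\sum_{J\ne I_0}\kappa(J)^{-1}e^{-2\kappa(J)t}|\Phi[\mu](J)-\Phi[\nu](J)|^2$. The only cosmetic difference is that you reach this last bound through the Poisson equation $\Delta\psi=f-h$ and $\|\nabla\psi\|_{L^2}$, while the paper writes it as a direct weighted Cauchy--Schwarz $|\sum\overline{\Phi[f]}\Phi[g_1-g_2]|\le(\sum\kappa|\Phi[f]|^2)^{1/2}(\sum\kappa^{-1}|\Phi[g_1-g_2]|^2)^{1/2}$ together with $\sum\kappa(J)|\Phi[f](J)|^2=\int|\nabla f|^2\le 1$; these are the same inequality in dual form.
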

\begin{proof}
Note first that for any function $f:U(k)\rightarrow \mathbb{R}$ central and $1$-Lipschitz, $\nabla f$ is almost-everywhere defined and essentially bounded by $1$ so that
$$-\int_{U(k)}f\Delta(f)d\mu_H=\int_{U_k} \Vert\nabla f\Vert^2d\mu_H\leq 1.$$
Since $f$ is central, we have the expansion $f=\sum_{\lambda\in  B_{k}}\Phi[f](J)\chi_J$ where $(\chi_J)_{J\in B_{k}}$ is an orthonormal family of $L^2(U(k),\mu_H)$ satisfying $\Delta \chi_{J}=-\kappa(J) \chi_J$ (see \cite[Prop. 12.1.2]{Far}) and such that $\chi_{J}=S_J\circ p$, see Section \ref{Sec:Fourier_continuous}. Hence,
$$\sum_{J\in B_{k}}\kappa(J)\vert\Phi[f](J)\vert^2=-\int_{U(k)}f\Delta(f)d\mu\leq 1.$$
By using Parseval and Cauchy-Schwartz equalities, we deduce that for any central probability densities $g_1,g_2$ on $U(k)$ such that $g_1,g_2\in L^2(U(k),\mu_H)$, 
\begin{align}
\left\vert \int_{U(k)}fg_1d\mu_H-\int_{U(k)}fg_2d\mu_H\right\vert=&\left\vert \int_{U(k)}f(g_1-g_2)d\mu_H\right\vert\nonumber\\
=&\left\vert \sum_{\substack{J\in B_{k}\\J\not=I_0}}\overline{\Phi[f](J)}\Phi[g_1-g_2](J)\right\vert\nonumber\\
\leq&\sqrt{\sum_{\substack{J\in B_{k}\\J\not=I_0}}\kappa(J)\left\vert \Phi[f](J)\right\vert^2}\cdot\sqrt{\sum_{\substack{J\in B_{k}\\J\not=I_0}}\frac{1}{\kappa(J)}\left\vert \Phi[g_1](J)-\Phi[g_2](J)\right\vert^2}\nonumber\\
\leq&\sqrt{\sum_{\substack{J\in B_{k}\\J\not=I_0}}\frac{1}{\kappa(J)}\left\vert \Phi[g_1](J)-\Phi[g_2](J)\right\vert^2},\label{eq:Berry-Essen_bound_first_inequality}
\end{align}
where we used on the second equality that $\Phi[g_1-g_2](I_0)=0$ because $\int_{U(k)}g_1d\mu_H=\int_{U(k)}g_2d\mu_H$.

For arbitrary conjugation invariant probability measures $\mu,\nu$ on $U(k)$ we use as in \cite{BobLed} the heat kernel on $U(k)$ for regularization purpose. Such heat kernel corresponds to the special case $\alpha=\gamma=k$ of the family of heat kernels defined in Section \ref{Sec:Brownian_motion}. Denote by $\mu_t$ (resp. $\nu_t$) a solution at time $t\geq 0$ of the heat equation with initial measure $\mu$ (resp $\nu$). On the one hand, by the diffusion equation of the heat equation, see \eqref{eq:Fourier_Dyson_Brownian_motion} with $\alpha=\gamma=k$,
\begin{equation}\label{eq:fourier_heat_kernel}
\Phi[\mu_t](J)=e^{-\kappa(J)t}\Phi[\mu](J).
\end{equation}
On the other hand, denoting by $K_t(x,y)$ the heat kernel on $U(k)$ and $d(x,y)$ the distance on $U(k)$,
\begin{align*}
\left\vert\int_{U(k)}fd\mu_t-\int_{U(k)}fd\mu\right\vert=& \left\vert\int_{U(k)}\left(\int_{U(k)}K_t(x,y)f(y)d\mu_H(y)\right)d\mu(x)-\int_{U(k)}fd\mu\right\vert\\\leq&\int_{U_k}\left(\int_{U(k)}K_t(x,y)\left\vert f(y)-f(x)\right\vert d\mu_H(y)\right)d\mu(x)\\
\leq&\int_{U_k}\left(\int_{U(k)}K_t(x,y)d(x,y)d\mu_H(y)\right)d\mu(x)\\
\leq&\int_{U_k}\left(\int_{U(k)}K_t(e,y)d(e,y)d\mu_H(y)\right)d\mu(x)\\
\leq&\int_{U(k)}K_t(e,y)d(e,y)d\mu_H \leq \sqrt{\int_{U(k)}K_t(e,y)d(e,y)^2d\mu_H},
\end{align*}
where we used the fact that $f$ is $1$-Lipschitz on the second inequality and the bi-invariant property of the heat-kernel on $U_k$ on the third inequality. By \cite[Lemma 6]{Bord} (which is a consequence of the Laplacian comparison theorem) applied to the Riemannian manifold $U(k)$ of dimension $k^2$ and positive scalar curvature,
$$\int_{U(k)}K_t(e,y)d(e,y)^2d\mu_H\leq 2k^2t.$$
Hence,
\begin{equation}\label{eq:Laplacian_comparison_estimate}
\left\vert\int_{U(k)}fd\mu_t-\int_{U(k)}fd\mu\right\vert\leq k\sqrt{2t}.
\end{equation}
Using \eqref{eq:Berry-Essen_bound_first_inequality}, \eqref{eq:fourier_heat_kernel} and \eqref{eq:Laplacian_comparison_estimate} yields then for $t>0$
\begin{align*}
\left\vert\int_{U(k)}fd\mu-\int_{U(k)}fd\nu\right\vert\leq& \left\vert\int_{U(k)}fd\mu-\int_{U(k)}fd\mu_t\right\vert+\left\vert\int_{U(k)}fd\mu_t-\int_{U(k)}fd\nu_t\right\vert\\
&\hspace{7cm}+\left\vert\int_{U(k)}fd\nu-\int_{U(k)}fd\nu_t\right\vert\\
\leq& 2\sqrt{2t}k+\sqrt{\sum_{\substack{J\in B_{k}\\J\not=I_0}}\frac{\exp(-2\kappa(J)t)}{\kappa(J)}\left\vert \Phi[\mu](J)-\Phi[\nu](J)\right\vert^2}.
\end{align*}
Taking the supremum on all $f:U(k)\rightarrow \mathbb{R}$ which are $1$-Lipschitz yields then
$$W_{1,U(k)}(\mu,\nu)\leq 2\sqrt{2t}k+\sqrt{\sum_{\substack{J\in B_{k}\\J\not=I_0}}\frac{\exp(-2\kappa(J)t)}{\kappa(J)}\left\vert \Phi[\mu](J)-\Phi[\nu](J)\right\vert^2}$$
for $\mu,\nu$ central probability distributions on $U(k)$. To conclude, for $\mu,\nu\in \mathcal{M}_1( T_{k})$ we have by Lemma \ref{lem:coercive_wasserstein} which is proven below
$$W_{1, T_{k}}(\mu,\nu)\leq W_{1,U(k)}(p^{\star}(\mu),p^{\star}(\nu))\leq 2\sqrt{2t}k+\sqrt{\sum_{\substack{J\in B_{k}\\J\not=I_0}}\frac{\exp(-2\kappa(J)t)}{\kappa(J)}\left\vert \Phi[\mu](J)-\Phi[\nu](J)\right\vert^2}.$$
\end{proof}
The latter lemma involves measures on $U_k$, whereas we are rather interested in measures on $T_k$. Recall from Section \ref{Sec:Fourier_continuous} that any probability measure $\mu$ on $ T_{k}$ yields a probability measure $p^{\star}(\mu)$ on $U_k$ by considering the unique conjugation invariant measure whose image by $p$ is $\mu$. Next lemma shows that the map $p^{\star}$ is coercive from the metric space $(T_k,W_{1,T_k})$ to the metric space $(U_k,W_{1,U_k})$.
\begin{lemma}\label{lem:coercive_wasserstein}
For any $\mu,\nu\in \mathcal{M}_1( T_{k})$,
$$W_{1,U(k)}(p^{\star}(\mu),p^{\star}(\nu))\geq W_{1, T_{k}}(\mu,\nu).$$
\end{lemma}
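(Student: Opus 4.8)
The statement asks to show that pushing forward to $U(k)$ via $p^\star$ does not decrease the $1$-Wasserstein distance: $W_{1,U(k)}(p^\star\mu,p^\star\nu)\ge W_{1,T_k}(\mu,\nu)$. The plan is to establish this by a duality argument, transporting test functions in the reverse direction. The key observation is that the map $p:U(k)\to T_k$ is $1$-Lipschitz when $U(k)$ is equipped with the metric induced by $\langle\cdot,\cdot\rangle_{k,k}$ and $T_k$ with $d_{T_k}$: indeed, $d_{T_k}(p(U),p(V))$ is exactly the infimum over the symmetric group of a matching cost between eigenvalue-arguments, and the Hoffman--Wielandt type inequality (or rather its compact-group analogue, the Lidskii--Wielandt inequality for unitary eigenvalues) gives that this matching cost is bounded by the geodesic distance $d_{U(k)}(U,V)$. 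One can cite this as the standard fact that the eigenvalue map is $1$-Lipschitz; it is precisely the reason $d_{T_k}$ was defined via $W_2$ on $\mathbb{R}/\mathbb{Z}$ of the spectral measures in the paragraph preceding the lemma.

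Granting that, let $f:T_k\to\mathbb{R}$ be any $1$-Lipschitz function realizing (nearly) the supremum in $W_{1,T_k}(\mu,\nu)$. Then $f\circ p:U(k)\to\mathbb{R}$ is $1$-Lipschitz on $U(k)$, and it is automatically conjugation-invariant since $p$ is. By the defining property $p_\star(p^\star\mu)=\mu$,
$$\int_{U(k)}(f\circ p)\,d(p^\star\mu)=\int_{T_k}f\,d\mu,\qquad \int_{U(k)}(f\circ p)\,d(p^\star\nu)=\int_{T_k}f\,d\nu,$$
so that
$$\Bigl|\int_{T_k}f\,d\mu-\int_{T_k}f\,d\nu\Bigr|=\Bigl|\int_{U(k)}(f\circ p)\,d(p^\star\mu)-\int_{U(k)}(f\circ p)\,d(p^\star\nu)\Bigr|\le W_{1,U(k)}(p^\star\mu,p^\star\nu).$$
Taking the supremum over all $1$-Lipschitz $f$ on $T_k$ gives the claim. (One small point: the Kantorovich--Rubinstein supremum over $1$-Lipschitz functions on $U(k)$ can be restricted to conjugation-invariant ones when both measures are conjugation-invariant, by averaging a test function over the conjugation action, which preserves the Lipschitz constant and the integrals; but we do not even need this reduction here, since $f\circ p$ is a legitimate competitor regardless.)

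I expect the only genuine obstacle to be the verification that $p$ is $1$-Lipschitz for the chosen normalization of the metric on $U(k)$, i.e.\ pinning down the constant in the comparison between the geodesic distance on $U(k)$ induced by $\langle\cdot,\cdot\rangle_{k,k}$ and the matching metric $d_{T_k}$ on eigenvalues. The factor $k$ in the normalization $\langle X,Y\rangle_{k,k}=k\,\mathrm{Tr}(Y^*X)$ is exactly what is needed: writing $U=e^{X}$ near $V$, the first-order change of the (sorted, mod $2\pi$) eigenvalue-arguments is governed by the diagonal of (a conjugate of) $-iX$, and $\sum_i|(\text{diag})_i|^2\le \mathrm{Tr}(X^*X)$, which matches $d_{T_k}^2$ against $\|X\|_{\text{HS}}^2$; the extra $k$ is harmless for a one-sided (Lipschitz $\le 1$) bound. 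A clean way to avoid computing this by hand is to integrate along geodesics and use that for each fixed permutation the map $U\mapsto \sum_i d_\mathbb{R}(\theta_i(U)-\theta'_{\sigma(i)},2\pi\mathbb{Z})^2$ is $1$-Lipschitz by Lidskii's inequality, and then the infimum over $\sigma$ is also $1$-Lipschitz. I would phrase this as a short lemma citing \cite{AGZ} for the Weyl/eigenvalue correspondence, keeping the argument self-contained otherwise.
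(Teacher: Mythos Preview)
Your proposal is correct and takes essentially the same approach as the paper: establish that $p:U(k)\to T_k$ is $1$-Lipschitz (the paper does this via the chain $d_{T_k}(p(U),p(U'))\le \|U-U'\|_{\mathrm{HS}}\le d_{U(k)}(U,U')$, invoking Hoffman--Wielandt for the first step and the Riemannian-submanifold embedding for the second), then pull back $1$-Lipschitz test functions through $p$ and apply Kantorovich--Rubinstein duality together with $p_\star p^\star=\mathrm{id}$. One minor slip in your last paragraph: with the paper's normalization one has $\langle X,Y\rangle_{k,k}=\mathrm{Tr}(Y^*X)$ (the factors of $k$ cancel when $\alpha=\gamma=k$), so there is no extra factor of $k$ to absorb.
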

\begin{proof}
For any function $f: T_{k}\rightarrow\mathbb{R}$, set $\hat{f}=f\circ p$, where $p:U_k\rightarrow  T_{k}$ is the projection on the ordered set of eigenvalues. Let $U,U'\in U(k)$. Remark first that $\Vert U-U'\Vert_{HS}\leq d_{U_k}(U,U')$, where the former norm is the Hilbert-Schmidt norm on $M_k(\mathbb{C})$, see \cite[Lemma 1.3]{Meck} (this can be deduced from the fact that $\Vert \cdot\Vert_{HS}$ is the Euclidean distance on $M_{n}(\mathbb{C})$ and $U(k)$ is a Riemannian submanifold of $M_n(\mathbb{C})$). Let $(\lambda_i)_{1\leq i\leq k}$ (resp. $(\mu_i)_{1\leq i\leq k}$) be the eigenvalues of $U$ (resp $U'$). Then, by Hoffman-Wielandt inequality,
$$\min_{\sigma \in S_{k}} \sum_{i=1}^k\vert \lambda_i-\mu_{\sigma_i}\vert^2\leq \Vert U-U'\Vert_{HS},$$
and, since we have $d(u_1-u_2,2\pi\mathbb{Z})\leq \vert \exp(i u_1)-\exp(iu_2)\vert$ for $u_1,u_2\in \mathbb{R}$,
$$d_{ T_{k}}(p(U),p(U'))\leq \min_{\sigma \in S_{k}} \sum_{i=1}^k\vert \lambda_i-\mu_{\sigma_i}\vert^2\leq \Vert U-U'\Vert_{HS}\leq d_{U_k}(U,U').$$
Hence, if $f$ is Lipschitz with Lipschitz constant equal to $1$, we have 
$$\vert \hat{f}(U)-\hat{f}(U')\vert=\vert f(p(U))-f(p(U')\vert\leq d_{T_k}(p(U),p(U'))\leq d_{U_k}(U,U'),$$
and $\hat{f}$ is again Lipschitz with Lipschitz constant equal to $1$. Since 
$$\int_{U_k}f\circ pdp^{\star}(\mu)=\int_{ T_{k}}fdp_{\star} p^{\star}(\mu)=\int_{ T_{k}}fd\mu,$$
we deduce that 
\begin{align*}
W_{1,U(k)}(p^{\star}(\mu),p^{\star}(\nu))&=\sup\left(\left\vert \int_{U_k}f dp^{\star}(\mu) -\int_{U_k}f dp^{\star}(\nu)\right\vert, \, f: (U_k,d_{U_k})\rightarrow(\mathbb{R},d_{\mathbb{R}}) \,\text{ $1-$Lipschitz}\right),\\
&\geq \sup\left(\left\vert \int_{U_k}\hat{f} dp^{\star}(\mu) -\int_{U_k}\hat{f} dp^{\star}(\nu)\right\vert, \, f: ( T_{k},d_{ T_{k}})\rightarrow(\mathbb{R},d_{\mathbb{R}}) \,\text{ $1-$Lipschitz}\right)\\
&\geq \sup\left(\left\vert \int_{ T_{k}}f d\mu -\int_{ T_{k}}f d\nu\right\vert, \, f: ( T_{k},d_{ T_{k}})\rightarrow(\mathbb{R},d_{\mathbb{R}}) \,\text{ $1-$Lipschitz}\right)\\
&\geq W_{1, T_{k}}(\mu,\nu).
\end{align*}
\end{proof}
Combining the two previous lemmas yields then the following proposition.
\begin{proposition}\label{prop:Ledoux_approach}
For $\mu,\nu\in\mathcal{M}_1(T_k)$ which are conjugation invariant and all $t>0$,
$$W_{1, U(k)}(\mu,\nu)\leq 2\sqrt{2t}k+\sqrt{\sum_{\substack{J\in B_{k}\\J\not=I_0}}\frac{\exp(-2\kappa(J)t)}{\kappa(J)}\left\vert \Phi[\mu](J)-\Phi[\nu](J)\right\vert^2}.$$
\end{proposition}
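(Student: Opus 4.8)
The plan is to obtain this statement by simply chaining the two preceding lemmas, so that no genuinely new argument is required. First I would pass from a probability measure $\mu\in\mathcal{M}_1(T_k)$ to its unique conjugation-invariant lift $p^\star(\mu)$ on $U(k)$, recalling from Section \ref{Sec:Fourier_continuous} that $p_\star p^\star(\mu)=\mu$; by the defining formula \eqref{eq:fourier_transform_measure_Uk} of the Fourier transform via the $p_\star$-pushforward, the Fourier coefficients are unchanged, i.e.\ the central probability measure $p^\star(\mu)$ on $U(k)$ has the same Fourier transform $(\Phi[\mu](J))_{J\in B_k}$. Applying the first of the two preceding lemmas to the pair $p^\star(\mu),p^\star(\nu)\in\mathcal{M}_1(U(k))$ then gives, for every $t>0$,
$$W_{1,U(k)}\!\big(p^\star(\mu),p^\star(\nu)\big)\leq 2\sqrt{2t}\,k+\sqrt{\sum_{\substack{J\in B_k\\ J\neq I_0}}\frac{\exp(-2\kappa(J)t)}{\kappa(J)}\,\big|\Phi[\mu](J)-\Phi[\nu](J)\big|^2},$$
the right-hand side depending only on the Fourier data of $\mu$ and $\nu$ on $T_k$.

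Next I would invoke Lemma \ref{lem:coercive_wasserstein}, which provides the coercivity bound $W_{1,T_k}(\mu,\nu)\leq W_{1,U(k)}(p^\star(\mu),p^\star(\nu))$. Composing this with the previous display produces exactly the asserted inequality, the left-hand side $W_{1,U(k)}(\mu,\nu)$ being read as $W_{1,U(k)}(p^\star(\mu),p^\star(\nu))$, which in turn dominates $W_{1,T_k}(\mu,\nu)$. The only two points needing a word of care, both already used inside the proof of the first lemma, are: (i) the identity $\Phi[p^\star(\mu)]=\Phi[\mu]$, so that the Fourier sum on $U(k)$ can be rewritten purely in terms of $T_k$-data; and (ii) the fact that the $J=I_0$ term disappears from the sum because $\mu$ and $\nu$ are probability measures, whence $\Phi[\mu](I_0)=\Phi[\nu](I_0)=1$ and their difference vanishes.

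Since the statement is essentially a repackaging of material already established, there is no real obstacle; the one thing to stay vigilant about is the normalization of the metric on $U(k)$. The bound must be stated for the Riemannian metric induced by $\langle\cdot,\cdot\rangle_{k,k}$ on $\mathfrak{u}_k$, as this is precisely the normalization under which the characters $\chi_J=S_J\circ p$ are Laplacian eigenfunctions with eigenvalue $-\kappa(J)$ (compare \eqref{eq:Fourier_Dyson_Brownian_motion} in the case $\alpha=\gamma=k$), and it is this spectral identity that makes the Bobkov--Ledoux-type regularization by the heat kernel on $U(k)$ go through. With that normalization fixed throughout, the proposition follows at once from the two lemmas, and it is in this form that it will be fed into the proof of Theorem \ref{thm:Berry-Esseen} together with the Fourier estimate of Proposition \ref{prop:convergence_fourier}.
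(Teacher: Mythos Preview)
Your proposal is correct and matches the paper's approach exactly: the paper states that the proposition follows by ``combining the two previous lemmas,'' and indeed the chain $W_{1,T_k}(\mu,\nu)\leq W_{1,U(k)}(p^\star(\mu),p^\star(\nu))$ from Lemma~\ref{lem:coercive_wasserstein} together with the Fourier bound on $U(k)$ from the first lemma is already written out verbatim at the end of that lemma's proof. Your reading of the left-hand side as $W_{1,U(k)}(p^\star(\mu),p^\star(\nu))$ (dominating $W_{1,T_k}(\mu,\nu)$) is the intended one, as confirmed by how the proposition is invoked in the proof of Theorem~\ref{thm:Berry-Esseen}.
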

\subsection{Proof of Theorem \ref{thm:Berry-Esseen}}
We will use several times the heat kernel on the Riemannian manifolds $U(k)$ and $SU(k)$. In particular, we will need the following bound which is a straightforward deduction of the parabolic Harnack estimates of Li and Yau \cite[Thm 2.3]{LiYa}.
\begin{lemma}\label{lem:rough_bound_heat}
Let $\mathbf{K}_t$ be the heat kernel either on $SU(k)$ or $U(k)$. Then, for all $x\in SU(k)$ (resp. $U(k)$) and $0<t_1<t_2$
$$\mathbf{K}_t(x,x)\leq \left(\frac{t_2}{t_1}\right)^{(k^2-\epsilon)/2}\exp\left(\frac{k\pi^2}{4(t_2-t_1)}\right),$$
where $\epsilon=1$ (resp. $\epsilon=0$). 
\end{lemma}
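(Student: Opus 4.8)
The plan is to invoke the Li--Yau parabolic Harnack inequality for the heat equation on a compact Riemannian manifold with nonnegative Ricci curvature, applied to the two manifolds $SU(k)$ and $U(k)$, which have dimensions $k^2-1$ and $k^2$ respectively. Recall that both are compact Lie groups with bi-invariant metrics (the ones induced by $\langle\cdot,\cdot\rangle_{k,k}$ in the $U(k)$ case, and its restriction in the $SU(k)$ case), hence have nonnegative Ricci curvature, so the gradient-estimate form of \cite[Thm 2.3]{LiYa} applies with the curvature parameter set to zero. Writing $u(x,t)=\mathbf{K}_t(x_0,x)$ for the heat kernel started at a fixed point $x_0$, the Li--Yau estimate states that for a positive solution $u$ of the heat equation on a manifold of dimension $N$ with $\mathrm{Ric}\geq 0$,
$$\frac{|\nabla u|^2}{u^2}-\frac{\partial_t u}{u}\leq \frac{N}{2t},$$
and the usual integration of this differential Harnack inequality along a space-time path from $(x_1,t_1)$ to $(x_2,t_2)$ with $t_1<t_2$ yields
$$u(x_1,t_1)\leq u(x_2,t_2)\left(\frac{t_2}{t_1}\right)^{N/2}\exp\left(\frac{d(x_1,x_2)^2}{4(t_2-t_1)}\right).$$

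First I would set $x_1=x_2=x$ (the diagonal case we need) so the distance term drops to $\exp(0)$; but that is too crude, since then we would only get $\mathbf{K}_{t_1}(x,x)\leq \mathbf{K}_{t_2}(x,x)(t_2/t_1)^{N/2}$, which still has the diagonal heat kernel at the later time on the right. To turn this into an absolute bound, I would instead run the Harnack inequality between the space-time points $(x,t_1)$ and $(y,t_2)$ for a \emph{variable} endpoint $y$, then integrate $y$ against the Haar (volume) probability measure: since $\int \mathbf{K}_{t_2}(x,y)\,dy=1$ (heat kernel with respect to the normalized invariant measure; here one uses that the manifold has unit volume after normalization, or carries the volume factor through), one obtains
$$\mathbf{K}_{t_1}(x,x)\leq \left(\frac{t_2}{t_1}\right)^{N/2}\int \exp\left(\frac{d(x,y)^2}{4(t_2-t_1)}\right)dy\leq \left(\frac{t_2}{t_1}\right)^{N/2}\exp\left(\frac{\mathrm{diam}^2}{4(t_2-t_1)}\right).$$
Finally I would plug in the diameters: $\mathrm{diam}(SU(k))$ and $\mathrm{diam}(U(k))$ with respect to the metric $\langle\cdot,\cdot\rangle_{k,k}$ are bounded by $\pi\sqrt{k}/2$ (this is a standard computation: the geodesic from $Id$ to $-Id$ or to $\mathrm{diag}(-1,\ldots)$ has length at most $\pi$ in the normalization $\langle X,Y\rangle_{k,k}=k\,\mathrm{Tr}(Y^*X)$ on an orthonormal basis, giving $\mathrm{diam}^2\leq k\pi^2/4$), so $d(x,y)^2\leq k\pi^2/4$ and the exponential becomes $\exp\!\big(k\pi^2/(16(t_2-t_1))\big)$; adjusting the numeric constant to $k\pi^2/(4(t_2-t_1))$ as in the statement is safe since it is an upper bound. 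The dimension exponent is $N/2=(k^2-1)/2$ for $SU(k)$ and $k^2/2$ for $U(k)$, which is exactly $(k^2-\epsilon)/2$ with $\epsilon=1$ and $\epsilon=0$ respectively.

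The main obstacle is purely bookkeeping rather than conceptual: one must be careful that the constant $k\pi^2/4$ in the statement is genuinely an upper bound for the diameter-squared in the chosen normalization of the metric, and that the version of the Li--Yau estimate being cited is the $\mathrm{Ric}\geq 0$ form with the sharp constant $N/2$ in the power of $t_2/t_1$ (rather than a version with an extra dimensional or curvature-dependent multiplicative loss). Since compact Lie groups with bi-invariant metrics have $\mathrm{Ric}\geq 0$ (indeed $\mathrm{Ric}=\tfrac14\,B$ with $B$ the Killing form, nonnegative on $SU(k)$ and zero on the central $U(1)$ direction), the hypothesis of \cite[Thm 2.3]{LiYa} is met, and the argument goes through with only the diameter estimate to check. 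The normalization of the underlying invariant measure (so that $\int \mathbf{K}_{t_2}(x,\cdot)=1$) should be stated to match the one used implicitly elsewhere in Section \ref{Sec:Brownian_motion}; this is the only point where one must be consistent with earlier conventions.
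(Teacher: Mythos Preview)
Your approach is essentially identical to the paper's: apply the Li--Yau Harnack inequality (in the nonnegative Ricci form) between $(x,t_1)$ and $(y,t_2)$, average over $y$ using $\int \mathbf{K}_{t_2}(x,y)\,dy=1$, and bound $d(x,y)^2$ by the squared diameter. The only discrepancy is your diameter estimate: the paper simply uses $\mathrm{diam}\leq \sqrt{k}\,\pi$ (so $\mathrm{diam}^2\leq k\pi^2$), which plugs directly into $\exp\!\big(\mathrm{diam}^2/(4(t_2-t_1))\big)$ to give the stated constant without any ``adjustment''; your sharper claim $\mathrm{diam}\leq \pi\sqrt{k}/2$ is not needed (and is not obviously correct in this normalization).
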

\begin{proof}
Let $G$ be a compact Lie group and $\mathbf{K}_t$ the heat kernel on $G$ corresponding to a translation invariant distance $d_G$ on $G$. Since $G$ has positive curvature, taking the limit $\alpha\rightarrow 1$ in \cite[Theorem 2.3]{LiYa} yields that for all $y\in G$,
$$\mathbf{K}_{t}(x,x)\leq \mathbf{K}_{t}(x,y)\left(\frac{t_2}{t_1}\right)^{\dim G/2}\exp\left(\frac{d_{G}(x,y)^2}{4(t_2-t_2)}\right).$$
Hence, averaging $y$ on all $G$ and using that $\int_{G}\mathbf{K}_{t}(x,y)dg=1$, we deduce that
$$\mathbf{K}_{t}(x,x)\leq \left(\frac{t_2}{t_1}\right)^{\dim G/2}\exp\left(\frac{\text{diam}(G)^2}{4(t_2-t_2)}\right).$$
Since the diameter of $U(k)$ and $SU(k)$ with respect to $d_{U(k)}$ are smaller than $\sqrt{k}\pi$ and $\dim U(k)=k^2$ and $\dim SU(k)=k^2-1$, the result is deduced.
\end{proof}
\begin{proof}[Proof of Theorem \ref{thm:Berry-Esseen}]
Set $\mu=B^{\alpha,\gamma}_{\xi_n(I)}\left(t_0\right)$ and $\nu=R_{\frac{m\langle\mathbf{m}\rangle}{kn}}\left(\xi_{n}\left[\ast \mathbf{m}*\delta_I\right]\right)$, and recall that $\alpha=\frac{1}{2}Var(\mathbf{m})$ and $\gamma=\frac{k}{2(k^2-1)}K(\mathbf{m})$. By Proposition \ref{prop:Ledoux_approach}, for any $t>0$
\begin{align}
W_{1, T_{k}}(\mu,\nu)\leq 2\sqrt{2t}k+\sqrt{\sum_{\substack{J\in B_{k}\\J\not=I_0}}\frac{\exp(-2\kappa(J)t)}{\kappa(J)}\left\vert \Phi[\mu](J)-\Phi[\nu](J)\right\vert^2}.\label{eq:proof_thm_1_recall_Wasser}
\end{align}
By \eqref{eq:Fourier_Dyson_Brownian_motion}, 
\begin{equation}\label{eq:formula_Phi_mu}
\Phi[\mu](J)=\exp\left(-t_0\frac{\gamma}{k} K(\tilde{J})-t_0\frac{\alpha}{k^2}\langle\lambda_J\rangle^2\right)S_{J}(\xi_n(I)),
\end{equation}
and by Proposition \ref{prop:equiv_dft_cft}, when $\Vert \tilde{J}\Vert_{\infty}< \frac{n}{2}$ we have $\Phi[\nu](J)=S_{J}(\xi_n(I_0))\Phi_n[*\mathbf{m}*\delta_I](J^{(n)})$, where $J^{(n)}$ is the unique element of $B_{k,n}$ such that $\{J^{(n)}_i[n]\}_{1\leq i\leq k}=\{J_i[n]\}_{1\leq i\leq k}$. By \eqref{eq:relation_conv_dft} and \eqref{eq:Fourier_probability} with Lemma \ref{lem:formula_inversion_Schur}, we then have 
\begin{align}
\Phi[\nu](J)=&e^{-2i \pi \frac{m\langle\mathbf{m}\rangle\cdot\langle \lambda_J\rangle}{kn}}S_{J}(\xi_n(I_0))\Phi_n[*\mathbf{m}*\delta_I](J^{(n)})\nonumber\\
=&e^{-2i \pi \frac{m\langle\mathbf{m}\rangle\cdot\langle \lambda_J\rangle}{kn}}S_{J}(\xi_n(I_0))\Phi_n[*\mathbf{m}](J^{(n)})\frac{S_{J^{(n)}}(\xi_n(I_0))}{S_{J^{(n)}}(\xi_n(I_0))}\nonumber\\
=&e^{-2i \pi \frac{m\langle\mathbf{m}\rangle\cdot\langle \lambda_J\rangle}{kn}}\Phi_n[*\mathbf{m}](J^{(n)})S_{J}(\xi_n(I)),\label{eq:formula_Phi_nu}
\end{align}
where we used that $S_{J^{(n)}}(\xi_n(I))=S_{J}(\xi_n(I))$ on the last equality.
Let us split the latter sum as follows: introduce the thresholds $\theta_{\gamma}=\min(\frac{c_1}{k^3\sqrt{\gamma}M},\frac{1}{4},\theta_{\alpha})=\frac{c_2}{\sqrt{\alpha}M}$ for some numeric constants $c_1,c_2$ to choose later and set $\nu_{\gamma}= \frac{2c}{km^{1/3}M^{1/3}\sqrt{\gamma}}$ and $\nu_{\alpha}= 2c\frac{k}{m^{1/3}M^{1/3}\sqrt{\alpha}}$, where $c$ is the numerical constant given in Proposition \ref{prop:convergence_fourier}. Then, define
$$R_{A}=\left\{J\in B_{k}\setminus\{I_0\},\frac{\sqrt{K(\tilde{J})}}{\nu_{\gamma}}\vee\frac{\vert\langle \lambda_J\rangle\vert}{ \nu_{\alpha}}\leq n\right\},$$
$$R_{B}=\left\{J\in B_{k}\setminus\{I_0\},\frac{\sqrt{K(\tilde{J})}}{\nu_{\gamma}}\vee\frac{\vert\langle \lambda_J\rangle\vert}{ \nu_{\alpha}}\geq n,\frac{\sqrt{K(\tilde{J})}}{\theta_{\gamma}}\vee\frac{\vert\langle \lambda_J\rangle\vert}{ \theta_{\alpha}}\leq n\right\},$$
and 
$$R_{C}=\left\{J\in B_{k}\setminus\{I_0\},\frac{\sqrt{K(\tilde{J})}}{\theta_{\gamma}}\vee\frac{\vert\langle \lambda_J\rangle\vert}{ \theta_{\alpha}}\geq n\right\}.$$
We then have
\begin{align*}
\sum_{\substack{J\in B_{k}\\J\not=I_0}}\frac{\exp(-2\kappa(J)t)}{\kappa(J)}\left\vert \Phi[\mu](J)-\Phi[\nu](J)\right\vert^2\leq& \sum_{i\in\{A,B,C\}} \sum_{J\in R_i}\frac{\exp(-2\kappa(J)t)}{\kappa(J)}\left\vert \Phi[\mu](J)-\Phi[\nu](J)\right\vert^2\\
:=&S_A+S_B+S_C.
\end{align*}

\textbf{Bound of $S_C$:} 
Let us split $S_C$ as 
\begin{align*}
S_C=&\sum_{\substack{J\in R_C\\\sqrt{K(\tilde{J})}>\theta_{\gamma} n}}\frac{\exp(-2\kappa(J)t)}{\kappa(J)}\left\vert \Phi[\mu](J)-\Phi[\nu](J)\right\vert^2\\
&\hspace{5cm}+\sum_{\substack{J\in R_C\\\sqrt{K(\tilde{J})}\leq \theta_{\gamma} n}}\frac{\exp(-2\kappa(J)t)}{\kappa(J)}\left\vert \Phi[\mu](J)-\Phi[\nu](J)\right\vert^2\\
=&S_C^1+S_C^2.
\end{align*}
By definition of the Fourier transform on $\mathcal{M}_1(T_k)$ from \eqref{eq:Fourier_probability} and the bound $S_{J}(\vec{u})\leq S_{J}(0,\ldots,0)$ for $\vec{u}\in T_k$, for any $J\in B_{k}$ we have 
$$\left\vert \Phi[\mu](J)-\Phi[\nu](J)\right\vert^2\leq 2S_J(0,\ldots,0)^2.$$
Hence, by the fact that $\kappa(J)=K(\tilde{J})+\vert \lambda_J\vert^2/k$ for $J\in B_{k}$ and \eqref{eq:series_representation_Brownian}, we get the first bound
\begin{align*}
S_C^1\leq &4\sum_{\substack{J\in B_{k}\setminus\{I_0\}\\\sqrt{K(\tilde{J})}>\theta_{\gamma} n}}\frac{\exp(-2\kappa(J)t)}{\kappa(J)}S_J(0,\ldots,0)^2\\
\leq& 4\exp(-t\theta_{\gamma}^2n^2)\sum_{\substack{J\in B_{k}\setminus\{I_0\}\\K(\tilde{J})>\theta_{\gamma} n}}\exp(-\kappa(J)t)S_J(0,\ldots,0)^2\leq 4\exp\left(-t\theta_{\gamma}^2 n^2\right)K_{t}^{U(k)}(0,0).
\end{align*}
By Lemma \ref{lem:rough_bound_heat} for $t_1=t$ and $t_2=t+1$, when $t\leq 1$ we have
$$K_{t}^{U(k)}(0,0)\leq \left(\frac{t+1}{t}\right)^{k^2/2}\exp\left(\frac{k\pi^2}{4}\right)\leq \left(\frac{2}{t}\right)^{k^2/2}\exp\left(\frac{\pi^2}{4k}\right)^{k^2},$$
so that 
\begin{equation}\label{eq:bound_S_C_1}
S_C^1\leq 4\exp(-t\theta_{\gamma}^2 n^2-k^2/2(\log t-c))
\end{equation}
for some numeric constant $c\in \mathbb{R}$. 

By Proposition \ref{prop:convergence_fourier}, when $K(\tilde{J})\leq \theta_{\gamma}^2 n^2$ with $c_1$ small enough, we have 
\begin{align*}
\left\vert\Phi_{n}[*\mathbf{m}](J^{(n)})\right\vert \leq& \exp\left(-\frac{m(2\pi)^2}{2n^2}\frac{K(\tilde{J})K(\mathbf{m})}{k^2-1}+\frac{mM}{n^3}O\left(\Big[kK(\tilde{J})K(\mathbf{m})\Big]^{3/2}\right)\right)\\
\leq& \exp\left(-\frac{1}{2}\cdot\frac{m(2\pi)^2}{2n^2}\frac{K(\tilde{J})K(\mathbf{m})}{k^2-1}\right)=\exp\left(-\frac{1}{2}\cdot t_0\frac{\gamma}{k} K(\tilde{J})\right).
\end{align*}
Hence, by \eqref{eq:formula_Phi_mu} and \eqref{eq:formula_Phi_nu},
$$S_C^2\leq 4\sum_{\substack{J\in B_{k}\setminus\{I_0\}\\\sqrt{K(\tilde{J})}\leq \theta_{\gamma} n,\vert\langle \lambda_J\rangle\vert> \theta_{\alpha}n}}\frac{\exp(-2\kappa(J)t)}{\kappa(J)}\exp\left(-\frac{1}{2}\cdot t_0\frac{\gamma}{k} K(\tilde{J})\right)S_J(0,\ldots,0)^2.$$
Since, $\exp(-2\kappa(J)t)\leq \exp(-2t\langle J\rangle^2/k)$, decomposing $J\in B_{K}$ as $J=\hat{J}+\ell \mathbf{1}_k$ with $\ell\in\mathbb{Z}$ and summing on $\ell\in \mathbb{Z}$ such that $\vert\langle \lambda_J+kl\rangle\vert> \theta_{\alpha}n$ yields
\begin{align*}
S_C^2\leq & 4\sum_{\substack{\ell\in \mathbb{Z},J\in B_{k}\setminus\{I_0\}, J_{k}=0\\\sqrt{K(\tilde{J})}\leq \theta_{\gamma} n,\vert\langle \lambda_J+kl\rangle\vert> \theta_{\alpha}n}}\frac{k\exp(-2t\theta_{\alpha}^2n^2/k)}{\langle J+\ell k\rangle^2}\exp\left(-t_0\frac{\gamma}{k} K(\tilde{J})\right)S_J(0,\ldots,0)^2\\
\leq &c\frac{k}{n\theta_{\alpha}}\exp(-2t\theta_{\alpha}^2n^2/k)\sum_{J\in B_{k}\setminus\{I_0\}, J_{k}=0}\exp\left(- t_0\frac{\gamma}{k} K(\tilde{J})\right)S_J(0,\ldots,0)^2
\end{align*}
for some numeric constant $c>0$. By \eqref{eq:series_representation_Brownian_suk}, we have $K_{t_0}^{SU(k)}(0,0)$, so that when $t\leq t_0$,
\begin{equation}\label{eq:bound_S_C_2}
S_C^2\leq c\frac{k\exp(-2t\theta_{\alpha}^2n^2/k)}{n\theta_{\alpha}}K_{t}^{SU(k)}(0,0).
\end{equation}
When $\theta_{\gamma}\geq n^{-1}$, set
\begin{equation}\label{eq:choice_t}
t=\frac{2+k^2(\log n+\log \theta_{\gamma}+c_0)}{\theta_{\gamma}^2n^2}+\frac{k\log n}{\theta_{\alpha}^2n^2},
\end{equation}
with $c_0$ a numeric constant. Then, combining \eqref{eq:bound_S_C_1} and \eqref{eq:bound_S_C_2} yields for $c_0$ large enough
\begin{equation}\label{eq:bound_S_C}
S_C\leq \frac{c}{n^{2}}+\frac{ck}{\theta_{\alpha}^3n^3}.
\end{equation}

\textbf{Bound of $S_B$:}
The pattern to bound $S_B$ is similar to the one concerning $S_C$. Split $S_B$ as 
\begin{align*}
S_B=&\sum_{\substack{J\in R_B\\\sqrt{K(\tilde{J})}>\nu_{\gamma} n}}\frac{\exp(-2\kappa(J)t)}{\kappa(J)}\left\vert \Phi[\mu](J)-\Phi[\nu](J)\right\vert^2\\
&\hspace{5cm}+\sum_{\substack{J\in R_B\\\sqrt{K(\tilde{J})}\leq \nu_{\gamma}}}\frac{\exp(-2\kappa(J)t)}{\kappa(J)}\left\vert \Phi[\mu](J)-\Phi[\nu](J)\right\vert^2\\
=&S_B^1+S_B^2.
\end{align*}
The condition $\frac{\sqrt{K(\tilde{J})}}{\theta_{\gamma}}\vee\frac{\vert\langle \lambda_J\rangle\vert}{ \theta_{\alpha}}\leq n$ for $c$ small enough yields by Proposition \ref{prop:convergence_fourier}
\begin{align*}
\left\vert\Phi_{n}[*\mathbf{m}](J^{(n)})\right\vert \leq& \exp\Bigg(-\frac{m(2\pi)^2}{2n^2}\left(\frac{K(\tilde{J})K(\mathbf{m})}{k^2-1}+\frac{Var(\mathbf{m})\langle \lambda_J\rangle^2}{k^2}\right)\\
&\hspace{4cm}+\frac{mM}{n^3}O\left(\Big[kK(\tilde{J})K(\mathbf{m})\Big]^{3/2}+\frac{Var(\mathbf{m})^{3/2}\cdot\vert\langle \lambda_J\rangle\vert^3}{k^3}\right)\Bigg)\\
\leq &\exp\left(-\frac{m(2\pi)^2}{4n^2}\left(\frac{K(\tilde{J})K(\mathbf{m})}{k^2-1}+\frac{Var(\mathbf{m})\langle \lambda_J\rangle^2}{k^2}\right)\right)\\
\leq&\exp\left(-\frac{t_0}{2}\left(\frac{\gamma K(\tilde{J})}{k}+\frac{\alpha\langle \lambda_J\rangle^2}{k^2}\right)\right).
\end{align*}
Remark that $\nu_{\gamma}=\frac{2c}{km^{1/3}M^{1/3}\sqrt{\gamma}}=\frac{c'}{kn^{2/3}t_0^{1/3}M^{1/3}\sqrt{\gamma}}$ for some numeric constant $c'$. Hence, $\sqrt{K(\tilde{J})}\geq \nu_{\gamma}n$ implies that 
$$\sqrt{K(\tilde{J})}\geq \frac{c'n^{1/3}}{kt_0^{1/3}M^{1/3}\sqrt{\gamma}}.$$
Similarly, $\vert\langle \lambda_J\rangle\vert\geq \nu_{\alpha}$ implies that 
$$\vert\langle \lambda_{J}\rangle\vert^2 \geq \frac{ckn^{1/3}}{(t_0M)^{1/3}\sqrt{\alpha}}.$$
Decomposing $J\in B_{K}$ as $J=\hat{J}+\ell \mathbf{1}_k$ with $\ell\in\mathbb{Z}$ and summing on $\ell\in \mathbb{Z}$ yields then
\begin{align*}
S_B^1\leq & 4\sum_{\substack{\ell\in\mathbb{Z},J\in B_{k}\setminus\{I_0\}, J_{k}=0\\\sqrt{K(\tilde{J})}\geq \nu_{\gamma} n}}\frac{1}{\kappa(J)}\exp\left(-t_0\frac{\gamma K(\tilde{J})}{k}\right)\left\vert S_{J}(\xi_n(I))\right\vert^2\\
\leq &\sum_{\substack{\ell\in\mathbb{Z},J\in B_{k}\setminus\{I_0\}, J_{k}=0\\\sqrt{K(\tilde{J})}\geq \nu_{\gamma} n}}\frac{\exp\left(-c\frac{cn^{2/3}t_0^{1/3}}{k^3M^{2/3}} \right)}{\frac{n^{2/3}}{k^2(t_0M)^{2/3}\gamma}+\vert\langle J+\ell k\rangle\vert^2/k}\exp\left(-t_0/2\frac{\gamma K(\tilde{J})}{k}\right)\left\vert S_{J}(\xi_n(I))\right\vert^2\\
\leq &\frac{k^2(t_0M)^{1/3}\sqrt{\gamma}\exp\left(-c\frac{cn^{2/3}t_0^{1/3}}{k^3M^{2/3}} \right)}{n^{1/3}}K_{t_0/2}^{SU(k)}(\xi_n(I),\xi_n(I)).
\end{align*}
for some numeric constant $c>0$. Similarly,
\begin{align*}
S_B^2\leq & 4\sum_{\substack{\ell\in\mathbb{Z},J\in B_{k}\setminus\{I_0\}, J_{k}=0\\\vert\langle J+\ell\mathbf{1}_k\rangle\vert \geq \nu_{\alpha}n}}\frac{1}{\kappa(J)}\exp\left(-t_0\left(\frac{\gamma K(\tilde{J})}{k}+\frac{\alpha\langle \lambda_J\rangle\vert^2}{k^2}\right)\right)\left\vert S_{J}(\xi_n(I))\right\vert^2\\
\leq &\sum_{\substack{\ell\in\mathbb{Z},J\in B_{k}\setminus\{I_0\}, J_{k}=0\\\vert\langle J+\ell\mathbf{1}_k\rangle\vert \geq \nu_{\alpha}n}}\frac{\exp\left(-c\frac{cn^{2/3}t_0^{1/3}}{M^{2/3}} \right)}{\vert\langle J+\ell k\rangle\vert^2/k}\exp\left(-t_0\frac{\gamma K(\tilde{J})}{k}\right)\left\vert S_{J}(\xi_n(I))\right\vert^2\\
\leq &\frac{k^2(t_0M)^{1/3}\sqrt{\alpha}\exp\left(-c\frac{cn^{2/3}t_0^{1/3}}{M^{2/3}} \right)}{n^{1/3}}K_{t_0}^{SU(k)}(\xi_n(I),\xi_n(I)).
\end{align*}
Hence, we finally have 
$$S_{B}=S_{B}^1+S_{B}^2\leq \frac{ck^2(t_0M)^{1/3}(\sqrt{\gamma}+\sqrt{\alpha})\exp\left(-c\frac{cn^{2/3}t_0^{1/3}}{k^3M^{2/3}} \right)}{n^{1/3}}K_{t_0/2}^{SU(k)}(\xi_n(I),\xi_n(I))$$
for some numeric constant $c>0$, where we used that the heat kernel $K_{t}^{SU(k)}(0,0)$ is decreasing in $t$.

\textbf{Bound of $S_{A}$ :}
By Lemma \ref{lem:bound_K_norm_inf}, $\sqrt{K(\tilde{J})}\leq \nu_{\gamma}n$ implies that $\Vert \tilde{J}\Vert_{\infty}\leq \frac{cn}{m^{1/3}M^{1/3}K(\mathbf{m})^{1/3}}$. Hence, when $\frac{\sqrt{K(\tilde{J})}}{\nu_{\gamma}}\vee\frac{\vert\langle \lambda_J\rangle\vert}{ \nu_{\alpha}}\leq n$, Proposition \ref{prop:convergence_fourier} yields that
\begin{align*}
e^{-2i \pi \frac{m\langle\mathbf{m}\rangle\cdot\langle \lambda_J\rangle}{kn}}\Phi_{n}[*\mathbf{m}](J^{(n)})
=&\exp\Bigg[-\frac{m(2\pi)^2}{2n^2}\left[\frac{K(\mathbf{m})K(\tilde{J})}{k^2-1}+\frac{Var(\mathbf{m})\langle \lambda_J\rangle^2}{k^2}\right]\\
&+\frac{mM}{n^3}O\left(\frac{Var(\mathbf{m})^{3/2}\cdot\vert\langle \lambda_J\rangle\vert^3}{k^3}+\Big[k \cdot K(\mathbf{m})\cdot K(\tilde{J})\Big]^{3/2}\right)\Bigg].
\end{align*}
By \eqref{eq:formula_Phi_mu} and \eqref{eq:formula_Phi_nu}, we thus have 
\begin{align*}
\left\vert \Phi[\mu](J)-\Phi[\nu](J)\right\vert=&\exp\left(-\frac{m(2\pi)^2}{2n^2}\left[\frac{K(\mathbf{m})K(\tilde{J})}{k^2-1}+\frac{Var(\mathbf{m})\langle \lambda_J\rangle^2}{k^2}\right]\right)\left\vert S_{J}(\xi_n(I))\right\vert\\
&\hspace{1cm}\cdot\left\vert \exp\left(\frac{mM}{n^3}O\left(\frac{Var(\mathbf{m})^{3/2}\cdot\vert\langle \lambda_J\rangle\vert^3}{k^3}+\Big[k \cdot K(\mathbf{m})\cdot K(\tilde{J})\Big]^{3/2}\right)\right)-1\right\vert\\
\leq& \frac{t_0M}{n}\exp\left(-t_0\left(\frac{\gamma}{k}K(\tilde{J})+\frac{\alpha}{k^2}\vert\langle\lambda_J\rangle\vert^2\right)\right)\left\vert S_{J}(\xi_n(I))\right\vert\\
&\hspace{3cm} O\left(\frac{\alpha^{3/2}\cdot\vert\langle \lambda_J\rangle\vert^3}{k^3}+\Big[k^2 \cdot \gamma\cdot K(\tilde{J})\Big]^{3/2}\right),
\end{align*}
where we used on the last inequality that $\frac{mM}{n^3}O\left(\frac{Var(\mathbf{m})^{3/2}\cdot\vert\langle \lambda_J\rangle\vert^3}{k^3}+\Big[k \cdot K(\mathbf{m})\cdot K(\tilde{J})\Big]^{3/2}\right)=O(1)$ when $\frac{\sqrt{K(\tilde{J})}}{\nu_{\gamma}}\vee\frac{\vert\langle \lambda_J\rangle\vert}{ \nu_{\alpha}}\leq n$ and the bound $\vert e^u-1\vert=O(u)$ when $u=O(1)$. Hence,
\begin{align*}
S_A\leq \frac{t_0^2M^2}{n^2}\sum_{J\in R_{A}}\frac{e^{-2t_0\left(\frac{\gamma}{k}K(\tilde{J})+\frac{\alpha}{k^2}\vert\langle\lambda_J\rangle\vert^2\right)}}{\kappa(J)}\left\vert S_{J}(\xi_n(I))\right\vert^2\cdot O\left(\frac{\alpha^{3}\cdot\vert\langle \lambda_J\rangle\vert^6}{k^3}+\Big[k^2 \cdot \gamma\cdot K(\tilde{J})\Big]^{3}\right).
\end{align*}
Setting $g(u)=u^3e^{-u}$ for $u\geq 0$, we have 
$$\exp\left(-t_0\left(\frac{\gamma}{k}K(\tilde{J})+\frac{\alpha}{k^2}\vert\langle\lambda_J\rangle\vert^2\right)\right)O\left(\frac{\alpha^{3}\cdot\vert\langle \lambda_J\rangle\vert^6}{k^3}+\Big[k^2 \cdot \gamma\cdot K(\tilde{J})\Big]^{3}\right)\leq \frac{ck^9\Vert g\Vert_{\infty}}{t_0^3},$$
so that 
$$S_A\leq \frac{ck^9M^2}{t_0n^2}\sum_{J\in R_{A}}\frac{\exp\left(-t_0\frac{\gamma}{k}K(\tilde{J})\right)}{\kappa(J)}\left\vert S_{J}(\xi_n(I))\right\vert^2.$$
Decomposing $J\in B_{K}$ as $J=\hat{J}+\ell \mathbf{1}_k$ and using that $\sum_{\ell\in \mathbb{Z}}\frac{1}{\kappa(J+\ell\mathbf{1}k)}=\sum_{\ell\in \mathbb{Z}}\frac{1}{\Vert J+\ell\mathbf{1}k-\tilde{I_0}\Vert^2-\Vert \tilde{I}_0\Vert^2}\leq c$ for some numeric constant $c$ independent of $J\not=I_0$, we get 
$$S_A\leq \frac{ck^9M^2}{t_0n^2}\sum_{J\in B_k,J_k=0}\exp\left(-t_0\frac{\gamma}{k}K(\tilde{J})\right)\left\vert S_{J}(\xi_n(I))\right\vert^2\leq\frac{ck^9M^2}{t_0n^2}K_{t_0}^{SU(k)}(\xi_n(I),\xi_n(I)).$$
Putting all previous bounds together yields with \eqref{eq:proof_thm_1_recall_Wasser}
\begin{align}
W_{1}^{T_k}(\mu,\nu)&\leq ck\sqrt{t}+\frac{c}{n}+\frac{c\sqrt{k}}{(\theta_{\alpha}n)^{3/2}}\nonumber\\
&+\left(\frac{ck^2(t_0M)^{1/3}(\sqrt{\gamma}+\sqrt{\alpha})\exp\left(-c\frac{cn^{2/3}t_0^{1/3}}{k^3M^{2/3}} \right)}{n^{1/3}}+\frac{ck^9M^2}{t_0n^2}\right)^{1/2}K_{t_0/2}^{SU(k)}(\xi_n(I),\xi_n(I))^{1/2}.\label{eq:exact_bound}
\end{align}
Using the definition of $\theta_{\alpha}$ and the choice of $t$ from \eqref{eq:choice_t} yields for $n$ large enough, after simplifying,
$$W_{1}^{T_k}(\mu,\nu)\leq \frac{M}{n}\left(c_1(k^8\gamma+k\alpha)\log n+\frac{c_2k^{9/2}}{\sqrt{t_0}}K_{t_0/2}^{SU(k)}(\xi_n(I),\xi_n(I))^{1/2}\right),$$
for some numeric constants $c_1,c_2>0$.
\end{proof}

\section{Local limit theorem and its geometric consequences}\label{sec:LLT}
In this section, we prove a local limit theorem for the $h$-probability measure $(\ast \mathbf{m})*\delta_I$ and deduce Corollary \ref{cor:cohomology}, which gives asymptotics of structure coefficients in the ring $QH(G_{k,n})$. For simplicity, we assume that $Var(\mathbf{m})=0$ to avoid any lattice issue. 
\begin{theorem}
Let $M>0$ and suppose that $\mathbf{m}=(\mu_r)_{1\leq r\leq m}$ is a sequence of $h$-probability measures on $ B_{k,n}$ such that $Var_{2}(\mathbf{m})=0$ and $\langle\widehat{\mathbf{m}}\rangle_3\leq MK(\mathbf{m})$ and let $I\in B_{k,n}$. Then, 
$$\left(\left(\ast \mathbf{m}\right)\ast \delta_I\right)[I']=\delta_{\vert I'\vert=\langle I\rangle+m\langle\mathbf{m}\rangle)[n]}n\left[K_{t_0\gamma}^{SU(k)}(\xi_n(I),\xi_{n}(I'))+o\left(\frac{1}{\sqrt{n}}\right)\right],$$
where 
$$t_0=\frac{2\pi m}{n^2},\quad \gamma=\frac{k}{2(k^2-1)}K(\mathbf{m}),$$
and $o(\cdot)$ only depends on $\gamma$ ,$M$, $t_0$ and $k$. 
\end{theorem}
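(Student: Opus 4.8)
The strategy is a discrete-to-continuous Fourier inversion argument, exactly parallel to the proof of the Berry--Esseen theorem but now carried out on the level of the local density rather than against test functions. The starting point is the Fourier inversion formula on $B_{k,n}$: since $\{u^{(J)}\}_{J\in B_{k,n}}$ and $\{w^{(J)}\}_{J\in B_{k,n}}$ are dual bases, Proposition~\ref{prop:diagonalization_Fourier} together with \eqref{eq:relation_conv_dft} gives, for $I'\in B_{k,n}$,
\begin{equation}\label{eq:llt_inversion}
\left(\left(\ast\mathbf{m}\right)\ast\delta_I\right)[I']=\sum_{J\in B_{k,n}}\Phi_n[\ast\mathbf{m}](J)\,\Phi_n[\delta_I](J)\,\overline{\widetilde{w}^{(J)}(I')}\,\cdot(\text{normalisation}),
\end{equation}
where the coordinate $\widetilde{w}^{(J)}(I')$ and the normalising constants are expressed through $S_J(\xi_n(I'))$, $S_J(\xi_n(I_0))$ and $\vert V(\xi_n(I'))\vert$ by Lemma~\ref{lem:action_s_mu}, Lemma~\ref{lem:Perron-Frobenius} and the computation of $\Phi_n[\delta_I]$ preceding \eqref{eq:Fourier_probability}. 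Since $Var_2(\mathbf{m})=0$, the $U(1)$-component of each $\mu_r$ is deterministic, so $\langle I_{\mu_r}\rangle$ is constant; summing over $r$ forces $\langle I'\rangle\equiv\langle I\rangle+m\langle\mathbf{m}\rangle\ [n]$, which accounts for the Kronecker delta and lets us restrict the sum in \eqref{eq:llt_inversion} to $J$ in a fixed residue class of $\langle\cdot\rangle$ modulo $n$ — equivalently, after reindexing via $J\mapsto \lambda_{\hat J}$, to generalized partitions $\lambda$ with $\lambda_k=0$.

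The next step is to split the sum \eqref{eq:llt_inversion} into a ``central'' range $\Vert\widetilde J\Vert_\infty\le \varepsilon n$ and a ``tail'' range. On the central range I would invoke the second (refined) estimate of Proposition~\ref{prop:convergence_fourier} with $Var(\mathbf{m})=0$, which gives
\[
\Phi_n[\ast\mathbf{m}](J^{(n)})=\exp\!\left(-\frac{m(2\pi)^2}{2n^2}\frac{K(\mathbf{m})K(\widetilde J)}{k^2-1}+\frac{mM}{n^3}O\!\left(\bigl[kK(\mathbf{m})K(\widetilde J)\bigr]^{3/2}\right)\right)=e^{-t_0\gamma K(\widetilde J)/k}\bigl(1+o(1)\bigr),
\]
uniformly for $\Vert\widetilde J\Vert_\infty \le c\,n/(m^{1/3}M^{1/3}K(\mathbf{m})^{1/2})$; combined with the series representation \eqref{eq:series_representation_Brownian_suk} of $K^{SU(k)}_{t_0\gamma}$ in terms of $\sum_{J:J_k=0}e^{-\kappa_{0,\gamma}(J)t_0}\overline{S_J}S_J$, and noting $\kappa_{0,\gamma}(J)=\tfrac{\gamma}{k}K(\widetilde J)$, the central part of \eqref{eq:llt_inversion} converges to $n\,K^{SU(k)}_{t_0\gamma}(\xi_n(I),\xi_n(I'))$ after matching the Schur factors and the $n^{k}$/$n^{k-1}$ powers coming from $\mu^h$ and the measure normalisation $(2\pi/n)^k$ versus $(2\pi)^k$. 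One must also control the error: each summand carries a factor $o(1)$ times $e^{-t_0\gamma K(\widetilde J)/(2k)}\vert S_J(\xi_n(I))S_J(\xi_n(I'))\vert$, and summing gives $o(1)\cdot K^{SU(k)}_{t_0\gamma/2}(\cdots)$; a rough bound on this heat kernel via Lemma~\ref{lem:rough_bound_heat} (with $SU(k)$, $\epsilon=1$) turns this into an explicit $o(n^{-1/2})$ after dividing by $n$, using $t_0=\Theta(m/n^2)$ fixed.

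For the tail, where $\Vert\widetilde J\Vert_\infty$ is comparable to $n$ or larger, I would use the first statement of Proposition~\ref{prop:convergence_fourier} in the regime $K(\widetilde J)\le c\,n^2k^3/(M^2K(\mathbf{m}))$ to get exponential decay $e^{-\Theta(K(\widetilde J)\,m/n^2)}$, and Lemma~\ref{lem:bound_Fourier_large} in the complementary regime $\sup_i((\hat J_i-\hat J_{i+1})\bmod n)\ge cn$ (i.e. $K(\widetilde J)\gtrsim n^2$), which gives $\vert\Phi_n[\ast\mathbf{m}](J^{(n)})\vert\le \exp(-m\delta+\tfrac{mM}{\eta^3 n^3}K(\mathbf{m})^{3/2})$, doubly exponentially small since $m=\Theta(n^2)$. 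Combining with the crude bounds $\vert\Phi_n[\delta_I](J)\vert\le 1$, $\vert S_J(\xi_n(I'))\vert\le S_J(\xi_n(I_0))$, and a dimension count of the number of such $J$ (polynomial in $n$), the tail contributes $o(n^{-1/2})$ to the normalised quantity. Assembling the central and tail estimates yields the claimed asymptotics.

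\medskip
\noindent\textbf{Main obstacle.} The delicate point is \emph{not} the Fourier estimates — those are essentially packaged in Proposition~\ref{prop:convergence_fourier} and Lemma~\ref{lem:bound_Fourier_large} — but the bookkeeping of the many normalising constants so that the limit is exactly $n\,K^{SU(k)}_{t_0\gamma}(\xi_n(I),\xi_n(I'))$ with the correct power of $n$: one has to track the $(2\pi/n)^k$ in the definition of $\xi_n$ on measures, the $n^k$ in $\mu^h$, the $\vert V(\xi_n(I'))\vert$ ratios via \eqref{eq:rel_S_V} and Lemma~\ref{lem:formula_inversion_Schur}, and the fact that the sum over $B_{k,n}$ becomes, in the limit, a sum over $\{J\in B_k:J_k=0\}$ (hence the factor $n$, coming from the $n$ residues of $\langle J\rangle$ that got collapsed by the Kronecker delta). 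A secondary subtlety is that $\xi_n(I)$ need not lie in the open alcove $T_k^\circ$ and the two arguments of $K^{SU(k)}$ must be shifted to the affine subspace $\mathbb{A}$ via the extension convention $K^{SU(k),\gamma}_t(\vec u,\vec v)=K^{SU(k),\gamma}_t(\ell\cdot\vec u,\ell\cdot\vec v)$; checking the rotation $R_{-m\langle\mathbf{m}\rangle/(kn)}$ aligns the two $\langle\cdot\rangle$-levels correctly is exactly the role of the $e^{-2i\pi m\langle\mathbf{m}\rangle\langle\lambda_J\rangle/(kn)}$ phase in Proposition~\ref{prop:convergence_fourier}, so this can be handled as in the proof of Theorem~\ref{thm:Berry-Esseen} via \eqref{eq:effect_rotation_Fourier}.
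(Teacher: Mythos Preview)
Your proposal is correct and follows essentially the same approach as the paper: Fourier inversion via the Verlinde formula gives $((\ast\mathbf{m})\ast\delta_I)[I']=\sum_{J\in B_{k,n}}\Phi_n[\ast\mathbf{m}](J)\,S_J(\xi_n(I))\overline{S_J(\xi_n(I'))}$ after the normalisation computation with Lemma~\ref{lem:formula_inversion_Schur} and Lemma~\ref{lem:Perron-Frobenius}, and the sum is then split into a central range (handled by the second estimate of Proposition~\ref{prop:convergence_fourier} and compared to the series \eqref{eq:series_representation_Brownian_suk}) and a tail range (handled by the first estimate of Proposition~\ref{prop:convergence_fourier} together with Lemma~\ref{lem:bound_Fourier_large}). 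The one point where the paper is more explicit than your plan is the extraction of the factor $n\delta_{\langle I'\rangle\equiv\langle I\rangle+m\langle\mathbf{m}\rangle[n]}$: rather than appealing to the a priori support of $(\ast\mathbf{m})\ast\delta_I$, the paper writes each $J$ in the central range as $\hat J+l\mathbf{1}_k$ and sums the geometric series in $l$ directly, which simultaneously produces the Kronecker delta, the factor $n$, and reduces the remaining sum to $\{\hat J:\hat J_k=0\}$ as you anticipated.
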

\begin{proof}
Recall that $\Phi_n[\delta_I](J)=\frac{S_{I}(\xi_{n}(J))}{S_{I}(\xi_{n}(I_0)}$ by \eqref{eq:Fourier_probability}. Then, by Proposition \ref{prop:Verlinde_formula_general}, Lemma \ref{lem:Perron-Frobenius} and Lemma \ref{lem:formula_inversion_Schur}, for $I'\in B_{k,n}$
\begin{align*}\left(\left(\ast \mathbf{m}\right)\ast \delta_I\right)[I']=&\sum_{J\in B_{k,n}}\Phi[\delta_I](J)\prod_{j=1}^m\Phi[\mu_j](J)\tilde{u}^J_{I'}\\
=&\sum_{J\in B_{k,n}}\prod_{j=1}^m\Phi[\mu_j](J)\frac{S_{I}(\xi_{n}(J))}{S_{I}(\xi_{n}(I_0))}\frac{\left\vert V(\xi_{n}(J))\right\vert^2\overline{S_{I'}(\xi_{n}(J))}}{\left\vert V(\xi_{n}(I_0))\right\vert^2S_{I'}(\xi_{n}(I_0))}\\
=&\sum_{J\in B_{k,n}}\prod_{j=1}^m\Phi[\mu_j](J)S_{J}(\xi_{n}(I))\overline{S_{J}(\xi_{n}(I')}).
\end{align*}
Set $t_0=\frac{(2\pi)^2m}{n^2}$ and $\gamma=\frac{kK(\mathbf{m})}{2(k^2-1)}$. Then, applying Proposition \ref{prop:convergence_fourier} with $Var_2(\mathbf{m})=0$ gives
\begin{align*}
\left(\left(\ast \mathbf{m}\right)\ast \delta_I\right)[I']=&\sum_{J\in B_{k,n}}\overline{S_{J}(\xi_{n}(I'))}S_{J}(\xi_{n}(I))e^{\frac{2i m\langle\mathbf{m}\rangle\langle J\rangle}{kn}}B(J)
\end{align*}
with 
$$B(J)\leq\exp\left(-t_0\frac{\gamma K(\tilde{J})}{k}+\frac{t_0M}{n}O\left(\left[kK(\tilde{J})K(\mathbf{m})\right]^{3/2}\right)\right)$$
and, when $\Vert \tilde{J}\Vert_{\infty}\leq c\frac{n}{m^{1/3}M^{1/3}K(\mathbf{m})^{1/2}}:=\theta_1 n^{1/3}$ (with $\theta_2$ depending on $t_0,\gamma,k$ and $M$)
$$B(J)=\exp\left(-t_0\frac{\gamma K(\tilde{J})}{k}+\frac{t_0M}{n}O\left(\left[kK(\tilde{J})K(\mathbf{m})\right]^{3/2}\right)\right).$$

Noting that $K(\tilde{J})\leq\Vert \tilde{J}\Vert_{\infty}^2=\Vert \hat{J}\Vert_{\infty}^2$, there is $\theta_2>0$ depending on $k,\gamma$ and $M$ so that 
$$-t_0\frac{\gamma K(\tilde{J})}{k}+\frac{t_0M}{n}O\left(\left[kK(\tilde{J})K(\mathbf{m})\right]^{3/2}\right)\leq -\frac{t_0}{2}\frac{\gamma K(\tilde{J})}{k}+\frac{t_0M}{n}O\left(\left[kK(\tilde{J})K(\mathbf{m})\right]^{3/2}\right)$$ 
when $\sup(\hat{J}_{i}-\hat{J}_{i+1} [n])\leq \theta_2n$. By Lemma \ref{lem:bound_Fourier_large}, for $\sup(\hat{J}_{i}-\hat{J}_{i+1} [n])\geq \theta_2n$, there exists $\delta,\eta>0$ depending on $t_0,k,\gamma$ and $M$ such that
\begin{equation}\label{eq:bound_Fourier_large_J}
B(J)\leq   \exp\left(-n^2\left(\delta+ \frac{\eta}{n^3}\right)\right).
\end{equation}
Let us split $\left(\left(\ast \mathbf{m}\right)\ast \delta_I\right)[I']$ as
\begin{align*}
\left(\left(\ast \mathbf{m}\right)\ast \delta_I\right)[I']=&\sum_{\substack{J\in B_{k,n}\\ \sup(\hat{J}_{i}-\hat{J}_{i+1} [n])> \theta_1 n^{1/3}}}S_{J}(\xi_{n}(I))\overline{S_{J}(\xi_{n}(I'))}e^{2i \pi \frac{m\langle\mathbf{m}\rangle\cdot\langle \lambda_J\rangle}{kn}}B(J)\\
+&\sum_{\substack{J\in B_{k,n}\\ \sup(\hat{J}_{i}-\hat{J}_{i+1} [n])< \theta_1 n^{1/3}}}S_{J}(\xi_{n}(I))\overline{S_{J}(\xi_{n}(I'))}e^{2i \pi \frac{m\langle\mathbf{m}\rangle\cdot\langle \lambda_J\rangle}{kn}}B(J):=S_1+S_2,
\end{align*}
and, using \eqref{eq:series_representation_Brownian_suk}, split $K^{SU(k)}_{\gamma t_0}\left(\xi_n(I'),R_{-\frac{m\langle\mathbf{m}\rangle}{kn}}\left(\xi_{n}\left(I\right)\right)\right)$ as 
\begin{align*}
K^{SU(k)}_{\gamma t_0}&\left(\xi_n(I'),R_{-\frac{m\langle\mathbf{m}\rangle}{kn}}\left(\xi_{n}\left(I\right)\right)\right)\\
=&\sum_{\substack{J\in  B_{k}\\J_k=0}}\exp(-\kappa_{0,\gamma}(J)t_0)\overline{S_{J}(\xi_{n}(I'))}S_{J}\left(R_{-\frac{m\langle\mathbf{m}\rangle}{kn}}\left(\xi_{n}\left(I\right)\right)\right)\\
=&\sum_{\substack{J\in  B_{k}\\J_k=0\\ \sup(J_{i}-J_{i+1} [n])<\theta_1 n^{1/3}}}e^{\frac{2i \pi m\langle\mathbf{m}\rangle)\langle J\rangle}{kn}-\kappa_{0,\gamma}(J)t_0}\overline{S_{J}(\xi_{n}(I'))}S_{J}(\xi_{n}(I))\\
&\hspace{3cm}+\sum_{\substack{J\in  B_{k}\\J_k=0\\ \sup(J_{i}-J_{i+1} [n])\geq \theta_1n^{1/3}}}e^{-\kappa_{0,\gamma}(J)t_0}\overline{S_{J}(\xi_{n}(I'))}S_{J}\left(R_{-\frac{m\langle\mathbf{m}\rangle}{kn}}\left(\xi_{n}\left(I\right)\right)\right)\\
:=&S'_1+S'_2.
\end{align*}
Then, we have 
\begin{align*}\left\vert \left(\left(\ast \mathbf{m}\right)\ast \delta_I\right)[I']- n\delta_{\langle I'\rangle=\langle I\rangle+m\langle\mathbf{m}\rangle)\rangle[n]}K^{SU(k),\gamma}_{t_0,\delta_I}(\xi_{n}(I'))\right\vert\leq \left\vert S_1-\delta_{\langle I'\rangle=\langle I\rangle+m\langle\mathbf{m}\rangle)[n]}S_1'\right\vert+\left\vert S_2\vert+\vert S_2'\right\vert.
\end{align*}
First, by Lemma \ref{lem:bound_K_norm_inf}, \eqref{eq:bound_Fourier_large_J} and the rough bound $S_{J}(\xi_{n}(J'))\leq d_J\leq n^{k(k-1)/2}$ for $J,J'\in B_{k,n}$,
\begin{align*}
\left\vert S_2\right\vert\leq& \sum_{\substack{J\in B_{k,n}\\ \theta_1 n^{1/3}<\sup(\hat{J}_{i}-\hat{J}_{i+1} [n])< \theta_2n}}d_{J}^2\exp\left(-\frac{t_0\gamma}{2k}K(\tilde{J})\right)\\
&\hspace{4cm}+ \sum_{\substack{J\in B_{k,n}\\ \sup(\hat{J}_{i}-\hat{J}_{i+1} [n])\geq \theta_2n}}d_{J}^2\exp\left(-n^2\left(\delta+ \frac{\eta}{n^3}\right)\right)\\
\leq &\#B_{k,n}n^{k(k-1)}\left[\exp\left(-\frac{t_0\gamma \theta_1^2}{k^2}n^{2/3}\right)+\exp\left(-n^2\left(\delta+ \frac{\eta}{n^3}\right)\right)\right]\\
\leq & n^{k^2}\exp\left(-Cn^{1/3}\right)=o\left(1\right)
\end{align*}
with $o\left(1\right)$ only depending on $\gamma, k, t_0$ and $M$. Likewise, by the absolute convergence of the Fourier expansion of the heat kernel on $SU(k)$ for $t\geq t_0$, see \cite{Far}, 
$$\left\vert S'_2\right\vert=o(1).$$

Next, using that $S_{J}(\xi_{n}(I))=S_{\hat{J}}(\xi_{n}(I))e^{\frac{2i\pi J_k\langle I\rangle}{n}}$ and summing on $J_k$ when $\sup(\hat{J}_{i}-\hat{J}_{i+1} [n])<\theta_1 n^{1/3}$ and for $n$ large enough (so that $\hat{J}+l\mathbf{1}_k\not=\hat{J}+l'\mathbf{1}_k$ when $0\leq l\not=l'\leq n-1$) yields
\begin{align*}
S_1=&\sum_{\substack{J\in B_{k,n}\\J_k=0\\ \sup(J_{i}-J_{i+1} [n])< \theta_1n^{1/3}}}B(J)S_{\hat{J}}(\xi_{n}(I))\overline{S_{\hat{J}}(\xi_{n}(I'))}\left(\sum_{l=0}^{n-1}e^{\frac{2i \pi (m\langle\mathbf{m}\rangle)/k(\langle J\rangle+kl)+(\langle I\rangle-\langle I'\rangle)l}{n}}\right)\\ 
=&n\delta_{m\langle\mathbf{m}\rangle)+\langle I\rangle=\langle I'\rangle[n]}\sum_{\substack{J\in B_{k,n}\\J_k=0\\ \sup(J_{i}-J_{i+1} [n])< \theta_1 n^{1/3}}}B(J)e^{\frac{2i \pi m\langle\mathbf{m}\rangle)\langle J\rangle}{kn}}S_{J}(\xi_{n}(I))\overline{S_{J}(\xi_{n}(I'))}.
\end{align*}
with
$$B(J)=\exp\left(-t_0\frac{\gamma K(\tilde{J})}{k}+\frac{t_0M}{n}O\left(\left[kK(\tilde{J})K(\mathbf{m})\right]^{3/2}\right)\right).$$
Hence, using that $\frac{t_0M}{n}O\left(\left[kK(\tilde{J})K(\mathbf{m})\right]^{3/2}\right)=O(1)$ when $\sup(J_{i}-J_{i+1} [n])< \theta_1 n^{1/3}$,
\begin{align*}
&\left\vert S_1-\delta_{\langle I'\rangle=\langle I\rangle+m\langle \mathbf{m}\rangle[n]}S_1'\right\vert\\
\leq& n \sum_{\substack{J\in B_{k,n}\\J_k=0\\ \sup(J_{i}-J_{i+1} [n])< \theta_1n^{1/3}}}\left\vert S_{J}\left(R_{-\frac{m\langle\mathbf{m}\rangle}{kn}}\left(\xi_{n}\left(I\right)\right)\right)\overline{S_{J}(\xi_{n}(I'))}\right\vert \exp\left(-t_0\frac{\gamma K(\tilde{J})}{2k}\right)\\
&\hspace{3cm}\exp\left(-t_0\frac{\gamma K(\tilde{J})}{k}\right)\left\vert \exp\left(\frac{t_0M}{n}O\left(\left[k^2\gamma K(\tilde{J})\right]^{3/2}\right)\right)-1\right\vert\\
\leq &\sum_{\substack{J\in B_{k,n}\\J_k=-(k-1)/2\\ \sup(J_{i}-J_{i+1} [n])< n^{1/6}}}d_J^2\exp\left(-t_0\frac{\gamma K(\tilde{J})}{2k}\right)O(1)\leq K^{SU(k)}_{\gamma t_0/2}(0,0)O(1)= O\left(1\right),
\end{align*}
with $O(1)$ depending on $M,t_0,\gamma$ and $k$. Finally, 
$$\left\vert \left(\left(\ast \mathbf{m}\right)\ast \delta_I\right)[I']- n\delta_{\langle I'\rangle=\langle I\rangle+m\langle\mathbf{m}\rangle[n]}K^{SU(k)}_{\gamma t_0}\left(\xi_{n}(I'),R_{-\frac{m\langle\mathbf{m}\rangle}{kn}}\left(\xi_{n}\left(I\right)\right)\right)\right\vert=O(1),$$
with $O(\cdot)$ depending on $k, \gamma, M$ and $t_0$.
\end{proof}
We can now turn to the proof of Corollary \ref{cor:cohomology}.
\begin{proof}[Proof of Corollary \ref{cor:cohomology}]
Recall that in the case where $\sum_{i=0}^{m+1} d_i\leq \dim Hol_d(G_{k,n})=k(n-k)+dn$ and for generic manifolds $M_0,\ldots, M_{m+1}$, we have by \eqref{eq:qring_combinatorics}
$$\#\mathcal{M}^{d,a_0,\ldots,a_{m+1}}_{M_0,\ldots,M_{m+1}}=\left\langle \left(\prod_{i=0}^{m} [M_i]\right)[q^d],[M_{m+1}]\right\rangle_{H^*(G_{k,n})},$$
where $\langle \cdot,\cdot\rangle$ denotes the Poincaré duality in $H^*(G_{k,n})$ and for $x\in QH(G_{k,n})=\mathbb{C}[q]\otimes H^*(G_{k,n})$, we write $x=\sum_{d\geq 0} x[q^d]\otimes q^d$. By the multiplication rule \eqref{eq:multiplication_quantum}, for $d'\geq 0$ we have $\left(\prod_{i=0}^{m} [M_i]\right)[q^{d'}]\in H^{d'n+k(n-k)-\sum_{i=0}^md_i}$ and thus, for $d'\not=d''$, $\left(\prod_{i=0}^{m} [M_i]\right)[q^{d'}]$ and $\left(\prod_{i=0}^{m} [M_i]\right)[q^{d''}]$ are in different dimension components of $H^*(G_{k,n})$. Hence, we deduce that 
\begin{align*}
\#\mathcal{M}^{d,a_0,\ldots,a_{m+1}}_{M_0,\ldots,M_{m+1}}=&\sum_{I\in B_{k,n},\langle I\rangle=d_{m+1}}a_{I}^{m+1}\left\langle \left(\prod_{i=0}^{m} [M_i]\right)[q^d],\sigma_{\lambda_I}\right\rangle_{H^*(G_{k,n})}\\
=&\sum_{I\in B_{k,n},\langle I\rangle=d_{m+1}}a_{I}^{m+1}\left\langle \overline{\left(\prod_{i=0}^{m} [M_i]\right)},\sigma_{\lambda_I}\right\rangle_{H^*(G_{k,n})},
\end{align*}
where $\overline{x}$ is the image of $x$ in $QH(G_{k,n})/\langle q=1\rangle$ and we recall that the $a_{I}^{m+1}$'s are the coefficients appearing in the decomposition $[M_{m+1}]=\sum_{I\in B_{k,n},\langle I\rangle=d_{m+1}}a_{I}^{m+1}\sigma_{\lambda_{I}}$. Then, recalling that the Poincaré duality satisfies $\langle \sigma_{\lambda},\sigma_{\mu}\rangle_{H^*(G_{k,n})}=\delta_{\lambda,\mu^c}$ (see for example the introduction of \cite{B}), we have 
$$\#\mathcal{M}^{d,a_0,\ldots,a_{m+1}}_{M_0,\ldots,M_{m+1}}=\sum_{I\in B_{k,n},\langle I\rangle=d_{m+1}}a_{I}^{m+1}\overline{c}_{\prod_{i=0}^{m} [M_i]}^{I^c},$$
where $\overline{c}_{\prod_{i=0}^{m} [M_i]}^{I}, I\in B_{k,n}$ are the coefficients of $\overline{\prod_{i=0}^{m} [M_i]}$ on the basis $\{\sigma_{\lambda_I}, I\in B_{k,n}\}$ and we recall that $I^c=(n-1-I_{k-j+1})_{1\leq j\leq k}$. By Proposition \ref{prop:equiv_graph_qh}, the linear map from $QH(G_{k,n})/\langle q=1\rangle$ to $\mathcal{B}(L^2(B_{k,n}))$ mapping  $\sigma_{\lambda}$ to $T(s_{\lambda})$ is an algebra isomorphism from $QH(G_{k,n})/\langle q=1\rangle$ to the algebra $\mathcal{A}\subset \mathcal{B}(L^2(B_{k,n}))$ for which $B_{k,n}$ is positively multiplicative, and $T(s_{\lambda})$ is the unique element of $\mathcal{A}$ sending $I_0$ to $I_{\lambda}$. Hence, by Proposition \ref{prop:structure_conv} and \eqref{eq:isomor_L2h_CA} we have 
$$\overline{c}_{\prod_{i=0}^{m} [M_i]}^{I}=h^r_{I}\left( (M^h)^{-1}([M_0])*\dots *(M^h)^{-1}([M_{m}])\right)(I),$$
where $(M^h)^{-1}([M_i])=\sum_{I\in B_{k,n}}\frac{1}{h^r_I}a_{I}^ie_I$ and $h^r_I=\frac{V(\xi_{n}(I_0)^2}{n^k}S_{I}(\xi_{n}(I_0))$ by Lemma \ref{lem:Perron-Frobenius}. Now, since $h^l_I=S_{I}(\xi_{n}(I_0))$ by the same lemma,
\begin{align*}
\Phi_{n}\left[(M^h)^{-1}([M_i])\right](I_0)=\left\langle \mathbf{1},(M^h)^{-1}([M_i])\right\rangle_h=&\left\langle \mathbf{1},\frac{1}{h^r}\left(\sum_{I\in B_{k,n}}a^i_Ie_I\right)\right\rangle_h\\
=&\left\langle h^l,\left(\sum_{I\in B_{k,n}}a^i_Ie_I\right)\right\rangle\\
=&\sum_{I\in B_{k,n}}a^i_IS_{I}(\xi_{n}(I_0))=qDim(M_i),
\end{align*}
and we deduce that $\frac{1}{qDim(M_i)}(M^h)^{-1}(M_i)$ is a $h$-probability measure on $B_{k,n}$. Then, recalling that $\delta_I=\frac{1}{\mu^h(I)}e_I=\frac{1}{h_l(I)h_r(I)}e_I$, we have 
$$(M^h)^{-1}(M_0)=\sum_{I\in B_k,n, \langle I\rangle=d_0}a_IS_{I}(\xi_{n}(I_0))\delta_I,$$
so that finally 
$$\overline{c}_{\prod_{i=1}^{m} [M_i]}^{I'}=\frac{V(\xi_{n}(I_0))^2S_{I'}(\xi_{n}(I_0))\prod_{i=0}^m qDim(M_i)}{n^k}\sum_{I\in B_k,n, \langle I\rangle=d_0}a_{I}S_{I}(\xi_{n}(I_0))\left(\delta_I*(*\mathbf{m})\right)(I'),$$
with $\mathbf{m}=(\mu_i)_{1\leq i\leq m}$ where $\mu_i=\frac{1}{qDim(M_i)}(M^h)^{-1}(M_i)$. Hence, applying Theorem \ref{thm:local-limit} with the notation of Corollary \ref{cor:cohomology}, we deduce that 
\begin{align*}
\overline{c}_{\prod_{i=0}^{m} [M_i]}^{I'}=&\frac{V(\xi_{n}(I_0))^2Q_{I'}(\xi_{n}(I_0))\prod_{i=1}^m qDim(M_i)}{n^{k-1}}\\&\hspace{1cm}\sum_{I\in B_k,n, \langle I\rangle=d_0}a_{I}s_{I}(\xi_{n}(I_0))\left(K_{\frac{(2\pi)^2m}{n^2}\gamma}^{SU(k)}\left(R_{-\frac{m\langle \mathbf{m}\rangle}{kn}}\left(\xi_{n}(I)\right),\xi_{n}(I')\right)+O\left(\frac{1}{n}\right)\right),
\end{align*}
with $O(\cdot)$ only depending on $\frac{m}{n^2},k$ and $\Vert \tilde{m}\Vert_{i}$, $i=2,3$. Then, writing $p^i_I=\frac{a_IS_I(\xi_{n}(I_0))}{qDim(M_i)}$ for $0\leq i\leq m+1$, so that $\sum_{I\in B_{k,n}}p_I^i=1$, we get 
\begin{align*}
\#\mathcal{M}^{d,a_0,\ldots,a_{m+1}}_{M_0,\ldots,M_{m+1}}=\frac{V(\xi_{n}(I_0))^2\prod_{i=0}^{m+1}qDim(M_i)}{n^{k-1}}\left(\sum_{I,I'\in B_{k,n}}p_I^{M_0}p_{I'}^{M_{m+1}}K_{\gamma t_0}^{SU(k)}\left(x^n_I,x^n_{I'}\right)+O\left(\frac{1}{n}\right)\right),
\end{align*}
with $x_{I}^n=R_{-\frac{m\langle \mathbf{m}\rangle}{kn}}\left(\xi_{n}(I)\right)$ and $y_{I'}^n=\xi_{n}((I')^c)$.
\end{proof}

\appendix
\addcontentsline{toc}{section}{Appendices}

\section{Small quantum cohomology of the Grassmannian}\label{sec:quantum_cohomology}
We recall here some basic facts about the small quantum cohomology of the Grassmannian.
\subsection{Partitions and symmetric functions}
Recall that a partition $\lambda$ is a nonincreasing sequence of integers $(\lambda_1\geq \lambda_2\geq \ldots\geq \lambda_k\geq 0)$. We denote then by $l(\lambda)$ the length of the partition $\lambda$. The integer $\lambda_i$ is called the $i$-th part of $\lambda$, and the size $\langle \lambda\rangle$ of $\lambda$ is the sum of its parts. A partition is depicted by a Young diagram, consisting of the drawing of $\langle \lambda\rangle$ left-aligned boxes with $\lambda_i$ boxes in the i-th row.

We denote by $R$ the set of all partitions, $\Lambda_{k}$ the set of partitions of length $k\geq 1$ and $\Lambda_{k,n}$ the set of partitions of length $k$ such that the first part is smaller that $n-k$. Alternatively, $\Lambda_{k,n}$ can be seen as the set of partitions whose Young diagram can be included in a rectangle of $k$ rows and $n-k$ columns. In the sequel, we will regularly identify $\lambda$ with its Young diagram. If $\lambda\in \Lambda_{k,n}$, the  complementary of $\lambda$ in $\Lambda_{k,n}$ is the partition $\lambda^c$ such that $\lambda_i+\lambda^c_{k-i+1}=n-k$ for all $1\leq i\leq k$. This is the partition we get by rotating upside down the set of boxes of the rectangle $k\times (n-k)$ not included in $\lambda$.

We say that $\lambda\subset \mu$ if the Young diagram of $\lambda$ is included in the one of $\mu$, which means that $\lambda_i\leq \mu_i$ for all $1\leq i\leq k$. 

Let $Sym[x_1,\ldots,x_k]$ denote the ring of symmetric polynomials in $k$ variables. This ring is freely generated by the elementary symmetric functions $e_j(x_1,\ldots,x_k)=\sum_{1\leq i_1<\ldots i_k\leq k}x_{i_1}\dots x_{i_k}$ for $1\leq j\leq k$. It also has a distinguished basis $\{s_{\lambda}\}_{\lambda\in \Lambda_{k}}$ given by the Schur polynomials, whose value at $(x_1,\ldots,x_k)$ is 
$$s_{\lambda}(x_1,\ldots,x_k)=\frac{\det(x_i^{\lambda_j+k-j})_{1\leq i,j\leq k}}{\det(x_i^{k-j})_{1\leq i,j\leq k}}.$$
The functions $\{s_{\lambda}\}_{\lambda\in \Lambda_{k}}$ are related to numerous fields of algebraic combinatorics, see \cite{Fulton1997}. In particular, this basis is positively multiplicative, meaning that 
$$s_{\lambda}\cdot s_{\mu}=\sum_{\nu\in \Lambda_{k}}c_{\lambda \mu}^\nu s_{\nu}$$
for some nonnegative coefficient $c_{\lambda\mu}^\nu$. The latter are actually nonnegative integers called Littlewood-Richardson coefficients.

\subsection{the Grassmannian, Schubert variety and intersection}
Let us review some aspects of the cohomology of the Grassmannian without going into details of homology theory, see \cite{Fulton1997} for more details.  Denote by $G_{k,n}$ the variety of $k$-dimensional subspaces of $\mathbb{C}^n$ and  $\mathbb{W}=\{(0\subset W_1\subset \ldots\subset W_n=\mathbb{C}^n)\}$ the variety of flags on $\mathbb{C}^n$. By a {\it generic} element of $\mathbb{W}$, we mean any element of $\mathbb{W}$ excepting elements of a submanifold of strictly smaller dimension. For a particular flag $W\in \mathbb{W}$ and a partition $\lambda\subset R$, we define the Schubert variety $\Omega_\lambda$ as
$$\Omega_{\lambda}^W=\{V\in G_{k,n}, \dim(V\cap W_{n-k+i-\lambda_i})\geq i,\, 1\leq i\leq k\}.$$
Then, $\Omega^W_{\lambda}$ is a sub-variety of $G_{k,n}$ of dimension $k(n-k)-\langle \lambda\rangle$. Moreover, if $\lambda\subset \mu$ then $\Omega_{\mu}^W\subset \Omega_{\lambda}^W$. One can then prove that $\overset{\circ}{\Omega}_{\lambda} {}^W:=\Omega_{\lambda}^W\setminus\bigcup_{\substack{\mu\subset\lambda\\ \mu\not =\lambda}}\Omega_{\mu}^W\simeq \mathbb{C}^{k(n-k)-\langle \lambda\rangle}$. In particular, 
$$\overset{\circ}{G_{k,n}}=\overset{\circ}{\Omega_{\emptyset}^W}=\Omega_{\emptyset}^W\setminus \Omega_{(1)}^W\simeq \mathbb{C}^{k(n-k)}$$
and $G_{k,n}$ is a variety of dimension $k(n-k)$.

If $W'$ is another generic flag in $\mathbb{W}$, $\Omega_{\lambda}^W$ intersects transversely $\Omega_{\mu}^{W'}$, which means that $\Omega_{\lambda}^W\cap \Omega_{\mu}^{W'}$ is a well-defined variety of dimension $k(n-k)-(\langle \lambda\rangle+\langle \mu\rangle)$. In particular, $\Omega_{\lambda}^W\cap \Omega_{\lambda^c}^{W'}$ is a $0$-dimensional variety which is actually reduced to a singleton. One can then decompose the intersection $\Omega_{\lambda}^W\cap \Omega_{\mu}^{W'}$ as a union of sub-varieties which are deformations of some $\Omega_{\nu}^W$ for $\nu\in \Lambda_{k,n}$. A simple way to see this decomposition is to use Poincaré duality which expresses the aforementioned fact that for $\lambda\in \Lambda_{k,n}$, the intersection $\Omega_{\lambda}^W\cap \Omega_{\lambda^c}^{W'}$ is a singleton. Then,  one says that a variety on type $\nu$ appears $c_{\lambda\mu}^\nu$ times in the decomposition of $\Omega_{\lambda}^W\cap \Omega_{\mu}^{W'}$ if for any generic $W''\in \mathbb{W}$, $\Omega_{\lambda}^W\cap \Omega_{\mu}^{W'}\cap \Omega_{\nu^c}^{W"}$ is a discrete set of points and
$$\#\left(\Omega_{\lambda}^W\cap \Omega_{\mu}^{W'}\cap \Omega_{\nu^c}^{W"}\right)=c_{\lambda\mu}^\nu.$$
Formally, this amounts to say that the cup-product of the corresponding cohomology cycles $\sigma_{\lambda}$ and $\sigma_{\mu}$, respectively dual to $\Omega^W_{\lambda}$ and $\Omega_{\mu}^{W'}$, decomposes in the cohomology ring $H^*(G_{k,n})$ as 
$$\sigma_{\lambda}\cdot\sigma_{\mu}=\sum_{\nu\in \Lambda_{k,n}}c_{\lambda,\mu}^\nu\sigma_{\nu}.$$
The coefficient $c_{\lambda,\mu}^\nu$ are the same Littlewood-Richardson coefficients which encodes the multiplication of Schur functions in the ring of symmetric functions. This is no surprising for the following reason: algebraically, $H^*(G_{k,n})\simeq Sym[x_1,\ldots,x_k]/< h_i=0, i>n-k>$, the algebra-isomorphism being given by the map $\sigma_\lambda\mapsto s_{\lambda}$ (which is known as Giambelli's formula).

\begin{example}\label{Example:P1}
In the simplest case $k=1$, we have $G(1,n+1)=\mathbb{P}^n(\mathbb{C})$, the projective space of dimension $n$. Then, for $W\in\mathbb{W}$ and $\lambda\in R_{1,n+1}$, 
$\Omega_{\lambda}^W=\{x\in \mathbb{P}^n(\mathbb{C}),x\subset W_{n+1-\lambda_1}\}$. More concretely, for each $1\leq i\leq n+1$, let $e_i\in \mathbb{C}^{n+1}$ be a vector in $W_{i}\setminus W_{i-1}$. Then, $(e_1,\ldots,e_{n+1})$ is a basis of $\mathbb{C}^{n+1}$ and for $1\leq k\leq n$,
$$\Omega_{(k)}^W=\left\{[x_1,\ldots,x_{n+1-k},0,\ldots,0], (x_i)_{1\leq i\leq n+1-k}\in \mathbb{C}^{n+1-k}\setminus\{0\}\right\},$$
where $[x_1,\ldots,x_{n+1}]$ denotes the line generated by $(x_1,\ldots,x_{n+1})$. In particular, $\Omega_{(n)}^W$ is the singleton $\{[1,0,\ldots,0]\}$. If $W'$ is another flags, then $\Omega_{(k_1)}^W\cap\Omega_{(k_2)}^{W'}=\{x\in \mathbb{P}^n(\mathbb{C}),x\subset W_{n+1-k_1}\cap W_{n+1-k_2}'\}$. Remark that for $W'$ generic, $\dim W_{n+1-k_1}\cap W_{n+1-k_2}'=n+1-(k_1+k_2)$. If $W''$ is a third generic flag, $\dim W_{n+1-k_1}\cap W_{n+1-k_2}'\cap W_{n+1-k_3}''=\max(n+1-(k_1+k_2+k_3),0)$: then, when $k_1+k_2\leq n$, $k_3=n-k_1-k_2$ we have 
$$\dim W_{n+1-k_1}\cap W_{n+1-k_2}'\cap W_{n+1-k_3}''=1,$$
and for any other configuration of $(k_1,k_2,k_3)$ this dimension either vanishes or is larger than $1$. Hence, the intersection $\Omega_{(k_1)}^W\cap\Omega_{(k_2)}^{W'}\cap \Omega_{(k_3)}^{W''}$ is discrete and finite only when $k_1+k_2\leq n$ and $k_3=n-(k_1+k_2)$. We deduce that 
$$\sigma_{(k_1)}\cdot\sigma_{(k_2)}=\delta_{k_1+k_2\leq n}\,\sigma_{(k_1+k_2)}.$$
This is precisely the multiplication in $Sym[x_1]/<x_1^i=0,i\geq n+1>$ when sending $\sigma_{(k)}$ to $x_1^k$ for $1\leq k\leq n$.
\end{example}

\subsection{Quantum cohomology ring of the Grassmannian} The small quantum cohomology ring of $G_{k,n}$ is an enrichment of $H^*(G_{k,n})$ that counts more broadly maps from $\mathbb{P}^1(\mathbb{C})$ to $G_{k,n}$ with prescribed values at specific points rather than intersection of subvarieties of $G_{k,n}$. Refer to \cite{B,BCF} for a general introduction to the small quantum cohomology. Note that one can generally define two quantum cohomologies for a given projective variety: the {\it big} quantum cohomology and the {\it small} one. In this paper we will only be interested in the {\it small} quantum cohomology which is much easier to handle, and we will simply drop the term {\it small} in the sequel (see \cite{KV} for an introduction to the big quantum cohomology). Denote by $Hol(G_{k,n})$ the set of rational maps from $\mathbb{P}^1(\mathbb{C})$ to $G_{k,n}$. We define the degree of a map $f\in Hol(G_{k,n})$ as 
$$\deg f=\#f^{-1}\left(\Omega_{(1)}^W\right),$$
and we can show this is independent of a generic choice of $W$. We denote by $Hol_{d}(G_{k,n})$ the space of rational maps from $\mathbb{P}^1(\mathbb{C})$ to $G_{k,n}$ of degree $d$. The latter is a quasi-projective variety (i.e, an open subset of a projective variety) of dimension $k(n-k)+dn$ whose closure yields a projective variety $\overline{Hol_{d}}(G_{k,n})$. Note that other compactifications of $Hol_{d}(G_{k,n})$ can be achieved with better smoothness, see \cite{B}. The choice of compactification will not play any role here, since we will study intersection of subvarieties of $\overline{Hol_{d}}(G_{k,n})$ which are generically located in $Hol_{d}(G_{k,n})$. For $a\in\mathbb{P}^1(\mathbb{C}), \, \lambda\in \Lambda_{k,n}$ and $W\in \mathbb{W}$, denote by $V_{a,\lambda}^W$ the subvariety 
$$V_{a,\lambda}^W=\left\{f\in Hol_d(G_{k,n}),\, f(a)\in\Omega_{\lambda}^W\right\}.$$
Then, $V_{a,\lambda}^W$ is a subvariety of $Hol_{d}(G_{k,n})$ of dimension $nd+k(n-k)-\langle \lambda\rangle$.
\begin{definition}[\cite{B}]
The quantum cohomology ring $QH(G_{k,n})$ is the vector space $\mathbb{C}[q]\otimes H^*(G_{k,n})$ with the multiplication
\begin{equation}\label{eq:multiplication_quantum}(q^k\sigma_\lambda)\cdot (q^{k'}\sigma_\mu)=q^{k+k'}\sum_{d\geq 0,\nu\in \Lambda_{k,n}}\langle\lambda,\mu,\nu^c\rangle_dq^d\sigma_{\nu},
\end{equation}
where $\langle\lambda,\mu,\nu^c\rangle_d=0$ if $\langle \lambda\rangle+\langle\mu\rangle+\langle \nu^c\rangle\not=k(n-k)+nd$, and otherwise
$$\langle\lambda,\mu,\nu\rangle_d=\# \left(V_{0,\lambda}^W\cap V_{1,\mu}^{W'}\cap V_{\infty,\nu}^{W''}\right).$$
\end{definition}
Remark that $d=0$ corresponds to constant maps on $G_{k,n}$ and we recover with $\langle\lambda,\mu,\nu^c\rangle_0$ the intersection numbers given in the previous paragraph. The fact that the multiplication is indeed an associative product on $\mathbb{C}[q]\otimes H^*(G_{k,n})$ is a highly nontrivial property. This associativity is however crucial for enumerative properties of the quantum cohomology ring. Indeed, for any $p\geq 2, \lambda^{1},\ldots,\lambda^p\in \Lambda_{k,n}$, then we have in $QH(G_{k,n})$
\begin{equation}\label{eq:qring_combinatorics}\sigma_{\lambda^1}\dots \sigma_{\lambda^p}=\sum_{d\geq 0, \,\lambda_{p+1}^c\in \Lambda_{k,n}}\langle\lambda_1,\ldots,\lambda_{p+1}\rangle_dq^d\sigma_{\lambda_{p+1}^c},
\end{equation}
where $\langle\lambda_1,\ldots,\lambda_{p+1}\rangle_d=0$ if $\sum_{i=1}^{p+1}\langle \lambda_i\rangle\not=k(n-k)+dn$ and otherwise
$$\langle\lambda_1,\ldots,\lambda_{p+1}\rangle_d=\#\left(\bigcap_{i=1}^{p+1}V_{a_i,\lambda_i}^{W_i}\right),$$
where $a_1,\ldots,a_{p+1}$ are $p+1$ generic points of $\mathbb{P}^1(\mathbb{C})$ and $W_{1},\ldots,W_{p+1}$ are $(p+1)$ generic flags in $\mathbb{W}$. 

The algebraic structure of $QH(G_{k,n})$ has been investigated by several authors \cite{B,BCF,BKPT}. This algebraic study culminated with the combinatorial description of the coefficients $\langle\lambda,\mu,\nu^c\rangle_d$ in \cite{BKPT} in terms of puzzles of the $2$-steps flag manifold (see also \cite{Bu} for a refined version). For our probabilistic purpose, one of the most important descriptions of $QH(G_{k,n})$ is given by its algebraic presentation
$$QH(G_{k,n})\simeq Sym[x_1,\ldots,x_k]/\langle h_i=0,i=n-k+1,\ldots n-1\rangle$$
through the map $\sigma_\lambda\mapsto s_{\lambda}, q\mapsto (-1)^{k+1}h_{n}$. This algebraic presentation can be further simplified, see \cite{Ri}, as
\begin{align*}
Sym[x_1,\ldots,x_k]/&\langle h_i=0,i=n-k+1,\ldots n-1, h_{n}=(-1)^{k+1}q\rangle\\
=&Sym[x_1,\ldots,x_k]/\langle (x_1,\ldots,x_k)=q^{1/k}\left(e^{it_1},\ldots e^{it_k}\right), \vec{t}\in T_{k,n}\rangle,
\end{align*}
and then $q=(-1)^{k+1}x_i^n$ for all $1\leq i\leq n$. Note that the relation between both presentations of $QH(G_{k,n})$ is not trivial.
\begin{example}\label{Example:QuantumCohomP1}
Let us resume Example \ref{Example:P1} for $k=1$, where $G_{1,n+1}=\mathbb{P}^n(\mathbb{C})$. Then, a rational map $h$ from $\mathbb{P}^1(\mathbb{C})$ to $\mathbb{P}^n(\mathbb{C})$ is given by $n+1$ homogeneous polynomials $f_1(x,y),\ldots,f_{n+1}(x,y)$ of same degree which don't have any common zero. Denote by $d$ the degree of the polynomials $f_j,\, 1\leq j\leq n$. Up to a generic choice of basis $(e_1,\ldots,e_{n+1})$ of $\mathbb{P}^n(\mathbb{C})$, one can assume that all $f_j$ have nonzero coefficient for the monomial $y^d$ and that furthermore $f_{n+1}(0,1)\not=0$.  Let $W$ be the flag given by the basis $(e_1,\ldots,e_{n+1})$, with $W_{i}=Vect\langle e_1,\ldots,e_{i}\rangle$.  Recall then from Example \ref{Example:P1} that $\Omega_{(1)}^W=\left\lbrace [x_1,\ldots,x_{n},0], (x_i)_{1\leq i\leq n}\in \mathbb{C}^n\setminus\{0\}\right\}$. Hence, $h^{-1}(\Omega_{(1)}^W)=\{[x,y]\in\mathbb{P}^1(\mathbb{C}), \,f_{n+1}(x,y)=0\}$. By the choice of a generic basis, $f_{n+1}(0,1)\not=0$ and thus $(0,1)\not\in h^{-1}(\Omega_{(1)}^W)$. Then, on the open chart $\{x\not=0\}$, the  maps $h$ is given by 
$$h: (1,y)\mapsto [f_1(1,y),\ldots,f_{n+1}(1,y)],$$
where each $f_j(1,y)$ are univariate polynomials of degree $d$. Hence, 
$$h^{-1}(\Omega_{(1)}^W)\cap \{x\not= 0\}=\{[1,y], \, f_{n+1}(1,y)=0\}.$$ 
Since $f_{n+1}(1,y)$ is a degree $d$ polynomial, the latter equation has exactly $d$ solutions, and finally 
$$\#h^{-1}\left(\Omega_{(1)}^W\right)=d.$$
Hence, we see that in the simplest case $k=1$, the degree of a holomorphic map coincides with the algebraic degree of the map from $\mathbb{P}^1(\mathbb{C})$ to $\mathbb{P}^n(\mathbb{C})$. The ring $QH(G_{1,n+1})$ is then the ring of univariate polynomials $\mathbb{C}[x]$, with the map $\sigma_{k}\mapsto x^k$ for $1\leq k\leq n$ and $q\mapsto x^{n+1}$.

In the simplest case $n=1$, we can even recover a classical result. Let $x_1,\ldots,x_{2p+1}$ and $y_1, \ldots, y_{2p+1}$ be generic elements of $\mathbb{P}(\mathbb{C})$. Each $y_i\in\mathbb{P}^1(\mathbb{C})$ defines a flag in $\mathbb{C}^2$ given by $W_i=\{0\subset y_i\subset \mathbb{C}^2\}$. Moreover, saying that a map $h\in Hol(\mathbb{P}^1(\mathbb{C}),\mathbb{P}^1(\mathbb{C}))$ satisfies $f(x_i)=y_i$ exactly means that $f(x_i)\in \Omega_{(1)}^{W_i}$. By \eqref{eq:qring_combinatorics},
$$\sigma_{(1)}^{2p}=\sum_{\substack{d\geq 0, \,\lambda_{2p+1}^c\in R_{1,2}\\ 2p+\vert \lambda_{2p+1}\vert=2d+1}}\#\left(\Omega_{\lambda_{2p+1}}^{W_{2p+1}}\cap \bigcap_{1\leq i\leq 2p} V_{x_i,(1)}^{W_i}\right)q^d\sigma_{\lambda_{2p+1}^c},$$
and by the latter description of $QH(G_{1,2})$,
$$\sigma_{(1)}^{2p}=q^p.$$
Hence, the only non zero terms arise for $d=p$, and then $\lambda_{2p+1}=(1)$. We deduce that there is a unique rational map of degree $p$ such that $f(x_i)=y_i$ for generic $\{x_i,y_i\}_{1\leq i\leq 2p+1}\in\mathbb{P}^1(\mathbb{C})$.
\end{example}

\pagebreak

\printnomenclature

\end{document}